\documentclass[11pt,a4paper]{article}

\usepackage[T1]{fontenc}
\usepackage[utf8]{inputenc}
\usepackage{lmodern}
\usepackage{microtype}
\usepackage[margin=28mm]{geometry}
\usepackage{amsmath,amssymb,amsthm,mathtools}
\usepackage{mathrsfs}
\usepackage{booktabs,longtable,array,float}
\usepackage{tikz}
\usepackage[hidelinks]{hyperref}
\hypersetup{pdftitle={Dimensional transition and spectral instability in lattice Hardy inequalities},
 pdfauthor={Carlos Lizama},pdfsubject={Defect sign law; exterior optimization at the eight--nine transition; finite resolvent parts and eigenvalue asymptotics},
 pdfkeywords={Hardy inequality, total critical defect, dimensional transition, discrete Laplacian, critical coupling, finite resolvent part}}

\numberwithin{equation}{section}

\newtheorem{theorem}{Theorem}[section]
\newtheorem{lemma}[theorem]{Lemma}
\newtheorem{proposition}[theorem]{Proposition}
\newtheorem{corollary}[theorem]{Corollary}
\theoremstyle{remark}
\newtheorem{remark}[theorem]{Remark}

\newcommand{\Z}{\mathbb Z}
\newcommand{\R}{\mathbb R}
\newcommand{\C}{\mathbb C}
\newcommand{\E}{\mathcal E}
\newcommand{\D}{\mathcal D}
\newcommand{\W}{\mathcal W}
\newcommand{\BesselK}{\mathrm K_0}

\newcommand{\supp}{\operatorname{supp}}
\newcommand{\ess}{\mathrm{ess}}
\newcommand{\spec}{\sigma}
\newcommand{\ip}[2]{\langle #1,#2\rangle}
\newcommand{\norm}[1]{\lVert #1\rVert}
\newcommand{\ind}{\mathbf 1}
\newcommand{\cP}{\mathcal P}
\newcommand{\cL}{\mathcal L}
\newcommand{\B}{\mathsf B}
\newcommand{\T}{\mathsf T}
\newcommand{\Rcal}{\mathcal R}
\newcommand{\Ac}{\mathcal A}
\newcommand{\Bc}{\mathcal B}

\title{Dimensional transition and spectral instability\\ in lattice Hardy inequalities}
\author{Carlos Lizama\\[4pt]
\small Departamento de Matem\'atica y Ciencia de la Computaci\'on\\
\small Universidad de Santiago de Chile, Santiago, Chile\\[4pt]
\small Corresponding author:
\href{mailto:carlos.lizama@usach.cl}{\nolinkurl{carlos.lizama@usach.cl}}\\[2pt]
\small ORCID: \href{https://orcid.org/0000-0002-9807-1100}{0000-0002-9807-1100}}
\date{}

\begin{document}
\maketitle

\begin{abstract}
Discretization can change a sharp stability threshold even when the
continuum threshold is recovered at infinity. We study this phenomenon
for the Hardy inequality on the integer lattice in dimensions at least
three: its best coefficient
agrees with the continuum value through dimension eight and is smaller
from dimension nine onwards. We characterize this transition by
the total critical defect, the summable error obtained by testing the
lattice operator against the critical continuum power. A theta-function
representation proves that the defect is negative and strictly
decreasing from dimension nine; explicit positive Hardy remainders
establish its positivity in dimensions three through eight. An exact energy
identity relates this defect to the energy saved by optimizing the
profile outside the neighbours of the origin. This exterior remains
stable in dimensions eight and nine, locating the first loss of
positivity in its coupling to those neighbours. We give self-contained
proofs of both dimensional ranges. We then describe how local
attraction creates a negative eigenvalue in a general class of lattice
operators with critical inverse-square tails. Its magnitude is
exponentially small near the threshold, with both the rate and the
multiplicative prefactor determined by zero-energy solutions. The
result allows connected finite modifications of the lattice and
nonradial corrections to the tail. The same analysis controls the
spread of the emerging state and gives a two-dimensional return scale
for the associated Markov chain in every ambient dimension.
\end{abstract}

\medskip
\noindent\textbf{Keywords.} Discrete Hardy inequality; dimensional transition;
total critical defect; critical coupling; eigenvalue asymptotics.

\medskip
\noindent\textbf{Mathematics Subject Classification (2020).}
Primary 39A12; Secondary 47A10, 26D15.

\section*{Acknowledgments}
The author acknowledges support from the Agencia Nacional de
Investigaci\'on y Desarrollo (ANID), Chile, through the Fondo Nacional
de Desarrollo Cient\'ifico y Tecnol\'ogico (FONDECYT), grant No.~1260057.

\clearpage
\setcounter{tocdepth}{2}
\begingroup\small\tableofcontents\endgroup
\setcounter{tocdepth}{2}
\clearpage

\section{Introduction}\label{sec:intro}

Replacing derivatives by nearest-neighbour differences preserves many
large-scale features of a differential operator. At a sharp stability
threshold, however, the geometry near a singularity can change the
answer. The lattice Hardy inequality exhibits a precise dimensional
transition: the critical continuum coefficient remains valid through
dimension eight and fails from dimension nine onwards, although the
coefficient seen at infinity is unchanged in every dimension. We study
how this discrepancy is measured by an energy defect, how it is
modified by optimization, and how it leads to spectral instability.

Hardy inequalities compare the energy of spatial variation with the
attraction generated by an inverse-square potential.
For $N\ge3$, the classical Hardy inequality is
\[
 \int_{\R^N}|\nabla u|^2\,dx
 \ge A_N\int_{\R^N}\frac{|u|^2}{|x|^2}\,dx,
 \qquad A_N=\frac{(N-2)^2}{4},
 \qquad u\in C_c^\infty(\R^N\setminus\{0\}).
\]
Its sharp coefficient is the largest inverse-square attraction
compatible with nonnegative energy. The gradient term and the
potential have the same scaling, so neither disappears on changing
length scale. This balance underlies the role of Hardy inequalities
in spectral theory and equations with singular
potentials~\cite{Hardy,FLS,KapitanskiLaptev}.

Let $C(N)$ be the best coefficient when the gradient energy is replaced
by the sum of squared nearest-neighbour differences on $\Z^N$, with
the value at the origin fixed to zero. The classification
$C(N)=A_N$ for $3\le N\le8$ and $C(N)<A_N$ for $N\ge9$
has been established in recent preprints. Lizama~\cite{LizamaSharp}
proved the equalities in dimensions three and four and the strict
inequalities from dimension nine. Alpay~\cite{AlpayThree,AlpayLow}
proved the three-dimensional equality and the equalities in dimensions
five through eight. Our purpose is to relate this classification to
quantities that describe both the dimensional comparison and the
onset of instability, and to develop the resulting local threshold
theory.

The distinction between the coefficient at infinity and the global
coefficient has a direct consequence for approximation. On a lattice
of mesh size $h$, the critical Hardy operator is unitarily equivalent
to $h^{-2}$ times the unit-mesh operator. In dimensions three through
eight it remains nonnegative for every $h>0$. From dimension nine,
the bottom of its spectrum tends to $-\infty$ as $h\downarrow0$.
Thus local consistency with the continuum operator away from the
origin coexists with a loss of uniform spectral stability
(Corollary~\ref{cor:meshstability}). The issue is already present in
the interaction of the critical inverse-square potential with the
lattice near the singularity.

The first quantity we introduce tests this interaction against the
critical continuum power. Let $H_N$ be the Dirichlet lattice
Laplacian minus $A_N|x|^{-2}$. The power profile and its
\emph{total critical defect} are
\begin{equation}\label{eq:introprofiledefect}
 \begin{gathered}
 h_N(x)=|x|^{-(N-2)/2}\quad(x\ne0),\qquad h_N(0)=0,\\
 \mathfrak d_N=\sum_{x\in\Z^N\setminus\{0\}}
                h_N(x)(H_Nh_N)(x).
 \end{gathered}
\end{equation}
The two terms in the energy of the uncut profile diverge separately,
but their local difference is absolutely summable. This gives a
finite scalar quantity that can be compared across dimensions.
We obtain an integral representation through the Jacobi theta
function and prove the sign law
\begin{equation}\label{eq:introcharacterization}
 \begin{aligned}
 \mathfrak d_N>0&\quad\Longleftrightarrow\quad C(N)=A_N
                   \quad\Longleftrightarrow\quad3\le N\le8,\\
 \mathfrak d_N<0&\quad\Longleftrightarrow\quad C(N)<A_N
                   \quad\Longleftrightarrow\quad N\ge9.
 \end{aligned}
\end{equation}
together with strict decrease of $\mathfrak d_N$ along integer
dimensions from nine onwards. The negative defect provides explicit
violations of the Hardy inequality by logarithmic cutoffs. In the
sharp dimensions, positive remainders for arbitrary competitors
establish both the inequality and the positive defect sign.
The proofs of both ranges are included. The constructions in
dimensions three through eight share the reciprocal-field method
with~\cite{AlpayLow}; Section~\ref{sec:hardyproofs} specifies the
different fields, profiles and estimates used here.

The next step accounts for the adjustment of the profile.
The defect tests a prescribed function, while stability depends on
all competitors. We minimize the energy under a fixed mass on the
nearest neighbours of the origin. The resulting infimum is nonnegative
if and only if the lattice operator is nonnegative. Whenever the
exterior has the required positive energy structure, an exact identity
computes how much energy is saved by optimizing the profile there
(Theorem~\ref{thm:main}).

At the dimensional transition, the exterior obtained by removing the
first sphere remains subcritical in both dimensions eight and nine:
it has a finite zero-energy Green kernel and a positive Hardy
remainder. The change of sign is therefore located in the coupling
between this exterior and the neighbours of the origin. Dimension
eight permits additional attraction on these vertices, whereas
dimension nine requires repulsion to restore nonnegativity.
This reduction connects the dimensional transition to a local
perturbation problem with a continuously varying coupling strength.

The spectral results determine what happens when that strength crosses
its stability threshold. More generally, let $H$ be a nonnegative
subcritical lattice operator whose potential has the critical tail
$-A_N|x|^{-2}+O(|x|^{-2-\eta})$, $\eta>0$, and let $W\ge0$ be a
nonzero potential supported on finitely many vertices. For the family
$H-\tau W$, let $\tau_c$ be the largest coupling preserving
nonnegativity. Just above $\tau_c$, precisely one negative eigenvalue
appears. Its magnitude satisfies
\[
 \varepsilon(\tau)=\mathfrak P(H,W)
 \exp\!\left[-\frac{\mathfrak A(H,W)}{\tau-\tau_c}\right](1+o(1)),
 \qquad \mathfrak A(H,W),\mathfrak P(H,W)>0.
\]
We determine both constants from the positive threshold state and a
Poisson solution with a prescribed normalization at infinity
(Theorem~\ref{thm:robustintro}). The proof identifies the finite part
of the resolvent after its logarithmic singularity is subtracted.
This precision determines the prefactor, as well as the exponential
rate. The hypotheses allow connected finite modifications of the
lattice, nonradial tail corrections of either sign, and arbitrary
finitely supported nonnegative $W$.

The threshold also has a spatial and a probabilistic description.
The emerging state spreads over a length comparable to
$\varepsilon^{-1/2}$. Reweighting the jump rates by the positive
threshold state gives a Markov chain whose return probabilities have
the scale of dimension two in every ambient dimension. The basic
reason is that the square of the critical power changes radial volume
from $r^{N-1}\,dr$ to $r\,dr$; Section~\ref{sec:results} states the
precise estimates. These conclusions extend the stability analysis
from the sign of the energy to the size and spatial behaviour of
the state created at the threshold.

Hardy inequalities on graphs have also been studied by optimizing
the spatial weight itself; see~\cite{KPPhardy,KPPMonthly,KrejcirikStampach}
and the overview~\cite{Oberwolfach2025}.
Fischer~\cite[Theorems~2.3 and~2.5]{FischerHardy} develops
supersolution constructions for nonlinear graph energies and necessary
decay conditions for optimal weights. Keller--Lemm~\cite{KellerLemm}
identify the continuum inverse-square term as the leading term of an
optimal lattice weight and establish robustness under elliptic
coefficients. Power weights and improved inequalities on the
half-line are treated in~\cite{Dyda,DasFuente,Gupta1D,HuangYe}.
Gerhat--Krej\v{c}i\v{r}\'ik--\v{S}tampach~\cite{GKSRellich}
use factorization to improve the classical discrete Rellich weight.
Fractional lattice Hardy inequalities and supersolution constructions
are studied in~\cite{CiaurriRoncal,HKPFractional,HKPSupersolution}.
For the fixed inverse-square weight considered here, positive
remainders in the sharp dimensions are established
in~\cite{LizamaSharp,AlpayLow}. They express subcriticality in graph
criticality theory~\cite{PinchoverTintarev,KPP}. The explicit
remainders constructed below also quantify the first-sphere
stability margin.

The large-dimension asymptotics clarify why eventual failure is
expected. Gupta~\cite{GuptaLattice} proved order bounds;
Gupta~\cite{GuptaAsymptotic} and Huang--Ye~\cite{HuangYeAsymptotic}
establish $C(N)/N\to2$, and Lizama~\cite{LizamaAsymptotic} gives a
three-term expansion. The continuum coefficient grows quadratically.
Our defect comparison starts at the first strict dimension and gives
a strictly decreasing scalar quantity throughout that range.

Exponential eigenvalue laws have classical continuum antecedents.
Simon~\cite{Simon} developed the two-dimensional weak-coupling theory;
Kondej--Lotoreichik~\cite[Theorem~3.2]{KondejLotoreichik} obtain an
explicit positive prefactor for compactly supported potential measures
in the plane. Jia--Zhao~\cite[Theorem~3.8]{JiaZhao} study continuum
inverse-square backgrounds with strictly positive angular indices.
Threshold expansions and inversion methods are developed
in~\cite{JensenNenciu,JensenNenciuAddendum,WangConical,WangCorrigendum},
and Ito--Jensen~\cite{ItoJensen} study thresholds of the free lattice
Laplacian. For a class of convolution operators on the two-dimensional
lattice, Kholmatov--Lakaev--Almuratov
\cite[Theorem~1.5]{KholmatovLakaevAlmuratov} prove exponential
weak-coupling eigenvalue asymptotics with positive multiplicative
prefactors. Discrete ground-state and Green-kernel estimates appear
in~\cite{JexStampach,ChenHuaXu}. Our expansion determines the constant
term for critical inverse-square lattice tails in dimensions at least
three from normalized zero-energy data. For singular parabolic equations,
Baras--Goldstein~\cite[Theorem~2.2]{BarasGoldstein} show that
solutions for bounded potential truncations diverge at every positive
time above the continuum Hardy threshold, for every nonzero
nonnegative integrable initial datum. The mesh-refinement consequence
above concerns the stability of the discrete critical operators
themselves.

Section~\ref{sec:results} states the defect and stability results,
followed by the local threshold law and its consequences.
Section~\ref{sec:structure} develops the theta representation and the
dimensional sign comparison. Section~\ref{sec:hardyproofs} supplies
the positive Hardy remainders, and Section~\ref{sec:interface}
derives the exterior optimization identity. With this geometric and
variational description in place, Section~\ref{sec:criticallaws}
proves the general spectral theory. Section~\ref{sec:N8application}
returns to the eight--nine transition, describing eigenvalue creation
by attraction and extinction by repulsion. The appendices collect
the scalar estimates; Table~\ref{tab:N8proofguide} is a guide to the
eight-dimensional construction.

\section{Main results}\label{sec:results}

We first compare the critical continuum profile with the optimized
lattice energy. The defect gives a scalar dimensional comparison;
the optimized energy identifies the coupling at which stability is
lost. We then describe the eigenvalue and the spatial behaviour
created when a local attraction crosses such a threshold.

\subsection{Hardy estimates and the defect transition}

Let $X_N=\Z^N\setminus\{0\}$ and extend functions by zero at the
origin. Write $e_j$ for the coordinate unit vectors. The lattice
Dirichlet energy sums squared differences between neighbours; the
Hardy mass weights squared values by the inverse square of the
distance to the origin. We denote them by $\E_N$ and $\D_N$:
\begin{equation}\label{eq:energies}
 \E_N[u]=\sum_{x\in\Z^N}\sum_{j=1}^N|u(x+e_j)-u(x)|^2,
 \qquad \D_N[u]=\sum_{x\in X_N}\frac{|u(x)|^2}{|x|^2}.
\end{equation}
The best Hardy coefficient is the infimum of their quotient.
With $C_c$ denoting finite support, it is
\begin{equation}\label{eq:CNdef}
 C(N)=\inf_{0\ne u\in C_c(X_N;\C)}\frac{\E_N[u]}{\D_N[u]}.
\end{equation}
Each unoriented nearest-neighbour edge is counted once, including those
incident with the origin. Write $y\sim x$ when $x$ and $y$ are
nearest neighbours, and $M_f$ for multiplication by a function $f$.
The Dirichlet Laplacian $L_N^D$ retains the full diagonal at the
neighbours of the removed origin. Subtracting the critical potential
gives the Hardy operator $H_N$:
\[
 H_N=L_N^D-A_N M_{|x|^{-2}},\qquad
 (L_N^Du)(x)=2Nu(x)-\sum_{y\in X_N,\,y\sim x}u(y),
\]
on $\ell^2(X_N)$. To isolate the behaviour far from the origin,
restrict the Hardy quotient to functions supported outside a ball.
The coefficient at infinity, denoted by $C_\infty(N)$, is the limit
of these infima as the radius tends to infinity.

The critical power balances the gradient energy and the inverse-square
term in the continuum. We denote its decay exponent by $s_N$ and half
that exponent by $\nu_N$. Restricting this power to the lattice gives
the profile $h_N$:
\begin{equation}\label{eq:defectprofile}
 s_N=\frac{N-2}{2},\qquad \nu_N=\frac{N-2}{4},\qquad
 h_N(x)=|x|^{-s_N}\quad(x\ne0),\qquad h_N(0)=0.
\end{equation}
The local energy defect is the pointwise error left by this profile,
multiplied by the profile itself. We denote it by $\rho_N$ and its
sum, the total critical defect, by $\mathfrak d_N$:
\begin{equation}\label{eq:defectdefinition}
 \rho_N(x)=h_N(x)(H_Nh_N)(x),\qquad
 \mathfrak d_N=\sum_{x\in X_N}\rho_N(x).
\end{equation}
The local defects are summed before energy and mass are separated.
Their absolute convergence is proved in Section~\ref{sec:defect}; the
two quantities for the uncut profile separately diverge.

The sharp constants and the existence of positive spatial remainders
are established in
Lizama's preprint on the dimensional classification~\cite{LizamaSharp}
(Theorem~1.1 and Corollary~3.7) and in
Alpay's low-dimensional preprint~\cite{AlpayLow}
(Theorem~2.1 and Corollary~8.1).
Below we prove the strict inequalities in Section~\ref{sec:defect}
and the sharp equalities, including positive remainders, in
Section~\ref{sec:hardyproofs}. The proposition also records the
quantitative bounds we construct on the neighbours of the origin.
These $2N$ vertices form the \emph{first sphere}, denoted by
$\Sigma_N=\{\pm e_j:1\le j\le N\}$. A positive spatial remainder is
a strictly positive vertex weight whose weighted mass is bounded by
the critical Hardy energy; we denote that weight by $w_N$.
The constants $\beta_N$ below measure the resulting first-sphere
bound, and $c_*$ is the corresponding vertex bound in dimension eight.

\begin{proposition}[Sharp Hardy inequalities and positive remainders]
\label{prop:hardyremainder}
One has $C(N)=A_N$ exactly for $3\le N\le8$.
For each of these dimensions there is a signed-permutation invariant
weight $w_N(x)>0$ on $X_N$ such that
\begin{equation}\label{eq:hardyremainder}
 \langle u,H_Nu\rangle\ge\sum_{x\in X_N}w_N(x)|u(x)|^2
 \qquad(u\in C_c(X_N;\C)).
\end{equation}
The weights may be chosen to satisfy
\begin{equation}\label{eq:owntraceweights}
 w_N|_{\Sigma_N}=\frac{\beta_N}{2N},\qquad
 (\beta_3,\beta_4,\beta_5,\beta_6,\beta_7,\beta_8)
       =\left(9,15,21,21,10,\frac{16}{119}\right).
\end{equation}
In particular,
\begin{equation}\label{eq:ownmargin8}
 w_8|_{\Sigma_8}=c_*,\qquad c_*:=\frac1{119}>0.
\end{equation}
\end{proposition}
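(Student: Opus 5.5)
The plan is the ground-state (reciprocal-field) method: build a positive supersolution $\phi_N$ with $H_N\phi_N\ge0$, ideally strictly positive, and convert it into a remainder via the ground-state identity. For any positive $\phi$ on $X_N$ (extended by zero at the origin) and $u=\phi v$ with finite support,
\[
 \langle u,H_Nu\rangle=\sum_{\{x,y\}\subset X_N,\,x\sim y}\phi(x)\phi(y)|v(x)-v(y)|^2+\sum_{x\in X_N}\frac{(H_N\phi)(x)}{\phi(x)}|u(x)|^2 .
\]
Here edges to the origin contribute through the full diagonal $2N$ of $L_N^D$, so they are absorbed in $H_N\phi$. Dropping the nonnegative first sum gives \eqref{eq:hardyremainder} with $w_N=H_N\phi_N/\phi_N$. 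This also yields $C(N)\ge A_N$. The reverse inequality $C(N)\le A_N$ is standard: $C(N)\le C_\infty(N)$, and $C_\infty(N)=A_N$ follows by testing large-scale logarithmically cut-off versions of $h_N$, whose lattice energy agrees with the continuum one to leading order. So everything reduces to constructing, for each $3\le N\le8$, a signed-permutation invariant $\phi_N>0$ with $H_N\phi_N>0$ pointwise and $(H_N\phi_N/\phi_N)|_{\Sigma_N}=\beta_N/(2N)$.

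For the ansatz I would take $\phi_N=h_N\cdot(1+g_N)$, or a finite combination $|x|^{-s_N}+a|x|^{-s_N-2}+\dots$ plus a finite modification on a few shells near the origin, with all coefficients chosen symmetric under signed permutations. Far from the origin I would Taylor-expand $(L_N^Dh_N)(x)$: the lattice Laplacian of $|x|^{-s}$ equals the continuum Laplacian plus a correction of order $|x|^{-s-4}$, with an anisotropic factor $\sum x_j^4/|x|^4$. The continuum part cancels the $A_N|x|^{-2}$ term exactly. A correction $a|x|^{-s_N-2}$ contributes $a\bigl((s_N+2)(s_N+2-N+2)+A_N\bigr)|x|^{-s_N-4}$-type terms; since $s_N+2$ lies off the critical exponent, this coefficient is nonzero and positive. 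Choosing $a$ large enough therefore makes $H_N\phi_N\ge c|x|^{-s_N-4}>0$ for $|x|\ge R_0$, uniformly in the anisotropy, using explicit remainder bounds on the fourth-order Taylor error. On the finitely many orbits with $|x|<R_0$ (orbits of $|x|^2$ with sorted coordinates), positivity becomes a finite list of rational/algebraic inequalities in the free shell values. I would set the first-sphere value so that $2N\phi(e_1)-\sum_{y\sim e_1,y\ne0}\phi(y)-A_N\phi(e_1)=\tfrac{\beta_N}{2N}\phi(e_1)$ holds exactly, then verify the remaining shells with rigorous interval arithmetic or exact rational bounds. These are the appendix-type scalar estimates.

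The main obstacle is dimension eight. There $A_8=9$ is close to the lattice limit, since $N=9$ already fails, so the first-sphere margin is tiny ($c_*=1/119$). The near-origin shells have almost no slack, and the Taylor tail must be controlled with explicit constants down to moderate radii. For $N=8$ I would first try a profile defined exactly by prescribed values on shells $|x|^2\le K$ (solving $H_8\phi=$ small positive target there as a linear system, with $\phi(e_1)$ fixed so that the quotient equals $1/119$), glued to $h_8(1+a|x|^{-2}+b|x|^{-4})$ outside. I would then check the gluing shells by hand and the tail by a certified remainder estimate. If positivity fails on some intermediate shell, I would enlarge $K$ or add a $|x|^{-s_N-4}$ term.
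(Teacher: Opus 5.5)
\textbf{There is a genuine gap: the proposal describes a method but does not carry out the construction, and its one explicit tail computation has the wrong sign.}

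\textbf{What is sound.} The ground-state identity you write is correct, since the full diagonal $2N$ absorbs the deleted edges. It is the case $a_{xy}=\phi(y)/\phi(x)$ of Lemma~\ref{lem:edge}. The bound $C(N)\le C_\infty(N)\le A_N$ via cut-off critical powers is also fine. Insisting on a global $\phi$ loses nothing in principle: once \eqref{eq:hardyremainder} holds, a positive $\phi$ with $H_N\phi\ge w_N\phi$ exists by the Allegretto--Piepenbrink equivalence (cf.\ \cite{KPP}).

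\textbf{The sign error.} For every $p$, $(-\Delta-A_Nr^{-2})r^{-p}=-(p-s_N)^2r^{-p-2}\le0$. Your coefficient $(s_N+2)(s_N+4-N)+A_N=4$ is the one for $\Delta+A_Nr^{-2}$. So a term $+a|x|^{-s_N-2}$ contributes $-4a|x|^{-s_N-4}$, and taking $a>0$ large makes the tail a subsolution. The usable tail is $h_N(1-a|x|^{-2})$ with $a>0$ large enough to dominate the anisotropic fourth-order lattice error. This is exactly the form of $U^+_\varepsilon$ in Lemma~\ref{rob:lem:barriers} and of the supersolution in Proposition~\ref{rob:prop:nonempty}. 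But it is positive only for $|x|^2>a$. The asymptotic ansatz therefore cannot reach the first shells, and the region you must build by hand grows with $a$.

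\textbf{The missing construction.} Everything the proposition asserts is deferred to checks you have not shown can succeed: positivity at every core vertex and at the gluing rows, with the prescribed quotient $\beta_N/(2N)$ on $\Sigma_N$. These checks are not routine.
\begin{itemize}
\item Already for $N=6$ the bare power gives $(H_6h_6)/h_6=10-\tfrac{152}{15}=-\tfrac2{15}$ at $e_1+e_2$.
\item For $N=8$, your fallback (``enlarge $K$ or add a term'') comes with no argument that it terminates.
\item Solving $H_8\phi=f>0$ on $\{|x|^2\le K\}$ gives a positive $\phi$ only if that compression is positive definite. That is a finite form of the very positivity being proved, and it would itself need certification.
\item The rows just outside the ball still mix the interior solution with the exterior ansatz.
\end{itemize}

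\textbf{How the paper avoids this.} It never uses a global ratio field. Lemma~\ref{lem:directlocalization} and Proposition~\ref{prop:analyticcompletion} need only a reciprocal field. The exterior field can therefore be a rational $b(s,t)$ tuned to the affine calibration on the squared-coordinate simplex, chosen independently of the interior. This reduces exterior positivity to a scalar inequality valid for $|x|^2\ge2$ when $N\le5$, $\ge3$ when $N=6$, $\ge4$ when $N=7$, and outside $\mathcal P_8$ when $N=8$.
\begin{itemize}
\item For $N\le7$ no interior profile is built. Cauchy--Schwarz over orbit incidences and completed squares in the orbit norms $U_j$ produce the $\beta_N$.
\item For $N=8$, a profile on the sixteen orbits $B_j,E_k$ is fixed by junction equations with reserve $\tfrac1{119}\mathbf 1_\Sigma$. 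It is certified by Taylor--Cauchy bounds (Theorem~\ref{thm:N8}).
\end{itemize}

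Until you exhibit $\phi_N$ and certify these inequalities, the proposal does not establish \eqref{eq:hardyremainder} or \eqref{eq:owntraceweights}.
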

\begin{proof}
Theorem~\ref{thm:lowtrace} supplies the positive weights and the
first-sphere values for $3\le N\le7$. The completed reciprocal field
of Theorem~\ref{thm:N8} has remainder $W_8>0$ everywhere and
$W_8|_{\Sigma_8}>1/119$. Take $w_8=W_8$ off $\Sigma_8$ and
$w_8=1/119$ on $\Sigma_8$.
These constructions give $C(N)\ge A_N$ through dimension eight.
Proposition~\ref{prop:scaling} gives the reverse inequality, and
Theorem~\ref{thm:defectsign} supplies finitely supported negative
tests for $H_N$ when $N\ge9$.
\end{proof}

We next separate the first sphere from the rest of the lattice.
The exterior vertex set is $Y_N=X_N\setminus\Sigma_N$. Its operator
$K_N$ is the compression of $H_N$ to $Y_N$, with the original
diagonal, so the energy cost of edges crossing from the first sphere
to the exterior is retained. Boundary values $v$ act on the exterior
through the adjacency source $B_Nv$. We denote the unit constant
boundary pattern by $e_N$ and the source it produces by $b_N$:
\[
 (B_Nv)(x)=\sum_{a\in\Sigma_N,\,a\sim x}v(a),\qquad
 e_N=(2N)^{-1/2}\mathbf1_{\Sigma_N},\qquad b_N=B_Ne_N.
\]
The \emph{optimized defect} is the least energy when the first sphere
carries the same total mass as the critical profile. It measures the
energy remaining after the function is allowed to adjust. We denote
this infimum by $\mathfrak D_N$:
\[
 \mathfrak D_N=\inf\{\langle u,H_Nu\rangle:
 u\in C_c(X_N;\C),\ \|u|_{\Sigma_N}\|_2^2=2N\}.
\]
It may take the value $-\infty$.

When $K_N\ge0$, the solution of $(K_N+\varepsilon)v=b_N$
describes the exterior profile for the positive parameter $\varepsilon$.
Its scalar
response, denoted by $m_N(\varepsilon)$, and its zero-energy limit are
\[
 m_N(\varepsilon)=\langle b_N,(K_N+\varepsilon)^{-1}b_N\rangle,
 \qquad m_N(0)=\lim_{\varepsilon\downarrow0}m_N(\varepsilon).
\]
Subtracting this limit from the first-sphere diagonal gives the local
stability margin $\tau_N=2N-A_N-m_N(0)$ whenever the limit is finite.
The zero-energy Green kernel is the pointwise limit of the resolvent
kernels as the energy tends to zero. For the nonnegative connected
lattice operators considered here, finiteness of this kernel is
equivalent to subcriticality~\cite[Theorem~5.12]{KPP}.

The energy dual norm measures the strength of a source against tests
of unit energy. For a nonnegative injective operator $K$ on
$\ell^2(Y)$, it is
\[
 \|f\|_{K,*}^2=
 \sup_{0\ne u\in C_c(Y)}
 \frac{|\langle f,u\rangle|^2}{\langle u,Ku\rangle}.
\]
The restriction of the critical profile to the exterior is denoted by
$\phi_N$. Its residual, denoted by $r_N$, is the mismatch left when
that profile is inserted into the exterior equation. The squared dual
norm of this residual will be the energy saved by correcting the
profile. Write $P_E$ for multiplication by $\mathbf1_E$. 

\begin{theorem}[Defect and exterior transition]\label{thm:main}
For every integer $N\ge3$, $C_\infty(N)=A_N$ and
$\mathfrak d_N$ is finite. Its signs and dimensional comparison are
\begin{equation}\label{eq:introdefectsign}
 \mathfrak d_N>0\quad(3\le N\le8),\qquad
 \mathfrak d_N<0\quad(N\ge9),\qquad
 \mathfrak d_{N+1}<\mathfrak d_N\quad(N\ge9).
\end{equation}
For all $N\ge3$, one has $\mathfrak D_N\le\mathfrak d_N$ and
$\mathfrak D_N\ge0$ if and only if $H_N\ge0$.
For $3\le N\le9$, $K_N$ is nonnegative, injective, and has a finite
zero-energy Green kernel. The zero-energy response $m_N(0)$ is
finite. With $\phi_N=h_N|_{Y_N}$ and
$r_N=K_N\phi_N-\sqrt{2N}\,b_N$, interpreted in the energy dual,
\begin{equation}\label{eq:introoptimized}
 \mathfrak D_N=\mathfrak d_N-\|r_N\|_{K_N,*}^2
             =2N\tau_N.
\end{equation}
The sharp dimensions have $\tau_N>0$. At the transition,
\begin{equation}\label{eq:introinterfacechange}
 \tau_8\ge c_*>0,
 \qquad \tau_9<-\frac1{54}.
\end{equation}
In particular, for $t\ge0$,
\[
 H_8-tP_{\Sigma_8}\ge0\ \Longleftrightarrow\ t\le\tau_8,
 \qquad
 H_9+tP_{\Sigma_9}\ge0\ \Longleftrightarrow\ t\ge-\tau_9.
\]
\end{theorem}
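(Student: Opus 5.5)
The proof splits into four parts, in this order.

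\textbf{Part 1: the coefficient at infinity and finiteness of the defect.} For $C_\infty(N)=A_N$ I would prove two bounds.
\begin{itemize}
\item Upper bound: rescaled continuum near-optimizers supported far from the origin. Their lattice energy and mass converge, after scaling, to the continuum quotient, which is the content of Proposition~\ref{prop:scaling}.
\item Lower bound: a supersolution argument on the exterior of a large ball. Taylor expansion gives $H_Nh_N=O(|x|^{-s_N-4})$. A corrected profile $h_N(1+c|x|^{-2})$ or $h_N(1-c|x|^{-1})$ then gives a positive supersolution of $L_N^D-(A_N-\delta)|x|^{-2}$ outside a large ball. The ground-state (Agmon--Allegretto--Piepenbrink) criterion then yields $C_\infty(N)\ge A_N-\delta$.
\end{itemize}
The same expansion gives $\rho_N=O(|x|^{-2s_N-4})=O(|x|^{-N-2})$, which is summable, so $\mathfrak d_N$ is finite.

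\textbf{Part 2: the sign law.}
\begin{itemize}
\item $N\ge9$. I would use the theta-function representation $|x|^{-2a}=\Gamma(a)^{-1}\int_0^\infty t^{a-1}e^{-t|x|^2}\,dt$ with $a=s_N$ and $a=s_N+1$. This writes $\mathfrak d_N$ as an integral over $t$ of polynomial expressions in $\theta(t)=\sum_{k\in\Z}e^{-tk^2}$ raised to powers $N$, $N-1$ (the neighbour sums factorize coordinatewise). Negativity and strict monotonicity in $N$ then reduce to scalar inequalities on $\theta$, established via the appendix estimates.
\item Negative tests. Once $\mathfrak d_N<0$ is known, logarithmic cutoffs $h_N\chi_R$ give $\langle u,H_Nu\rangle\to\mathfrak d_N<0$. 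This is because the cutoff contributes only $O(1/\log R)$ thanks to the two-dimensional radial volume $r\,dr$ of $h_N^2$. Hence $C(N)<A_N$.
\item $3\le N\le8$. Positivity of $\mathfrak d_N$ follows from Proposition~\ref{prop:hardyremainder}. Applying \eqref{eq:hardyremainder} to the same log cutoffs gives $\mathfrak d_N\ge\sum w_N h_N^2>0$, where the right side is positive even if infinite, and the limit is handled by the same cutoff estimate.
\end{itemize}

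\textbf{Part 3: the optimization identity.}
\begin{itemize}
\item $\mathfrak D_N\le\mathfrak d_N$: the cutoffs $h_N\chi_R$ are admissible, up to normalization on $\Sigma_N$ where $h_N=1$.
\item $\mathfrak D_N\ge0\iff H_N\ge0$: this follows by homogeneity. Any $u$ with $u|_{\Sigma_N}=0$ is a limit of admissible perturbations, and otherwise $u$ can be rescaled to the constraint.
\item Identity for $3\le N\le9$. First I need $K_N\ge0$, injectivity, and a finite Green kernel. For $N\le8$ these come from the remainder $w_N>0$ restricted to $Y_N$. For $N=9$ I need a separate positive supersolution on $Y_9$; I take the reciprocal-field construction of Section~\ref{sec:hardyproofs} for the exterior, since the failure sits only on $\Sigma_9$. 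Then I decompose $u=(v,g)$ with $v$ on $\Sigma_N$ and $g$ on $Y_N$, so that $\langle u,H_Nu\rangle=(2N-A_N)\|v\|^2-2\mathrm{Re}\langle B_Nv,g\rangle+\langle g,K_Ng\rangle$ plus the edges inside $\Sigma_N$, of which there are none. Minimizing over $g$ gives the Schur complement $(2N-A_N)\|v\|^2-\|B_Nv\|_{K_N,*}^2$. By signed-permutation symmetry and convexity the minimizing $v$ is the constant pattern $\sqrt{2N}e_N$, which gives $2N\tau_N$ with $m_N(0)=\|b_N\|_{K_N,*}^2$. Finally, expanding $g=\phi_N+(g-\phi_N)$ and completing the square yields $\mathfrak d_N-\|r_N\|_{K_N,*}^2$, interpreting $\phi_N$ via cutoffs.
\end{itemize}

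\textbf{Part 4: the transition values.}
\begin{itemize}
\item $\tau_8\ge c_*$: from \eqref{eq:ownmargin8}, apply the remainder to $u=v$ on $\Sigma_8$ plus the optimal exterior.
\item $\tau_9<-1/54$: I would bound $m_9(0)$ from below by the variational principle $m(0)\ge 2\mathrm{Re}\langle b,g\rangle-\langle g,Kg\rangle$, using an explicit trial $g$ (a multiple of $\phi_9$ truncated and corrected on the second sphere). Alternatively, combine $\mathfrak D_9\le\mathfrak d_9$ with the numerical theta bound on $\mathfrak d_9$, which gives $\tau_9\le\mathfrak d_9/18$, so I need $\mathfrak d_9<-1/3$.
\item The final equivalences follow since $H_N-tP_{\Sigma}$ has Schur complement $2N(\tau_N-t)$ on constant patterns.
\end{itemize}

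The main obstacle is the quantitative part: the scalar theta inequalities giving strict monotonicity for all $N\ge9$, and the explicit bound $\tau_9<-1/54$, which requires sharp numerics.
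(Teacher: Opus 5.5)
Most of your outline follows the paper's route:
\begin{itemize}
\item scaling tests and the ground-state field $h_N(y)/h_N(x)$ for $C_\infty(N)$;
\item logarithmic cutoffs turning the sign of $\mathfrak d_N$ into negative tests, or into $\mathfrak d_N\ge\beta_N$;
\item the first-sphere Schur complement;
\item completion of the square around $\phi_N$.
\end{itemize}

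The genuine gap is the step ``by signed-permutation symmetry and convexity the minimizing $v$ is the constant pattern.'' After eliminating the exterior you must minimize $(2N-A_N)\|v\|^2-\langle v,T_Nv\rangle$ on the sphere $\|v\|^2=2N$. That means \emph{maximizing} the convex form $\langle v,T_Nv\rangle$. Averaging over the group can only decrease an invariant convex function, and it also leaves the sphere, so convexity points the wrong way. Symmetry alone makes $T_N$ a scalar on each of the three channels of $\ell^2(\Sigma_N)$: constants, odd patterns with $v(-a)=-v(a)$, and even patterns with zero sum. It does not say which scalar is largest.

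The missing ingredient is positivity, as in Proposition~\ref{prop:abstractschur}:
\begin{itemize}
\item $(K_N+\varepsilon)^{-1}$ is positivity preserving: shift $K_N$ so its negative is entrywise nonnegative, then use the exponential series and the Laplace integral.
\item $B_N\ge0$ entrywise, so $T_N$ has nonnegative entries.
\item Transitivity makes every row sum equal to $\langle e_N,T_Ne_N\rangle=m_N(0)$.
\item The row-sum bound on the spectrum then gives $\lambda_{\max}(T_N)=m_N(0)$.
\end{itemize}

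Without this you only get $\mathfrak D_N\le2N\tau_N$. Consequently three of your steps are not justified:
\begin{itemize}
\item the equality $\mathfrak D_N=2N\tau_N$;
\item your route to $\tau_9\le\mathfrak d_9/18$ via $\mathfrak D_9\le\mathfrak d_9$;
\item the two final equivalences. Your last bullet evaluates the Schur complement only on constant patterns, so it proves just $H_8-tP_{\Sigma_8}\ge0\Rightarrow t\le\tau_8$ and $H_9+tP_{\Sigma_9}\ge0\Rightarrow t\ge-\tau_9$, not the converses.
\end{itemize}
The margin bounds themselves survive. The bound $\tau_8\ge c_*$ tests only $e_8$. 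The bound $\tau_9<-1/54$ follows from $\mathfrak d_9-18\tau_9=\|r_9\|_{K_9,*}^2\ge0$, which is your own completion of the square with the constant pattern.

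Two further inputs are asserted rather than supplied.

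First, nonnegativity of $K_9$ does not follow from the instability ``sitting only on $\Sigma_9$.'' Exterior positivity genuinely fails in high dimension: $\langle\delta_{e_1+e_2},K_N\delta_{e_1+e_2}\rangle<0$ for $N\ge20$. The paper has to choose a new member of the quadratic family ($\mu=7/4$, $\kappa=17/5$), verify it for $s\ge12$, and complete it on the Boolean orbits $\mathcal O_2^{(9)},\dots,\mathcal O_6^{(9)}$ (Proposition~\ref{prop:K9}).

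Second, on the theta side, a one-parameter Mellin formula with $a=s_N,s_N+1$ covers only the diagonal and Hardy terms.
\begin{itemize}
\item The neighbour products $h_N(x)h_N(y)$ need the beta-averaged two-parameter representation.
\item The separately divergent sums need a regularization such as $p=\nu_N+\delta$ before exchanging sum and integral.
\item Strict decrease for all $N\ge9$ rests on the split $\mathfrak d_N=Q_N-R_N$, with $R_{N+1}\ge a_NR_N$ (from Lemma~\ref{mono:lem:beta}), $R_N<550$, and the base estimate $\mathfrak d_9<-1/3$. ``Scalar inequalities on $\theta$'' does not identify any of these.
\end{itemize}
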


Section~\ref{subsec:energytools} constructs the energy space and
proves the variational identity behind \eqref{eq:introoptimized}.
The theorem gives the sign relation
\begin{equation}\label{eq:transitionintro}
 \begin{aligned}
 \mathfrak d_N>0&\quad\Longleftrightarrow\quad C(N)=A_N
                  \quad\Longleftrightarrow\quad3\le N\le8,\\
 \mathfrak d_N<0&\quad\Longleftrightarrow\quad C(N)<A_N
                  \quad\Longleftrightarrow\quad N\ge9.
 \end{aligned}
\end{equation}
Its positive side uses Proposition~\ref{prop:hardyremainder}; its negative
side follows directly from the theta formula and logarithmic cutoffs.
The energy identity \eqref{eq:introoptimized} quantifies the difference
between testing the critical power and minimizing over the exterior.
The estimate for $K_9$ shows that the first loss of positivity occurs
in the coupling while the exterior stays subcritical.

The positive remainders also imply finite Green diagonals for
$3\le N\le8$. Writing $\delta_x$ for the unit point source at $x$,
the diagonal response is
\begin{equation}\label{eq:introsubcritical}
 G_N(x,x):=\lim_{\varepsilon\downarrow0}
 \langle\delta_x,(H_N+\varepsilon)^{-1}\delta_x\rangle<\infty.
\end{equation}
Thus a sharp global Hardy coefficient coexists with a positive threshold
for finite-support attraction. Proposition~\ref{prop:spectraltransition}
identifies the negative discrete spectrum for $N\ge9$;
Corollary~\ref{cor:meshstability} gives the corresponding loss of a
uniform lower spectral bound under mesh refinement.

\subsection{Loss of stability under local attraction}

The positive margins above leave room for an additional attraction
on a finite set. The next theorem determines the last admissible
strength and the negative eigenvalue that appears beyond it. We state
it for the full class of critical-tail operators to which the same
mechanism applies.

Fix
$N\ge3$, set $\alpha=(N-2)/2$, $A_N=\alpha^2$ and $\omega=|\mathbb S^{N-1}|$, and let
$X=\Z^N\setminus D$ with $D$ finite. Let $L_c$ be the Laplacian of a
connected graph whose conductances agree with the integer lattice outside
a finite core. Suppose
\[
 H=L_c+q\ge0,\qquad
 q(x)=-A_N|x|^{-2}+O(|x|^{-2-\eta}),\quad\eta>0,
\]
and the zero-energy Green kernel $G_0$ is finite at one vertex, hence
at every pair by Lemma~\ref{rob:lem:green}. The full assumptions are
stated in Section~\ref{sec:lowenergy}. Write $f\asymp g$ for bounds
by positive constant multiples in the indicated limit.

Let $W$ be a nonzero nonnegative potential of finite support.
Increasing $\tau$ in $H-\tau W$ strengthens the attraction on that
support. The critical coupling $\tau_c$ is the last value for which
the operator remains nonnegative. Just above it, $\varepsilon(\tau)$
measures how far the new eigenvalue lies below zero. The constants
$\mathfrak A(H,W)$ and $\mathfrak P(H,W)$ give its exponential rate
and multiplicative prefactor. Two zero-energy functions determine
them: a positive state at the threshold and its response to the
perturbation, obtained from a Poisson equation with a prescribed
logarithmic correction to the critical power.

\begin{theorem}[Critical coupling and eigenvalue asymptotics]\label{thm:robustintro}
For every nonzero nonnegative multiplication operator $W$ of finite
support, there is $\tau_c>0$ such that, among $\tau\ge0$,
$H-\tau W\ge0$ precisely for $0\le\tau\le\tau_c$. For
$0<\delta=\tau-\tau_c$ sufficiently small, the negative spectrum
consists of one simple eigenvalue $-\varepsilon(\tau)$ and
\[
 \varepsilon(\tau)=\mathfrak P(H,W)
 \exp\!\left[-\frac{\mathfrak A(H,W)}{\delta}\right](1+o(1)).
\]
At $\tau_c$ there is a unique positive pointwise zero-energy solution
$h_c$ up to a scalar, with
$h_c(x)=a_c|x|^{-\alpha}(1+O(|x|^{-\sigma}))$ for some
$a_c,\sigma>0$; zero is not an $\ell^2$ eigenvalue.
The $W$-weighted mass of the critical state is
$M_c=\sum_xW(x)h_c(x)^2$. Its logarithmic Poisson response $g_c$
is the unique solution fixed by the equation and normalization
\[
 (H-\tau_cW)g_c=Wh_c,\qquad
 g_c(x)=-\frac{M_c}{\omega a_c}|x|^{-\alpha}\log|x|
          +O(|x|^{-\alpha-\sigma}),
\]
after decreasing $\sigma$ if necessary. Then
\begin{align}
 \mathfrak A(H,W)&=\frac{2\omega a_c^2}{M_c},
       \label{eq:introexactrate}\\
 \mathfrak P(H,W)&=4e^{-2\gamma_{\mathrm E}}
   \exp\!\left(\frac{2\omega a_c^2}{M_c^2}\ip{Wh_c}{g_c}\right),
       \label{eq:introprefactor}
\end{align}
where $\gamma_{\mathrm E}$ is Euler's constant. Both constants are
positive and independent of the scalar normalization of $h_c$.

Let $\Psi_\tau>0$ denote the normalized eigenvector for
$-\varepsilon(\tau)$. The least radius
$\mathcal L_\theta$ containing a fixed fraction $\theta\in(0,1)$ of
its mass satisfies
\[
 \mathcal L_\theta(\tau)\asymp_\theta\varepsilon^{-1/2}.
\]
For every fixed nonempty finite set $E$,
\[
 \sum_{x\in E}|\Psi_\tau(x)|^2
 \asymp_E\varepsilon\log^2(1/\varepsilon).
\]
The constants may depend on the fixed operator and on $W$.
\end{theorem}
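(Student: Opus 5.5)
The plan is a Birman--Schwinger reduction combined with a low-energy expansion of the resolvent of $H$. The core observation is that the critical power tail makes the problem effectively two-dimensional.

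\emph{Step 1: threshold and critical coupling.} Since $G_0$ is finite, $H$ is subcritical. For $W\ge0$ of finite support $F$, set
\[
\mathcal K_\varepsilon=W^{1/2}(H+\varepsilon)^{-1}W^{1/2}
\]
on $\ell^2(F)$. By Birman--Schwinger, $-\varepsilon$ is an eigenvalue of $H-\tau W$ iff $1/\tau$ is an eigenvalue of $\mathcal K_\varepsilon$, with matching multiplicities. Accordingly I set $\tau_c=1/\|\mathcal K_0\|$, which is finite and positive. The top eigenvalue of $\mathcal K_0$ is simple by positivity of $G_0$ (Perron--Frobenius on the connected graph). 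The critical state is $h_c=G_0W^{1/2}\psi$, where $\psi$ is the Perron vector. Its asymptotics $h_c\sim a_c|x|^{-\alpha}$ follow from the Green kernel asymptotics $G_0(x,y)\sim c(y)|x|^{-\alpha}$. I would prove these by comparison with the radial ODE solutions $r^{-\alpha}$ and $r^{-\alpha}\log r$ for the critical tail, using the $O(|x|^{-2-\eta})$ correction as a perturbation; this also gives $\sigma>0$. Because $|x|^{-2\alpha}=|x|^{-(N-2)}$ is not summable, $h_c\notin\ell^2$, so zero is not an eigenvalue. Uniqueness of $h_c$ is ground-state uniqueness for the critical operator $H-\tau_cW$ (criticality theory, \cite{KPP}).

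\emph{Step 2: low-energy expansion (main obstacle).} I need
\[
G_\varepsilon(x,y)=G_0(x,y)-\kappa(x)\kappa(y)\log(1/\varepsilon)+R(x,y)+o(1)
\]
on the finite set $F$, with explicit finite part $R$. Heuristically, the substitution $u=|x|^{-\alpha}v$ turns $H+\varepsilon$ into a weighted operator whose radial measure is $r\,dr$, i.e.\ a two-dimensional Laplacian plus $\varepsilon$. Its Green function is $\BesselK(\sqrt\varepsilon\, r)/(\omega\cdots)$, and
\[
\BesselK(z)=-\log(z/2)-\gamma_{\mathrm E}+o(1),
\]
which produces both the $\tfrac12\log(1/\varepsilon)$ and the constant $\log2-\gamma_{\mathrm E}$. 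Rigorously, I would split space at radius $R$. In the interior I use $G_0$ and a Poisson correction. In the exterior I build explicit radial parametrices, $r^{-\alpha}\BesselK(\sqrt\varepsilon r)$ corrected by a Dirichlet-form error estimate. The error terms are controlled by the tail $O(r^{-2-\eta})$ and the lattice discretization error $O(r^{-4})$, and they are summable against $r\,dr$ weights only because of the $\eta$-gain. Matching the two regions identifies the coefficient $\kappa$ through $a_c$ and $\omega$. The nonradial, finite-core and tail corrections enter only through the normalized zero-energy solutions $h_c$ and $g_c$. I expect this matching, with $o(1)$ control of the constant term, to be the hardest part.

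\emph{Step 3: eigenvalue equation.} Project the Birman--Schwinger condition $\tau\mu(\varepsilon)=1$, with $\mu(\varepsilon)$ the top eigenvalue of $\mathcal K_\varepsilon$, using analytic perturbation theory for the simple eigenvalue. Feshbach reduction onto $\psi$ yields
\[
\frac1{\tau_c}-\frac1\tau=\frac{M_c}{2\omega a_c^2}\Bigl(\log\tfrac1\varepsilon-\log 4+2\gamma_{\mathrm E}\Bigr)^{-1}\cdots,
\]
with the constant shifted by the second-order term. That term is expressed via the solvability of $(H-\tau_cW)g_c=Wh_c$ and equals $\ip{Wh_c}{g_c}/M_c^2$ after normalizing by the log coefficient of $g_c$. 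Existence of $g_c$ with the stated normalization is Fredholm solvability on the complement of $h_c$ plus the same asymptotic ODE analysis. Solving for $\varepsilon$ gives \eqref{eq:introexactrate} and \eqref{eq:introprefactor}. Positivity of $\mathfrak A$ is evident. Rescaling $h_c\mapsto th_c$ sends $g_c\mapsto tg_c$ and leaves both constants invariant. Uniqueness and simplicity of the negative eigenvalue follow because the other eigenvalues of $\mathcal K_\varepsilon$ stay below $1/\tau$ for small $\delta$.

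\emph{Step 4: localization.} The eigenvector is
\[
\Psi_\tau=(H+\varepsilon)^{-1}W^{1/2}\psi_\varepsilon\Big/\text{norm}.
\]
Using the exterior parametrix $|x|^{-\alpha}\BesselK(\sqrt\varepsilon|x|)$, the $\ell^2$ norm squared is $\asymp\int r\,\BesselK(\sqrt\varepsilon r)^2dr\asymp\varepsilon^{-1}$. Its value on a finite set is $\asymp\log(1/\varepsilon)$, which gives the ratio $\varepsilon\log^2(1/\varepsilon)$. Mass is concentrated at radii $\asymp\varepsilon^{-1/2}$, since the $\BesselK^2 r$ density is scale-invariant in $\sqrt\varepsilon r$; this yields $\mathcal L_\theta\asymp\varepsilon^{-1/2}$.
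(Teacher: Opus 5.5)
Your overall architecture matches the paper's: Birman--Schwinger on $W^{1/2}(H+\varepsilon)^{-1}W^{1/2}$, Perron--Frobenius, Bessel matching via $\BesselK(t)=-\log(t/2)-\gamma_{\mathrm E}+o(1)$, and the $r\,dr$ mass count. However, the part that produces $\mathfrak P$ does not work as written. Your Step~2 expansion $G_\varepsilon=G_0-\kappa\kappa^*\log(1/\varepsilon)+R+o(1)$ cannot hold for the subcritical $H$, because $0<G_\varepsilon\le G_0$ entrywise. The true subcritical defect is $\frac{2\omega}{L_\varepsilon}a_Sa_S^*+O(L_\varepsilon^{-2})$ (Theorem~\ref{rob:thm:exactresponse}), and this gives the rate only up to an $O(1)$ error in $\log\varepsilon$. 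To extract the prefactor from $\mathcal K_\varepsilon$ you would need the $L_\varepsilon^{-2}$ coefficient exactly, including the shift from $\delta/(\tau\tau_c)$ versus $\delta/\tau_c^2$, and nothing in your plan supplies it. The logarithmic singularity you describe belongs to the critical operator $H_c=H-\tau_cW$. The paper proves $P_S(H_c+\varepsilon)^{-1}P_S=\frac{L_\varepsilon}{2\omega}h_Sh_S^*+\mathcal G_S+\frac{\log2-\gamma_{\mathrm E}}{\omega}h_Sh_S^*+o(1)$ (Proposition~\ref{pref:prop:finitepart}). It then applies Birman--Schwinger to $H_c-\delta W$, whose top eigenvalue is $\frac{M}{2\omega}L_\varepsilon+d+o(1)$, with eigenvalue condition $\mu(\varepsilon)=1/\delta$. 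Your displayed Step~3 relation is also incorrect. The coefficient should be $2\omega a_c^2/(M_c\tau_c^2)$, not $M_c/(2\omega a_c^2)$. The Bessel constant enters as $L_\varepsilon+\log4-2\gamma_{\mathrm E}$, not $L_\varepsilon-\log4+2\gamma_{\mathrm E}$. Taken literally, your relation gives the wrong rate and $e^{2\gamma_{\mathrm E}}/4$ in place of $4e^{-2\gamma_{\mathrm E}}$.

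Second, the existence and normalization of $g_c$ cannot come from Fredholm solvability on the complement of $h_c$. $H_c$ is not Fredholm at $0$, since its essential spectrum is $[0,4N]$ and $h_c\notin\ell^2$. More importantly, the source is not orthogonal to $h_c$: $\ip{h_c}{Wh_c}=M_c>0$. That nonzero charge is exactly what forces the $r^{-\alpha}\log r$ term, and its coefficient $-M_c/(\omega a_c)$ is what makes the constant terms match $\BesselK$. The paper handles this in four steps:
\begin{itemize}
\item it solves zero-charge sources with the pinned Green function of $H_c+\kappa P_o$;
\item it constructs an exterior logarithmic mode $w=\phi(\log r+b+O(r^{-\sigma}))$ by a contraction for the stopped potential of the recurrent Doob chain;
\item it proves $\ip{h}{H_cw}=-\omega$ by an annular energy estimate;
\item it gets uniqueness because bounded harmonic functions on the recurrent $h$-network that vanish at infinity are zero.
\end{itemize}
Likewise, comparison with radial barriers gives only $h_c\asymp r^{-\alpha}$. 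For nonradial cores and tails, the existence of the amplitude $a_c$ with a rate $\sigma$ needs an oscillation-decay argument, namely the annular Harnack contraction in Lemma~\ref{rob:lem:amplitude}. This matters because both $\mathfrak A$ and $\mathfrak P$ are expressed through $a_c$. A smaller point: excluding an $\ell^2$ zero eigenvector requires applying the ground-state identity to a possibly sign-changing $\ell^2$ solution; $h_c\notin\ell^2$ alone does not suffice.
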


The normalization of $g_c$ is what makes the prefactor unambiguous:
its expansion has no constant multiple of $|x|^{-\alpha}$ after the
logarithmic term. Adding such a multiple of $h_c$ would change the
quantity in \eqref{eq:introprefactor}. Theorems~\ref{rob:thm:birth}
and~\ref{pref:thm:prefactor} give the proof. Finite changes of the
core can change $\tau_c$, $\mathfrak A$ and $\mathfrak P$; the formulas
hold for each resulting operator under the stated assumptions.
Proposition~\ref{rob:prop:nonempty} constructs admissible starting
operators for every such tail and connected finite modification by
adding a nonnegative potential of finite support.

The resolvent expansion underlying the theorem concerns the critical
operator $H_c=H-\tau_cW$. Normalize its state by $h=h_c/a_c$ and write
$L_\varepsilon=\log(1/\varepsilon)$ for the logarithmic energy scale.
For a finite source $f$, the canonical Poisson operator $\mathcal G$
selects the solution with the normalization
\[
 H_c\mathcal Gf=f,\qquad
 \mathcal Gf=-(\ip h f/\omega)|x|^{-\alpha}\log|x|
             +O_f(|x|^{-\alpha-\sigma}).
\]
For a fixed finite set $S$, also use $P_S$ for restriction and
extension by zero as appropriate. The restricted state and Poisson
matrix are $h_S=P_Sh$ and
$\mathcal G_S(x,y)=(\mathcal G\delta_y)(x)$.
Proposition~\ref{pref:prop:finitepart} proves the expansion
\begin{equation}\label{eq:introfinitepart}
 P_S(H_c+\varepsilon)^{-1}P_S
 =\frac{L_\varepsilon}{2\omega}h_Sh_S^*
   +\mathcal G_S+\frac{\log2-\gamma_{\mathrm E}}{\omega}h_Sh_S^*
   +o_S(1)
\end{equation}
in matrix norm. The constant term, including the Bessel constant,
yields \eqref{eq:introprefactor} by finite-dimensional
Birman--Schwinger perturbation. The vanishing remainder in
\eqref{eq:introfinitepart} fixes this constant term uniquely.

For the subcritical operator, the approach to zero energy is measured
by the \emph{Green defect}, the difference between $G_0$ and the
resolvent at spectral parameter $-\varepsilon<0$. The source amplitude $a(y)$ is the
coefficient of the common power tail in the response to a point source
at $y$. Theorem~\ref{rob:thm:exactresponse} proves that
$a(y)=\lim_{|x|\to\infty}|x|^\alpha G_0(x,y)>0$ exists and that
\begin{equation}\label{eq:introexactresponse}
 P_S[G_0-(H+\varepsilon)^{-1}]P_S
 =\frac{2\omega}{L_\varepsilon}a_Sa_S^*
       +O_S(L_\varepsilon^{-2}),\qquad a_S=(a(y))_{y\in S}.
\end{equation}
The matrix error controls energy-dependent sources on $S$.
Theorem~\ref{rob:thm:response} supplies the uniform spatial estimates
needed both for this expansion and for the mass bounds above. The
two expansions concern different operators: $G_0$ is finite for the
subcritical $H$, whereas the resolvent of $H_c$ diverges logarithmically.

\subsection{The effective two-dimensional return scale}

The threshold profile explains why the same scale appears for every
$N\ge3$ in the critical-tail class. Its square changes radial volume to
\begin{equation}\label{eq:introeffective}
 r^{N-1}\bigl(r^{-(N-2)/2}\bigr)^2\,dr=r\,dr.
\end{equation}
For radial continuum functions, the ground-state transform gives
\[
 r^{(N-2)/2}(-\Delta-A_Nr^{-2})
 \bigl(r^{-(N-2)/2}v(r)\bigr)=-v''(r)-r^{-1}v'(r).
\]
This is the radial Laplacian in dimension two. The modified Bessel
function $\BesselK$ and its logarithmic expansion~\cite{DLMF} therefore
provide the exterior matching profile. Related continuum state and
heat-kernel estimates appear in~\cite{IKO}; weak-coupling results for
more general kinetic energies, including discrete operators, appear
in~\cite{HoangHundertmarkRichterVugalter}. On the lattice, amplitude,
capacity and matching estimates are needed to turn this radial
calculation into the expansions above.

For the fractional Laplacian with a Hardy potential in Euclidean space,
Bogdan--Grzywny--Jakubowski--Pilarczyk~\cite[Theorem~1.1]{BogdanHardy}
prove two-sided heat-kernel bounds up to the sharp Hardy coefficient.
At that coefficient, as time $t$ tends to infinity, their estimate
yields diagonal decay of order $t^{-1}$ at each fixed nonzero point.

At the local threshold, the positive state turns the Schr\"odinger
operator into a continuous-time Markov chain by rescaling each jump
with the ratio of the state at its endpoints. This Doob
transform~\cite{Doob} has jump rates $c_{xy}h_c(y)/h_c(x)$ and
reversible measure $h_c(x)^2$. Denote the chain by $X_t$ and its law
started at $x$ by $\mathbb P_x^c$. Let $T_x$ be the first return
after departure, including the initial holding time. Null recurrence
means that return occurs almost surely but the mean return time is
infinite. Corollary~\ref{rob:cor:returns} proves null recurrence and,
at each fixed vertex,
\[
 \mathbb P_x^c(X_t=x)\asymp_x t^{-1},\qquad
 \mathbb P_x^c(T_x>t)\asymp_x(\log t)^{-1}.
\]
Resolvent and renewal estimates give these two-sided return bounds.
The transformed chain has a two-dimensional return scale, while the
free lattice walk is transient for $N\ge3$~\cite{Polya}.
These return estimates hold in every ambient dimension under the
critical-tail hypotheses. The dimensional Hardy transition determines
which unperturbed operators $H_N$ satisfy the required nonnegativity.

\subsection{Eigenvalue emergence in dimension eight}

Theorem~\ref{thm:main} allows us to apply the threshold asymptotics to
every $H_N$ with $3\le N\le8$. Attraction on the first sphere in
dimension eight relates the constants in these asymptotics to the
transition margin. We denote this perturbed operator by
$H_8^{\mathrm{loc}}(\tau)$, where $\tau$ is the attraction strength:
\begin{equation}\label{eq:introlocalfamily}
 \Sigma=\{\pm e_j:1\le j\le8\},\qquad
 H_8^{\mathrm{loc}}(\tau)=H_8-\tau M_{\mathbf1_\Sigma},\quad\tau\ge0.
\end{equation}

\begin{corollary}\label{thm:birthintro}
There is $c_*\le\tau_*<7$ such that $H_8^{\mathrm{loc}}(\tau)$ has no
negative spectrum for $0\le\tau\le\tau_*$. At $\tau_*$ it has a
positive pointwise zero-energy state comparable to $|x|^{-3}$, and
zero is not an $\ell^2$ eigenvalue. For $\tau>\tau_*$ sufficiently close to $\tau_*$, there is
exactly one negative eigenvalue, which is simple. In that limit,
\begin{equation}\label{eq:introbirthlaw}
 \varepsilon(\tau)=\mathfrak P_8
 \exp\!\left[-\frac{\mathfrak A_8}{\tau-\tau_*}\right](1+o(1)),
\end{equation}
where $\mathfrak A_8=\pi^4a_8^2/24$ and $a_8$ is the amplitude
at infinity of the threshold state normalized to equal one on $\Sigma$.
The positive prefactor $\mathfrak P_8$ is given in
\eqref{pref:eq:N8prefactor}.
The normalized bound state satisfies
\begin{equation}\label{eq:introdelocalization}
 \log\mathcal L_\theta(\tau)=
 \frac{\mathfrak A_8}{2(\tau-\tau_*)}+O_\theta(1),\qquad
 \|\mathbf1_\Sigma\Psi_\tau\|_2^2
       \asymp\frac{\varepsilon(\tau)}{(\tau-\tau_*)^2}.
\end{equation}
\end{corollary}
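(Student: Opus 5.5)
The plan is to specialize Theorem~\ref{thm:robustintro} to $H=H_8$, $W=M_{\mathbf1_\Sigma}$. We then read off the constants in dimension eight and use Theorem~\ref{thm:main} to locate $\tau_*$.

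\emph{Hypotheses.} First we check that $H_8$ belongs to the critical-tail class. The lattice is unmodified with $D=\{0\}$, the potential is $q=-A_8|x|^{-2}$ exactly, and $H_8\ge0$ by Proposition~\ref{prop:hardyremainder}. Its zero-energy Green kernel is finite by \eqref{eq:introsubcritical}. Theorem~\ref{thm:robustintro} then provides $\tau_c=:\tau_*$, the uniqueness and simplicity of the negative eigenvalue, the positive zero-energy state $h_c\asymp|x|^{-3}$ (here $\alpha=3$), and the absence of an $\ell^2$ zero eigenvalue.

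\emph{Bounds on $\tau_*$.} The lower bound $\tau_*\ge c_*$ follows from \eqref{eq:introinterfacechange}: $H_8-tP_{\Sigma_8}\ge0$ exactly when $t\le\tau_8$, so $\tau_*=\tau_8\ge c_*$. For the upper bound $\tau_*<7$ we use $\tau_8=2N-A_N-m_8(0)=16-9-m_8(0)=7-m_8(0)$. Since $b_8\ne0$ and $K_8$ is positive and injective, $m_8(0)\ge m_8(1)>0$, which gives $\tau_*<7$.

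\emph{Constants.} By signed-permutation symmetry and uniqueness, $h_c$ is invariant, hence constant on $\Sigma$. We normalize it to equal $1$ there, so $M_c=|\Sigma|=16$. The area of the sphere is $\omega=|\mathbb S^7|=2\pi^4/\Gamma(4)=\pi^4/3$. Then \eqref{eq:introexactrate} gives
\[
\mathfrak A_8=\frac{2(\pi^4/3)a_8^2}{16}=\frac{\pi^4a_8^2}{24}.
\]
The prefactor $\mathfrak P_8$ is \eqref{eq:introprefactor} evaluated with this normalization, and it defines \eqref{pref:eq:N8prefactor}.

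\emph{Localization.} For the radius, take logarithms in $\mathcal L_\theta\asymp_\theta\varepsilon^{-1/2}$ to get $\log\mathcal L_\theta=\tfrac12\log(1/\varepsilon)+O_\theta(1)$. Substituting the eigenvalue law, $\log(1/\varepsilon)=\mathfrak A_8/\delta-\log\mathfrak P_8+o(1)$, yields the first statement in \eqref{eq:introdelocalization}. For the mass on $\Sigma$, use the finite-set estimate with $E=\Sigma$: $\|\mathbf1_\Sigma\Psi_\tau\|^2\asymp\varepsilon\log^2(1/\varepsilon)$. Since $\log(1/\varepsilon)\asymp\delta^{-1}$ as $\delta\downarrow0$, this becomes $\varepsilon/\delta^2$.

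\emph{Main obstacle.} Everything else is direct substitution. The one point needing care is the identification $\tau_*=\tau_8$ together with the strict positivity of $m_8(0)$ needed for $\tau_*<7$. Both rely on the exterior identity of Theorem~\ref{thm:main} and on monotonicity of $m_8(\varepsilon)$ in $\varepsilon$, which follows from the spectral theorem.
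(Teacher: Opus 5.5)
Your proposal is correct and follows essentially the paper's own argument: the paper also applies the general threshold theorems (Theorems~\ref{rob:thm:birth} and~\ref{pref:thm:prefactor}) to $H_8$ with $W=P_\Sigma$, identifies $\tau_*=7-m(0)$ through the first-sphere Schur reduction (Proposition~\ref{prop:birthschur}, the same reduction behind the criterion from Theorem~\ref{thm:main} that you invoke), and obtains $\mathfrak A_8=\pi^4a_8^2/24$ from the threshold vector $(e,v_0)$, which equals $1/4$ on $\Sigma$, so $a_8=4a_b$; this agrees with your computation using $M_c=16$. The only slip is harmless: the Dirichlet condition keeps the full diagonal at the neighbours of the deleted origin, so the potential in the general framework is $-9|x|^{-2}+\mathbf 1_\Sigma$ rather than exactly $-9|x|^{-2}$, which the hypotheses allow because $q$ is arbitrary on finite sets.
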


The exterior response links the explicit stability bound to this
eigenvalue law. If $K$ is the exterior compression and $b$ is the adjacency
source of the normalized constant interface vector, put
$m(\varepsilon)=\langle b,(K+\varepsilon)^{-1}b\rangle$.
Then
\[
 \tau_*=7-m(0),\qquad
 \tau-\tau_*=\varepsilon+m(0)-m(\varepsilon).
\]
The response inequality $m(0)<7$ gives unperturbed stability; the
low-energy response gives the emerging eigenvalue.
Proposition~\ref{prop:birthschur} identifies the threshold and the
interval determined by the remaining interface spectrum. The constants in
\eqref{eq:introbirthlaw} are determined by the amplitude at infinity
and the normalized Poisson solution. Proposition~\ref{prop:defectschur}
also relates the local stability margin to the total critical defect
and the energy of the exterior adjustment.

The corresponding phenomenon in dimension nine is the disappearance
of a bound state under first-sphere repulsion. With
$t_9=-\tau_9>1/54$, the family $H_9+tP_{\Sigma_9}$ is critical at
$t=t_9$. As $t\uparrow t_9$, its remaining negative eigenvalue obeys
an exponential law in $(t_9-t)^{-1}$, with both constants determined by
the same theorem; see Corollary~\ref{cor:extinction9}.

\section{The total critical defect}\label{sec:structure}

The coefficient at infinity is fixed by continuum scaling. The total
defect then measures the finite limiting energy of the critical power.
We derive its theta representation, prove its negative sign and strict
decrease from dimension nine, and use the positive remainders proved below
for the complementary dimensions.

In Lizama's preprint~\cite{LizamaSharp}
(Propositions~4.8 and~5.4), the strict inequalities follow from a
Gaussian Rayleigh--Ritz construction in dimension nine and
finite-dimensional trial spaces spanned by indicators of symmetry
classes in higher dimensions. Here the total critical defect gives a single proof for
all $N\ge9$: its theta representation supplies the estimate in
dimension nine and the comparison between consecutive dimensions,
and logarithmic cutoffs turn its negative sign into test functions.

\subsection{The coefficient at infinity}

The classical continuum Hardy inequality~\cite{Hardy,FLS} is
\begin{equation}\label{eq:continuumHardy}
 \int_{\R^N}|\nabla u|^2\,dx
 \ge A_N\int_{\R^N}\frac{|u|^2}{|x|^2}\,dx,
 \qquad A_N=\frac{(N-2)^2}{4},\qquad N\ge3,
\end{equation}
for $u\in C_c^\infty(\R^N\setminus\{0\})$, with best coefficient $A_N$.
Two complementary lattice estimates identify the same coefficient
at infinity: scaled continuum tests give
the upper bound, and a reciprocal field gives the exterior lower bound.
The same edge identity will be used for the exterior in dimension nine.

\subsubsection{The reciprocal-edge identity}

The Dirichlet reciprocal-edge decomposition is recorded in
Alpay's low-dimensional preprint~\cite{AlpayLow}
(Proposition~3.3). It retains the full lattice degree
at neighbours of the removed origin. A \emph{reciprocal edge field}
assigns a positive number to each oriented edge, with the reciprocal
number assigned to the reverse orientation.

Let $N\ge3$. On every oriented edge $x\sim y$ in $X_N$, assume
\begin{equation}\label{eq:reciprocity}
 a_{xy}>0,\qquad a_{yx}=a_{xy}^{-1},
\end{equation}
The diagonal left after these edge contributions is the vertex
weight, denoted by $\W_a^{(N)}$:
\begin{equation}\label{eq:Wa}
 \W_a^{(N)}(x)=2N-\sum_{y\in X_N,\,y\sim x}a_{xy}.
\end{equation}
The full diagonal in this definition is essential at neighbours of the
removed origin.

\begin{lemma}\label{lem:edge}
For every field satisfying \eqref{eq:reciprocity} and every
$u\in C_c(X_N;\C)$,
\begin{equation}\label{eq:edge}
 \E_N[u]=
 \sum_{\substack{\{x,y\}\subset X_N\\x\sim y}}
 |\sqrt{a_{xy}}u(x)-a_{xy}^{-1/2}u(y)|^2
 +\sum_{x\in X_N}\W_a^{(N)}(x)|u(x)|^2.
\end{equation}
One orientation is chosen for each unoriented edge in the first sum.
\end{lemma}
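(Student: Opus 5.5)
The identity is a local computation on each edge followed by regrouping. Fix $u\in C_c(X_N;\C)$. Since $u$ vanishes at the origin, the energy $\E_N[u]$ splits into two parts. The first is the sum over unoriented edges $\{x,y\}\subset X_N$ of $|u(x)-u(y)|^2$. The second is the sum over the $2N$ edges joining the origin to $\Sigma_N$, which contribute $|u(a)|^2$ for $a\in\Sigma_N$. All sums are finite because $u$ has finite support; for the fields we only need $a_{xy}$ on edges meeting $\supp u$, and terms off the support vanish.

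For each edge $\{x,y\}\subset X_N$ with the chosen orientation, expand
\[
 |\sqrt{a_{xy}}u(x)-a_{xy}^{-1/2}u(y)|^2
 =a_{xy}|u(x)|^2+a_{yx}|u(y)|^2-2\operatorname{Re}\bigl(u(x)\overline{u(y)}\bigr),
\]
using $a_{xy}^{-1}=a_{yx}$ from \eqref{eq:reciprocity}. The expression is symmetric under reversing the orientation, so the choice of orientation is irrelevant. Comparing with $|u(x)-u(y)|^2=|u(x)|^2+|u(y)|^2-2\operatorname{Re}(u(x)\overline{u(y)})$, the difference between the two is $(1-a_{xy})|u(x)|^2+(1-a_{yx})|u(y)|^2$.

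Now sum over all edges of $X_N$. Each oriented pair $(x,y)$ with $y\sim x$, $y\in X_N$ appears exactly once in these corrections, so the total correction is $\sum_{x\in X_N}|u(x)|^2\sum_{y\in X_N,\,y\sim x}(1-a_{xy})$. Add the origin edges, which contribute $|u(x)|^2$ for $x\in\Sigma_N$. Every $x\in X_N$ has $2N$ lattice neighbours, and the origin is among them exactly when $x\in\Sigma_N$. Hence the coefficient of $|u(x)|^2$ becomes $\#\{y\in X_N:y\sim x\}+\mathbf 1_{\Sigma_N}(x)-\sum_{y\in X_N,\,y\sim x} a_{xy}=2N-\sum_{y\in X_N,\,y\sim x}a_{xy}=\W_a^{(N)}(x)$. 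This is \eqref{eq:edge}.

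The only delicate point is the bookkeeping at $\Sigma_N$. There the missing neighbour count is compensated exactly by the origin edge, which is why \eqref{eq:Wa} uses the full diagonal $2N$.
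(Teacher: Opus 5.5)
Your proof is correct and follows essentially the same route as the paper: you expand each edge square using reciprocity, note that the mixed terms reproduce the nearest-neighbour energy, and use the edge to the removed origin at first-sphere vertices to restore the full diagonal $2N$ in $\W_a^{(N)}$. Your explicit count $\#\{y\in X_N:y\sim x\}+\mathbf 1_{\Sigma_N}(x)=2N$ is the same bookkeeping the paper performs, just written out in more detail.
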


\begin{proof}
For $a>0$ and $z,w\in\C$,
\[
 |\sqrt a\,z-a^{-1/2}w|^2
 =a|z|^2+a^{-1}|w|^2-2\operatorname{Re}(z\overline w).
\]
By reciprocity, the edge squares contribute
$\sum_{y\in X_N,\,y\sim x}a_{xy}$ to the coefficient of $|u(x)|^2$,
regardless of their orientations. Adding \eqref{eq:Wa} restores $2N$.
The mixed terms are exactly those of the nearest-neighbour energy.
If $x$ is adjacent to the origin, the omitted edge contributes
$|u(x)|^2$ in \eqref{eq:energies}, since $u(0)=0$.
Thus the same diagonal $2N$ is correct there as well. This proves
\eqref{eq:edge}.
\end{proof}

In particular, if $\rho(x):=\W_a^{(N)}(x)-A_N/|x|^2>0$ on $X_N$, then
\[
 \E_N[u]-A_N\D_N[u]\ge\sum_{x\in X_N}\rho(x)|u(x)|^2>0
 \qquad(0\ne u\in C_c(X_N;\C)).
\]

\subsubsection{The continuum upper bound}

The continuum upper bound is proved in
Lizama's preprint on the dimensional classification~\cite{LizamaSharp}
(Proposition~2.2) and in
Alpay's low-dimensional preprint~\cite{AlpayLow}
(Proposition~2.5).
We recall the scaling argument, which also gives competitors
supported outside any prescribed ball.

\begin{proposition}\label{prop:scaling}
For every integer $N\ge3$, $C(N)\le A_N$. For every $R_0>0$, the same upper
bound holds for the infimum restricted to competitors supported in
$\{|x|>R_0\}$.
\end{proposition}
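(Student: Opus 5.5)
The plan is the standard scaling argument. Fix a continuum competitor $\varphi\in C_c^\infty(\R^N\setminus\{0\})$, real-valued, and for a scale parameter $R\ge1$ define the lattice function $u_R(x)=\varphi(x/R)$ for $x\in X_N$, with $u_R(0)=0$. If $\supp\varphi\subset\{r_1<|y|<r_2\}$ with $r_1>0$, then $u_R$ is finitely supported in $\{R r_1<|x|<Rr_2\}$. It therefore vanishes near the origin and is an admissible competitor. Once $R>R_0/r_1$, it is also supported in $\{|x|>R_0\}$, which is why the restricted infimum is handled by the same family.

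\textbf{Step 1: the energy.} Each difference is
\[
 u_R(x+e_j)-u_R(x)=R^{-1}\int_0^1\partial_j\varphi\bigl((x+te_j)/R\bigr)\,dt .
\]
Hence $|u_R(x+e_j)-u_R(x)|^2=R^{-2}|\partial_j\varphi(x/R)|^2+O(R^{-3})$, uniformly in $x$, with the constant controlled by $\|\nabla\varphi\|_\infty\|\nabla^2\varphi\|_\infty$. Nonzero terms occur only for $x$ in an $O(R^N)$-sized annulus. Summing and recognizing Riemann sums with mesh $1/R$ gives
\[
 \E_N[u_R]=R^{N-2}\Bigl(\int_{\R^N}|\nabla\varphi|^2\,dy+o(1)\Bigr).
\]

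\textbf{Step 2: the Hardy mass.} In the same way,
\[
 \D_N[u_R]=\sum_x\frac{|\varphi(x/R)|^2}{|x|^2}
 =R^{-2}\sum_x\frac{|\varphi(x/R)|^2}{|x/R|^2}
 =R^{N-2}\Bigl(\int\frac{|\varphi|^2}{|y|^2}\,dy+o(1)\Bigr).
\]
The integrand $|\varphi|^2/|y|^2$ is smooth and compactly supported away from the origin, so Riemann-sum convergence holds.

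\textbf{Step 3: the limit.} Dividing, the quotient $\E_N[u_R]/\D_N[u_R]$ tends to the continuum Rayleigh quotient of $\varphi$. The infimum of that quotient over $C_c^\infty(\R^N\setminus\{0\})$ is $A_N$, by sharpness of \eqref{eq:continuumHardy}. Taking the infimum over $\varphi$ after the limit $R\to\infty$ gives $C(N)\le A_N$, and the same bound for the restricted infimum for every $R_0$.

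The only point needing care is sharpness of the continuum constant, i.e.\ the existence of smooth $\varphi$ with quotient arbitrarily close to $A_N$. I would recall this via the usual truncated critical power $|y|^{-(N-2)/2}$, cut off logarithmically at both a small and a large radius. It can also simply be cited from \cite{Hardy,FLS}, since \eqref{eq:continuumHardy} is stated with best coefficient $A_N$. The Riemann-sum error control is routine because $\varphi$ is smooth with compact support away from $0$.
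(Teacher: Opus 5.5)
Your proof is correct and follows the paper's argument: sample a fixed continuum competitor at scale $R$, pass to Riemann sums for both the energy and the Hardy mass, and observe that the support leaves any fixed ball. The only difference is that the paper makes the continuum near-optimizer explicit, $f_T(x)=|x|^{-(N-2)/2}\eta(\log|x|/T)$, with quotient $A_N+T^{-2}\int_0^1|\eta'|^2/\int_0^1|\eta|^2$; this is exactly the logarithmically cut-off critical power you propose to recall or cite for sharpness.
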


\begin{proof}
Let $\alpha=(N-2)/2$ and choose a real-valued
$0\ne\eta\in C_c^\infty((0,1))$. For $T>0$, set
\[
 f_T(x)=|x|^{-\alpha}\eta(\log|x|/T).
\]
Its extension by zero belongs to
$C_c^\infty(\R^N\setminus\{0\})$. With $t=\log r/T$,
\[
 \frac{d}{dr}\big[r^{-\alpha}\eta(t)\big]
 =r^{-\alpha-1}\big[-\alpha\eta(t)+T^{-1}\eta'(t)\big].
\]
Since $2\alpha=N-2$, radial integration uses the measure $dr/r=T\,dt$.
The cross term vanishes because $\int_0^1\eta\eta'=0$. Hence
\begin{equation}\label{eq:continuousratio}
 \frac{\int_{\R^N}|\nabla f_T|^2}
      {\int_{\R^N}|f_T|^2/|x|^2}
 =A_N+\frac1{T^2}\frac{\int_0^1|\eta'|^2}{\int_0^1|\eta|^2}.
\end{equation}

Fix $T$ and put $u_R(n)=f_T(n/R)$ for positive integers $R$.
These functions have finite support and vanish at zero. The normalized
differences
\[
 R\big[f_T(x+e_j/R)-f_T(x)\big]
 =\int_0^1\partial_jf_T(x+\theta e_j/R)\,d\theta
\]
converge uniformly to $\partial_jf_T(x)$ on one fixed compact set containing
all relevant supports. Their squares are uniformly bounded, and the number
of lattice points in that compact set is $O(R^N)$. Consequently ordinary
Riemann sums give
\begin{align}
 R^{2-N}\E_N[u_R]&\longrightarrow\int_{\R^N}|\nabla f_T|^2,
 \label{eq:scalingenergy}\\
 R^{2-N}\D_N[u_R]&\longrightarrow
              \int_{\R^N}|f_T(x)|^2/|x|^2\,dx>0.
 \label{eq:scalingmass}
\end{align}
For the second limit, $|f_T|^2/|x|^2$ is continuous and compactly supported
away from zero. Dividing the two limits, then using
\eqref{eq:continuousratio} and letting $T\to\infty$, proves $C(N)\le A_N$.
For fixed $T$, the inner radius of $\operatorname{supp}u_R$ tends to infinity.
Thus the same competitors prove the assertion with any fixed excluded ball.
\end{proof}

\subsubsection{The continuum coefficient at infinity}

For $R>0$, define
\[
 C_R^\infty(N)=
 \inf_{\substack{0\ne u\in C_c(X_N;\C)\\
                  \operatorname{supp}u\subset\{|x|>R\}}}
       \frac{\E_N[u]}{\D_N[u]}.
\]
These quantities increase with $R$ and are bounded above by $A_N$,
by Proposition~\ref{prop:scaling}.

The coefficient $A_N$ also occurs in the large-distance asymptotics
of lattice Hardy weights; see \cite{KPPhardy} and
\cite[Section~3.1]{KellerLemm}.
The following proposition gives its exterior characterization for
the prescribed inverse-square weight.

\begin{proposition}\label{prop:infinity}
For every integer $N\ge3$,
\begin{equation}\label{eq:infinitythreshold}
 C_\infty(N):=\lim_{R\to\infty}C_R^\infty(N)=A_N.
\end{equation}
\end{proposition}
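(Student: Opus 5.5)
Since the quantities $C_R^\infty(N)$ increase in $R$ and are bounded above by $A_N$ by Proposition~\ref{prop:scaling}, the limit exists and satisfies $C_\infty(N)\le A_N$. It remains to prove the lower bound $\liminf_{R\to\infty}C_R^\infty(N)\ge A_N$. For this I would use the reciprocal-edge identity, Lemma~\ref{lem:edge}, with a field adapted to the critical power. Fix a small exponent $0<s<\alpha=(N-2)/2$, or directly $s=\alpha$, and set $a_{xy}=\phi(y)/\phi(x)$ with $\phi(x)=|x|^{-s}$ for $x\ne0$. This choice is automatically reciprocal and positive. The vertex weight \eqref{eq:Wa} is then
\[
 \W_a^{(N)}(x)=\frac{(L\phi)(x)}{\phi(x)},\qquad (L\phi)(x)=2N\phi(x)-\sum_{y\sim x}\phi(y),
\]
where at $x$ adjacent to the origin one must use the convention of the lemma. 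Since only competitors supported in $\{|x|>R\}$ matter, the lemma applied to $u$ with support far out needs $\W_a$ only where $u\ne0$. If some neighbour weights misbehave near the origin, this does not matter, because they never enter.

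The key step is the Taylor expansion of the lattice Laplacian of the power. For $|x|\to\infty$,
\[
 \sum_{j}\bigl(2\phi(x)-\phi(x+e_j)-\phi(x-e_j)\bigr)
 =-\Delta\phi(x)+O(|x|^{-s-4}),
\]
because the odd-order terms cancel by symmetry of $\pm e_j$ and the fourth-order remainder of $|x|^{-s}$ is $O(|x|^{-s-4})$. Moreover $-\Delta|x|^{-s}=s(N-2-s)|x|^{-s-2}$. Hence
\[
 \W_a^{(N)}(x)=\frac{s(N-2-s)}{|x|^2}+O(|x|^{-4}).
\]
With $s=\alpha$ this becomes $A_N|x|^{-2}+O(|x|^{-4})$.

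Dropping the nonnegative edge squares in \eqref{eq:edge} gives, for $\supp u\subset\{|x|>R\}$,
\[
 \E_N[u]\ge\sum_x\bigl(A_N-C|x|^{-2}\bigr)\frac{|u(x)|^2}{|x|^2}\ge(A_N-CR^{-2})\D_N[u].
\]
Thus $C_R^\infty(N)\ge A_N-CR^{-2}\to A_N$, which is the required lower bound.

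The main obstacle is making the error term uniform. We need the bound $|\phi(x\pm e_j)-\text{Taylor}|\le C|x|^{-s-4}$ uniformly for $|x|\ge R_0$, where $R_0\ge2$ keeps the segment $[x-e_j,x+e_j]$ away from the origin. This follows from the bound $|D^4|x|^{-s}|\le C|y|^{-s-4}$ together with $|y|\ge|x|/2$ on that segment. Given this, dividing by $\phi(x)=|x|^{-s}$ yields the $O(|x|^{-4})$ remainder with a constant depending only on $N$.
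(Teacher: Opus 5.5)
Your proposal is correct and follows the paper's proof essentially step for step. Both use the ground-state reciprocal field $a_{xy}=h(y)/h(x)$ with $h=|x|^{-(N-2)/2}$, the fourth-order Taylor bound on segments with $|x|\ge2$ to get the vertex weight $A_N|x|^{-2}+O(|x|^{-4})$, and Lemma~\ref{lem:edge} with the edge squares dropped to obtain $C_R^\infty(N)\ge A_N-c_NR^{-2}$; combined with monotonicity in $R$ and Proposition~\ref{prop:scaling}, this gives the limit.
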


\begin{proof}
Only the lower bound remains. Put $\alpha=(N-2)/2$ and
$h(x)=|x|^{-\alpha}$ on $\R^N\setminus\{0\}$.
For $|x|\ge2$, all line segments $x+t e_j$, $|t|\le1$, stay away from zero.
The fourth derivatives of $h$ on these segments are bounded by
$c_N|x|^{-\alpha-4}$, because of homogeneity and the bound
$|x+t e_j|\ge|x|/2$. Taylor's formula with integral remainder, applied to
the two opposite moves, therefore gives
\[
 h(x+e_j)+h(x-e_j)-2h(x)
 =\partial_j^2h(x)+O_N(|x|^{-\alpha-4}),
\]
uniformly in the direction of $x$. Also
$-\Delta_{\R^N}h=A_N|x|^{-\alpha-2}$. Summing over $j$ yields
\begin{equation}\label{eq:infinityfield}
 \frac{L_Nh(x)}{h(x)}
 =\frac{A_N}{|x|^2}+O_N(|x|^{-4}).
\end{equation}
Assign the reciprocal field $a_{xy}=h(y)/h(x)$ on $X_N$.
At $|x|>R\ge2$ its vertex weight is the left-hand side of
\eqref{eq:infinityfield}. Lemma~\ref{lem:edge} consequently gives, for
competitors supported there,
\[
 \E_N[u]\ge A_N\D_N[u]
       -c_N\sum_x\frac{|u(x)|^2}{|x|^4}
 \ge\left(A_N-\frac{c_N}{R^2}\right)\D_N[u].
\]
The limit proves \eqref{eq:infinitythreshold}.
\end{proof}

For every fixed $\delta>0$, competitors with Hardy quotient at most
$A_N-\delta$ cannot have supports escaping every fixed ball.
A fixed improvement therefore uses finite spatial scales, while its
optimizing profile may still extend throughout the exterior. The estimate
\eqref{eq:infinityfield} identifies the threshold at infinity; the global
classification is proved in Proposition~\ref{prop:hardyremainder}.

\subsection{The theta formula and the strict dimensions}\label{sec:defect}

The total defect measures the discrepancy between the discrete operator
and the continuum-critical power. Recall $h_N,s_N,\nu_N$ from
\eqref{eq:defectprofile} and $\rho_N,\mathfrak d_N$ from
\eqref{eq:defectdefinition}. The local cancellation makes the sum finite
and lets us compare dimensions without first optimizing an exterior
profile.
By \eqref{eq:infinityfield},
$\rho_N(x)=O_N(|x|^{-N-2})$, so the series in
\eqref{eq:defectdefinition} is absolutely convergent.

A Gaussian-weighted count of one-dimensional lattice points encodes
the lattice sum. Its $N$th power accounts for the $N$ coordinates.
Denote this theta function by
\[
 \Theta(t)=\sum_{k\in\Z}e^{-tk^2},\qquad t>0.
\]
Let $V_N$ have the beta distribution
$\operatorname{Beta}(\nu_N,\nu_N)$. The beta average represents the
interaction between neighbouring critical powers. Its edge kernel is
denoted by $\mathcal B_N$, and $\mathcal J_N$ collects these
contributions over the lattice:
\begin{align}
 \mathcal B_N(t)
 &=\mathbb E\big[(1-e^{-tV_N})(1-e^{-t(1-V_N)})\big],
 \label{eq:defectB}\\
 \mathcal J_N(t)
 &=2N\bigg[e^{-t}+(\Theta(t)^{N-1}-1)\mathcal B_N(t)\notag\\
 &\hspace{24mm}
 +\Theta(t)^{N-1}\sum_{k\ge1}e^{-tk^2}
       \mathcal B_N((2k+1)t)\bigg].\label{eq:defectJ}
\end{align}
The Mellin--theta calculation in Appendix~\ref{app:defecttheta} gives the
following exact representation.

\begin{proposition}\label{prop:defecttheta}
For every integer $N\ge3$,
\begin{equation}\label{eq:defecttheta}
 \mathfrak d_N=
 \frac{(N+2)\pi^{N/2}}{N\Gamma(s_N)}
 +\frac1{\Gamma(s_N)}\int_0^\infty t^{s_N-1}
 \big[\mathcal J_N(t)-s_Nt(\Theta(t)^N-1)\big]dt.
\end{equation}
The integral converges as a combined expression at both endpoints.
\end{proposition}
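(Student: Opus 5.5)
We would prove \eqref{eq:defecttheta} by analytic continuation in the exponent. Replace $s_N$ by a complex parameter $z$ in the profile, $h^{(z)}(x)=|x|^{-z}$, and keep $A_N$ and the beta parameter $\nu_N$ tied to $z$ as $z^2$ and $z/2$. For $\operatorname{Re}z>N/2$ the energy and mass sums converge separately. There we can compute them term by term and then continue the resulting identity to $z=s_N$. The local defect $\rho^{(z)}(x)$ is $O(|x|^{-2\operatorname{Re}z-4})$ by \eqref{eq:infinityfield}, adapted to the power $z$ with the same Taylor argument. So $\mathfrak d^{(z)}=\sum_x\rho^{(z)}(x)$ is analytic on $\operatorname{Re}z>(N-4)/2$, a half-plane that contains $s_N$. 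It therefore suffices to show that the right-hand side of \eqref{eq:defecttheta}, with $s_N\to z$, is analytic there and agrees with $\mathfrak d^{(z)}$ for large $\operatorname{Re}z$.

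For large $\operatorname{Re}z$ we split $\rho^{(z)}$ into three pieces, which we use in the order below.
\begin{itemize}
\item The diagonal piece is $2N\sum_x|x|^{-2z}$.
\item The potential piece is $-z^2\sum_x|x|^{-2z-2}$.
\item The cross piece is $-\sum_x\sum_{y\sim x}h(x)h(y)$. Terms with $y=0$ vanish, which is where the $e^{-t}$ boundary term will come from.
\end{itemize}
We use the Mellin identity $|x|^{-2z}=\Gamma(z)^{-1}\int_0^\infty t^{z-1}e^{-t|x|^2}dt$ and the lattice sum $\sum_{x\ne0}e^{-t|x|^2}=\Theta(t)^N-1$. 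The potential piece becomes $-\Gamma(z)^{-1}\int t^{z-1}\,zt(\Theta^N-1)\,dt$, because $z^2/\Gamma(z+1)=z/\Gamma(z)$. For the cross piece we use the Feynman parametrization
\[
 (|x|^2)^{-z/2}(|y|^2)^{-z/2}=\mathbb E\big[(V|x|^2+(1-V)|y|^2)^{-z}\big],\qquad V\sim\operatorname{Beta}(z/2,z/2),
\]
and then apply the Mellin identity to the bracket. For an edge $y=x+e_j$ we have $V|x|^2+(1-V)|y|^2=|x|^2+(1-V)(2x_j+1)$. The Gaussian sum therefore factorizes into $\Theta(t)^{N-1}$ for the transverse coordinates and a one-dimensional sum $\sum_ke^{-tk^2-t(1-V)(2k+1)}$. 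This sum can be rewritten as $\sum_ke^{-tVk^2-t(1-V)(k+1)^2}$.

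We then group, for each edge, the two diagonal contributions $e^{-t|x|^2}+e^{-t|y|^2}$ with the cross contribution. Using the $V\leftrightarrow1-V$ symmetry of the beta law, these combine into the product $(1-e^{-tV})(1-e^{-t(1-V)})$, evaluated at the appropriate shifted arguments. We separate three kinds of edges: edges in the coordinate hyperplane $x_j\in\{0,-1\}$ with $x\ne0$; edges with $|x_j|=k\ge1$, where the shift produces $\mathcal B_N((2k+1)t)$ weighted by $e^{-tk^2}$; and the edges incident to the origin, which produce $e^{-t}$. Summed over the $2N$ directions, this gives exactly $\mathcal J_N$. The identity for large $\operatorname{Re}z$ is then
\[
 \mathfrak d^{(z)}=\Gamma(z)^{-1}\int_0^\infty t^{z-1}\big[\mathcal J(t)-zt(\Theta^N-1)\big]\,dt.
\]

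The main obstacle is the continuation, specifically the behaviour as $t\downarrow0$. There $\Theta(t)=\sqrt{\pi/t}\,(1+O(e^{-\pi^2/t}))$, and $\mathcal J$ and $zt\Theta^N$ each grow like $t^{-N/2}$. We would expand $\mathcal B_N(t)=\mathbb E[V(1-V)]\,t^2+O(t^3)$ with $\mathbb E[V(1-V)]=\nu/(2(2\nu+1))$, and replace the sum over $k$ by an integral (Euler--Maclaurin or Poisson summation). This should show that the bracket equals $c\,t^{-N/2}+O(t^{-N/2+1})$. We split the integral at $t=1$. On $(0,1)$ we subtract $c\,t^{-N/2}$ explicitly; its integral $c/(z-N/2)$, continued to $z=s_N$, gives $-c/2$. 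This should match the constant $(N+2)\pi^{N/2}/N$, and we would carry out this verification carefully; the factor $N+2$ should come from $2\nu+1$ together with the two potential contributions. The remainder integrand is integrable near $0$ because $s_N-N/2+1=0$ and the next order gains a further power of $t$, possibly after cancellation, which we would check. As $t\to\infty$, every term decays like $e^{-t}$, and $e^{-t}\cdot2N$ cancels against the $zt(\Theta^N-1)\sim2Nzte^{-t}$ mismatch only up to exponential decay, so convergence at infinity is harmless. Uniqueness of analytic continuation then gives \eqref{eq:defecttheta} at $z=s_N$. The integral is then read as the combined expression, with the small-$t$ subtraction recorded as the explicit constant term.
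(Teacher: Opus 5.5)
There is a genuine gap, and it is in your first step. Tying the potential coefficient to $z^2$ destroys the continuum cancellation for $z\ne s_N$. Since $-\Delta|x|^{-z}=z(N-2-z)|x|^{-z-2}$, the Taylor argument gives
\[
 \rho^{(z)}(x)=-2z(z-s_N)|x|^{-2z-2}+O\bigl(|x|^{-2\operatorname{Re}z-4}\bigr),
\]
not $O(|x|^{-2\operatorname{Re}z-4})$. Hence $\sum_x\rho^{(z)}(x)$ converges only for $\operatorname{Re}z>s_N$, so $s_N$ is a boundary point rather than an interior one.

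The boundary value is also the wrong one. The sum $\sum_{x\ne0}|x|^{-2z-2}$ has a simple pole at $z=s_N$ with residue $|\mathbb S^{N-1}|/2$. The simple zero of $-2z(z-s_N)$ cancels it only down to a nonzero constant, so
\[
 \lim_{z\downarrow s_N}\mathfrak d^{(z)}=\mathfrak d_N-s_N|\mathbb S^{N-1}|
 =\mathfrak d_N-\frac{2\pi^{N/2}}{\Gamma(s_N)} .
\]
Uniqueness of continuation therefore identifies your continued right-hand side with this quantity, not with $\mathfrak d_N$.

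You would see the discrepancy when checking the constant. First, your small-$t$ orders are off by one: $\mathcal J$ and $zt\Theta^N$ grow like $t^{1-N/2}$, not $t^{-N/2}$. The remainder after subtraction is $O(t^{2-N/2}+1)$ uniformly near $s_N$, by the shifted Jacobi formula, and that is what makes it integrable against $t^{z-1}$. The singular coefficient of your bracket is $c(z)=\frac{Nz}{2(z+1)}-z=-\frac{z(z-s_N)}{z+1}$, and
\[
 \frac1{\Gamma(z)}\int_0^1t^{z-1}c(z)\pi^{N/2}t^{1-N/2}\,dt
 =\frac{c(z)\pi^{N/2}}{(z-s_N)\Gamma(z)}
 \longrightarrow-\frac{(N-2)\pi^{N/2}}{N\Gamma(s_N)}.
\]
This differs from the asserted $\frac{(N+2)\pi^{N/2}}{N\Gamma(s_N)}$ by exactly $2\pi^{N/2}/\Gamma(s_N)$. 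So your step ``this should match'' cannot be carried out.

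The repair is to vary only the exponent and the beta parameter while keeping the coefficient fixed at $A_N=s_N^2$. This is what the paper does with $h_p=|x|^{-2p}$, $p=\nu_N+\delta$. Then $\rho^{(z)}(x)=-(z-s_N)^2|x|^{-2z-2}+O(|x|^{-2\operatorname{Re}z-4})$, the double zero kills the pole, and $\mathfrak d^{(z)}\to\mathfrak d_N$.

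In your Mellin bookkeeping the mass kernel becomes $(s_N^2/z)\,t(\Theta^N-1)$. The singular coefficient $\frac{Nz}{2(z+1)}-\frac{s_N^2}{z}$ vanishes at $s_N$ with derivative $(N+2)/N$, so the subtraction gives the constant $\frac{(N+2)\pi^{N/2}}{N\Gamma(s_N)}$ directly.

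The paper proceeds slightly differently:
\begin{itemize}
\item It works with real $\delta\downarrow0$ and dominated convergence.
\item It uses the edge form of the energy, so the Mellin identity already holds for $p>\nu_N$ and no continuation from $\operatorname{Re}z>N/2$ is needed.
\item It writes the mass through $\mathcal F_N$, obtaining $\frac{2\pi^{N/2}}{N\Gamma(s_N)}$ from the singular term and the remaining $\frac{\pi^{N/2}}{\Gamma(s_N)}$ from an integration by parts.
\end{itemize}
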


The representation makes the strict side of the dimensional transition
accessible by comparing the edge kernel with an elementary upper
bound. Denote the difference by $\Xi_N$ and the coefficient in
that bound by $\eta_N$:
\begin{equation}\label{eq:defectetaXi}
 \eta_N=\frac{N-2}{4N},\qquad
 \Xi_N(t)=\eta_Nt(1-e^{-t})-\mathcal B_N(t).
\end{equation}

The positive gap also gives a comparison between dimensions. This is
stronger than proving the sign separately in each dimension.

\begin{theorem}\label{mono:thm:main}
The total critical defect is strictly decreasing in integer dimensions
from nine onwards:
\begin{equation}\label{mono:eq:main}
 \mathfrak d_{N+1}<\mathfrak d_N\qquad(N\ge9).
\end{equation}
\end{theorem}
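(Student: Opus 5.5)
Starting from the theta representation of Proposition~\ref{prop:defecttheta}, I would split $\mathfrak d_N$ into an explicit main part and a correction controlled by the gap $\Xi_N$ of \eqref{eq:defectetaXi}. Replacing $\mathcal B_N(t)$ by $\eta_Nt(1-e^{-t})-\Xi_N(t)$ in $\mathcal J_N$ gives
\[
 \mathfrak d_N=\mathfrak m_N-\frac{2N}{\Gamma(s_N)}\int_0^\infty t^{s_N-1}\Big[(\Theta^{N-1}-1)\Xi_N(t)+\Theta^{N-1}\sum_{k\ge1}e^{-tk^2}\Xi_N((2k+1)t)\Big]dt,
\]
where $\mathfrak m_N$ involves only theta functions and exponentials. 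The integrand has a product structure, so the $t$-integrals in $\mathfrak m_N$ are Mellin transforms of sums like $\sum_k k^2e^{-tk^2}$ and $\sum_k e^{-t(k+1)^2}$. They should evaluate to lattice zeta-type sums, or at least to expressions whose dependence on $N$ is transparent. The first step is therefore to prove $\Xi_N(t)\ge0$, with strict inequality for $t>0$. This is an elementary inequality on the beta expectation. At small $t$, $\mathcal B_N(t)\approx t^2\,\mathbb E[V(1-V)]=t^2\frac{\nu_N}{2(2\nu_N+1)}=\eta_Nt^2$ to leading order, and this explains the choice of $\eta_N$. In general I would expand $(1-e^{-tV})(1-e^{-t(1-V)})=1-e^{-tV}-e^{-t(1-V)}+e^{-t}$ and use convexity or Jensen on the beta law.

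The second step is a term-by-term comparison of dimension $N$ with dimension $N+1$. I would change variables so that both integrals carry comparable weights $t^{s_N-1}/\Gamma(s_N)$, using $s_{N+1}=s_N+\tfrac12$. I would then aim to show that each ingredient moves in the right direction:
\begin{itemize}
\item the correction term is nonnegative and increases with $N$, because $\Theta^{N-1}$ grows, $2N$ grows, and $\Xi_N$ is monotone in $\nu_N$ (to be checked via the beta variance);
\item the main part satisfies $\mathfrak m_{N+1}\le\mathfrak m_N$.
\end{itemize}
For the main part, write $\Theta^N-1$ and $\Theta^{N-1}-1$ as sums over nonzero lattice points, namely $\sum_{x\ne0}e^{-t|x|^2}$. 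After the Mellin integration each $t$-integral becomes an explicit Epstein-type sum $\sum_{x\in\Z^N\setminus\{0\}}|x|^{-2s}$ combined with the Gamma factors. Comparing consecutive dimensions then reduces to comparing these sums weighted by $\Gamma$-ratios. A convenient route is to show that the integrand of $\mathfrak d_N$, as a function of $N$ at fixed $t$, satisfies a pointwise inequality after the substitution.

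The main obstacle is the main part. Its sign is not evident, since the continuum term $\frac{(N+2)\pi^{N/2}}{N\Gamma(s_N)}$ and the subtracted $s_Nt(\Theta^N-1)$ balance only as a combined expression near $t=0$. I would first try to rewrite $\mathfrak m_N$ using the Jacobi inversion $\Theta(t)=\sqrt{\pi/t}\,\Theta(\pi^2/t)$. This should turn the $t\to0$ region into rapidly convergent sums and isolate an explicit negative leading quantity. That quantity should decrease in $N$ for $N\ge9$, and the exponentially small tail terms would be bounded crudely. The threshold $N\ge9$ enters precisely here, so the estimates at $N=9$ may need a numerical check of finitely many explicit constants, in the spirit of the scalar estimates collected in the appendices.
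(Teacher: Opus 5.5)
Your opening substitution $\mathcal B_N=\eta_Nt(1-e^{-t})-\Xi_N$ is exactly the paper's first step, and you do need positivity and monotonicity of $\Xi_N$ (Lemma~\ref{mono:lem:beta}). The plan breaks at your second bullet. Your main part is not a combination of Epstein-type sums; the separate Mellin integrals diverge at $t=0$. By the telescoping identity \eqref{eq:defecttelescoping}, every theta function cancels and the combined integrand is exactly $2Ne^{-t}(1+\eta_Nt)$. Hence $\mathfrak m_N=\mathfrak c_N+(N+2)^2/4$ with $\mathfrak c_N=(N+2)\pi^{N/2}/(N\Gamma(s_N))$, as in \eqref{mono:eq:defect}. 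This quantity is positive and not decreasing, since $\mathfrak m_{N+1}-\mathfrak m_N=\mathfrak c_{N+1}-\mathfrak c_N+(2N+5)/4$. The difference is already positive at $N=9$, where $\mathfrak c_9=88\pi^4/135\approx63.5$ and $\mathfrak c_{10}=\pi^5/5\approx61.2$. It is also positive for all large $N$, because $\mathfrak c_N\to0$. So the claim $\mathfrak m_{N+1}\le\mathfrak m_N$ is false, and there is no negative leading quantity for Jacobi inversion to isolate. The entire sign and decrease of $\mathfrak d_N$ must come from the correction growing by more than $(2N+5)/4+\mathfrak c_{N+1}-\mathfrak c_N$ per step. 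A qualitative statement that the correction increases cannot give this.

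Your pointwise comparison does not establish even that qualitative statement for large $N$. Passing from $s$ to $s+\tfrac12$ multiplies the Mellin weight by $\sqrt t\,\Gamma(s)/\Gamma(s+\tfrac12)$. The extra factor $\Theta$ in each theta term absorbs this only through $\sqrt t\,\Theta(t)\ge\sqrt\pi$. That leaves the factor $a_N=\frac{N-1}{N-2}\sqrt\pi\,\Gamma(s)/\Gamma(s+\tfrac12)$ of \eqref{mono:eq:alpha}, and $a_N<1$ for $N\ge11$. The missing idea is quantitative. Remove from the correction its two smallest-radius pieces, $2ne^{-t}\Xi_N(t)$ and $e^{-t}\Xi_N(3t)$, and evaluate them exactly with the beta integral \eqref{mono:eq:I}. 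This moves a term of size about $N^3/2$ into the explicit part, which becomes $Q_N=\mathfrak c_N+P(N)+E(N)$ with $P(N)=-N^3/2+6N^2-2N$. This explicit part drops by roughly $3N^2/2$ per step. For the residual $R_N$ you then need two facts: the comparison $R_{N+1}\ge a_NR_N$, and a uniform bound such as $R_N<550$ (Lemma~\ref{mono:lem:550}). Together they ensure the possible loss $(1-a_N)R_N$ is dominated by the explicit decrease. The two steps $N=9,10$, where $a_N>1$, are handled directly.
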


The comparison of the beta laws is particularly simple after a
normalization. Define
\begin{equation}\label{mono:eq:Xiinf}
 \Xi_\infty(t)=\frac t4(1-e^{-t})-(1-e^{-t/2})^2.
\end{equation}

\begin{lemma}\label{mono:lem:beta}
For $t>0$, the function $\Xi_N(t)/(1-2/N)$ is strictly increasing
in the real parameter $N>2$ and is bounded above by $\Xi_\infty(t)$.
In particular $\Xi_N(t)>0$ and
\begin{equation}\label{mono:eq:betaorder}
 0<\Xi_N(t)\le\frac{N-2}{N}\Xi_\infty(t),\qquad
 \Xi_{N+1}(t)\ge
 \frac{N(N-1)}{(N+1)(N-2)}\Xi_N(t).
\end{equation}
\end{lemma}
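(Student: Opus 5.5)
The plan is to reduce everything to a termwise comparison of power series in $a=t/2$. Put $\nu=\nu_N=(N-2)/4$, so that $1-2/N=2\nu/(2\nu+1)$ and $\eta_N=\tfrac14(1-2/N)$. Then
\[
 \frac{\Xi_N(t)}{1-2/N}=\frac t4(1-e^{-t})-\frac{2\nu+1}{2\nu}\,\mathcal B_N(t),
\]
and the first term does not depend on $N$. Hence it suffices to show three things: $\frac{2\nu+1}{2\nu}\mathcal B_N(t)$ is strictly decreasing in $\nu>0$; it is bounded below by $(1-e^{-t/2})^2$; and it is strictly smaller than $\frac t4(1-e^{-t})$.

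To expand $\mathcal B_N$, write $V_N=\tfrac12+W$ with $W$ symmetric. Then
\[
 (1-e^{-tv})(1-e^{-t(1-v)})=1+e^{-t}-2e^{-a}\cosh(2aW).
\]
For the symmetric beta law, $\mathbb E[(2W)^{2k}]=m_k:=(1/2)_k/(\nu+1/2)_k$. Since $1+e^{-t}=2e^{-a}\cosh a$, we get
\[
 \mathcal B_N(t)=2e^{-a}\sum_{k\ge1}\frac{a^{2k}}{(2k)!}(1-m_k).
\]
Interchanging sum and expectation is harmless because all terms are nonnegative.

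The key algebraic step is a telescoping identity. From $m_j-m_{j+1}=m_j\,\nu/(\nu+1/2+j)$ we obtain
\[
 g_k(\nu):=\frac{\nu+1/2}{\nu}(1-m_k)=\sum_{j=0}^{k-1}\frac{(1/2)_j}{(\nu+3/2)_j}.
\]
Therefore
\[
 \frac{2\nu+1}{2\nu}\mathcal B_N(t)=2e^{-a}\sum_{k\ge1}\frac{a^{2k}}{(2k)!}\,g_k(\nu).
\]
We have $g_1\equiv1$. For $k\ge2$, every summand with $j\ge1$ is strictly decreasing in $\nu$ and tends to $0$ as $\nu\to\infty$. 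So each $g_k$ with $k\ge2$ is strictly decreasing and satisfies $g_k\ge1$.

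Strict monotonicity of $\Xi_N(t)/(1-2/N)$ in $N$ follows at once, since $a^4$ appears with a positive coefficient. For the upper bound, note that $2e^{-a}\sum_k a^{2k}/(2k)!=2e^{-a}(\cosh a-1)=(1-e^{-a})^2$. Hence $g_k\ge1$ gives $\frac{2\nu+1}{2\nu}\mathcal B_N\ge(1-e^{-t/2})^2$, which is exactly $\Xi_N/(1-2/N)\le\Xi_\infty$.

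For positivity, use the strict bound $g_k(\nu)<g_k(0^+)=\sum_{j<k}(2j+1)^{-1}$ for $k\ge2$, and compare with
\[
 \frac t4(1-e^{-t})=e^{-a}a\sinh a=2e^{-a}\sum_{k\ge1}\frac{a^{2k}}{(2k)!}\,k .
\]
Termwise it remains to check $\sum_{j<k}(2j+1)^{-1}\le k$. This holds with equality only at $k=1$, since every summand is at most one. So $\Xi_N(t)>0$ for every real $N>2$.

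Finally, the second inequality in \eqref{mono:eq:betaorder} is monotonicity between $N$ and $N+1$ multiplied out:
\[
 \frac{1-2/(N+1)}{1-2/N}=\frac{N(N-1)}{(N+1)(N-2)}.
\]
The step I expect to need the most care is the telescoping identity for $g_k$, which turns the $\nu$-dependence into manifestly monotone Pochhammer ratios. The rest is bookkeeping with nonnegative series.
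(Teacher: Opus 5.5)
Your proposal is correct and follows the paper's proof: the same $\cosh$ expansion in powers of $(t/2)^{2k}$ with moments $(1/2)_k/(N/4)_k$, and the same identity writing the normalized coefficient as $k-g_k(\nu)=\sum_{j=1}^{k-1}\bigl[1-(1/2)_j/(N/4+1)_j\bigr]$, whose summands are positive, strictly increasing in $N$, and sum to less than $k-1$. The only difference is how that identity is derived: you telescope the moment recurrence $m_{j+1}=m_j(j+1/2)/(\nu+j+1/2)$, whereas the paper tilts the $\operatorname{Beta}(1/2,(N-2)/4)$ law of $Z_N^2$ by $1-y$.
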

\begin{proof}
We use the rising factorial, the product of consecutive factors,
written $(a)_m=a(a+1)\cdots(a+m-1)$ for $m\ge1$, with $(a)_0=1$.
Set $Z_N=2V_N-1$. Its even moments are
\begin{equation}\label{eq:defectmoments}
 \mathbb E Z_N^{2m}=\frac{(1/2)_m}{(N/4)_m},\qquad m\ge0.
\end{equation}
Expanding the hyperbolic functions in
$\mathcal B_N(t)=2e^{-t/2}(\cosh(t/2)-\mathbb E\cosh(tZ_N/2))$ gives
\begin{equation}\label{eq:defectXiseries}
 \Xi_N(t)=2e^{-t/2}\sum_{m\ge2}
 \frac{\kappa_{m,N}}{(2m)!}(t/2)^{2m},
\end{equation}
where
\begin{equation}\label{eq:defectkappa}
 \kappa_{m,N}=m(1-2/N)-1+\frac{(1/2)_m}{(N/4)_m}.
\end{equation}
The distribution of $Z_N^2$ is
$\operatorname{Beta}(1/2,(N-2)/4)$. Applying
$1-y^m=(1-y)\sum_{j=0}^{m-1}y^j$ and tilting this beta law by
$1-y$ yields
\begin{equation}\label{mono:eq:kappa}
 \frac{\kappa_{m,N}}{1-2/N}
 =\sum_{j=1}^{m-1}
 \left[1-\frac{(1/2)_j}{(N/4+1)_j}\right].
\end{equation}
Every summand is positive and strictly increases with $N$; their sum
is less than $m-1$. Substitution in the positive series proves the
claims, since replacing its coefficients by $m-1$ gives
\eqref{mono:eq:Xiinf}.
\end{proof}

The elementary identity
\begin{equation}\label{eq:defecttelescoping}
 \sum_{k\ge0}(2k+1)
 \big(e^{-k^2t}-e^{-(k+1)^2t}\big)=\Theta(t)
\end{equation}
reorganizes the critical cancellation. Substituting
$\mathcal B_N(u)=\eta_Nu(1-e^{-u})-\Xi_N(u)$ in
\eqref{eq:defectJ} and using $2N\eta_N=s_N$ gives
\begin{align}
 \frac{\mathcal J_N(t)-s_Nt(\Theta(t)^N-1)}{2N}
 ={}&e^{-t}(1+\eta_Nt)
 -(\Theta(t)^{N-1}-1)\Xi_N(t)\notag\\
 &-\Theta(t)^{N-1}\sum_{k\ge1}e^{-k^2t}
       \Xi_N((2k+1)t).\label{eq:defectmaster}
\end{align}
Write $s=s_N$ and $n=N-1$ throughout this subsection. Integration yields
\begin{align}
 \mathfrak d_N&=\mathfrak c_N+\frac{(N+2)^2}{4}-S_N,
 \label{mono:eq:defect}\\
 \mathfrak c_N&=\frac{(N+2)\pi^{N/2}}{N\Gamma(s)},
 \label{eq:defectcN}\\
 S_N&=\frac{2N}{\Gamma(s)}\int_0^\infty t^{s-1}
 \bigg[(\Theta(t)^n-1)\Xi_N(t)\notag\\
 &\hspace{34mm}+\Theta(t)^n\sum_{k\ge1}e^{-k^2t}
               \Xi_N((2k+1)t)\bigg]dt.\label{mono:eq:S}
\end{align}
The convergence follows from Proposition~\ref{prop:defecttheta} and
\eqref{eq:defectmaster}; all terms in $S_N$ are nonnegative.

The useful split extracts the two contributions with the smallest
squared radius. Their Mellin integrals are explicit, while the remaining
positive integral admits a comparison between consecutive dimensions.
This separates the elementary dimensional decrease from the possible
loss in the remainder.

For $\lambda,b>0$, beta integration gives the exact elementary formula
\begin{align}
 I_N(\lambda,b)
 &:=\frac{1}{\Gamma(s)}\int_0^\infty t^{s-1}e^{-\lambda t}\Xi_N(bt)\,dt
 \notag\\
 &=\eta_Nbs\big[\lambda^{-s-1}-(\lambda+b)^{-s-1}\big]
   -\lambda^{-s}-(\lambda+b)^{-s}
   +2[\lambda(\lambda+b)]^{-s/2}.\label{mono:eq:I}
\end{align}
Here the mixed term uses
$\mathbb E(\lambda+bV_N)^{-s}=[\lambda(\lambda+b)]^{-s/2}$.
Extract from \eqref{mono:eq:S} the terms
$2n e^{-t}\Xi_N(t)$ and $e^{-t}\Xi_N(3t)$. This gives
\begin{equation}\label{mono:eq:QR}
 \mathfrak d_N=Q_N-R_N,\qquad
 Q_N=\mathfrak c_N+\frac{(N+2)^2}{4}
     -2N\big[2nI_N(1,1)+I_N(1,3)\big],
\end{equation}
where the positive remainder is
\begin{equation}\begin{aligned}
 R_N=\frac{2N}{\Gamma(s)}\int_0^\infty t^{s-1}
 \bigg[&\big(\Theta^n-1-2n e^{-t}\big)\Xi_N(t)
    +(\Theta^n-1)e^{-t}\Xi_N(3t)\\
   &+\Theta^n\sum_{k\ge2}e^{-k^2t}\Xi_N((2k+1)t)\bigg]dt.
\end{aligned}\label{mono:eq:R}\end{equation}
All three terms are nonnegative, because
$\Theta\ge1+2e^{-t}$.

The explicit part can now be written without integrals. Set
\begin{align}
 z_N&=2^{-(N-2)/4},\qquad
 P(N)=-\frac{N^3}{2}+6N^2-2N,\notag\\
 B(N)&=\frac{N^3+11N^2-24N-4}{4},\qquad
 D(N)=\frac{3N^2+20N+12}{16},\notag\\
 E(N)&=-8N(N-1)z_N+B(N)z_N^2+D(N)z_N^4.
 \label{mono:eq:E}
\end{align}
Direct substitution of \eqref{mono:eq:I} into \eqref{mono:eq:QR} gives
\begin{equation}\label{mono:eq:Qexplicit}
 Q_N=\mathfrak c_N+P(N)+E(N).
\end{equation}

The following lower comparison controls how much the remainder can
decrease between consecutive dimensions.

\begin{lemma}\label{mono:lem:comparison}
For integer $N\ge9$,
\begin{equation}\label{mono:eq:alpha}
 R_{N+1}\ge a_NR_N,\qquad
 a_N=\frac{N-1}{N-2}\sqrt\pi\,
              \frac{\Gamma((N-2)/2)}{\Gamma((N-1)/2)}.
\end{equation}
Moreover $a_9=5\pi/14>1$, $a_{10}=36/35>1$, and, for $N\ge11$,
\begin{equation}\label{mono:eq:alphalower}
 a_N>\underline a_N:=\frac{5(N-1)}{2(N-2)^{3/2}},\qquad
 0<\underline a_N<1.
\end{equation}
\end{lemma}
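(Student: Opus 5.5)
We prove \eqref{mono:eq:alpha} pointwise in $t$, by comparing the integrands of $R_{N+1}$ and $R_N$ after absorbing the extra half-power of $t$ into one factor of $\Theta$. Write $s=s_N$ and $n=N-1$, so that $s_{N+1}=s+\tfrac12$ and the exponent of $\Theta$ in $R_{N+1}$ is $n+1$. Let $F_N(t)$ denote the bracket in \eqref{mono:eq:R}. Then
\[
 R_{N+1}=\frac{2(N+1)}{\Gamma(s+\tfrac12)}\int_0^\infty t^{s-1}\,t^{1/2}F_{N+1}(t)\,dt .
\]
The key input is the theta lower bound $\Theta(t)\ge\sqrt{\pi/t}$. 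It follows from the Poisson summation identity $\Theta(t)=\sqrt{\pi/t}\,\Theta(\pi^2/t)$ together with $\Theta\ge1$.

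\emph{Step 1: the $\Theta$-factors.} Set $X=\Theta^n-1-2ne^{-t}$. By $\Theta\ge1+2e^{-t}$ we have $X\ge0$ and $\Theta-1-2e^{-t}\ge0$, so
\[
 \Theta^{n+1}-1-2(n+1)e^{-t}=\Theta X+(\Theta-1-2e^{-t})\ge\sqrt{\pi/t}\,X .
\]
In the same way, $\Theta^{n+1}-1=\Theta(\Theta^n-1)+(\Theta-1)\ge\sqrt{\pi/t}\,(\Theta^n-1)$. Finally, $\Theta^{n+1}\ge\sqrt{\pi/t}\,\Theta^n$ handles the sum over $k\ge2$.

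\emph{Step 2: the edge kernels.} Every factor $\Xi_N(\cdot)$ in $F_N$ is nonnegative, and by the second inequality in \eqref{mono:eq:betaorder},
\[
 \Xi_{N+1}(u)\ge\frac{N(N-1)}{(N+1)(N-2)}\,\Xi_N(u)
\]
for each argument $u\in\{t,3t,(2k+1)t\}$. Combining this with Step 1 term by term gives the pointwise bound
\[
 t^{1/2}F_{N+1}(t)\ge\sqrt\pi\,\frac{N(N-1)}{(N+1)(N-2)}\,F_N(t).
\]
Multiplying by $2(N+1)t^{s-1}/\Gamma(s+\tfrac12)$ and integrating yields $R_{N+1}\ge a_NR_N$ with
\[
 a_N=\frac{N-1}{N-2}\,\sqrt\pi\,\frac{\Gamma(s)}{\Gamma(s+\tfrac12)},
\]
which is \eqref{mono:eq:alpha}. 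All integrands are nonnegative, and convergence is already known from Proposition~\ref{prop:defecttheta}. The only delicate point is arranging the $\Theta$ comparison so that $\Theta-1-2e^{-t}\ge0$ absorbs the shift in the subtracted constant $2ne^{-t}$; Step 1 achieves this.

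\emph{Step 3: the numerical values.} For $N=9$ we have $s=7/2$ and $\Gamma(7/2)/\Gamma(4)=5\sqrt\pi/16$, hence $a_9=\tfrac87\cdot\tfrac{5\pi}{16}=5\pi/14>1$. For $N=10$ we have $s=4$ and $\Gamma(4)/\Gamma(9/2)=96/(105\sqrt\pi)$, hence $a_{10}=\tfrac98\cdot\tfrac{96}{105}=36/35>1$.

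For $N\ge11$, Gautschi's inequality $\Gamma(x+\tfrac12)/\Gamma(x)<\sqrt x$ with $x=s=(N-2)/2$ gives
\[
 a_N>\frac{N-1}{N-2}\,\frac{\sqrt{2\pi}}{\sqrt{N-2}} .
\]
Since $\sqrt{2\pi}>5/2$, this is greater than $\underline a_N$. The bound $\underline a_N<1$ is equivalent to $5(N-1)<2(N-2)^{3/2}$. This holds at $N=11$, where $40<54$, and it persists for larger $N$. Indeed, the derivative of the right side is $3(N-2)^{1/2}$, which is at least $9>5$ for $N\ge11$.
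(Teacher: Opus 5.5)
Your proof is correct and follows the paper's argument. Each theta factor gains a factor $\Theta\ge\sqrt{\pi/t}$ from Jacobi's identity, the edge kernels are compared via \eqref{mono:eq:betaorder}, and the gamma-ratio bound (Gautschi, equivalent to the paper's Cauchy--Schwarz step) yields \eqref{mono:eq:alphalower}.

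There are two harmless slips. In Step 1 the exact identity is $\Theta^{n+1}-1-2(n+1)e^{-t}=\Theta X+(\Theta-1-2e^{-t})+2ne^{-t}(\Theta-1)$, so your ``$=$'' should read ``$\ge$''; the conclusion is unaffected. At $N=11$ one has $5(N-1)=50<54$, not $40$.
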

\begin{proof}
Write $q=e^{-t}$ and $A_n=\Theta^n-1-2nq$. Then
\[
 A_{n+1}-\Theta A_n=\Theta-1-2q+2nq(\Theta-1)\ge0.
\]
Also $(\Theta^{n+1}-1)\ge\Theta(\Theta^n-1)$.
Thus each theta factor in \eqref{mono:eq:R} gains at least $\Theta$ when
$N$ increases by one. Combine \eqref{mono:eq:betaorder} with the extra
Mellin factor $\sqrt t$, and use Jacobi's identity
$\sqrt t\,\Theta(t)=\sqrt\pi\,\Theta(\pi^2/t)\ge\sqrt\pi$.
This proves \eqref{mono:eq:alpha}. Cauchy--Schwarz in the gamma integral gives
$\Gamma(s+1/2)/\Gamma(s)\le\sqrt s$. Since $\pi>25/8$, this implies
\eqref{mono:eq:alphalower}; its upper bound follows from the decreasing
function $(x+1)x^{-3/2}$ and $\underline a_{11}=25/27$.
\end{proof}

\begin{lemma}\label{mono:lem:550}
For every integer $N\ge9$,
\begin{equation}\label{mono:eq:550}
 0<R_N<550.
\end{equation}
\end{lemma}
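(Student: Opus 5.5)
The positivity half is immediate from the structure of \eqref{mono:eq:R}. By Lemma~\ref{mono:lem:beta}, $\Xi_N(u)>0$ for every $u>0$. The three theta factors $\Theta^n-1-2ne^{-t}$, $\Theta^n-1$ and $\Theta^n e^{-k^2t}$ are nonnegative, as noted after \eqref{mono:eq:R}. The last one is in fact strictly positive. Hence the integrand of $R_N$ is strictly positive on $(0,\infty)$, and $R_N>0$; finiteness is already part of Proposition~\ref{prop:defecttheta}. The real work is the uniform upper bound $R_N<550$.

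For the upper bound I would first replace $\Xi_N$ by the dimension-free majorant of Lemma~\ref{mono:lem:beta}, namely $\Xi_N\le(1-2/N)\Xi_\infty\le\Xi_\infty$. I would then expand every theta power as a lattice sum, $\Theta(t)^n=\sum_{y\in\Z^n}e^{-|y|^2t}$. After this expansion each of the three pieces of \eqref{mono:eq:R} becomes a sum of terms of the form $\Gamma(s)^{-1}\int_0^\infty t^{s-1}e^{-\lambda t}\Xi_\infty(bt)\,dt$. Here $\lambda=|y|^2+k^2$ and $b=2k+1$ (with $k=0$, $b=1$ for the first two pieces). The subtractions in \eqref{mono:eq:R} remove exactly the configurations with $\lambda\le1$: $y=0$, and $|y|=1$ with $b=1$, $k=0$, while the $(k{=}1, y{=}0)$ term of $\Xi_N(3t)$ has been removed too. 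So every surviving term has $\lambda\ge2$, or $\lambda\ge 4$ in the $k\ge2$ sum.

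Each such integral is explicit by the same beta computation that gave \eqref{mono:eq:I}, with $V_N$ replaced by its limit law, the point mass at $1/2$. Alternatively, one can use the series $\Xi_\infty(u)=2e^{-u/2}\sum_{m\ge2}(m-1)(u/2)^{2m}/(2m)!$, which gives bounds of the form $\sum_{m\ge 2}(m-1)\frac{(s)_{2m}}{(2m)!}(b/2)^{2m}(\lambda+b/2)^{-s-2m}$. The key feature is that $\lambda\ge2$ produces a factor of size $2^{-s}=2^{-(N-2)/2}$. This geometric decay beats both the polynomial growth $(s)_{2m}$ and the shell counts $\#\{y\in\Z^n:|y|^2=j\}\le(2n)^j/j!\cdot$(const).

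I would then split into two regimes. For $N$ larger than an explicit threshold $N_0$ (something like $N_0\approx 40$), I would sum the resulting majorant over shells $j=|y|^2$ and over $k$. The sum is dominated by a few small shells, and I expect it to tend to $0$, or at least stay far below $550$, uniformly in $N$. For the finitely many $9\le N< N_0$ I would instead bound $R_N=Q_N-\mathfrak d_N$ in a way that avoids an explicit lower bound on $\mathfrak d_N$: truncate the shell sums at a finite radius, compute the head exactly via \eqref{mono:eq:I}, and bound the tail by the same geometric majorant. This gives a certified numerical upper bound in each dimension.

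The main obstacle is the uniformity in $N$ of the explicit constant. The shell multiplicities grow like $(2N)^j/j!$ while the Mellin weights contribute $(j+\cdot)^{-s}$. Showing cleanly that the sum stays below a numerical constant requires a careful choice of how many shells to treat exactly versus by the crude bound, and a careful choice of the crossover $N_0$. I would try first to bound the whole tail by a single majorant of the form $C\,\Theta$-power evaluated at a rescaled argument, using Jacobi's identity as in Lemma~\ref{mono:lem:comparison}. Only if that proves too lossy near $N=9$ would I fall back to the dimension-by-dimension numerical treatment.
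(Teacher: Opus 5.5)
\textbf{Verdict: there is a genuine gap.} The lower bound is fine, but the upper bound $R_N<550$ is not proved, and the route you sketch for it does not work as stated.

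\textbf{Positivity.} Your argument is correct: each theta factor in \eqref{mono:eq:R} is nonnegative, $\Xi_N>0$ by Lemma~\ref{mono:lem:beta}, and the $k\ge2$ terms are strictly positive.

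\textbf{Upper bound.} This is the whole content of the lemma, and it is deferred to an expectation (``I expect it to tend to $0$'') and to unspecified certified numerics in about thirty lattice dimensions. The mechanism you propose for uniformity also fails in two places.

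First, the geometric factor $\lambda^{-s}$ (your ``$2^{-s}$'') cannot beat the shell counts. Indeed $\sum_{0\ne y\in\Z^n}|y|^{-2s}$ diverges, because $2s=n-1<n$. Your shell expansion converges only because $\Xi_\infty(u)=u^4/192+O(u^5)$. This makes the high-shell terms of size about $(s)_4b^4\lambda^{-s-4}/192$, and the margin $s+4-n/2=7/2$ does not improve with $N$.

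Second, the bound $\#\{y\in\Z^n:|y|^2=j\}\le C(2n)^j/j!$ is false for $j\gg n$. For fixed $n$ the right-hand side decays superexponentially in $j$, whereas the count grows like $j^{n/2-1}$. So your large-$N$ summation does not control the high shells. Their uniform smallness comes from a factor of the type $\pi^{n/2}/\Gamma(s)$, that is, from the small-$t$ behaviour of $\Theta$. That is exactly the Jacobi estimate you list only as a fallback option.

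\textbf{How the paper closes this.} It splits the Mellin integral at $t=3$ instead of splitting the lattice into shells.

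On $(0,3]$ it adds back the extracted terms and telescopes, using $\Theta_{1/2}\le\Theta$:
\[
 \sum_{k\ge0}e^{-k^2t}\Xi_\infty((2k+1)t)
 =(t/4-1)\Theta+e^{-t/4}\Theta_{1/2}\le\Theta(t)\,t^2/32 .
\]
So the remaining integrand, including $t^{s-1}$, is at most $\Theta(t)^Nt^{s+1}/32$. Jacobi's identity gives $\sqrt t\,\Theta(t)\le\sqrt3\,\Theta(3)<\sqrt3\cdot\frac{11}{10}$ on this interval. Since $N/2=s+1$, the powers of $t$ cancel exactly, leaving
\[
 U(s)=\frac{3s}{8\Gamma(s)}\Bigl(\frac{363}{100}\Bigr)^{s+1}.
\]
For $N\ge9$ this is largest at $s=9/2$ and $s=5$ and stays below $180$. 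The factor $\Gamma(s)$ is what gives uniformity in $N$.

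On $[3,\infty)$, with $q=e^{-t}$, $a=201/100$ and $H=11/10$, geometric bounds such as
\[
 \Theta^n-1-2nq\le anq^4+an(n-1)H^{n-2}q^2
\]
reduce everything to the explicit integrals $J_s(2,1)$, $J_s(2,3)$ and $J_s(4,1)$. These carry a factor $(121/200)^s$ and together contribute less than $370$.

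Your exact low-shell evaluations correspond to this large-$t$ half. What your proposal is missing is the small-$t$ bound, which handles all high shells and all $N\ge9$ at once.
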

\begin{proof}
The uniform estimate is proved in Appendix~\ref{app:defectremainder}.
It uses the upper kernel in Lemma~\ref{mono:lem:beta}, the Jacobi
identity for $0<t\le3$, and geometric theta bounds for $t\ge3$.
\end{proof}

\begin{proof}[Proof of Theorem~\ref{mono:thm:main}]
First, $\mathfrak c_{N+1}<\mathfrak c_N$ for all integers $N\ge9$. At the first two steps,
\[
 \frac{\mathfrak c_{10}}{\mathfrak c_9}=\frac{27\pi}{88}<1,
 \qquad \frac{\mathfrak c_{11}}{\mathfrak c_{10}}=\frac{208}{231}<1.
\]
For real $N\ge11$, logarithmic differentiation gives
\[
 \frac{d}{dN}\log \mathfrak c_N=-\frac{2}{N(N+2)}
                 +\frac12[\log\pi-\psi((N-2)/2)]<0.
\]
For completeness, if $T$ has a gamma distribution of shape $s>1$,
Jensen's inequality gives
$\psi(s)=\mathbb E\log T>-\log\mathbb E(T^{-1})=\log(s-1)$.
Here $s\ge9/2$ and $s-1\ge7/2>\pi$.

We next bound the variation of $E(N)$ in \eqref{mono:eq:E}.
The two positive terms $B(N)z_N^2$ and $D(N)z_N^4$ decrease for
real $N\ge9$. Indeed
\[
 \frac{B'(N)}{B(N)}<\frac{3}{N}\le\frac13<\frac{\log2}{2},
 \qquad \frac{D'(N)}{D(N)}<\frac{2}{N}<\log2.
\]
The first inequality follows from
$11N^2-48N-12>0$; the second follows from the positive coefficients
of $D$. Put $\alpha=2^{-1/4}>5/6$. It follows that
\begin{equation}\label{mono:eq:Edifference}
 E(N+1)-E(N)
 \le8Nz_N[(N-1)-\alpha(N+1)].
\end{equation}
This is negative for $N=9,10$. For $N\ge11$, use
$\log2>2/3$ and write $x=N-11\ge0$. Then
\[
 \frac{E(N+1)-E(N)}{N}
 \le \frac{4}{3}(N-11)e^{-(N-2)/6}
 =\frac{4}{3}e^{-3/2}x e^{-x/6}
 \le8e^{-5/2}<1.
\]
Since $P(N+1)-P(N)=-(3N^2-21N-7)/2$, this proves
\begin{equation}\label{mono:eq:Qdifference}
 Q_{N+1}-Q_N<-p(N),\qquad
 p(N)=\frac{3N^2-23N-7}{2},\qquad N\ge11.
\end{equation}

For $N=9,10$, \eqref{mono:eq:Edifference}, the decrease of $\mathfrak c_N$, and
the exact difference of $P$ give
$Q_{10}-Q_9<-47/2$ and $Q_{11}-Q_{10}<-83/2$.
By Lemma~\ref{mono:lem:comparison}, $R_{10}>R_9$ and $R_{11}>R_{10}$.
Equation~\eqref{mono:eq:QR} proves \eqref{mono:eq:main} for these two steps.

For $N\ge11$, combine Lemmas~\ref{mono:lem:comparison} and~\ref{mono:lem:550}
with \eqref{mono:eq:Qdifference}. Since $\underline a_N<1$,
\begin{align*}
 \mathfrak d_{N+1}-\mathfrak d_N
 &< -p(N)+(1-\underline a_N)R_N\\
 &< -p(N)+550(1-\underline a_N)=-g(N),
\end{align*}
where
\[
 g(x)=\frac{3x^2-23x-7}{2}-550
              +1375\,\frac{x-1}{(x-2)^{3/2}}.
\]
There is a simple uniform lower bound for this elementary function:
\[
 g''(x)=3+\frac{4125}{4}\frac{x+3}{(x-2)^{7/2}}>3\qquad(x>2).
\]
Moreover
\[
 g(12)=-\frac{951}{2}+\frac{3025}{2\sqrt{10}}>\frac{27}{10},
 \qquad -\frac{19}{5}<g'(12)
       =\frac{49}{2}-\frac{715}{8\sqrt{10}}<0.
\]
The rational bounds $79/25<\sqrt{10}<253/80$ suffice for both
inequalities. Strong convexity therefore gives, for every $x>2$,
\[
 g(x)\ge g(12)+g'(12)(x-12)+\frac32(x-12)^2
       \ge g(12)-\frac{g'(12)^2}{6}
       >\frac{27}{10}-\frac{(19/5)^2}{6}
       =\frac{22}{75}.
\]
Together with the first two steps, this proves the stronger estimate
\begin{equation}\label{mono:eq:gap}
 \mathfrak d_{N+1}<\mathfrak d_N-\frac{22}{75}\qquad(N\ge9),
\end{equation}
and hence the asserted strict monotonicity.
\end{proof}

The strict Hardy inequality now requires only one initial sign
estimate. We isolate it from the dimensional comparison.

\begin{lemma}\label{mono:lem:N9base}
The ninth-dimensional defect satisfies $\mathfrak d_9<-1/3$.
\end{lemma}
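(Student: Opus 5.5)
We use the decomposition \eqref{mono:eq:QR}, $\mathfrak d_9=Q_9-R_9$, with $R_9>0$ by Lemma~\ref{mono:lem:550}. It then suffices to show $Q_9<-1/3$, or, if $Q_9$ alone is not negative enough, $Q_9-\underline R_9<-1/3$ for an explicit lower bound $\underline R_9$ on $R_9$. By \eqref{mono:eq:Qexplicit},
\[
 Q_9=\mathfrak c_9+P(9)+E(9),
\]
and every term is an explicit number. First, $P(9)=-729/2+486-18=207/2$. Next, $z_9=2^{-7/4}$, and $B(9)=(729+891-216-4)/4=350$ and $D(9)=(243+180+12)/16=435/16$. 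Hence
\[
 E(9)=-576\,z_9+350\,z_9^2+\tfrac{435}{16}\,z_9^4 .
\]
Finally, with $\Gamma(7/2)=15\sqrt\pi/8$,
\[
 \mathfrak c_9=\frac{11\pi^{9/2}}{9\Gamma(7/2)}=\frac{88\pi^4}{135}.
\]
We evaluate these with rational enclosures: $\pi^4\in(97.40,97.41)$, and $2^{-7/4}$ is bracketed using rational bounds for $2^{1/4}$, for instance $1.1892<2^{1/4}<1.18921$. This yields $Q_9$ to a few decimals.

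A rough check gives $\mathfrak c_9\approx63.49$ and $z_9\approx0.29730$. Then $-576z_9\approx-171.25$, $350z_9^2\approx30.94$ and $(435/16)z_9^4\approx0.21$, so $E(9)\approx-140.10$. With $P(9)=103.5$ this gives $Q_9\approx26.9$. So $Q_9$ is positive, and negativity must come from $R_9$, which has to exceed roughly $27.2$.

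The main step is therefore an explicit lower bound for $R_9$ from \eqref{mono:eq:R}. All three integrands are nonnegative, so we may discard tails and keep only the next few shells of $\Theta^n$ and of the $k$-sum. Expanding $\Theta^8-1-16e^{-t}$ gives leading terms $\binom{8}{2}4e^{-2t}+16e^{-4t}+\cdots$, that is, the shells at squared radius $2$ and $4$. The term $(\Theta^8-1)e^{-t}\Xi_9(3t)$ contributes $16e^{-2t}\Xi_9(3t)$ and more. The $k=2$ term gives $e^{-4t}\Xi_9(5t)$ and its products with $\Theta^8$. Each retained monomial $e^{-\lambda t}\Xi_9(bt)$ integrates exactly through \eqref{mono:eq:I} with $s=7/2$. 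This turns the lower bound into a finite sum of algebraic numbers $\lambda^{-s}$, $(\lambda+b)^{-s}$ and $[\lambda(\lambda+b)]^{-s/2}$, each of which we enclose rationally.

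I would add shells in increasing order of squared radius $2,3,4,5,\dots$ (for instance the $e^{-3t}$ shell, $\binom83 8$, times $\Xi_9(t)$, together with cross terms) until the total lower bound exceeds $Q_9+1/3$. I expect the main obstacle to be bookkeeping and convergence. $R_9$ behaves like a lattice sum with decay $|x|^{-N-2}$ weighted by multiplicities that grow quickly, so many shells may be needed. If too many are needed, a fallback is to bound the tail from below by a continuum or Jacobi-type estimate. For $t\le t_0$, use $\Theta(t)\ge\sqrt{\pi/t}-1$ together with a lower bound for $\Xi_9$ coming from the first term $m=2$ of the positive series \eqref{eq:defectXiseries}, which gives $\Xi_9(t)\ge 2e^{-t/2}\kappa_{2,9}(t/2)^4/24$. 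This captures the bulk of the small-$t$ contribution in closed form through incomplete gamma integrals, which are again bounded rationally. Combining the explicit $Q_9$ enclosure with this lower bound for $R_9$ gives $\mathfrak d_9<-1/3$.
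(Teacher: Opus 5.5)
Your proposal is correct and is essentially the paper's proof. The paper likewise bounds $Q_9<138/5$ from its closed form, and bounds $R_9$ from below by keeping only the terms of $\Theta^8\ge(1+2e^{-t})^8$ through $e^{-4t}$, namely the seven integrals $I(j,1)$ for $2\le j\le4$ and $I(j+1,3)$ for $1\le j\le4$; these already give $R_9>559/20$, so you need no further shells and no small-$t$ fallback. The one technical difference is that you would enclose the heavily cancelling closed form \eqref{mono:eq:I} rationally, whereas the paper uses the uniform minorant $I(\lambda,b)>\frac{193}{100}\,b^4\sqrt{\lambda+b/2}\,/[\lambda(\lambda+b)]^4$ obtained from the positive moment series \eqref{eq:defectXiseries}.
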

\begin{proof}
Appendix~\ref{app:defectsmallchecks} proves the stronger bound
$\mathfrak d_9<-7/20$ by a uniform lower estimate on the integrated
beta kernel and the theta terms through $e^{-4t}$.
\end{proof}

\begin{theorem}\label{thm:defectsign}
For every integer $N\ge9$,
\begin{equation}\label{eq:defectnegative}
 \mathfrak d_N<-\frac13-\frac{22}{75}(N-9)<0.
\end{equation}
Consequently,
\begin{equation}\label{eq:defecthardystrict}
 C(N)<A_N=\frac{(N-2)^2}{4}\qquad(N\ge9).
\end{equation}
\end{theorem}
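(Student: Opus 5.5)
The first claim, the bound \eqref{eq:defectnegative}, follows by induction on $N$. Lemma~\ref{mono:lem:N9base} gives $\mathfrak d_9<-1/3$. The quantitative gap \eqref{mono:eq:gap}, established in the proof of Theorem~\ref{mono:thm:main}, gives $\mathfrak d_{N+1}<\mathfrak d_N-22/75$ for every $N\ge9$. Adding these increments telescopically yields $\mathfrak d_N<-1/3-\tfrac{22}{75}(N-9)$, and the right side is negative for all $N\ge9$.

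For the strict Hardy inequality \eqref{eq:defecthardystrict}, I will turn the negative defect into finitely supported test functions by logarithmic cutoffs. Fix $N\ge9$ and a smooth function $\chi$ on $[0,\infty)$ with $\chi=1$ on $[0,1]$ and $\chi=0$ on $[2,\infty)$. Set $\chi_T(x)=\chi(\log|x|/T)$ and $u_T=\chi_T h_N\in C_c(X_N;\R)$. Since $H_N$ is a nearest-neighbour operator, for real $u=\chi h$ one has the ground-state identity
\[
 \langle \chi h,H_N(\chi h)\rangle=\sum_x \chi(x)^2 h(x)(H_Nh)(x)+\sum_{\{x,y\}}h(x)h(y)\bigl(\chi(x)-\chi(y)\bigr)^2 ,
\]
with the edge sum taken over edges of $X_N$. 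One checks this by expanding $\E_N$ via Lemma~\ref{lem:edge} with the field $a_{xy}=h(y)/h(x)$, since $h_N>0$ on $X_N$.

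The first term equals $\sum_x\chi_T^2\rho_N$. By absolute summability of $\rho_N$ and dominated convergence, it tends to $\mathfrak d_N$ as $T\to\infty$. For the second term, $|\chi_T(x)-\chi_T(y)|\lesssim T^{-1}|x|^{-1}$, and this is supported in the shell $e^T\lesssim|x|\lesssim e^{2T}$. Using $h(x)h(y)\asymp|x|^{-(N-2)}$ there, the edge sum is bounded by
\[
 T^{-2}\sum_{e^T\lesssim|x|\lesssim e^{2T}}|x|^{-N}\asymp T^{-2}\cdot T=T^{-1}\to0 .
\]
Hence $\langle u_T,H_Nu_T\rangle\to\mathfrak d_N<0$. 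For large $T$ this gives $\E_N[u_T]<A_N\D_N[u_T]$ with $\D_N[u_T]>0$, and therefore $C(N)<A_N$.

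The only delicate point is justifying the ground-state identity with the Dirichlet diagonal at $\Sigma_N$, that is, checking that the edges to the origin are correctly absorbed. Lemma~\ref{lem:edge} already handles this, because the vertex weight $\W_a^{(N)}$ equals $(L_N^Dh)/h$ pointwise and so retains the full diagonal $2N$ at the neighbours of the origin. The remaining work is the routine shell estimate of the gradient term, and no further obstacle arises once \eqref{mono:eq:gap} and Lemma~\ref{mono:lem:N9base} are granted.
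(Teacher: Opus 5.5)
Your proposal is correct and follows the paper's proof essentially verbatim. As in the paper, the bound comes from iterating the gap \eqref{mono:eq:gap} starting at Lemma~\ref{mono:lem:N9base}, and $C(N)<A_N$ comes from the ground-state identity \eqref{eq:defectgroundstate} applied to logarithmic cutoffs of $h_N$: the edge cost is $O(1/\log R)$ and the vertex term converges to $\mathfrak d_N<0$. Your smooth cutoff $\chi(\log|x|/T)$, in place of the paper's cutoff that is linear in $\log|x|$ on $R<|x|<R^2$, is an immaterial variation.
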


\begin{proof}
The defect bound follows by iterating \eqref{mono:eq:gap}, starting
from Lemma~\ref{mono:lem:N9base}.

It remains to convert the defect sign into a Hardy competitor. For any
finitely supported $\chi$ the ground-state identity gives
\begin{align}\label{eq:defectgroundstate}
 &\E_N[h_N\chi]-A_N\D_N[h_N\chi]\notag\\
 &\qquad=\sum_{\{x,y\}\subset X_N,\,x\sim y}
 h_N(x)h_N(y)|\chi(x)-\chi(y)|^2
 +\sum_{x\in X_N}\rho_N(x)|\chi(x)|^2.
\end{align}
Choose a logarithmic cutoff equal to one on $|x|\le R$, decreasing
linearly in $\log|x|$ on $R<|x|<R^2$, and zero on $|x|\ge R^2$.
The first term on the right of \eqref{eq:defectgroundstate} is
$O_N((\log R)^{-1})$, while the second tends to $\mathfrak d_N$ by
absolute convergence. Hence for $\mathfrak d_N<0$ the left side is
negative for all sufficiently large $R$. This produces a finitely
supported competitor with Hardy quotient strictly below $A_N$, proving
\eqref{eq:defecthardystrict}.
\end{proof}

\begin{remark}\label{rem:defectcontinuous}
Formula \eqref{eq:defecttheta} defines a scalar interpolation
$\mathfrak d(\xi)$ for real $\xi>2$ by replacing $N$ with $\xi$ in the
beta law and the powers of $\Theta$. Corollary~\ref{cor:defectrealzero}
proves its continuity on $[8,9]$ and the existence of at least one zero
in $(8,9)$, using the positive endpoint estimate below.
The interpolation varies a real parameter in the scalar theta formula,
while Theorem~\ref{mono:thm:main} compares consecutive integer dimensions.
\end{remark}

\subsection{The positive defect and the dimensional sign law}
\label{subsec:apriorisign}

The spatial remainders in Proposition~\ref{prop:hardyremainder} give both
the positive defect and Green finiteness by direct form arguments.
These consequences provide the low-dimensional input to the exterior
and spectral analysis.

\begin{proposition}\label{prop:defectpositiveapriori}
For $3\le N\le8$, put
$\beta_N=2N\min_{x\in\Sigma_N}w_N(x)>0$. Then
\begin{equation}\label{eq:defectpositivelower}
 \mathfrak d_N\ge\beta_N>0,
 \qquad \mathfrak d_8\ge16c_*=\frac{16}{119}.
\end{equation}
Each $H_N$ has finite Green kernel, with
$G_N(x,x)\le w_N(x)^{-1}$.
\end{proposition}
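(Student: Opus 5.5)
Both claims follow by testing the remainder inequality \eqref{eq:hardyremainder} of Proposition~\ref{prop:hardyremainder} against suitable competitors and passing to limits.

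\emph{Defect bound.} Take the logarithmic cutoffs $\chi_R$ from the proof of Theorem~\ref{thm:defectsign}: $\chi_R=1$ on $|x|\le R$, linear in $\log|x|$ on $R<|x|<R^2$, and $0$ beyond. Set $u_R=h_N\chi_R\in C_c(X_N)$. The ground-state identity \eqref{eq:defectgroundstate} gives
\[
\langle u_R,H_Nu_R\rangle=\sum_{x\sim y}h_N(x)h_N(y)|\chi_R(x)-\chi_R(y)|^2+\sum_x\rho_N(x)\chi_R(x)^2 .
\]
As $R\to\infty$ the first term is $O_N((\log R)^{-1})$. The second term tends to $\mathfrak d_N$ by absolute convergence and dominated convergence, since $\rho_N=O(|x|^{-N-2})$. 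On the other side of \eqref{eq:hardyremainder}, every $u_R$ equals $h_N$ on $\Sigma_N$ once $R\ge1$, and $h_N|_{\Sigma_N}=1$. Dropping the nonnegative terms off $\Sigma_N$ gives
\[
\langle u_R,H_Nu_R\rangle\ge\sum_{x\in\Sigma_N}w_N(x)\ge 2N\min_{\Sigma_N}w_N=\beta_N .
\]
Letting $R\to\infty$ yields $\mathfrak d_N\ge\beta_N>0$. Two remarks complete this part:
\begin{itemize}
\item Positivity of $\beta_N$ follows from $w_N>0$ on the finite set $\Sigma_N$.
\item The case $N=8$ follows from \eqref{eq:ownmargin8}: $\beta_8=16c_*=16/119$.
\end{itemize}

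\emph{Green bound.} The remainder inequality extends by density from $C_c(X_N)$ to the form domain. The form domain contains $\ell^2$, because $L_N^D$ is bounded and $|x|^{-2}$ is bounded on $X_N$. Hence $H_N\ge M_{w_N}$ as bounded self-adjoint operators, and so $H_N+\varepsilon\ge M_{w_N}+\varepsilon$. Fix $x$ and set $g_\varepsilon=(H_N+\varepsilon)^{-1}\delta_x$. Then
\[
\langle\delta_x,g_\varepsilon\rangle=\langle g_\varepsilon,(H_N+\varepsilon)g_\varepsilon\rangle\ge w_N(x)|g_\varepsilon(x)|^2=w_N(x)\langle\delta_x,g_\varepsilon\rangle^2 .
\]
Here $\langle\delta_x,g_\varepsilon\rangle=g_\varepsilon(x)$ is real and positive. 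This gives the bound $\langle\delta_x,(H_N+\varepsilon)^{-1}\delta_x\rangle\le w_N(x)^{-1}$. An equivalent route is operator monotonicity of the inverse, but the direct estimate avoids invoking it. The left side is monotone in $\varepsilon$, so the limit $G_N(x,x)\le w_N(x)^{-1}$ exists. Finiteness of the off-diagonal entries then follows from Cauchy--Schwarz for the positive resolvent kernels, or from the equivalence with subcriticality cited from \cite[Theorem~5.12]{KPP}.

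\emph{Main obstacle.} The only delicate step is the logarithmic-cutoff limit, that is, showing the edge term is $O((\log R)^{-1})$. For $x\sim y$ in the annulus one has $h_N(x)h_N(y)\asymp|x|^{-(N-2)}$ and $|\chi_R(x)-\chi_R(y)|\lesssim(|x|\log R)^{-1}$. Summing over the annulus gives
\[
\frac{1}{(\log R)^{2}}\sum_{R<|x|<R^2}|x|^{-N}\asymp\frac{\log R}{(\log R)^{2}}=(\log R)^{-1}.
\]
This is the same estimate already used in Theorem~\ref{thm:defectsign}, so it can be cited rather than repeated.
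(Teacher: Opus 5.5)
Your proposal is correct and follows the paper's proof. You test \eqref{eq:hardyremainder} on the logarithmic cutoffs $h_N\chi_R$ through \eqref{eq:defectgroundstate} and keep only the first-sphere terms to get $\mathfrak d_N\ge\beta_N$; the Green bound comes from $H_N+\varepsilon\ge M_{w_N}+\varepsilon$, monotone convergence and resolvent Cauchy--Schwarz. Your direct quadratic-form estimate for $\langle\delta_x,(H_N+\varepsilon)^{-1}\delta_x\rangle$ is an unpacked version of the paper's inverse form order step, which gives the marginally sharper bound $(w_N(x)+\varepsilon)^{-1}$.
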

\begin{proof}
The cutoffs $h_N\chi_R$ in \eqref{eq:defectgroundstate} equal one
on $\Sigma_N$ for large $R$. Apply \eqref{eq:hardyremainder}, retain
the first-sphere terms, and take the limit to obtain
$\mathfrak d_N\ge\beta_N$.
For $\varepsilon>0$, inverse form order gives
\[
 \langle\delta_x,(H_N+\varepsilon)^{-1}\delta_x\rangle
 \le(w_N(x)+\varepsilon)^{-1}\le w_N(x)^{-1}.
\]
Monotone convergence and resolvent Cauchy--Schwarz prove Green
finiteness at every pair. Formula~\eqref{eq:ownmargin8} gives the
stated bound in dimension eight.
\end{proof}

\begin{lemma}\label{lem:N8subcritical}
For $3\le N\le8$, both $H_N$ and its first-sphere exterior $K_N$
satisfy the assumptions of the general threshold theory in
Section~\ref{sec:lowenergy}, with exact critical tails.
\end{lemma}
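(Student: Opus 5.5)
We have to check, for $H_N$ and for $K_N$ with $3\le N\le8$, the hypotheses of the general threshold theory as summarized before Theorem~\ref{thm:robustintro}. These are: a connected graph Laplacian whose conductances agree with $\Z^N$ outside a finite core, on $X=\Z^N\setminus D$ with $D$ finite; a potential with critical tail $q(x)=-A_N|x|^{-2}+O(|x|^{-2-\eta})$; nonnegativity; and a zero-energy Green kernel that is finite at one vertex.

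\emph{The operator $H_N$.} Take $D=\{0\}$ and $L_c$ the Dirichlet Laplacian on $X_N$ with unit conductances on the edges of $X_N$. The missing edges to the origin are rewritten as a killing term: the full diagonal $2N$ at the points of $\Sigma_N$ equals the graph degree plus $\mathbf 1_{\Sigma_N}$. So $H_N=L_c+q$ with
\[
 q=-A_N|x|^{-2}+\mathbf 1_{\Sigma_N}.
\]
The added term has finite support, so the tail is exactly critical, and we may take any $\eta>0$. The graph $X_N$ is connected for $N\ge2$, since any two nonzero points can be joined by a path avoiding the origin. Nonnegativity of $H_N$ and finiteness of $G_N(x,x)$ at every vertex are exactly Proposition~\ref{prop:hardyremainder} and Proposition~\ref{prop:defectpositiveapriori}.

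\emph{The exterior $K_N$.} Take $D=\{0\}\cup\Sigma_N$, so $X=Y_N$. Since $K_N$ keeps the original diagonal, the edges from $Y_N$ to $\Sigma_N$ again become a finitely supported killing term $\sum_{a\in\Sigma_N}\mathbf 1_{\{a\sim x\}}$. The tail is therefore still exact. Connectivity of $Y_N$ is elementary: two points can be joined by a lattice path that avoids the box $\{|x|_\infty\le1\}$ except where needed, and then rerouted around $\Sigma_N$. Here one uses $N\ge3$; for example, points $\pm e_i\pm e_j$ are connected through $\pm e_i\pm e_j\pm e_k$.

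\emph{Positivity and Green finiteness for $K_N$.} Nonnegativity follows by compression: for $u\in C_c(Y_N)$ extended by zero, $\langle u,K_Nu\rangle=\langle u,H_Nu\rangle\ge\sum w_N|u|^2$. The same form inequality gives
\[
 \langle\delta_x,(K_N+\varepsilon)^{-1}\delta_x\rangle\le w_N(x)^{-1},
\]
so the Green kernel of $K_N$ is finite at every vertex, just as in Proposition~\ref{prop:defectpositiveapriori}. Injectivity is immediate because $K_N\ge w_N>0$ pointwise.

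\emph{Main obstacle.} The main step is matching these facts with the precise formulation of the hypotheses in Section~\ref{sec:lowenergy}. That formulation is not visible here and may require, for instance, a specific convention for boundary killing terms, a Dirichlet form domain, or a finite-core modification written in a particular way. I would handle this by rewriting the boundary diagonal as a potential of finite support as above. If the section instead demands that the potential carry only the tail and that the graph be exactly the lattice off a core, I would absorb the killing terms into the core conductances or into the $O(|x|^{-2-\eta})$ correction, which is allowed to be nonradial and of either sign. The remaining claims then follow directly from the cited remainder bounds.
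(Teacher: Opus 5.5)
Your proof is correct and follows the paper's argument. The edges to deleted vertices become a finitely supported diagonal term in $q$, so the potential equals $-A_N|x|^{-2}$ outside a finite set. Compressing the positive remainder \eqref{eq:hardyremainder} and using the inverse form estimate $\langle\delta_x,(K_N+\varepsilon)^{-1}\delta_x\rangle\le w_N(x)^{-1}$ then gives nonnegativity and Green finiteness for both $H_N$ and $K_N$.

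Your final hedging paragraph is unnecessary. Section~\ref{sec:lowenergy} explicitly allows finitely many deleted vertices, with their edges retained as a finite diagonal contribution to $q$, which is exactly your setup.
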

\begin{proof}
Both domains are connected integer lattices with finitely many vertices
deleted. Retaining the diagonal of a deleted edge adds a finite
potential to the graph Laplacian. Compression of
\eqref{eq:hardyremainder} gives $K_N\ge M_{w_N|_{Y_N}}$.
The preceding inverse form estimate proves Green finiteness for both
operators. Their potentials equal $-A_N|x|^{-2}$ outside a finite set.
\end{proof}

\begin{corollary}\label{cor:defectsigncomplete}
For every integer $N\ge3$,
\begin{equation}\label{eq:defectsigncomplete}
 \mathfrak d_N>0\quad(3\le N\le8),\qquad
 \mathfrak d_N<0\quad(N\ge9).
\end{equation}
\end{corollary}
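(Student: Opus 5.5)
The corollary is obtained by assembling two results already proved for the complementary ranges of dimensions; no new estimate is required.

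\emph{Sharp range $3\le N\le8$.} Invoke Proposition~\ref{prop:defectpositiveapriori}. It states $\mathfrak d_N\ge\beta_N$, where $\beta_N=2N\min_{\Sigma_N}w_N$. Its positivity comes from the strictly positive weights supplied by Proposition~\ref{prop:hardyremainder}; the explicit values \eqref{eq:owntraceweights} give $\beta_N\ge16/119$ in every case. The underlying argument is as follows. Apply the ground-state identity \eqref{eq:defectgroundstate} to the logarithmic cutoffs $h_N\chi_R$. The gradient term is nonnegative and tends to zero, so the left side converges to $\mathfrak d_N$ by absolute convergence of $\rho_N$. For $R\ge2$ these cutoffs equal $h_N=1$ on $\Sigma_N$. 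The remainder inequality \eqref{eq:hardyremainder} therefore bounds the left side below by the first-sphere contribution $\sum_{\Sigma_N}w_N\ge\beta_N$. This gives $\mathfrak d_N>0$.

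\emph{Strict range $N\ge9$.} Invoke Theorem~\ref{thm:defectsign}, which gives $\mathfrak d_N<-\tfrac13-\tfrac{22}{75}(N-9)<0$. That bound rests on two inputs: the base estimate $\mathfrak d_9<-1/3$ from Lemma~\ref{mono:lem:N9base}, and the uniform gap \eqref{mono:eq:gap} from the monotonicity proof of Theorem~\ref{mono:thm:main}. Both are already established.

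The only point needing care is the limit in the positive range. The gradient term is only $O((\log R)^{-1})$, but it enters with a nonnegative sign. Dropping it therefore gives an inequality in the correct direction, so its slow decay causes no difficulty. The genuine difficulties, namely the explicit remainders for $N\le8$ and the theta estimates for $N\ge9$, lie in results cited above, so the corollary itself is a two-line consequence.
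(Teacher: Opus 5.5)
Your proposal is correct and matches the paper's proof: it combines Proposition~\ref{prop:defectpositiveapriori} for $3\le N\le8$ with Theorem~\ref{thm:defectsign} for $N\ge9$. Your closing remark has the sign logic backwards, though the argument does not depend on it. In the ground-state identity the nonnegative gradient term sits on the same side as $\sum\rho_N\chi_R^2$, so the remainder bound only gives $\sum\rho_N\chi_R^2\ge\beta_N-(\text{gradient term})$; what yields $\mathfrak d_N\ge\beta_N$ is the gradient term's decay to zero, which you correctly invoke earlier, not its sign.
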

\begin{proof}
Use Proposition~\ref{prop:defectpositiveapriori} for $3\le N\le8$ and
Theorem~\ref{thm:defectsign} for $N\ge9$.
\end{proof}

For the same logarithmically cut off continuum-critical profile in each
dimension, the limiting energy changes from positive to negative between
the integers eight and nine. The negative defect produces a negative
test for every $N\ge9$. In the complementary range, the positive
remainders in Proposition~\ref{prop:hardyremainder} control all
competitors and also establish the positive defect sign.

The scalar interpolation also locates a zero strictly between eight
and nine.

\begin{corollary}\label{cor:defectrealzero}
Let $\mathfrak d(\xi)$ be the right side of \eqref{eq:defecttheta} with
$N$ replaced by the real parameter $\xi>2$,
$s_\xi=(\xi-2)/2$, and the beta law
$\operatorname{Beta}((\xi-2)/4,(\xi-2)/4)$. Its restriction to $[8,9]$
is continuous and has at least one zero in $(8,9)$. In particular,
$\mathfrak d(8)\ge16/119>0$ and $\mathfrak d(9)<-1/3$.
\end{corollary}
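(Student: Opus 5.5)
The plan is to establish three things in turn: continuity of $\mathfrak d(\xi)$ on $[8,9]$, the endpoint values $\mathfrak d(8)\ge16/119$ and $\mathfrak d(9)<-1/3$, and then a sign change via the intermediate value theorem.

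\emph{Endpoints.} At the integer endpoints the interpolation coincides with the defect itself, since \eqref{eq:defecttheta} holds for every integer $N\ge3$ by Proposition~\ref{prop:defecttheta}. So $\mathfrak d(8)=\mathfrak d_8\ge16c_*=16/119$ by Proposition~\ref{prop:defectpositiveapriori}, and $\mathfrak d(9)=\mathfrak d_9<-1/3$ by Lemma~\ref{mono:lem:N9base}.

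\emph{Continuity.} I would not work with \eqref{eq:defecttheta} directly, because its integrand converges only as a combined expression. Instead I would use the reorganized form \eqref{mono:eq:defect}, which is the version of \eqref{eq:defectmaster} after integration:
\[
\mathfrak d(\xi)=\mathfrak c_\xi+\frac{(\xi+2)^2}{4}-S_\xi .
\]
Here $S_\xi$ is defined by \eqref{mono:eq:S} with $n=\xi-1$, $s=s_\xi$, and $\Xi_\xi$ built from the interpolated beta law.

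First I would check that this derivation holds verbatim for real $\xi>2$. The telescoping identity \eqref{eq:defecttelescoping} does not involve $N$, the relation $2\xi\eta_\xi=s_\xi$ is algebraic, and the Mellin integral of the $e^{-t}(1+\eta t)$ term is elementary. The terms $\mathfrak c_\xi$ and $(\xi+2)^2/4$ are then continuous by continuity of $\Gamma$.

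For $S_\xi$ I would argue by dominated convergence on $\xi\in[8,9]$.
\begin{itemize}
\item \emph{Pointwise continuity.} The integrand is continuous in $\xi$ for each fixed $t$. The function $\Theta(t)^{\xi-1}$ is continuous in $\xi$, and $\Xi_\xi(t)$ is continuous in $\xi$ by the series \eqref{eq:defectXiseries}, whose coefficients \eqref{eq:defectkappa} depend continuously on $\xi$ and are bounded by $m-1$ through \eqref{mono:eq:kappa}.
\item \emph{Domination.} Lemma~\ref{mono:lem:beta} gives $0<\Xi_\xi\le\Xi_\infty$, and Lemma~\ref{mono:lem:beta} was already stated for real parameters. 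I would then take $t^{s_\xi-1}\le t^{2}+t^{5/2}$, valid for $s\in[3,7/2]$, and $\Theta^{\xi-1}\le\Theta^{8}$.
\end{itemize}
This gives a $\xi$-independent majorant
\[
(t^2+t^{5/2})\Big[(\Theta^8-1)\Xi_\infty(t)+\Theta^8\sum_{k\ge1}e^{-k^2t}\,\Xi_\infty((2k+1)t)\Big].
\]

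\emph{Main obstacle.} The expected difficulty is integrability of this majorant near $t=0$. There $\Theta\sim\sqrt{\pi/t}$, so $\Theta^8\sim t^{-4}$, while $\Xi_\infty(u)=O(u^4)$ as $u\to0$ and is $O(u)$ for large $u$. The $k$-sum is roughly $\int e^{-k^2t}\min\{(kt)^4,kt\}\,dk$, which is $O(1)$ as $t\to0$. Hence the product is $O(t^{2-4})$, which is not integrable.

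I would handle this in one of two ways.
\begin{itemize}
\item \emph{Route 1.} Reuse the estimates of Appendix~\ref{app:defectremainder} (the Jacobi identity on $0<t\le3$), which bound $R_N$ uniformly. Those estimates must control exactly this singular region, and since they are uniform in $N$ they should carry over to real $\xi$ on compact ranges.
\item \emph{Route 2.} Split off the exact Mellin transforms, as in \eqref{mono:eq:I}, of the leading small-$t$ pieces. These are continuous in $\xi$ explicitly, and the remainder can then be dominated.
\end{itemize}
Once continuity is established, the intermediate value theorem applied to $\mathfrak d(8)>0>\mathfrak d(9)$ yields a zero in $(8,9)$.
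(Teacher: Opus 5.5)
Your route is sound, but the ``main obstacle'' you flag does not exist. It comes from a miscalculation, and your own majorant already closes the argument, so you need neither Route~1 nor Route~2.

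The $k$-sum is not $O(1)$. The Gaussian factor cuts off at $k\sim t^{-1/2}$, and there $(2k+1)t\lesssim\sqrt t$ is still in the quartic regime of $\Xi_\infty$. Your heuristic integral $\int e^{-k^2t}\min\{(kt)^4,kt\}\,dk$ is therefore of order $t^4\int k^4e^{-k^2t}\,dk\asymp t^{3/2}$. Precisely, the telescoping used in Appendix~\ref{app:defectremainder} gives
\[
 \sum_{k\ge0}e^{-k^2t}\,\Xi_\infty((2k+1)t)
 =\Bigl(\frac t4-1\Bigr)\Theta(t)+e^{-t/4}\Theta_{1/2}(t)
 \le\frac{t^2}{32}\,\Theta(t).
\]
This gives the following bounds for your majorant:
\begin{itemize}
\item On $(0,1]$, the sum term is at most $C\,t^4\Theta(t)^9=O(t^{-1/2})$, which is integrable.
\item On $(0,1]$, the first term is $O(t^2)$, because $\Xi_\infty(t)=t^4/192+O(t^5)$.
\item On $[1,\infty)$, everything is $O(t^{7/2}e^{-t})$.
\end{itemize}
If you bound $t^{s_\xi-1}\Theta^{\xi-1}\le Ct^{-3/2}$ jointly via Jacobi's identity, instead of splitting the two factors, you even get a bounded majorant.

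You should also state explicitly what this integrability buys. It shows $S_\xi<\infty$, and only then is it legitimate to split the integral in \eqref{eq:defecttheta} into the elementary $e^{-t}(1+\eta_\xi t)$ part and $S_\xi$ for real $\xi$. This also shows that $\mathfrak d(\xi)$ is well defined on $[8,9]$.

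The paper keeps the cancellation inside \eqref{eq:defecttheta}. It rewrites $\mathcal J_\xi$ by completing the square and applies the shifted Jacobi formula uniformly in the shift. This shows that the leading term $s_\xi\pi^{\xi/2}t^{1-\xi/2}$ of $\mathcal J_\xi$ cancels that of $s_\xi t(\Theta^\xi-1)$ uniformly for $\xi\in[8,9]$. Hence the combined integrand is bounded on $(0,1]$ and $O(t^{7/2}e^{-t})$ beyond.

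Your version replaces this asymptotic cancellation with positivity. After the dimension-free reorganization \eqref{eq:defectmaster}, which is indeed valid for real $\xi$, the only nontrivial integral has a nonnegative integrand. Lemma~\ref{mono:lem:beta}, stated for real parameters, then supplies the $\xi$-independent kernel $\Xi_\infty$ as a majorant, so no uniform expansion of $\mathcal J_\xi$ is needed. The price is reliance on the dimensional-comparison machinery of Section~\ref{sec:defect} rather than on the theta representation alone. The endpoint values and the intermediate value step are the same as in the paper.
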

\begin{proof}
We check continuity with the cancellation at $t=0$ left intact. Put
$\Theta_v(t)=\sum_{k\in\Z}e^{-t(k+v)^2}$ and let $V_\xi$ have the
indicated beta law. The one-dimensional square completion used in
Appendix~\ref{app:defecttheta} is valid also for real $\xi$ and gives
\begin{align*}
 \mathcal J_\xi(t)=\xi\big[2\Theta(t)^\xi
 &-2\Theta(t)^{\xi-1}\mathbb E_\xi\{
       e^{-tV_\xi(1-V_\xi)}\Theta_{V_\xi}(t)\}\\
 &-2+4\mathbb E_\xi e^{-tV_\xi}\big].
\end{align*}
The shifted Jacobi formula~\eqref{eq:appshiftedJacobi} gives
$\Theta_v(t)=(\pi/t)^{1/2}(1+O(e^{-\pi^2/t}))$ uniformly in
$v\in[0,1]$. Also
$\mathbb E_\xi[V_\xi(1-V_\xi)]=(\xi-2)/(4\xi)$.
Expansion to first order in $t$ therefore gives, uniformly for
$8\le\xi\le9$,
\[
 \mathcal J_\xi(t)=s_\xi\pi^{\xi/2}t^{1-\xi/2}
                    +O(t^{2-\xi/2}+1),\qquad 0<t\le1.
\]
The leading term cancels the one in
$s_\xi t(\Theta(t)^\xi-1)$. Including the factor $t^{s_\xi-1}$,
the combined integrand is bounded by a fixed constant on $(0,1]$.
On $[1,\infty)$, $0\le\mathcal B_\xi\le1$ and
$\Theta(t)-1=O(e^{-t})$ bound it by $Ct^{7/2}e^{-t}$, uniformly in
$\xi\in[8,9]$. The integrand is continuous in $\xi$ for each $t>0$:
the beta densities have a common integrable majorant, and the theta
series converge locally uniformly. Dominated convergence proves
continuity of the combined integral and hence of $\mathfrak d$.
The endpoint signs follow from
Proposition~\ref{prop:defectpositiveapriori} and
Lemma~\ref{mono:lem:N9base}. The intermediate value theorem
then gives a zero in $(8,9)$.
\end{proof}

\subsection{From the defect sign to spectral instability}

Let $L_N^D$ be the bounded self-adjoint operator on $\ell^2(X_N)$ given by
\[
 (L_N^Du)(x)=2Nu(x)-\sum_{y\in X_N,\,y\sim x}u(y).
\]
Set
\begin{equation}\label{eq:structuralH}
 H_N=L_N^D-A_N M_{|x|^{-2}}.
\end{equation}
For finitely supported functions,
$\langle u,H_Nu\rangle=\E_N[u]-A_N\D_N[u]$.

The essential spectrum and the high-dimensional spectral consequence
are given in
Lizama's preprint on the dimensional classification~\cite{LizamaSharp}
(Corollary~6.1).
We record them in a variational form that applies in every dimension.

\begin{proposition}\label{prop:spectraltransition}
For every integer $N\ge3$,
\[
 \sigma_{\mathrm{ess}}(H_N)=[0,4N].
\]
Moreover,
\[
 C(N)=A_N\quad\Longleftrightarrow\quad H_N\ge0,
\]
and $C(N)<A_N$ if and only if $H_N$ has a negative discrete eigenvalue.
\end{proposition}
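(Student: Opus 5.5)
The plan has three parts: the essential spectrum by a Weyl-sequence and compact-perturbation argument, the equivalence $C(N)=A_N\Leftrightarrow H_N\ge0$ by quadratic forms, and the eigenvalue statement from the previous two.

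\emph{Essential spectrum.} Write $H_N=L_N^D-A_NM_{|x|^{-2}}$. The multiplication operator $M_{|x|^{-2}}$ is compact on $\ell^2(X_N)$, since its entries tend to zero. The operator $L_N^D$ differs from the free lattice Laplacian $L_N$ on $\ell^2(\Z^N)$ only at finitely many entries, namely the removed origin and its $2N$ incident edges. More precisely, $L_N$ is unitarily a direct sum of a one-dimensional piece at the origin and $L_N^D$, plus a finite-rank perturbation. By the Fourier transform, $\sigma(L_N)=[0,4N]$, since the symbol $\sum_j(2-2\cos\theta_j)$ ranges over $[0,4N]$, and this spectrum is purely absolutely continuous, hence essential. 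Weyl's theorem on compact perturbations then gives $\sigma_{\mathrm{ess}}(H_N)=[0,4N]$. I will state this chain carefully: removing a vertex changes the essential spectrum neither by the finite-rank argument nor by decoupling a one-dimensional subspace.

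\emph{Equivalence $C(N)=A_N\Leftrightarrow H_N\ge0$.} For $u\in C_c(X_N)$ we have $\ip u{H_Nu}=\E_N[u]-A_N\D_N[u]$. Since $H_N$ is bounded, $C_c(X_N)$ is a form core. Hence $H_N\ge0$ if and only if $\E_N[u]\ge A_N\D_N[u]$ for all finitely supported $u$, which means $C(N)\ge A_N$. Proposition~\ref{prop:scaling} gives $C(N)\le A_N$ unconditionally, so $H_N\ge0$ is equivalent to $C(N)=A_N$.

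\emph{Negative discrete eigenvalue.} Suppose $C(N)<A_N$. Then there is $u\in C_c$ with $\ip u{H_Nu}<0$, so $\inf\sigma(H_N)<0$. Because $\sigma_{\mathrm{ess}}(H_N)\subset[0,\infty)$, spectrum below zero consists of isolated eigenvalues of finite multiplicity, and the bottom of the spectrum is one of them; this gives a negative discrete eigenvalue. Conversely, a negative eigenvalue means $H_N\not\ge0$, hence $C(N)<A_N$ by the equivalence above. By Theorem~\ref{thm:defectsign} and Proposition~\ref{prop:hardyremainder}, this happens exactly for $N\ge9$, although the statement itself does not need that classification.

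The only step requiring care is the essential spectrum computation, specifically handling the Dirichlet modification at the first sphere, which keeps the full diagonal $2N$ there. That modification is a finite-rank change relative to the restriction of $L_N$, so the compact-perturbation argument covers it. Everything else is soft functional analysis.
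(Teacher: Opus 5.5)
Your proposal is correct and follows essentially the same route as the paper: the Fourier symbol together with finite-rank and compact perturbation (Weyl) gives the essential spectrum, density of finitely supported functions in the bounded form together with Proposition~\ref{prop:scaling} gives the equivalence, and the essential spectrum starting at zero turns a negative finite test into a discrete negative eigenvalue. The only difference is cosmetic. For the converse of the last claim you invoke the form equivalence, whereas the paper truncates a negative eigenvector; this is the same density argument in another guise.
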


\begin{proof}
The Fourier symbol of the free lattice Laplacian is
\[
 \sigma_N(\theta)=4\sum_{j=1}^N\sin^2(\theta_j/2),
 \qquad \theta\in[-\pi,\pi]^N.
\]
Its range and essential range are $[0,4N]$, so that is its essential spectrum.
On adjoining a one-dimensional space at the origin, $L_N^D\oplus0$ differs
from the free Laplacian only in entries incident with the origin. This is
a finite-rank perturbation. Also $M_{|x|^{-2}}$ is compact: its restriction
to any finite ball has finite rank, and the norm of the omitted diagonal
tends to zero. Invariance of essential spectrum under compact perturbations
gives the stated interval for $H_N$.

If $C(N)=A_N$, its quadratic form is nonnegative on finitely supported
functions, and hence on all of $\ell^2(X_N)$ by boundedness and density.
Conversely, $H_N\ge0$ gives $C(N)\ge A_N$; Proposition~\ref{prop:scaling}
gives equality. If $C(N)<A_N$, a finite competitor has negative quadratic
energy for $H_N$. Thus $\inf\sigma(H_N)<0$. Since the essential spectrum
starts at zero, there is a negative isolated eigenvalue of finite
multiplicity. Conversely, truncating a negative eigenvector gives a finite
competitor with negative quadratic energy, and therefore $C(N)<A_N$.
\end{proof}

The spectral equivalence also yields an exact statement for mesh
refinement. Its dimensional alternatives use the lower bounds proved
in Proposition~\ref{prop:hardyremainder},
and the total-defect sign in Theorem~\ref{thm:defectsign}.

\begin{corollary}
\label{cor:meshstability}
For $h>0$, give $hX_N$ the measure assigning mass $h^N$ to each
vertex, extend functions by zero at the origin, and define
\begin{equation}\label{eq:meshoperator}
 (H_{N,h}u)(x)=h^{-2}\sum_{j=1}^N
 \bigl(2u(x)-u(x+he_j)-u(x-he_j)\bigr)
       -\frac{A_N}{|x|^2}u(x).
\end{equation}
The map $U_h:\ell^2(hX_N,h^N)\to\ell^2(X_N)$ given by
$(U_hu)(n)=h^{N/2}u(hn)$ is unitary and satisfies
\begin{equation}\label{eq:meshunitary}
 U_hH_{N,h}U_h^{-1}=h^{-2}H_N.
\end{equation}
In particular, for every $t>0$,
\begin{equation}\label{eq:meshsemigroup}
 \|e^{-tH_{N,h}}\|_{\ell^2\to\ell^2}=
 \begin{cases}
 1,&3\le N\le8,\\
 \exp(\kappa_Nt/h^2),&N\ge9,
 \end{cases}
 \qquad \kappa_N=-\inf\sigma(H_N)>0\quad(N\ge9).
\end{equation}
Thus these discrete critical operators have a common lower spectral
bound as $h\downarrow0$ if and only if $3\le N\le8$.
\end{corollary}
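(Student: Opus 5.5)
The plan is to check the unitary equivalence \eqref{eq:meshunitary} by direct computation, and then read off the semigroup norms from the spectral theorem combined with Propositions~\ref{prop:spectraltransition} and~\ref{prop:hardyremainder}.

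\emph{Unitarity and conjugation.} $U_h$ maps $hX_N$ bijectively onto $X_N$, and $\sum_n|h^{N/2}u(hn)|^2=\sum_x h^N|u(x)|^2$, so $U_h$ is an isometry with inverse $(U_h^{-1}v)(x)=h^{-N/2}v(x/h)$; hence it is unitary. For $v\in\ell^2(X_N)$ put $u=U_h^{-1}v$, extended by zero at the origin. At $x=hn$ the differences become $u(x\pm he_j)=h^{-N/2}v(n\pm e_j)$, and $v(0)=0$ matches the Dirichlet convention of $L_N^D$, which keeps the full diagonal $2N$. The potential transforms as $A_N|hn|^{-2}=h^{-2}A_N|n|^{-2}$. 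Therefore $U_hH_{N,h}U_h^{-1}v=h^{-2}(L_N^Dv-A_NM_{|x|^{-2}}v)=h^{-2}H_Nv$. Both operators are bounded, so no domain issues arise.

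\emph{Semigroup norm.} By \eqref{eq:meshunitary}, $e^{-tH_{N,h}}=U_h^{-1}e^{-(t/h^2)H_N}U_h$. For a bounded self-adjoint operator $T$, the spectral theorem gives $\|e^{-sT}\|=e^{-s\inf\sigma(T)}$. The problem therefore reduces to computing $\inf\sigma(H_N)$.

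\emph{Sharp dimensions, $3\le N\le8$.} Proposition~\ref{prop:hardyremainder} gives $C(N)=A_N$, so $H_N\ge0$ by Proposition~\ref{prop:spectraltransition}. The same proposition gives $0\in\sigma_{\mathrm{ess}}(H_N)=[0,4N]$, so $\inf\sigma(H_N)=0$ and the norm equals $1$.

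\emph{Strict dimensions, $N\ge9$.} Theorem~\ref{thm:defectsign} gives $C(N)<A_N$. Proposition~\ref{prop:spectraltransition} then yields a negative eigenvalue, so $\kappa_N=-\inf\sigma(H_N)>0$ and the norm equals $\exp(\kappa_Nt/h^2)$.

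\emph{Final assertion.} Since $\inf\sigma(H_{N,h})=h^{-2}\inf\sigma(H_N)$, this bottom is $0$ for every $h$ when $N\le8$, while for $N\ge9$ it is $-\kappa_N/h^2\to-\infty$ as $h\downarrow0$. Thus a common lower spectral bound exists exactly when $3\le N\le8$.

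I do not expect a real obstacle here. The only point needing care is the treatment of lattice neighbours of the origin: the zero extension of $u$ at the origin in \eqref{eq:meshoperator} has to correspond to the retained diagonal $2N$ in $L_N^D$, and it does, because $v(0)=0$ on both sides of the conjugation.
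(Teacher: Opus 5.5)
Your proposal is correct and follows the paper's proof: verify unitarity and the $h^{-2}$ scaling under $U_h$ directly, then combine the spectral-theorem identity $\|e^{-sT}\|=e^{-s\inf\sigma(T)}$ with Proposition~\ref{prop:spectraltransition} and the dimension-specific results to get $\inf\sigma(H_N)=0$ for $3\le N\le8$ and $\inf\sigma(H_N)=-\kappa_N<0$ for $N\ge9$. You also note explicitly that $0\in\sigma_{\mathrm{ess}}(H_N)$ makes the infimum exactly zero, not merely nonnegative, in the sharp dimensions, which the paper leaves implicit.
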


\begin{proof}
The equality of norms follows from
$\sum_n|h^{N/2}u(hn)|^2=h^N\sum_{x\in hX_N}|u(x)|^2$.
The lattice difference and the potential in \eqref{eq:meshoperator}
both acquire the factor $h^{-2}$ under this map, proving
\eqref{eq:meshunitary}. Proposition~\ref{prop:spectraltransition}, combined with those
dimension-specific bounds, gives $\inf\sigma(H_N)=0$ for $3\le N\le8$ and
$\inf\sigma(H_N)=-\kappa_N<0$ for $N\ge9$.
The spectral theorem now gives
$\|e^{-tH_{N,h}}\|=\exp(-t\inf\sigma(H_{N,h}))$, which proves
\eqref{eq:meshsemigroup} and the last assertion.
\end{proof}

The critical continuum form in \eqref{eq:continuumHardy} has a
nonnegative Friedrichs realization in every dimension. On smooth
functions supported away from the origin, the sampled expression
\eqref{eq:meshoperator} converges to $-\Delta-A_N|x|^{-2}$.
Corollary~\ref{cor:meshstability} concerns the uniform stability of
these approximations on their full Hilbert spaces: local consistency
away from a singularity does not by itself preserve a sharp
nonnegativity threshold.

For a class of regularized continuum parabolic equations, including
higher-order equations, Galaktionov--Kamotski
\cite[Theorem~2.1 and Lemma~2.1]{GalaktionovKamotski} relate exponential
growth to positive eigenvalues of a rescaled generator. Their result
assumes exponential decay of the corresponding eigenfunction and a
polynomial lower bound on the magnitude of its projection against the
rescaled initial data.

Conforming continuous piecewise linear finite elements provide a
variational approximation of the continuum Hardy quotient on bounded
smooth convex domains containing the origin. For shape-regular,
quasi-uniform meshes, Ignat--Zuazua~\cite[Theorem~1.1]{IgnatZuazua}
prove that the corresponding constants approach $A_N$ from above with
an excess of order $|\log h|^{-2}$ as the mesh size $h$ tends to zero.
The inclusion of these finite element spaces in the continuum energy
space preserves nonnegativity of the critical form.

The spectral Rayleigh quotient uses the denominator $\|u\|_2^2$,
whereas the Hardy quotient uses $\D_N[u]$. These normalizations give
different minima and the same criterion for the existence of a
negative direction.

\section{Sharp Hardy inequalities and positive remainders}\label{sec:hardyproofs}\label{sec:common}

This section constructs positive vertex remainders with quantitative
control on the first sphere for all six sharp dimensions. Reciprocal
fields reduce the form inequality to local diagonal estimates.
Local quadratic estimates control a finite interior while the field on
exterior and crossing edges is retained.
In dimensions three through seven, that region consists of at most
three layers of vertices with coordinates in $\{0,\pm1\}$, grouped
by the number of nonzero coordinates. Dimension eight uses two
families of classes under sign changes and coordinate permutations,
together with uniform scalar estimates.

The proofs share the reciprocal-field framework and local completion
with Alpay's low-dimensional preprint~\cite{AlpayLow}
(Sections~3--6), with alternative explicit constructions as follows.
For dimensions three and four,
Lizama's preprint~\cite{LizamaSharp}
(Lemmas~3.3--3.4) uses shifted radial supersolutions;
Alpay's three-dimensional preprint~\cite{AlpayThree}
(Theorem~2.1) uses a shifted linear reciprocal field.
The quadratic fields below give different constructions in those
dimensions. In dimensions four through seven, they belong to the
quadratic family of Alpay's low-dimensional preprint, with different
parameters and positivity estimates. In dimensions six and seven, we
retain and complete the interior quadratic form.

The eight-dimensional construction in Alpay's low-dimensional
preprint assigns an integer profile to the $26$ classes of
$\{1\le |x|^2\le12\}$ under sign changes and coordinate permutations.
Our construction uses a different exterior field and the two families
selected by $\sum_jx_j^4-|x|^2<24$, with profiles whose successive ratios
are rational functions. The formulas, their positivity estimates,
and the resulting explicit remainder are developed below and in
Appendix~\ref{app:fielddata}.

\subsection{Reciprocal fields and local completion}\label{subsec:generalcompletion}

The edge identity in Lemma~\ref{lem:edge} extends directly to weighted
graphs. Let $X$ be a locally finite graph with symmetric conductances
$c_{xy}=c_{yx}\ge0$, $c_{xx}=0$, and a real diagonal $a(x)$. Write
\[
 (Lu)(x)=a(x)u(x)-\sum_yc_{xy}u(y).
\]
All form identities below are for finitely supported complex functions,
with counting measure. A deleted neighbour is omitted from the sum
without changing the prescribed diagonal.

For a reciprocal field $\rho_{xy}>0$, $\rho_{yx}=\rho_{xy}^{-1}$, put
$V_\rho(x)=a(x)-\sum_yc_{xy}\rho_{xy}$. Expansion on each edge gives
\begin{equation}\label{eq:generalweightedidentity}
 \langle u,Lu\rangle
 =\sum_{\{x,y\}}c_{xy}
   \left|\sqrt{\rho_{xy}}u(x)-\sqrt{\rho_{yx}}u(y)\right|^2
   +\sum_xV_\rho(x)|u(x)|^2.
\end{equation}
Each unoriented edge with positive conductance is counted once.
The mixed term is $-2c_{xy}\operatorname{Re}(u(x)\overline{u(y)})$;
the vertex remainder restores the diagonal $a(x)$.

If $\rho_{xy}=h(y)/h(x)$ for a positive function $h$, then
$V_\rho=Lh/h$ and \eqref{eq:generalweightedidentity} is the usual
ground-state representation~\cite{KPPhardy}. Its half-line
factorization and explicit remainder are discussed in
\cite{KrejcirikStampach}. Here the field need only be reciprocal. This permits us to join a
local profile to an independently chosen exterior field without first
finding a global positive supersolution. The essential estimate is
the resulting vertex remainder for the prescribed potential.

The same identity gives a direct form estimate on a finite patch. It
retains the exterior remainder, so a positive interior form supplies a
positive weight at every vertex without constructing an interior field.

\begin{lemma}[Localization with retained exterior costs]
\label{lem:directlocalization}
Let $\Omega\subset X_N$ be finite. Suppose a positive reciprocal field
$b_{xy}$ is given on every edge with at least one endpoint outside
$\Omega$, and put
\[
 V_b(x)=2N-\frac{A_N}{|x|^2}
            -\sum_{y\in X_N,\,y\sim x}b_{xy}
 \qquad(x\notin\Omega).
\]
Define
\begin{align}
 Q_\Omega[u]={}&\sum_{x\in\Omega}
 \left(2N-\frac{A_N}{|x|^2}
       -\sum_{\substack{y\in X_N\setminus\Omega\\y\sim x}}b_{xy}
 \right)|u(x)|^2\notag\\
 &-2\operatorname{Re}\sum_{\substack{\{x,y\}\subset\Omega\\x\sim y}}
                   u(x)\overline{u(y)}.
 \label{eq:directlocalizationcore}
\end{align}
Then every $u\in C_c(X_N;\mathbb C)$ satisfies
\begin{equation}\label{eq:directlocalization}
 \langle u,H_Nu\rangle\ge Q_\Omega[u]
                   +\sum_{x\notin\Omega}V_b(x)|u(x)|^2.
\end{equation}
In particular, if $Q_\Omega[u]\ge\sum_{x\in\Omega}w(x)|u(x)|^2$
with $w>0$ on $\Omega$, and $V_b>0$ outside it, then
$H_N\ge M_{\widetilde w}$ for the strictly positive weight equal to
$w$ on $\Omega$ and to $V_b$ elsewhere.
\end{lemma}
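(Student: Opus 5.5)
The plan is to apply the weighted edge identity \eqref{eq:generalweightedidentity}, or equivalently Lemma~\ref{lem:edge}, only on edges having at least one endpoint outside $\Omega$. Interior edges are left untouched. For $u\in C_c(X_N;\C)$ we write
\[
 \langle u,H_Nu\rangle=\sum_{x\in X_N}\Bigl(2N-\frac{A_N}{|x|^2}\Bigr)|u(x)|^2
 -2\operatorname{Re}\sum_{\{x,y\}\subset X_N,\,x\sim y}u(x)\overline{u(y)}.
\]
As in the proof of Lemma~\ref{lem:edge}, the full diagonal $2N$ is correct at neighbours of the origin because $u(0)=0$. We then split the edge sum into edges with both endpoints in $\Omega$ and edges with at least one endpoint outside $\Omega$.

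On each edge of the second kind we use
\[
 -2\operatorname{Re}(z\overline w)=|\sqrt{b_{xy}}z-b_{xy}^{-1/2}w|^2-b_{xy}|z|^2-b_{yx}|w|^2.
\]
Here reciprocity makes the expression independent of the chosen orientation. We drop the nonnegative squares, which yields the inequality. The subtracted diagonal terms are then collected vertex by vertex.

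At a vertex $x\notin\Omega$, every incident edge has an endpoint outside $\Omega$. The collected coefficient is therefore $2N-A_N|x|^{-2}-\sum_{y\sim x}b_{xy}=V_b(x)$. At $x\in\Omega$, only the edges to $X_N\setminus\Omega$ contribute, which gives exactly the diagonal in \eqref{eq:directlocalizationcore}. The interior mixed terms are retained verbatim. Together these give $Q_\Omega[u]+\sum_{x\notin\Omega}V_b(x)|u(x)|^2$, which proves \eqref{eq:directlocalization}.

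For the final assertion, insert the hypothesis $Q_\Omega[u]\ge\sum_{x\in\Omega}w(x)|u(x)|^2$. This gives $\langle u,H_Nu\rangle\ge\sum_x\widetilde w(x)|u(x)|^2$ on $C_c(X_N)$. Since $H_N$ is bounded and $C_c$ is dense, the inequality extends to all of $\ell^2(X_N)$, which is the operator inequality $H_N\ge M_{\widetilde w}$. The only delicate point is the bookkeeping: each crossing edge must be counted once, and the coefficient $b_{xy}$ (respectively $b_{yx}$) must land at the correct endpoint. Reciprocity handles this automatically. No analytic obstacle arises, since all sums are finite.
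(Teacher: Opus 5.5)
Your argument is correct and is essentially the paper's proof. The paper extends $b$ arbitrarily to the edges inside $\Omega$, applies Lemma~\ref{lem:edge}, discards only the non-internal squares and re-expands the internal ones, which is exactly your bookkeeping with the interior edges never rewritten. In the final density step, $M_{\widetilde w}$ is bounded because $V_b\le 2N$ outside $\Omega$ and $\Omega$ is finite, so the extension from $C_c$ to $\ell^2(X_N)$ is legitimate; Fatou's lemma would also suffice.
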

\begin{proof}
Extend $b$ arbitrarily as a positive reciprocal field on the edges
inside $\Omega$. Apply Lemma~\ref{lem:edge} and subtract the Hardy
potential. Discard only those edge squares which are not internal to
$\Omega$. Expanding the retained squares cancels the internal field
ratios and leaves exactly \eqref{eq:directlocalizationcore}; the
remaining vertex terms are $V_b$ outside $\Omega$. This proves
\eqref{eq:directlocalization}, including the original full diagonal at
neighbours of the removed origin. The last assertion follows by
substitution.
\end{proof}

For fields depending on squared radii, reciprocity can be built into the
formula. With $s,t>0$, set
\begin{equation}\label{eq:generaloddfield}
 b(s,t)=\frac{1-\Psi(v,w)}{1+\Psi(v,w)},\qquad
 v=\frac{t-s}{t+s},\qquad w=\frac1{t+s}.
\end{equation}
If $\Psi(-v,w)=-\Psi(v,w)$ and $|\Psi(v,w)|<1$ on the required
edges, then $b$ is positive and reciprocal there. Indeed, exchanging
$s,t$ changes only the sign of $v$. This construction permits rational
corrections while leaving reciprocity exact.

The local patching principle is used in
Alpay's low-dimensional preprint~\cite{AlpayLow}
(Lemma~3.5): a positive profile supplies ratios on
interior edges while the exterior and crossing edges retain their
assigned values. The following version allows weighted edges, a
prescribed diagonal, and a nonnegative interior remainder.
We call this prescribed remainder the \emph{reserve} and denote it by
$\eta$. The outgoing-cost operator below subtracts both this reserve
and the contributions assigned to edges leaving the patch.

\begin{proposition}\label{prop:analyticcompletion}
Let $\Omega\subset X$ be finite. Suppose that a positive reciprocal field
$b$ is given on every edge with at least one endpoint outside $\Omega$,
and that
\[
 V_b(x)=a(x)-\sum_yc_{xy}b_{xy}\ge0\qquad(x\notin\Omega).
\]
Let $\eta:\Omega\to[0,\infty)$ and define the outgoing-cost operator
\begin{equation}\label{eq:generalpatchoperator}
 (M_\eta h)(x)=
 \left(a(x)-\eta(x)-\sum_{y\notin\Omega}c_{xy}b_{xy}\right)h(x)
       -\sum_{y\in\Omega}c_{xy}h(y),\qquad x\in\Omega.
\end{equation}
If $h>0$ on $\Omega$ and $M_\eta h\ge0$, the completed field
\begin{equation}\label{eq:generalcompletedfield}
 (\mathscr P_{\Omega,h}b)_{xy}
 =\begin{cases}
 h(y)/h(x),&x,y\in\Omega,\\
 b_{xy},&\text{otherwise}
 \end{cases}
\end{equation}
is positive and reciprocal, and its remainder is
\begin{equation}\label{eq:generalcompletedremainder}
 V_{\mathscr P_{\Omega,h}b}(x)=
 \begin{cases}
 \eta(x)+(M_\eta h)(x)/h(x),&x\in\Omega,\\
 V_b(x),&x\notin\Omega.
 \end{cases}
\end{equation}
Consequently
$\langle u,Lu\rangle\ge\sum_{x\in\Omega}\eta(x)|u(x)|^2$.
If $M_\eta h>0$ on $\Omega$ and $V_b>0$ outside it, this inequality
is strict for every nonzero finitely supported $u$.
\end{proposition}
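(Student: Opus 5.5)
The plan is to reduce everything to the weighted edge identity \eqref{eq:generalweightedidentity}, applied to the completed field $\rho=\mathscr P_{\Omega,h}b$. The argument has four steps.

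\textbf{Step 1: the completed field is a positive reciprocal field.}
\begin{itemize}
\item On edges with both endpoints in $\Omega$, the ratio $h(y)/h(x)$ is positive because $h>0$. It is reciprocal because swapping $x$ and $y$ inverts it.
\item On every other edge the field is the given $b$, which is positive and reciprocal by hypothesis.
\item The two prescriptions never overlap, since an edge either lies inside $\Omega$ or has an endpoint outside it.
\end{itemize}

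\textbf{Step 2: compute the remainder $V_\rho(x)=a(x)-\sum_y c_{xy}\rho_{xy}$.}
\begin{itemize}
\item For $x\notin\Omega$, every edge at $x$ has an endpoint outside $\Omega$. So $\rho_{xy}=b_{xy}$ for all $y$, and $V_\rho(x)=V_b(x)$.
\item For $x\in\Omega$, split the neighbour sum into $y\in\Omega$ and $y\notin\Omega$:
\[
 V_\rho(x)=a(x)-\sum_{y\notin\Omega}c_{xy}b_{xy}-\frac1{h(x)}\sum_{y\in\Omega}c_{xy}h(y).
\]
Adding and subtracting $\eta(x)$ and comparing with \eqref{eq:generalpatchoperator}, this equals $\eta(x)+(M_\eta h)(x)/h(x)$.
\end{itemize}
This is \eqref{eq:generalcompletedremainder}. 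Local finiteness keeps all sums finite.

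\textbf{Step 3: the inequality.}
\begin{itemize}
\item Insert $\rho$ into \eqref{eq:generalweightedidentity}. All edge squares are nonnegative, so they can be dropped.
\item Outside $\Omega$, $V_\rho=V_b\ge0$. On $\Omega$, $V_\rho\ge\eta$ because $M_\eta h\ge0$ and $h>0$.
\item Together these give $\langle u,Lu\rangle\ge\sum_{x\in\Omega}\eta(x)|u(x)|^2$.
\end{itemize}

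\textbf{Step 4: strictness.} Assume now $M_\eta h>0$ on $\Omega$ and $V_b>0$ outside $\Omega$.
\begin{itemize}
\item On $\Omega$ the remainder satisfies $V_\rho-\eta=(M_\eta h)/h>0$. Outside $\Omega$ it satisfies $V_\rho=V_b>0$.
\item Hence, for $u\ne0$,
\[
 \langle u,Lu\rangle-\sum_{\Omega}\eta|u|^2\ \ge\ \sum_{x\in\Omega}\frac{(M_\eta h)(x)}{h(x)}|u(x)|^2+\sum_{x\notin\Omega}V_b(x)|u(x)|^2>0,
\]
because the right-hand side has a strictly positive coefficient at every vertex and $u$ has a nonzero value somewhere.
\end{itemize}

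The computation is routine. The only point that needs care is conventions: deleted neighbours are omitted from the sums while the prescribed diagonal $a(x)$ is kept, exactly as in \eqref{eq:generalweightedidentity}. Since the identity is an exact algebraic expansion valid for finitely supported complex $u$, this bookkeeping introduces no analytic difficulty.
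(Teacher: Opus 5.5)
Your proof is correct and takes the same route as the paper. You check positivity and reciprocity of the completed field, compute its remainder by splitting the neighbour sum at vertices of $\Omega$ (it is unchanged outside $\Omega$, since every incident edge keeps $b$), then apply \eqref{eq:generalweightedidentity} and discard the edge squares, with strictness coming from the strictly positive vertex coefficients alone.
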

\begin{proof}
The internal ratios are reciprocal because $h>0$. Substitution of
\eqref{eq:generalcompletedfield} at a vertex of $\Omega$ gives
\eqref{eq:generalcompletedremainder}. Every edge incident with a vertex
outside $\Omega$ is retained, in both orientations, so its remainder
is unchanged. Apply \eqref{eq:generalweightedidentity}. Under the strict
hypotheses the vertex terms alone give the strict form inequality.
\end{proof}

The reserve $\eta$ is prescribed before choosing the profile.
Reciprocity permits exterior field estimates and an interior profile
to be combined whenever their crossing-edge costs agree.

\subsection{Positive remainders through dimension seven}
\label{sec:lowdimensions}

The first-sphere margin and the spatial remainder can be established
together. In each dimension, a rational reciprocal field controls the
exterior of at most three Boolean layers. One or two completed squares
then provide explicit positive weights on those layers. The estimates apply to all finitely supported complex competitors.

\subsubsection{The fixed field and its exterior remainder}

Put $s(x)=|x|^2$, $\mu=(N-2)/4$, and choose
\begin{equation}\label{eq:lowfixedparameters}
 \kappa_N=\begin{cases}N/4,&3\le N\le6,\\9/4,&N=7.\end{cases}
\end{equation}
These parameters satisfy $4\kappa_N>\mu^2$. Hence the reciprocal field
in \eqref{eq:uniformbase} is positive. Its rational expression is
\begin{equation}\label{eq:rationalfield}
 a_N(s,t)=
 \begin{cases}
 \displaystyle\frac{(N+1)s^2+(4-N)st+3t^2}
 {3s^2+(4-N)st+(N+1)t^2},&3\le N\le6,\\[2mm]
 \displaystyle\frac{9s^2-5st+4t^2}{4s^2-5st+9t^2},&N=7.
 \end{cases}
\end{equation}
Define its unmodified vertex remainder by
\begin{equation}\label{eq:lowbaseremainder}
 V_N(x)=2N-\frac{A_N}{s(x)}-
       \sum_{\substack{y\in X_N\\y\sim x}}a_N(s(x),s(y)).
\end{equation}

At fixed squared radius, each coordinate contributes a pair of edge
terms. We compare that pair with an affine function of the coordinate
square whose sum gives the Hardy potential. The scalar remainder
$R_N$ measures the gap after positive factors are removed.

\begin{lemma}\label{lem:lowexterior}
For the parameters \eqref{eq:lowfixedparameters}, the scalar remainder
in \eqref{eq:uniformfactor} satisfies $R_N(s,\tau)>0$ on the ranges
\[
 \begin{array}{ll}
 s\ge2,\ \tau\ge0,&N=3,4,5,\\
 s\ge3,\ 0\le\tau\le1,&N=6,\\
 s\ge5,\ 0\le\tau\le1,&N=7.
 \end{array}
\]
Consequently $V_N(x)>0$ for $s(x)\ge2$ when $N=3,4,5$, for
$s(x)\ge3$ when $N=6$, and for $s(x)\ge4$ when $N=7$.
\end{lemma}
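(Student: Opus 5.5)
The plan is to reduce positivity of the vertex remainder $V_N$ to positivity of a polynomial in two real variables, and then verify that polynomial inequality on the stated ranges.

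\textbf{Step 1: per-coordinate splitting.} Fix $x$ with $s=s(x)\ge2$. All $2N$ neighbours then lie in $X_N$, so no deleted-origin correction enters $V_N$. The neighbours in direction $\pm e_j$ have squared radii $s\pm2x_j+1$. Hence
\[
 V_N(x)=\sum_{j=1}^N\Bigl[2-\ell_N(s,x_j^2)-a_N(s,s+2x_j+1)-a_N(s,s-2x_j+1)\Bigr],
\]
where $\ell_N(s,\tau)$ is the affine function of the coordinate square fixed in \eqref{eq:uniformbase}. It is normalized so that $\sum_j\ell_N(s,x_j^2)=A_N/s$ whenever $\sum_jx_j^2=s$.

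The bracket is even in $x_j$, since the two edge terms are exchanged by $x_j\mapsto-x_j$. It is therefore a rational function of $s$ and $\tau=x_j^2$. Because $4\kappa_N>\mu^2$, both denominators of $a_N$ in \eqref{eq:rationalfield} are positive definite quadratic forms. Clearing them, and any further positive factors, leaves the scalar remainder $R_N(s,\tau)$ of \eqref{eq:uniformfactor}. So the consequence $V_N(x)>0$ follows once every coordinate bracket is positive, which is exactly $R_N(s(x),x_j^2)>0$.

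\textbf{Step 2: reduction to the stated parameter ranges.}
\begin{itemize}
 \item For $N=3,4,5$, we need all $\tau\in[0,s]$; the lemma asserts more, namely all $\tau\ge0$.
 \item For $N=6,7$, the restriction $0\le\tau\le1$ indicates the treatment I would follow. Coordinates with $x_j^2\ge2$ should be absorbed differently: either the affine comparison is regrouped so that only Boolean-type coordinates need the scalar bound, or such vertices already have $s$ beyond a threshold where a cruder estimate suffices.
 \item The mismatch between $s\ge5$ in the scalar statement and $s(x)\ge4$ in the consequence for $N=7$ means the shell $s=4$ must be checked separately. There are only the signed-permutation classes $(2,0,\dots)$ and $(1,1,1,1,0,\dots)$. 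I would evaluate $V_7$ exactly on both from \eqref{eq:lowbaseremainder}, using the rational field values at $t\in\{1,2,3,5,9\}$.
\end{itemize}

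\textbf{Step 3: the polynomial inequalities (main obstacle).} This is the certification of $R_N>0$ on unbounded regions.
\begin{itemize}
 \item Substitute $s=s_0+\sigma$ with $\sigma\ge0$, where $s_0\in\{2,3,5\}$ is the left endpoint for the given $N$.
 \item Expand $R_N$ in $\sigma$ and $\tau$, or in $\sigma$ and $\tau(1-\tau)$ on the unit interval in the cases $N=6,7$.
 \item Aim to show every coefficient is nonnegative, with a positive constant term.
\end{itemize}
If some mixed coefficients are negative, I would dominate each by AM--GM with neighbouring positive monomials. For example, a term $-c\,\sigma\tau$ would be dominated by $\sigma^2$ and $\tau^2$ terms. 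Failing that, I would split $\sigma\in[0,1]$ from $\sigma\ge1$ and handle the finite part by interval evaluation. The large-$s$ behaviour is consistent with \eqref{eq:infinityfield}, which gives the leading term. The delicate region is small $s$, where the margin is thinnest, especially $N=7$ near $s=5$. That is where I expect to spend the effort and possibly need the exact lattice check.
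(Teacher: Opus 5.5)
Your architecture matches the paper's: split $V_N$ into per-coordinate brackets via \eqref{eq:uniformsumg}--\eqref{eq:uniformcalibration}, reduce each bracket to $R_N$ through \eqref{eq:uniformfactor}, prove positivity of a quadratic in $\tau$, and check the shell $s=4$ separately when $N=7$. The genuine problem is that you misread the variable.

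\textbf{The main gap.} In \eqref{eq:uniformfactor} the second argument of $R_N$ is $\tau=q/s$ with $q=x_j^2$, not $\tau=x_j^2$. Since $\sum_jx_j^2=s$, every coordinate square gives $0\le\tau\le1$. So for $N=6,7$ the scalar range $0\le\tau\le1$ already covers every coordinate of every vertex. For $N=3,4,5$ you likewise need only $[0,1]$, not $[0,s]$. Your Step~2 instead treats $\tau\le1$ as a restriction to Boolean coordinates. It then defers coordinates with $x_j^2\ge2$ to an unspecified ``regrouping'' or ``cruder estimate'', which as written is not an argument. Worse, if you actually built your scalar function with second variable $x_j^2$, positivity on $[0,1]$ would only give the paper's $R_N$ on $[0,1/s]$. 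That is weaker than what the lemma asserts for $N=6,7$.

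In the paper the restriction $\tau\le1$ is purely technical. It makes $\partial_sR_N=(1-\tau)R_{N,0}'+\tau(R_{N,0}'+R_{N,1}')+\tau^2R_{N,2}'$ positive for $s\ge3$ (resp.\ $s\ge5$). One then evaluates at the left endpoint and checks a negative discriminant in $\tau$. For $N=3,4,5$ the paper instead bounds $R_{N,0},R_{N,1},R_{N,2}$ from below uniformly in $s\ge2$ and completes one square in $\tau$.

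\textbf{Smaller corrections.}
\begin{itemize}
\item The bracket is not positive for every coordinate. \eqref{eq:uniformfactor} carries the factor $q$ and $\ell_s(0)=g_s(0)$, so zero coordinates contribute exactly zero. $V_N>0$ comes from strict positivity on the nonzero coordinates, of which there is at least one.
\item For the shell $s=4$ in dimension seven, the neighbouring squared radii are $\{1,5,9\}$ for $(2)$ and $\{3,5,7\}$ for $(1^4)$. So you need $a_7(4,7)$, not $a_7(4,2)$.
\item $R_{N,0}$ and $R_{N,1}$ contain powers of $s$ down to $s^{-4}$ and $s^{-3}$, so your $\sigma$-expansion must be applied to $s^4R_N$. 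The coefficient of $\tau$ is negative at the left endpoint in every dimension. Your AM--GM fallback is therefore not optional, and it amounts to the paper's completed square.
\end{itemize}
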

\begin{proof}
The coefficient estimates are proved in
Appendix~\ref{app:lowalgebra} using the three
explicit coefficients \eqref{eq:uniformR0}--\eqref{eq:uniformR2}.
For every nonzero coordinate, \eqref{eq:uniformfactor} gives a strict
inequality $g_s(x_j^2)>\ell_s(x_j^2)$; for a zero coordinate there is
equality. Summing and using \eqref{eq:uniformsumg} and
\eqref{eq:uniformcalibration} proves the exterior assertion at the
indicated radii. For $N=7$ and $s=4$, the only signed-permutation
types are $(2)$ and $(1^4)$. Their two direct row estimates also appear
in Appendix~\ref{app:affine7}.
\end{proof}

\subsubsection{Explicit spatial weights and first-sphere margins}

Write $\mathcal O_j^{(N)}$ for the signed-permutation orbit of
$(1^j)$, so $\mathcal O_1^{(N)}=\Sigma_N$, and put
\[
 \Omega_N=\begin{cases}
 \mathcal O_1^{(N)},&3\le N\le5,\\
 \mathcal O_1^{(6)}\cup\mathcal O_2^{(6)},&N=6,\\
 \mathcal O_1^{(7)}\cup\mathcal O_2^{(7)}\cup\mathcal O_3^{(7)},&N=7.
 \end{cases}
\]
The following constants specify the weights at every vertex of these
patches:
\begin{equation}\label{eq:lowbetavalues}
 (\beta_3,\beta_4,\beta_5,\beta_6,\beta_7)=(9,15,21,21,10).
\end{equation}

\begin{theorem}\label{thm:lowtrace}
For $3\le N\le7$, define the signed-permutation invariant weight
\begin{equation}\label{eq:lowweight}
 w_N(x)=\begin{cases}
 \beta_N/(2N),&x\in\Sigma_N,\\
 1/46,&N=6,\ x\in\mathcal O_2^{(6)},\\
 1/264,&N=7,\ x\in\mathcal O_2^{(7)},\\
 1/556,&N=7,\ x\in\mathcal O_3^{(7)},\\
 V_N(x),&x\notin\Omega_N.
 \end{cases}
\end{equation}
Then $w_N(x)>0$ for every $x\in X_N$, and
\begin{equation}\label{eq:lowpositiveweight}
 \langle u,H_Nu\rangle\ge\sum_{x\in X_N}w_N(x)|u(x)|^2
 \qquad(u\in C_c(X_N;\mathbb C)).
\end{equation}
In particular,
\begin{equation}\label{eq:lowtrace}
 \langle u,H_Nu\rangle\ge
             \frac{\beta_N}{2N}\|u|_{\Sigma_N}\|_2^2,
 \qquad \mathfrak D_N\ge\beta_N>0.
\end{equation}
The Hardy inequality is strict for every nonzero finite-support
competitor, and $C(N)=A_N$.
\end{theorem}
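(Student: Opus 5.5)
The proof combines the exterior field estimate of Lemma~\ref{lem:lowexterior} with the local completion principle, Lemma~\ref{lem:directlocalization}. I take $\Omega=\Omega_N$ and use the rational reciprocal field $b=a_N(s(x),s(y))$ of \eqref{eq:rationalfield} on every edge with at least one endpoint outside $\Omega_N$. For $x\notin\Omega_N$ the vertex remainder $V_b(x)$ equals $V_N(x)$ from \eqref{eq:lowbaseremainder}. This is because every neighbour of such an $x$ lies in $X_N$, and the edges from $x$ into $\Omega_N$ are crossing edges that keep the value $a_N$. Lemma~\ref{lem:lowexterior} then gives $V_N>0$ off $\Omega_N$.

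One must check that the complement of $\Omega_N$ lies inside the radii covered by Lemma~\ref{lem:lowexterior}:
\begin{itemize}
\item For $N\le5$, every vertex outside $\Sigma_N$ has $s\ge2$.
\item For $N=6$, the vertices outside $\mathcal O_1\cup\mathcal O_2$ have $s\ge3$, except the type $(2)$ with $s=4\ge3$, which is also covered.
\item For $N=7$, the excluded types have $s\ge4$ (type $(1^4)$ and $(2)$ at $s=4$, and larger radii beyond).
\end{itemize}
Where the lemma needs a range of $\tau$, I read $\tau$ as the coordinate-square ratio. I will verify that each relevant type falls in the stated range, relying on the row estimates in Appendix~\ref{app:affine7} for $s=4$.

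Next comes the interior form $Q_{\Omega_N}$ of \eqref{eq:directlocalizationcore}. It has a specific structure:
\begin{itemize}
\item Its diagonal is $2N-A_N/s-\sum b_{xy}$, with the sum over the outward neighbours of $x$. These are explicit rational numbers that depend only on the orbit type.
\item Its off-diagonal part is minus the internal adjacency.
\end{itemize}
For $N\le5$ the patch is $\Sigma_N$, which contains no internal edges, since $\pm e_j$ are never adjacent. So $Q$ is purely diagonal, and the required bound is the single number $2N-A_N-\sum_{y\sim e_1,\,y\ne0}a_N(1,s(y))\ge\beta_N/(2N)$. The neighbours of $e_1$ are one point $2e_1$ with $s=4$ and $2N-2$ points of type $(1^2)$ with $s=2$, so this is the arithmetic check
\[
2N-A_N-a_N(1,4)-(2N-2)\,a_N(1,2)\ge\beta_N/(2N).
\]

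For $N=6,7$ the internal edges join consecutive Boolean layers $\mathcal O_j$ and $\mathcal O_{j+1}$. I intend to complete squares layer by layer. Each edge term $-2\operatorname{Re}u(x)\overline{u(y)}$ between layer $j$ and layer $j+1$ is bounded below by $-\lambda_j|u(x)|^2-\lambda_j^{-1}|u(y)|^2$. Counting neighbours, a layer-$j$ vertex has $N-j$ upward neighbours and $j$ downward ones, so the condition reduces to a scalar inequality per layer:
\[
d_j-(N-j)\lambda_j-j\lambda_{j-1}^{-1}\ge w_j ,
\]
where $d_j$ is the layer diagonal. I then choose $\lambda_1$, and for $N=7$ also $\lambda_2$, so that these inequalities hold with the target weights $\beta_N/(2N)$, $1/46$, $1/264$, $1/556$. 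This is a symmetric reduction of a tridiagonal layer matrix: by signed-permutation invariance and Cauchy--Schwarz, the completed-square bound is valid for arbitrary complex $u$, not only symmetric ones.

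Combining the two parts, Lemma~\ref{lem:directlocalization} yields \eqref{eq:lowpositiveweight} with $w_N>0$ everywhere. Restricting to $\Sigma_N$ gives the first inequality of \eqref{eq:lowtrace}. Applying it to competitors with $\|u|_{\Sigma_N}\|_2^2=2N$ gives $\mathfrak D_N\ge\beta_N$. Strict positivity of $w_N$ gives a strict Hardy inequality, so $C(N)\ge A_N$, and Proposition~\ref{prop:scaling} gives equality.

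I expect the main obstacle to be the $N=7$ bookkeeping. The margins there are tiny ($1/556$), and the diagonal entries involve rational field values with nearby denominators. I will compute the $d_j$ exactly and pick rational $\lambda_j$ that roughly balance the slacks. The $N=6$ case needs the same care, and I will handle it the same way.
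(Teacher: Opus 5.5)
Your plan follows the paper's proof, but the one count that drives the $N=6,7$ case is wrong, so the reduction as you wrote it does not prove the bound. The rest matches: Lemma~\ref{lem:directlocalization} with $\Omega=\Omega_N$ and the rational field $a_N$ on every non-internal edge, exterior positivity from Lemma~\ref{lem:lowexterior}, the single diagonal check $2N-A_N-a_N(1,4)-(2N-2)a_N(1,2)\ge\beta_N/(2N)$ for $N\le5$, and a layer completion for $N=6,7$. Your radius bookkeeping is right, as is the observation that $V_b=V_N$ off $\Omega_N$ because no such vertex touches the origin.

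The error is this. A vertex of type $(1^j)$ has $2(N-j)$ neighbours in $\mathcal O_{j+1}^{(N)}$, since each zero coordinate can become $+1$ or $-1$, not $N-j$. It also has $j$ neighbours in $\mathcal O_{j-1}^{(N)}$ and $j$ outgoing neighbours of type $(2,1^{j-1})$. Summing your edgewise bound $2|u(x)\overline{u(y)}|\le\lambda_j|u(x)|^2+\lambda_j^{-1}|u(y)|^2$ therefore puts $2(N-j)\lambda_j$, not $(N-j)\lambda_j$, on layer $j$. Your per-layer inequalities $d_j-(N-j)\lambda_j-j\lambda_{j-1}^{-1}\ge w_j$ therefore do not bound $Q_{\Omega_N}$ from below. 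In effect they treat the coupling $\sqrt{(N-j)(j+1)}$ instead of the true $\sqrt{2(N-j)(j+1)}$. Checking them with comfortable slack would prove nothing, and the real margins are small.

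The fix is to use the layer condition $d_j-2(N-j)\lambda_j-j\lambda_{j-1}^{-1}\ge w_j$. Your fixed-$\lambda$ edgewise inequality is then exactly the paper's Cauchy--Schwarz bound $\sqrt{2(N-j)(j+1)}\,U_jU_{j+1}$ followed by completing squares. Set $p_j=2(N-j)\lambda_j$ and $q_j=(j+1)\lambda_j^{-1}$, so $p_jq_j=2(N-j)(j+1)$; then the paper's square $(\sqrt{p_j}U_j-\sqrt{q_j}U_{j+1})^2$ corresponds to your choice of $\lambda_j$. The choices are essentially forced:
\begin{itemize}
\item For $N=6$, take $\lambda_1=23/40$. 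With $\gamma_1>15/2$ and $\gamma_2>7/2$ this gives the weights $7/4=\beta_6/12$ and $1/46$.
\item For $N=7$, take $\lambda_1=11/20$ and $\lambda_2=139/220$. With $\gamma_1>22/3$, $\gamma_2>239/24$ and $\gamma_3=19/4$ this gives $11/15>\beta_7/14$, $1/264$ and $1/556$.
\end{itemize}
With that correction your argument is correct and coincides with the paper's.
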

\begin{proof}
Set $U_j=\|u|_{\mathcal O_j^{(N)}}\|_2$. The degrees between
$\mathcal O_j^{(N)}$ and $\mathcal O_{j+1}^{(N)}$ are $2(N-j)$ and
$j+1$. Cauchy--Schwarz over their edges therefore gives
\begin{equation}\label{eq:lowincidence}
 \left|\sum_{\substack{x\in\mathcal O_j^{(N)},\ y\in\mathcal O_{j+1}^{(N)}\\x\sim y}}
                     u(x)\overline{u(y)}\right|
 \le\sqrt{2(N-j)(j+1)}\,U_jU_{j+1}.
\end{equation}
There are no edges inside an individual orbit, since a lattice edge
changes the parity of the squared radius. We now estimate
$Q_{\Omega_N}$ from Lemma~\ref{lem:directlocalization}.

\paragraph{Dimensions three through five.}
The patch is $\Sigma_N$. Each of its vertices has $2(N-1)$ outgoing
neighbours of squared radius two and one of squared radius four. Thus
$Q_{\Omega_N}[u]=c_NU_1^2$, where the rational field gives
\begin{align*}
 c_3&=\frac{23}{4}-4\frac67-\frac{56}{71}
       =\frac{3047}{1988}>\frac32,\\
 c_4&=7-6\frac{17}{23}-\frac{53}{83}
       =\frac{3678}{1909}>\frac{15}{8},\\
 c_5&=\frac{31}{4}-8\frac{16}{25}-\frac{10}{19}
       =\frac{3997}{1900}>\frac{21}{10}.
\end{align*}
These are the first three weights in \eqref{eq:lowweight}.

\paragraph{Dimension six.}
The diagonal costs on the first two Boolean layers are
\[
 \gamma_1=8-a_6(1,4),\qquad
 \gamma_2=10-8a_6(2,3)-2a_6(2,5).
\]
Since
\[
 a_6(1,4)=\frac{47}{107}<\frac12,\quad
 a_6(2,3)=\frac{43}{63}<\frac{11}{16},\quad
 a_6(2,5)=\frac{83}{167}<\frac12,
\]
one has $\gamma_1>15/2$ and $\gamma_2>7/2$. Hence
$Q_{\Omega_6}[u]\ge F_6(U_1,U_2)$, where
\[
 F_6=\frac{15}{2}U_1^2+\frac72U_2^2-2\sqrt{20}\,U_1U_2.
\]
The following exact completed square retains a positive weight on
both layers:
\begin{equation}\label{eq:low6square}
 F_6=\left(\sqrt{\frac{23}{4}}U_1-\sqrt{\frac{80}{23}}U_2\right)^2
       +\frac74U_1^2+\frac1{46}U_2^2.
\end{equation}

\paragraph{Dimension seven.}
On the first three Boolean layers the diagonal costs are
\[
 \gamma_1=\frac{31}{4}-a_7(1,4),\qquad
 \gamma_2=\frac{87}{8}-2a_7(2,5),\qquad
 \gamma_3=\frac{143}{12}-8a_7(3,4)-3a_7(3,6).
\]
Using
\[
 a_7(1,4)=\frac{53}{128}<\frac5{12},\quad
 a_7(2,5)=\frac{86}{191}<\frac{11}{24},\quad
 a_7(3,4)=\frac{17}{24},\quad a_7(3,6)=\frac12,
\]
we obtain $\gamma_1>22/3$, $\gamma_2>239/24$ and
$\gamma_3=19/4$. Thus $Q_{\Omega_7}[u]\ge F_7$, with
\[
 F_7=\frac{22}{3}U_1^2+\frac{239}{24}U_2^2+\frac{19}{4}U_3^2
            -2\sqrt{24}\,U_1U_2-2\sqrt{30}\,U_2U_3.
\]
The two exact completed squares are
\begin{align}
 F_7={}&\left(\sqrt{\frac{33}{5}}U_1-\sqrt{\frac{40}{11}}U_2\right)^2
   +\left(\sqrt{\frac{139}{22}}U_2-\sqrt{\frac{660}{139}}U_3\right)^2
   \notag\\
 &+\frac{11}{15}U_1^2+\frac1{264}U_2^2+\frac1{556}U_3^2.
 \label{eq:low7squares}
\end{align}
Since $11/15>5/7=\beta_7/14$, this proves the prescribed weights
on all three layers.

In each case, Lemma~\ref{lem:directlocalization} and the strictly
positive exterior remainder in Lemma~\ref{lem:lowexterior} now give
\eqref{eq:lowpositiveweight} with exactly \eqref{eq:lowweight}.
Retaining the first-sphere terms proves \eqref{eq:lowtrace}; the
definition of $\mathfrak D_N$ gives its lower bound. Positivity of
$w_N$ proves strictness for nonzero finite-support competitors, and
Proposition~\ref{prop:scaling} supplies the matching upper bound for
$C(N)$.
\end{proof}
\subsection{An analytic reciprocal field in dimension eight}\label{subsec:N8gauge}\label{sec:N8proof}

The interior patch in $X_8$ contains vertices whose nonzero
coordinates all have magnitude one, together with those having one
coordinate of magnitude two and all others of magnitude at most one.
Its classes under sign changes and coordinate permutations form two
chains: $B_j$ has $j$ unit coordinates, and $E_k$ has one coordinate
of magnitude two and $k$ unit coordinates. Their union is denoted by
$\mathcal P=\mathcal P_8$, as in \eqref{eq:generalgeometricpatch}.
The profile has separate values on the first few levels and common
formulas along the remaining levels. The junction equations match
these pieces while leaving positive operator remainders.
We apply Proposition~\ref{prop:analyticcompletion} with $N=8$
and target Hardy coefficient $\lambda=9$.
Throughout this subsection, write
\[
 s(x)=|x|^2,\qquad \Sigma=\{\pm e_i:1\le i\le8\},\qquad H=H_8.
\]
The chain indices are $1\le j\le8$ and $0\le k\le7$;
Lemma~\ref{lem:generalpatchgeometry} gives their geometric description.

The proof has three parts. Proposition~\ref{n8a:prop:exterior} controls
the retained field, using the angular identity \eqref{eq:generalangularsum}
and Appendices~\ref{n8a:app:large}--\ref{n8a:app:low}.
Lemma~\ref{n8a:lem:join} constructs a positive profile with prescribed
junction residuals. Lemma~\ref{n8a:lem:tails}, proved in
Appendices~\ref{n8a:app:interior} and~\ref{n8a:app:denominators},
controls the remaining chain levels on two real intervals.
Proposition~\ref{prop:analyticcompletion} then joins the interior and
exterior estimates while preserving every crossing edge.
Together these estimates give a positive remainder at every vertex
and an explicit lower bound on the first sphere.

\begin{theorem}\label{thm:N8}
There is an explicit positive reciprocal field $\widehat\beta$ on
$X_8$ whose vertex remainder satisfies
\begin{equation}\label{eq:N8globalW}
 W_8(x):=16-\frac9{s(x)}-
       \sum_{y\in X_8,\,y\sim x}\widehat\beta_{xy}>0
       \quad(x\in X_8),\qquad
 W_8|_\Sigma>c_*=\frac1{119}.
\end{equation}
Consequently, for every $u\in C_c(X_8;\C)$,
\begin{equation}\label{eq:N8globalcoercivity}
 \E_8[u]-9\D_8[u]\ge c_*\|P_\Sigma u\|_2^2,
\end{equation}
and $\E_8[u]>9\D_8[u]$ whenever $u\ne0$. In particular $C(8)=9$.
\end{theorem}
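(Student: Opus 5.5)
The construction follows the three-part structure announced before the statement, combined through Proposition~\ref{prop:analyticcompletion} with $X=X_8$, $c_{xy}=1$ on lattice edges, and diagonal $a(x)=16-9/s(x)$. The patch is $\Omega=\mathcal P$, the union of the chains $B_j$ ($1\le j\le8$) and $E_k$ ($0\le k\le7$). The edges leaving $\mathcal P$ are exactly those joining a vertex of $\mathcal P$ to a vertex having a coordinate of magnitude at least three, or two coordinates of magnitude two; this is the geometric content of Lemma~\ref{lem:generalpatchgeometry}.

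The first step concerns every edge with an endpoint outside $\mathcal P$. On these edges I would take the rational reciprocal field of the form \eqref{eq:generaloddfield}, with an odd correction $\Psi(v,w)$ in $v$ chosen so that its leading term reproduces the ratio $h_8(y)/h_8(x)$ up to $O(s^{-2})$ corrections. By \eqref{eq:infinityfield}, the vertex remainder $V_b$ then equals $O(s^{-2})$ plus a positive correction supplied by $\Psi$. Positivity of $V_b$ outside $\mathcal P$ is Proposition~\ref{n8a:prop:exterior}. For large $s$ I would use the angular identity \eqref{eq:generalangularsum}, which reduces the sum over $2N$ neighbours to an affine function of the coordinate squares, exactly as in Lemma~\ref{lem:lowexterior}. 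The finitely many low signed-permutation types that remain, namely those near the boundary of $\mathcal P$, I would check row by row; these are the computations in Appendices~\ref{n8a:app:large}--\ref{n8a:app:low}.

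Inside the patch I need a positive profile $h$, constant on the signed-permutation classes $B_j$ and $E_k$, together with a reserve $\eta\ge0$ satisfying $M_\eta h\ge0$ in \eqref{eq:generalpatchoperator}. On $\Sigma=B_1$ I choose $\eta=1/119$ and require $M_\eta h>0$ there. Because $h$ is class-constant, $M_\eta h$ reduces to a three-term recurrence along each chain. Here $B_j$ is coupled to $B_{j\pm1}$ and to $E_{j-1}$, and $E_k$ is coupled to $E_{k\pm1}$ and to $B_{k+1}$, with multiplicities read off from the degrees between classes. The outgoing costs enter through the retained exterior field. On the first few levels I fix the values of $h$ explicitly, using the junction equations of Lemma~\ref{n8a:lem:join}. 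Along the remaining levels I take common rational formulas in the level index, so that successive ratios are rational functions. Then $M_\eta h/h$ becomes an explicit rational function of a real level variable on each chain.

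The main obstacle is Lemma~\ref{n8a:lem:tails}: proving that these rational residuals are strictly positive on the two real intervals covering the remaining levels of $B_j$ and $E_k$. The margin is very thin, as the value $1/119$ indicates. I would first clear the denominators, proving their positivity separately as in Appendix~\ref{n8a:app:denominators}. I would then show that each numerator polynomial is positive on its interval, by writing it in a Bernstein-type or shifted basis with nonnegative coefficients, and by falling back on interval subdivision where cancellation occurs (Appendix~\ref{n8a:app:interior}).

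Once these are in place, Proposition~\ref{prop:analyticcompletion} gives the completed field $\widehat\beta$, with $W_8=V_b>0$ outside $\mathcal P$ and $W_8=\eta+M_\eta h/h$ on $\mathcal P$. In particular $W_8>1/119$ on $\Sigma$, and $W_8>0$ on the other classes whenever $M_\eta h>0$ there. Lemma~\ref{lem:edge} then yields \eqref{eq:N8globalcoercivity} and the strict inequality $\E_8[u]>9\D_8[u]$ for $u\ne0$. Combined with Proposition~\ref{prop:scaling}, this gives $C(8)=9$.
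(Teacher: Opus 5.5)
Your proposal is correct and follows the paper's proof. It uses the retained exterior field with Proposition~\ref{n8a:prop:exterior}, the junction profile of Lemma~\ref{n8a:lem:join} with reserve $\eta=\frac1{119}\mathbf 1_\Sigma$ and positive residuals, and the tail positivity of Lemma~\ref{n8a:lem:tails}, then assembles them through Proposition~\ref{prop:analyticcompletion} (which gives $W_8|_\Sigma=\frac1{119}+\varepsilon/U_1$), Lemma~\ref{lem:edge} and Proposition~\ref{prop:scaling}. The only difference is the certification technique you suggest for the scalar inequalities (Bernstein coefficients or subdivision, and row-by-row checks at low radii), whereas the paper uses normalized Taylor polynomials with Cauchy tail bounds on real intervals; either would establish the same inequalities.
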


\subsubsection{A geometric patch and two lattice chains}\label{subsec:generalgeometry}

To select the interior, we measure the departure of coordinate
magnitudes from zero and one. Write $s=|x|^2$ and
$P_4(x)=\sum_{i=1}^Nx_i^4$. The difference
$P_4(x)-s=\sum_i x_i^2(x_i^2-1)$ vanishes on vertices with
coordinates in $\{0,\pm1\}$ and increases when larger coordinates
appear. The interior patch is
\begin{equation}\label{eq:generalgeometricpatch}
 \mathcal P_N=\{x\in X_N:P_4(x)-s<24\}.
\end{equation}
Let $B_j$ denote the signed-permutation orbit of $(1^j)$ and $E_k$ that
of $(2,1^k)$. The indices below depend on $N$.

\begin{lemma}\label{lem:generalpatchgeometry}
For every integer $N\ge1$, $\mathcal P_N$ is the disjoint union of
$B_j$, $1\le j\le N$, and $E_k$, $0\le k\le N-1$.
On every edge with at least one endpoint outside $\mathcal P_N$,
the variables in \eqref{eq:generaloddfield} satisfy
\begin{equation}\label{eq:generalretainedbounds}
 0<w\le\frac1{13},\qquad |v|\le\frac5{13}.
\end{equation}
If $x\notin\mathcal P_N$ and $s\le11$, its coordinate squares consist
of either two $4$'s and otherwise $0,1$, or one $9$ and otherwise $0,1$.
\end{lemma}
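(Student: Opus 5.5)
The plan is to reduce all three assertions to elementary bookkeeping of coordinate magnitudes, using
\[
 P_4(x)-s=\sum_i x_i^2(x_i^2-1),
\]
in which each coordinate contributes independently.

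\textbf{Step 1: the decomposition of $\mathcal P_N$.} A coordinate of magnitude $m$ contributes $m^2(m^2-1)$. This equals $0$ for $m\in\{0,1\}$ and $12$ for $m=2$, and it is at least $72$ for $m\ge3$. Hence $P_4(x)-s<24$ holds exactly when no coordinate has magnitude at least three and at most one coordinate has magnitude two. This gives the union of the orbits $B_j$ ($1\le j\le N$, from $x\ne0$) and $E_k$ ($0\le k\le N-1$). The union is disjoint, since the multiset of coordinate magnitudes is an orbit invariant.

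\textbf{Step 2: the classification for $s\le11$.} Take $x\notin\mathcal P_N$. By Step 1, either some coordinate has magnitude $m\ge3$, or two coordinates have magnitude two.
\begin{itemize}
\item In the first case $s\ge9$. With $s\le11$ this forces $m=3$, and the remaining squares sum to at most $2$, so they lie in $\{0,1\}$.
\item In the second case the remaining squares sum to at most $3$, which excludes a third $4$ and any $9$.
\end{itemize}

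\textbf{Step 3: the edge bounds.} Write an edge as $y\sim y+e$, where $y$ is the endpoint of smaller squared radius. Let $a\ge0$ be the magnitude in $y$ of the changed coordinate, and let $r$ be the sum of the squares of the other coordinates. The two squared radii are $r+a^2$ and $r+(a+1)^2$. Hence
\[
 w=\frac1{2r+2a^2+2a+1},\qquad |v|=(2a+1)w,
\]
so $w>0$ is automatic. By hypothesis at least one endpoint lies outside $\mathcal P_N$; it suffices to treat the larger endpoint $y+e$. If only $y$ is outside, then $y$ already carries a magnitude at least three or two magnitudes equal to two. Both patterns persist in $y+e$ when $a\ge1$, and they sit in the untouched coordinates when $a=0$. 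The cases are as follows.
\begin{itemize}
\item $a=0$: the outside condition lives in the other coordinates, so $r\ge8$. This gives $w\le1/17$ and $|v|\le1/17$.
\item $a=1$: the new coordinate has magnitude two. Being outside then needs another $2$ or a magnitude at least $3$ among the other coordinates, so $r\ge4$. This gives $w\le1/13$ and $|v|\le3/13$.
\item $a\ge2$: here $2a^2+2a+1\ge13$, so $w\le1/13$. The ratio $(2a+1)/(2a^2+2a+1)$ is decreasing in $a$, and $r\ge0$, so $|v|\le5/13$. Equality holds for the edge from $(2)$ to $(3)$.
\end{itemize}

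The main obstacle, though a mild one, is Step 3. One has to check that every edge touching the complement of $\mathcal P_N$ is covered by one of the three cases. In particular, an edge whose outside endpoint is the smaller one must be reduced correctly to the larger endpoint being outside. One must also confirm that the extremal case $(2)\sim(3)$ attains $5/13$ rather than exceeding it.
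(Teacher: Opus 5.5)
Your proof is correct and follows essentially the same route as the paper: the per-coordinate contributions $0$, $12$, and $\ge72$ to $P_4-s$ give the patch decomposition and the $s\le11$ classification. The edge bounds come from the same case split on the smaller magnitude of the changed coordinate: $a\ge2$, $a=1$ with $r\ge4$, and $a=0$ with $r\ge8$. Your explicit reduction to the larger endpoint, which is cleanest via monotonicity of $m^2(m^2-1)$ in $m$, only spells out what the paper uses implicitly in its cases $k=1$ and $k=0$.
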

\begin{proof}
Each coordinate of absolute value $2$ contributes $12$ to $P_4-s$;
one of absolute value at least $3$ contributes at least $72$.
This proves the patch description. Outside it, $s\le11$ permits exactly
the two stated patterns: three $4$'s or a $9$ together with a $4$
already exceed $11$.

For a retained edge, change signs and orientation so the varying
coordinate goes from $k$ to $k+1$, $k\ge0$, and let $R$ be the sum
of the other coordinate squares. If $k\ge2$, then
\[
 s+t=2k^2+2k+1+2R\ge13,\qquad
 |v|\le\frac{2k+1}{2k^2+2k+1}\le\frac5{13};
\]
the last quotient decreases for $k\ge2$. If $k=1$, another coordinate
must have absolute value at least $2$, so $R\ge4$ and $s+t\ge13$.
If $k=0$, the other coordinates already have defect at least $24$;
thus $R\ge8$ and $s+t\ge17$. These bounds prove
\eqref{eq:generalretainedbounds} in every case.
\end{proof}

The neighbour counts yield one recurrence for each of the two chains. For the operator with diagonal $2N-\lambda/s$, let $b(s,t)$
be the field on retained edges and put $\mathcal H_N=M_0$ in
\eqref{eq:generalpatchoperator}. For an invariant profile $h$, put
$U_j=h|_{B_j}$ and $Z_k=h|_{E_k}$. Then
\begin{align}
 (\mathcal H_Nh)_{B_j}
 &=\left(2N-\frac\lambda j\right)U_j-jU_{j-1}
       -2(N-j)U_{j+1}-jZ_{j-1},\notag\\
 (\mathcal H_Nh)_{E_k}
 &=d_{N,k}Z_k-U_{k+1}-kZ_{k-1}-2(N-1-k)Z_{k+1},\notag\\
 d_{N,k}
 &=2N-\frac\lambda{k+4}-b(k+4,k+9)-k b(k+4,k+7).
 \label{eq:generaltwochains}
\end{align}
Here $U_0=0$, and terms with zero coefficients are omitted. The first
row follows from the Boolean neighbour counts above. From $E_k$, lowering
the coordinate $2$ gives $B_{k+1}$; lowering a unit coordinate gives
$k$ neighbours in $E_{k-1}$; changing a zero to $\pm1$ gives
$2(N-1-k)$ in $E_{k+1}$. The remaining moves create a $3$ or a second
$2$, giving the two outgoing terms in $d_{N,k}$. The full diagonal
retains the edge to the removed origin.

\subsubsection{Profiles with rational shift ratios}\label{subsec:generalprofiles}

To estimate these recurrences on real intervals, it is useful to replace
a power by a positive function with the same decay and rational shift
ratios. For $\nu>0$, define
\begin{equation}\label{eq:generalgammaprofile}
 G_\nu(s)=\frac{\Gamma(1+\nu)\Gamma(s)}{\Gamma(s+\nu)},
 \qquad s>0.
\end{equation}
The classical gamma recurrence~\cite[Formula~8.331.1]{GradshteynRyzhik}
gives
\begin{equation}\label{eq:generalgammarecurrence}
 G_\nu(1)=1,\qquad
 G_\nu(s+1)=\frac{s}{s+\nu}G_\nu(s),\qquad
 G_\nu(j)=\prod_{i=1}^{j-1}\frac{i}{i+\nu}.
\end{equation}
In particular, its integer values are rational when $\nu$ is rational.
The standard gamma-ratio asymptotic~\cite[Formula~8.328.2]{GradshteynRyzhik}
gives $G_\nu(s)\sim\Gamma(1+\nu)s^{-\nu}$ as $s\to\infty$.
It also follows directly from Euler's beta
integral~\cite[Formulas~8.380.1 and~8.384.1]{GradshteynRyzhik},
after the substitution $y=s(1-t)$:
\[
 s^\nu G_\nu(s)
 =\nu\int_0^s(1-y/s)^{s-1}y^{\nu-1}\,dy
 \longrightarrow\nu\Gamma(\nu).
\]
For $s\ge2$, the integrand, extended by zero for $y\ge s$, is bounded
by $e^{-y/2}y^{\nu-1}$, so dominated convergence applies.
For the Hardy profile take $\nu=\nu_N=(N-2)/4$, as in
\eqref{eq:defectprofile}. This is half the radial exponent
$\alpha=(N-2)/2$ used in the spectral analysis: with $s=r^2$,
$r^{-\alpha}=s^{-\nu_N}$.

For integers $d\ge0$, set
\begin{equation}\label{eq:generalgammashifts}
 r_d(s)=\frac{G_\nu(s+d)}{G_\nu(s)}
       =\prod_{i=0}^{d-1}\frac{s+i}{s+i+\nu},\qquad
 r_{-d}(s)=1/r_d(s-d).
\end{equation}
The first ratio is defined for $s>0$ and the second for $s>d$.
On levels where the profile and its relevant neighbours have the forms
$U_j=G_\nu(j)p(j)$ and $Z_k=G_\nu(k+4)z(k+4)$,
division of the operator remainder by the positive gamma factor
gives one scalar residual along each chain. We denote these functions
by $\mathcal B_{N,\lambda}$ and $\mathcal E_{N,\lambda}$:
\begin{align}
 \mathcal B_{N,\lambda}(s)
 ={}&\left(2N-\frac\lambda s\right)p(s)-sr_{-1}(s)p(s-1)
       -2(N-s)r_1(s)p(s+1)-sr_3(s)z(s+3),\notag\\
 \mathcal E_{N,\lambda}(s)
 ={}&\left[2N-\frac\lambda s-b(s,s+5)-(s-4)b(s,s+3)\right]z(s)
       \notag\\
 &-(s-4)r_{-1}(s)z(s-1)-2(N+3-s)r_1(s)z(s+1)
       -r_{-3}(s)p(s-3).
 \label{eq:generalchainresiduals}
\end{align}
At the corresponding integer levels these equal
$(\mathcal H_Nh)_{B_j}/G_\nu(j)$ and
$(\mathcal H_Nh)_{E_k}/G_\nu(k+4)$. All arguments of $G_\nu$
in these formulas are taken positive. For rational $b,p,z$, the two
residual functions are rational. Positivity on real intervals therefore
controls every integer level contained in them.

\subsubsection{Affine calibration on the squared-coordinate simplex}\label{subsec:simplexgeneral}

An affine minorant converts one scalar inequality into a bound for all
coordinate patterns at the same radius. The following summation rule
reduces the exterior remainder to one scalar inequality.

\paragraph{Affine minorants and lattice coordinate squares.}

The repeated scalar step is a minorant problem with one linear constraint.
For a fixed squared radius $s>0$, suppose a paired edge contribution
is $g_s(q)$ for $0\le q\le s$. To target a total weight $\Lambda(s)$,
choose an intercept $a_s$ and set
\[
 \ell_s(q)=a_s+\frac{\Lambda(s)-Na_s}{s}\,q.
\]
If $g_s(q)\ge\ell_s(q)$ on $[0,s]$, then, whenever
$q_j\ge0$ and $\sum_{j=1}^Nq_j=s$,
\begin{equation}\label{eq:simplexgeneral}
 \sum_{j=1}^Ng_s(q_j)\ge\sum_{j=1}^N\ell_s(q_j)=\Lambda(s).
\end{equation}
This follows by summing the affine function. If the minorant is strict
for $q>0$, the summed inequality is strict since at least one $q_j$ is
positive. The basic Hardy calibration takes
$\Lambda(s)=A_N/s$ and $a_s=g_s(0)$; a shifted intercept will also
be useful below.
We apply the scalar estimate on $0\le q\le s$ and use local
completion at the remaining finitely many radii.

For lattice points, a minorant need only hold on the allowed coordinate
squares. The sets $\{1\}$, $\{4\}$ and $[9,s]$ cover all positive
integer squares, so estimates on these sets may replace a stronger
continuous assertion on $[0,s]$.

A small change of intercept also keeps a positive contribution from zero
coordinates. For a target $\lambda/s$, take
$a_s=g_s(0)-\theta(s)$, where $\theta(s)\ge0$, and define
\begin{equation}\label{eq:generalangularresidual}
 \mathcal R_{N,\lambda}(s,q)
 =\frac{s^3}{q}\bigl(g_s(q)-\ell_s(q)\bigr),\qquad q>0.
\end{equation}
If $q_i=x_i^2$, $\sum_iq_i=s$ and $n_0=\#\{i:q_i=0\}$, summing
the affine function gives the exact identity
\begin{equation}\label{eq:generalangularsum}
 \sum_{i=1}^Ng_s(q_i)-\frac\lambda s
 =s^{-3}\sum_{q_i>0}q_i\mathcal R_{N,\lambda}(s,q_i)
       +n_0\theta(s).
\end{equation}
Thus a lower bound $\mathcal R_{N,\lambda}(s,q)\ge r(s)>0$ on the
positive coordinate squares yields a remainder at least $r(s)/s^2$.

At low radii, Lemma~\ref{lem:generalpatchgeometry} gives a second
reduction. If
$g_s(q)=2-b(s,s+1+2\sqrt q)-b(s,s+1-2\sqrt q)$, the only remainders
outside $\mathcal P_N$ with $s\le11$ are, when the corresponding
patterns occur,
\begin{align}
 L_{2;N,\lambda}(s)
 &=(N+6-s)g_s(0)+(s-8)g_s(1)+2g_s(4)-\lambda/s,\notag\\
 L_{3;N,\lambda}(s)
 &=(N+8-s)g_s(0)+(s-9)g_s(1)+g_s(9)-\lambda/s.
 \label{eq:generallowradii}
\end{align}
The coefficients count the zero and unit coordinates in the two
patterns. Their signs on the required intervals remain an application-
dependent estimate.

\subsubsection{The eighth-dimensional exterior field}
The critical ratio $(t/s)^{-3/2}$ is reciprocal and begins with
$1-3v$. In the representation \eqref{eq:generaloddfield}, its linear
term is therefore fixed by $\Psi=3v/2+\cdots$.
The remaining odd terms adjust the radial and angular errors in the
paired vertex sum. The rational denominator groups higher corrections
while remaining positive on retained edges. The following choice is
designed for the shifted affine calibration below, and its exterior
bounds follow from the interval estimates in Appendix~\ref{app:fielddata}.
Choose
\begin{equation}\label{eq:gaugefield8}
 \Psi(v,w)=v\left[\frac32+\frac{111}{80}w-\frac{125}{32}v^2+
 \frac{\frac{25}{12}w^2-\frac{169}{8}v^2w+\frac{175}{8}v^4}
 {1+8v^2-\frac{20}{3}w}\right],
\end{equation}
and write $\beta_8(s,t)=(1-\Psi(v,w))/(1+\Psi(v,w))$ for the
resulting field. The function $\Psi$ is odd in $v$.
Lemma~\ref{lem:generalpatchgeometry} gives $0<w\le1/13$ and
$|v|\le5/13$ on every retained edge.

For a uniform sign estimate, put $x=v^2$.
The inner denominator is at least $19/39+8x>0$.
The bracket $T=\Psi/v$, extended at $v=0$, satisfies
\[
 T\ge\frac32-\frac{125}{32}\frac{25}{169}
                         -\frac{169}{64\cdot13}>0,
\]
\[
 T\le\frac32+\frac{111}{80\cdot13}
       +\frac{25}{12\cdot13^2}\frac{39}{19}
       +\left(\frac{175}{64}-\frac{125}{32}\right)x<\frac53.
\]
Here we used $x^2/(19/39+8x)\le x/8$ and
$xw/(19/39+8x)\le w/8$.
Thus $|\Psi|\le25/39<1$, and the field is positive and reciprocal.

For the angular calibration, take $\theta(s)=1/(4s^4)$ in
\eqref{eq:generalangularresidual}. The paired contribution and the
resulting residual $\Rcal=\mathcal R_{8,9}$ are
\begin{align*}
 g_s(q)&=2-\beta_8(s,s+1+2\sqrt q)-\beta_8(s,s+1-2\sqrt q),\\
 \Rcal(s,q)&=\frac{s^3}{q}\bigl(g_s(q)-g_s(0)\bigr)
                 -9s+8s^2g_s(0)+\frac{s/q-8}{4s^2},\quad q>0.
\end{align*}

\begin{proposition}\label{n8a:prop:exterior}\label{lem:gaugeouter8}
The exterior field satisfies
\[
 V_{\beta_8}(x):=\sum_{i=1}^8g_s(x_i^2)-9/s>0
                    \qquad(x\in X_8\setminus\mathcal P).
\]
More precisely, $V_{\beta_8}(x)>1/(200s^5)$ if $s\ge12$.
\end{proposition}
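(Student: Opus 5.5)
The plan is to split the exterior $X_8\setminus\mathcal P$ by squared radius, $s\ge12$ and $s\le11$, and to reduce each vertex estimate to a scalar inequality in $(s,q)$. First I check that $V_{\beta_8}(x)$ really is the vertex remainder of the field on the retained edges. The neighbours $x\pm e_i$ have squared radii $s+1\pm2x_i$. For a coordinate with $q_i=x_i^2>0$, the two neighbour radii are $s+1\pm2\sqrt{q_i}$. For $q_i=0$ both neighbours have radius $s+1$, so the pair contributes $2-2\beta_8(s,s+1)=g_s(0)$. Since $2N=16=\sum_i 2$, the vertex remainder is $\sum_i g_s(x_i^2)-9/s$. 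Every edge incident to such an $x$ is retained, so Lemma~\ref{lem:generalpatchgeometry} gives $0<w\le1/13$ and $|v|\le5/13$ there, and $\beta_8$ is positive and reciprocal by the bound $|\Psi|\le25/39$.

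For $s\ge12$, I use the angular identity \eqref{eq:generalangularsum} with $\lambda=9$ and $\theta(s)=1/(4s^4)$:
\[
 V_{\beta_8}(x)=s^{-3}\sum_{q_i>0}q_i\Rcal(s,q_i)+\frac{n_0}{4s^4}.
\]
It then suffices to prove $\Rcal(s,q)\ge c/s^2$ on the admissible coordinate squares $q\in\{1\}\cup\{4\}\cup[9,s]$, with $c$ large enough that, since $\sum q_i=s$, the total exceeds $1/(200s^5)$. I will actually aim for something like $\Rcal(s,q)>1/(200s)$ or better.

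To do this, I substitute $v=\pm2\sqrt q/(2s+1\pm2\sqrt q)$ and $w=1/(2s+1\pm2\sqrt q)$ and expand $g_s(q)$ as a rational function. The odd structure of $\Psi$ makes $g_s(q)-g_s(0)$ a function of $q$ that is divisible by $q$. The coefficients $3/2$ and $111/80$, together with the cubic and rational corrections, are designed to cancel the leading orders $s^{-1}$, $s^{-2}$ and the $q$-dependence in $\Rcal$. What remains is a positive rational function of $(1/s,\,q/s^2)$. I plan to clear the positive denominators, namely $1\pm\Psi$ and $1+8v^2-\tfrac{20}{3}w$, and then verify polynomial positivity by interval subdivision in the variables $\sigma=1/s\in(0,1/12]$ and $\xi=q/s^2\in(0,1/s]$. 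Following the reference to Appendices~\ref{n8a:app:large}--\ref{n8a:app:low}, I will treat large $s$ by a series bound with explicit tail, and a finite range of moderate $s$ by direct rational evaluation.

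For $s\le11$ outside $\mathcal P$, Lemma~\ref{lem:generalpatchgeometry} leaves only two patterns: two coordinates equal to $\pm2$, or one equal to $\pm3$, with the remaining coordinates in $\{0,\pm1\}$. The remainders are then exactly $L_{2;8,9}(s)$ for $8\le s\le11$ and $L_{3;8,9}(s)$ for $9\le s\le11$. Here $s$ must satisfy the bounds on the number of unit coordinates. These are finitely many explicit rational numbers, evaluated from $\beta_8$ at rational $(v,w)$, and I check that each is positive.

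I expect the main obstacle to be the uniform estimate $\Rcal>0$ for $s\ge12$ across the whole angular range, especially at the endpoint $q=s$ (one large coordinate, where $|v|$ reaches $5/13$-type values) and at moderate $s$, where the asymptotic cancellation has not yet taken hold. I would first try a Taylor bound in $v$ with rigorous remainder, uniform in $w\le1/13$. Where that fails near $s\approx12$ to $30$, I would fall back to a finite exact check of the rational residual on a subdivided $(s,q)$ grid, using monotonicity in $q$ within each cell.
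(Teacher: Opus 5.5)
Your architecture matches the paper's. For $s\ge12$ you use the angular identity \eqref{eq:generalangularsum} with $\theta(s)=1/(4s^4)$, restricted to the admissible squares $q\in\{1\}\cup\{4\}\cup[9,s]$. For $s\le11$ you use the reduction to $L_{2;8,9}$ and $L_{3;8,9}$ via Lemma~\ref{lem:generalpatchgeometry}.

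Your remark that everything is rational at lattice points is a legitimate simplification, since $s$ and $\sqrt q$ are integers there. The seven numbers $L_2(8),\dots,L_2(11)$ and $L_3(9),L_3(10),L_3(11)$, and $\Rcal(s,q)$ at square $q$ for each integer $s$ below some threshold $S_0$, are finitely many exact checks. The paper instead proves positivity on whole real intervals by Taylor--Cauchy bounds. Only $\Rcal\ge1/(200s^3)$ is needed, because $\sum_iq_i=s$ gives $V_{\beta_8}\ge r(s)/s^2$.

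The gap is in the step everything else rests on. You assert that after the built-in cancellations ``what remains is a positive rational function of $(1/s,q/s^2)$''. You then plan to certify this by subdivision in continuous variables, with monotonicity in $q$ on cells as a fallback. This is false at moderate radii.

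Take $s=12$ and use the completed square of Appendix~\ref{n8a:app:large}:
\begin{itemize}
\item The leading part $s^3c_0+s^2c_1+sc_2+c_3$ is about $-96$ at $\tau=q/s\approx0.137$, i.e.\ $q\approx1.65$.
\item The later coefficients add only about $+8$, so $\Ac(1/12,\tau)\approx-0.05$.
\item With $\Bc\approx1$ and the shift term $1/(4sq)\approx0.013$, this gives $\Rcal(12,1.65)\approx-0.04$.
\item Even at $q=1$ one has $\Ac(1/12,1/12)<0$, and $\Rcal(12,1)>0$ only because of the shift $1/(4sq)=1/48$.
\end{itemize}
So for $12\le s<19$, positivity depends on $q$ being a perfect square. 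You must either evaluate exactly at integer $s$ and square $q$, or, as the paper does, prove separate estimates at $q=1$, at $q=4$, and on $9\le q\le s$.

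You also do not supply the uniform estimate for $s\ge S_0$, which is the real content. It does hold for every real $0<q\le s$ once $s\ge19$. Near $s=19$, however, $c_1,c_2,c_3$ all take negative values and must be handled jointly. The paper does this with its completed square, the parabolic bound on $c_4$, and a Cauchy tail bound applied to the polynomial numerator $\Ac$. Bounding the coefficients separately against $3s^3/80$ only works from roughly $s\approx80$, which would enlarge your finite check accordingly.

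Two smaller corrections. The angular variable is $v_\pm=(1\pm2\sqrt q)/(2s+1\pm2\sqrt q)$, not $\pm2\sqrt q/(2s+1\pm2\sqrt q)$. And for $s\ge12$ one has $|v_\pm|<1/3$, so the delicate region is small $q$ at moderate $s$, not $q=s$.
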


\begin{proof}
For $s\ge19$, Appendix~\ref{n8a:app:large} proves
$\Rcal(s,q)>5/(7s^3)$ for every real $0<q\le s$.
For $12\le s\le19$, Appendix~\ref{n8a:app:compact} proves
$\Rcal(s,q)>1/(200s^3)$ for $q=1$, $q=4$ and $9\le q\le s$.
Every positive integer square belongs to one of these regions.
Equation~\eqref{eq:generalangularsum} with $\theta(s)=1/(4s^4)$
therefore gives the bound for $s\ge12$.

For $s\le11$, Lemma~\ref{lem:generalpatchgeometry} and
\eqref{eq:generallowradii} reduce the exterior to
$L_2=L_{2;8,9}$ and $L_3=L_{3;8,9}$.
Appendix~\ref{n8a:app:low} proves $L_2>0$ on $[8,11]$ and
$L_3>0$ on $[9,11]$. This completes the exterior estimate.
\end{proof}

\subsubsection{A profile defined by positive junction identities}\label{n8a:sec:profile}
Let $h$ denote the local profile, distinct from the prescribed critical
power $h_8(x)=|x|^{-3}$. Write $U_j=h|_{B_j}$, $Z_k=h|_{E_k}$, and
specialize \eqref{eq:generaltwochains} to $N=8$ and $\lambda=9$,
using exterior field $\beta_8(s,t)$. Denote the resulting operator by $\mathcal H$ and its
second-chain diagonal by $d_k=d_{8,k}$. Fix
\[
 \delta=\frac1{119},\qquad \varepsilon=\frac1{3514},\qquad
 A=\mathcal H-\delta P_{B_1},\qquad \alpha_0=7-\delta.
\]
Here $P_{B_1}$ is multiplication by the indicator of $B_1=\Sigma$;
thus $A=M_\eta$ with reserve $\eta=\delta\mathbf1_\Sigma$.
Use the critical gamma factor $G=G_{3/2}$ from
\eqref{eq:generalgammaprofile}, and put $g_j=G(j)$.
Its recurrence \eqref{eq:generalgammarecurrence} gives $g_1=1$ and
\[
 g_2=\frac25,\quad g_3=\frac8{35},\quad g_4=\frac{16}{105},\quad
 g_5=\frac{128}{1155},\quad g_6=\frac{256}{3003},\quad
 g_7=\frac{1024}{15015}.
\]
Write the two profiles as a critical gamma factor times corrections
in $1/s$. Fix the higher coefficients first, leaving two leading
coefficients on the Boolean chain and one on the second chain free.
Together with the four exceptional values $U_1,U_2,U_3,Z_0$, these
give the seven parameters needed to impose the junction equations.
The fixed corrections are
\begin{align*}
 r(s)&=-\frac{1.4824723}{s^2}+\frac{9.9151639}{s^3}
                                      -\frac{17.4876379}{s^4},\\
 q(s)&=\frac{2.0555724}{s}-\frac{15.9024312}{s^2}
                  +\frac{119.8117632}{s^3}-\frac{211.5171688}{s^4}.
\end{align*}
All terminating decimals in this proof denote exact rational numbers.
Seek $p(s)=a+b/s+r(s)$, $z(s)=\gamma+q(s)$ and set
\[
 U_j=G(j)p(j)\ (4\le j\le8),\qquad
 Z_k=G(k+4)z(k+4)\ (1\le k\le7).
\]
Determine the seven parameters $a,b,\gamma,U_1,U_2,U_3,Z_0$ by prescribing
\begin{equation}\label{n8a:eq:impose}
 (Ah)_{B_j}=\varepsilon g_j\quad(1\le j\le4),\qquad
 (Ah)_{E_k}=\varepsilon g_{k+4}\quad(0\le k\le2).
\end{equation}
The value of $\varepsilon$ prescribes a positive common residual
and leaves room for the uniform estimates below. We solve these
equations explicitly and then prove positivity of the resulting profile.

\paragraph{Scalar compatibility and interpolation.}
Eliminating the junction equations gives the following intermediate
coefficients:
\begin{align*}
 K&=\alpha_0 d_0+13,& L&=23/2+2d_0,\\
 C&=(1+\alpha_0 g_4-14g_5)/K,&
 D&=-23g_5/2+2g_4+12g_6-g_2,\\
 P&=23d_1/2+50-14L(\alpha_0+d_1)/K,&
 Q&=-138-12d_2+168L/K,\\
 W&=Pg_5+Qg_6+120g_7.
\end{align*}
The scalar compatibility condition is
\begin{equation}\label{n8a:eq:gamma}
 \gamma=-\frac{Pg_5q(5)+Qg_6q(6)+120g_7q(7)
                                  +\varepsilon(D-LC)}{W}.
\end{equation}
This determines $Z_1,Z_2,Z_3$. Define successively
\begin{align}
 Z_0&=\frac{14(\alpha_0+d_1)Z_1-168Z_2}{K}+\varepsilon C,
                                                      \label{n8a:eq:z0}\\
 U_1&=d_0Z_0-14Z_1-\varepsilon g_4,\notag\\
 U_2&=d_1Z_1-Z_0-12Z_2-\varepsilon g_5,\notag\\
 U_3&=d_2Z_2-2Z_1-10Z_3-\varepsilon g_6,\notag\\
 U_4&=(13U_3-3U_2-3Z_2-\varepsilon g_3)/10,\notag\\
 U_5&=(55U_4/4-4U_3-4Z_3-\varepsilon g_4)/8.\notag
\end{align}
Finally, with $v_4=U_4/g_4-r(4)$ and $v_5=U_5/g_5-r(5)$, take
\begin{equation}\label{n8a:eq:ab}
 a=5v_5-4v_4,\qquad b=20(v_4-v_5).
\end{equation}
Then $a+b/4=v_4$ and $a+b/5=v_5$, so the definitions of
$U_4,U_5$ agree exactly with $G(j)p(j)$.

\begin{lemma}\label{n8a:lem:join}
Formulas \eqref{n8a:eq:gamma}--\eqref{n8a:eq:ab} are well defined,
give a positive profile and satisfy \eqref{n8a:eq:impose}.
\end{lemma}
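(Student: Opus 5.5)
The plan is to treat Lemma~\ref{n8a:lem:join} as a finite, exact rational computation, split into three parts. First, well-definedness. Second, verification of the seven junction identities \eqref{n8a:eq:impose}. Third, positivity of the finitely many profile values $U_1,\dots,U_8$ and $Z_0,\dots,Z_7$.

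\emph{Well-definedness and exactness.} Every ingredient is an exact rational number. The $g_j$ come from \eqref{eq:generalgammarecurrence}. The decimals in $r,q$ are terminating, hence rational. The diagonals $d_k=d_{8,k}$ are given by \eqref{eq:generaltwochains}, and $\beta_8(s,t)$ is a rational function of $v=(t-s)/(t+s)$ and $w=1/(t+s)$ at integer radii.

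So the first step is to evaluate $d_0,d_1,d_2$ exactly. These use $\beta_8(4,9)$, $\beta_8(5,10)$, $\beta_8(5,8)$, $\beta_8(6,11)$ and $\beta_8(6,9)$. Then I will check three denominators: $K=\alpha_0d_0+13\ne0$, $W\ne0$, and the $2\times2$ interpolation system behind \eqref{n8a:eq:ab}, whose determinant is nonzero since $1/4\ne1/5$. It should suffice to observe $K>0$ and $W\neq0$ numerically with rigorous rational bounds.

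\emph{The junction identities.} Next I write out \eqref{eq:generaltwochains} at $N=8$, $\lambda=9$, with the reserve $\delta$ on $B_1$, for the rows $B_1,\dots,B_4$ and $E_0,E_1,E_2$. Each row is linear in the unknowns.

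Rows $E_0,E_1,E_2$ are solved for $U_1,U_2,U_3$; these are exactly the displayed formulas, using the neighbour counts $14,12,10$ for $E_k\to E_{k+1}$. Rows $B_3,B_4$ are solved for $U_4,U_5$. Here the coefficients are $2N-\lambda/j$ with $j=3,4$, giving $13$ and $55/4$, together with the counts $2(N-j)$.

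The remaining rows $B_1,B_2$ then become two linear conditions on $(Z_0,Z_1,Z_2,Z_3)$. Since $Z_k$ for $k\ge1$ is affine in $\gamma$, these rows reduce to two affine conditions. Eliminating $Z_0$ between them gives \eqref{n8a:eq:z0} and the single scalar equation \eqref{n8a:eq:gamma} for $\gamma$. I expect the expressions $K,L,C,D,P,Q$ to come from this elimination, and I will confirm this by back-substitution rather than rederivation. Finally, \eqref{n8a:eq:ab} makes $p(4)$ and $p(5)$ consistent with $U_4,U_5$, so every equation in \eqref{n8a:eq:impose} holds identically.

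\emph{Positivity.} The main obstacle is proving positivity, because it depends on the actual numerical values. I would compute $\gamma$, $a$ and $b$ exactly, or bracket them in short rational intervals. Then I would check the sign of each of the sixteen numbers $U_j$ and $Z_k$.

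For $U_j=G(j)p(j)$ with $4\le j\le8$, it suffices that $p(j)>0$. For $Z_k=G(k+4)z(k+4)$ with $1\le k\le7$, it suffices that $z(k+4)>0$. These are five and seven explicit rational evaluations, and the values should be close to the leading constants $a$ and $\gamma$, since the corrections $r$ and $q$ are small for $s\ge4$. The exceptional values $U_1,U_2,U_3,Z_0$ are checked directly from their formulas. The only delicate point is rounding, so I would carry exact fractions, or interval arithmetic with guaranteed enclosures, for $d_0,d_1,d_2$ and all downstream quantities.
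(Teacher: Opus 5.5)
Your plan is correct and matches the paper's proof. The $E_0,E_1,E_2$ rows define $U_1,U_2,U_3$, the $B_3,B_4$ rows define $U_4,U_5$ (made consistent with $p$ by \eqref{n8a:eq:ab}), the $B_1,B_2$ rows reduce through $K=\alpha_0d_0+13$ to \eqref{n8a:eq:z0} and \eqref{n8a:eq:gamma}, and positivity follows from the exact $d_0,d_1,d_2$ together with rational enclosures of $K,W,\gamma,a,b,Z_0,\dots,Z_3$; the only cosmetic difference is that the paper bounds $p$ and $z$ from below uniformly on the real intervals $[4,8]$ and $[5,11]$, by discarding favourable terms, instead of evaluating them at the twelve integer points.
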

\begin{proof}
The three equations on the $E$-chain in \eqref{n8a:eq:impose} are
the definitions of $U_1,U_2,U_3$. The first Boolean equation is
\[
 \alpha_0 U_1-14U_2-Z_0=\varepsilon.
\]
Substitution of $U_1,U_2$ gives \eqref{n8a:eq:z0}, since the
coefficient of $Z_0$ is $K=\alpha_0 d_0+13$.
The second Boolean equation becomes
\[
 -LZ_0+(23d_1/2+50)Z_1-(138+12d_2)Z_2+120Z_3
                                      +\varepsilon D=0.
\]
Using \eqref{n8a:eq:z0}, this reduces to
$PZ_1+QZ_2+120Z_3+\varepsilon(D-LC)=0$, which is
\eqref{n8a:eq:gamma}. The third and fourth Boolean equations define
$U_4,U_5$. These are algebraic identities for arbitrary data with
$K,W\ne0$; positivity of their residuals is imposed by construction.

For the chosen field, the three coefficients are
\[
 d_0=\frac{4629944721}{346191500},\quad
 d_1=\frac{5660679880886737}{426558449626885},\quad
 d_2=\frac{19212352159488483}{1485092730366850}.
\]
The explicit formulas give
\begin{equation}\label{n8a:eq:parameters}
 \begin{gathered}
 106<K<107,\qquad -0.51<W<-0.50,\\
 1.03376322<a<1.03376323,\qquad
 0.48005477<b<0.48005478,\\
 0.968938202<\gamma<0.968938203.
 \end{gathered}
\end{equation}
These bounds concern constants defined by explicit rational
quotients; the preceding formulas specify every input.
In particular, $0.96<\gamma<0.98$, and the same formulas give
\[
 0.22<Z_0<0.25,\quad 0.150<Z_1<0.153,\quad
 0.106<Z_2<0.109,\quad 0.081<Z_3<0.083.
\]
Also $d_0,d_1>13$, $d_2>12$ and
$\varepsilon g_j<0.001$ for $4\le j\le6$.
The definitions therefore imply
\[
 U_1>13(0.22)-14(0.153)-0.001>0,
\]
\[
 U_2>13(0.150)-0.25-12(0.109)-0.001>0,
\]
\[
 U_3>12(0.106)-2(0.153)-10(0.083)-0.001>0.
\]
For the remaining values, $G>0$ and
\[
 p(s)>1.03-1.4824723/16-17.4876379/256>0\quad(4\le s\le8),
\]
\[
 z(s)>0.96+2.0555724/11-15.9024312/25-211.5171688/625>0
                 \quad(5\le s\le11).
\]
This proves positivity of the profile, including $U_4,U_5$
by the interpolation identities.
\end{proof}

\subsubsection{Uniform estimates along the two chains}\label{n8a:sec:chains}
Let $B=\mathcal B_{8,9}$ and $E=\mathcal E_{8,9}$ be the residual
functions in \eqref{eq:generalchainresiduals}, with $\nu=\nu_8=3/2$,
exterior field $\beta_8(s,t)$ and the functions $p,z$ just constructed.
By the recurrence for $G$,
\[
 (Ah)_{B_j}=G(j)B(j)\quad(5\le j\le8),\qquad
 (Ah)_{E_k}=G(k+4)E(k+4)\quad(3\le k\le7).
\]

\begin{lemma}\label{n8a:lem:tails}
For the parameters just constructed,
\[
 B(s)>0\quad(5\le s\le8),\qquad E(s)>0\quad(7\le s\le11).
\]
\end{lemma}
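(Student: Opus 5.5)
The plan is to reduce each inequality to positivity of an explicit univariate function on a compact interval, and then certify that positivity by rational interval arithmetic.

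\textbf{Step 1: make the residuals explicit.} First I will write $B$ and $E$ from \eqref{eq:generalchainresiduals} with $N=8$, $\lambda=9$, $\nu=3/2$. The ingredients are:
\begin{itemize}
\item[(a)] the gamma shift ratios $r_{\pm1},r_3,r_{-3}$ from \eqref{eq:generalgammashifts}, which are rational in $s$, for example $r_1(s)=s/(s+3/2)$ and $r_{-1}(s)=(s-1+3/2)/(s-1)$;
\item[(b)] the profile corrections $p(s)=a+b/s+r(s)$ and $z(s)=\gamma+q(s)$;
\item[(c)] the two exterior field values $\beta_8(s,s+5)$ and $\beta_8(s,s+3)$ that enter the diagonal of $E$.
\end{itemize}
Along $t=s+d$ we have $w=1/(2s+d)$ and $v=d/(2s+d)$. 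Hence $\Psi$ in \eqref{eq:gaugefield8}, and so $\beta_8=(1-\Psi)/(1+\Psi)$, are rational functions of $s$. Their denominators are positive on $[7,11]$ by the sign estimate $|\Psi|\le25/39$.

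One point needs care here: the $B$ residual involves $z(s+3)$ at $s\in[5,8]$, that is on $[8,11]$, and $E$ involves $p(s-3)$ on $[4,8]$. Both lie inside the ranges where $p,z>0$ were checked in Lemma~\ref{n8a:lem:join}.

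After multiplying by the positive common denominators (powers of $s$, factors $s+i+3/2$, $s-i$, and $(1+\Psi)$), each inequality becomes positivity of a polynomial with rational coefficients on a compact interval. I will keep $a,b,\gamma$ as unknowns confined to the intervals \eqref{n8a:eq:parameters}.

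\textbf{Step 2: certify positivity.} The coefficients of $a,b,\gamma$ in the cleared numerators are explicit, and I will bound them in absolute value on the interval. The parameter uncertainty (about $10^{-8}$) then perturbs the numerator by a tiny explicit amount. With that handled, it suffices to prove positivity at the central rational values with a margin exceeding this perturbation.

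For the polynomial at the central values, I will partition $[5,8]$ and $[7,11]$ into subintervals of length $1/4$ or smaller. On each piece I will use one of two certificates:
\begin{itemize}
\item[(i)] a Taylor or Bernstein expansion about the midpoint, with the remainder bounded by the coefficient sums;
\item[(ii)] a monotonicity bound for the separate positive and negative groups of terms, evaluating the minorant at the left or right endpoint as appropriate.
\end{itemize}

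At the integer points the residual is already known. Indeed, $(Ah)_{B_j}=\varepsilon g_j$ for $j\le4$ by \eqref{n8a:eq:impose}, and the design intends a residual of order $\varepsilon g_j$ nearby. So the expected size of $B$ and $E$ is about $\varepsilon=1/3514$. This margin is small but fixed, and it sets the mesh needed.

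\textbf{Main obstacle.} The hard part is exactly this small margin. The residual is a near-cancellation of $O(10)$ terms down to roughly $3\times10^{-4}$. Naive interval bounds will lose the sign, and the field contributions $\beta_8(s,s+3)$ and $\beta_8(s,s+5)$ are high-degree rational functions.

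My first attempt at this will be to expand $\beta_8$ along the two rays in powers of $1/s$ with rigorous tail bounds, using $w\le1/19$ on $[7,11]$. This gives simple rational minorants and majorants, and those should be accurate to $10^{-6}$ or better. Only then will I clear denominators and run the subdivision argument. The details of these interval checks belong with the other verifications in Appendices~\ref{n8a:app:interior} and~\ref{n8a:app:denominators}.
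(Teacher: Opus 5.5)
Your overall route is sound but different from the paper's. You clear all denominators exactly, treat $a,b,\gamma$ as interval unknowns, and certify the resulting polynomials by subdivision with local Taylor or Bernstein bounds. Treating $a,b,\gamma$ as unknowns is harmless: $B$ and $E$ are affine in them and the boxes have width $10^{-8}$.

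The paper does no subdivision. It writes $B=N_B/D_B$ and $E=N_E/D_E$ with explicit positive denominators; for $E$ these contain $K_5K_3$, the cleared denominators of $\beta_8(s,s+5)$ and $\beta_8(s,s+3)$. It then passes to one variable $t=\sigma/s-1$ with $(\sigma,n)=(13/2,22)$ and $(9,31)$. It bounds $\mathsf T_6F$ from below:
\begin{itemize}
\item for the Boolean chain, by polynomial division and completed squares;
\item for the second chain, by convexity and a completed square.
\end{itemize}
It adds explicit bounds for degrees $7$--$10$ (resp.\ $7$--$12$). Finally, one Cauchy estimate on $|t|=2$ (resp.\ $|t|=3$), built from factored circle bounds, controls the whole remainder. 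This gives a short hand-checkable certificate. Your approach is more mechanical and needs the exact degree-$22$ and degree-$31$ numerators and many cells. It would still work, because the functions are positive with a margin of order $10^{-4}$ on compact intervals.

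The one step that fails as written is your preliminary expansion of $\beta_8$ in powers of $1/s$. The function $\beta_8(s,s+5)$ is rational in $s$ with denominator $K_5(s)=3072\,s^5H_5(1/s)$. The polynomial $K_5$ has a pair of complex zeros near $s\approx-1.13\pm6.67\,i$, of modulus about $6.76<7$, and these are genuine poles of $\beta_8$.

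Consequently the $1/s$-series converges at $s=7$ only with geometric ratio about $0.97$. A rigorous tail bound would need hundreds of terms and a Cauchy circle squeezed between $|s|\approx6.76$ and $7$, where $\beta_8$ is large. That does not give simple rational minorants accurate to $10^{-6}$.

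The smallness of $w$ on the real interval has nothing to do with this radius of convergence. Also, $w\le1/19$ is false for $\beta_8(s,s+3)$: there $w=1/(2s+3)=1/17$ at $s=7$. That edge is harmless anyway, since its nearest singularity is at $|s|\approx3.8$.

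The fix is to drop the expansion and clear $K_5$ and $K_3$ exactly, which your Step~1 already allows. This is exactly why the paper keeps them in $D_E$ and applies Cauchy's estimate only to the polynomial numerator: its circle $|t|=3$ may pass these zeros, which sit at $|t|\approx1.79$.

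Finally, no prescribed junction residual lies inside $[5,8]$ or $[7,11]$; they occur only at $s\le4$ and $s\le6$. Your expected margin of order $\varepsilon$ is therefore a heuristic, not a known value at integer points.
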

\begin{proof}
Appendix~\ref{n8a:app:interior} proves both estimates by completing
squares in the leading Taylor terms and bounding the full remainders
by Cauchy's estimate. The bounds hold on the displayed real intervals.
\end{proof}

\subsubsection{The global field and the first-sphere reserve}

\begin{proof}[Proof of Theorem~\ref{thm:N8}]
Lemmas~\ref{n8a:lem:join} and \ref{n8a:lem:tails} give $h>0$ and
$Ah>0$ on $\mathcal P$. Proposition~\ref{n8a:prop:exterior} gives a
strictly positive exterior remainder. Hence
Proposition~\ref{prop:analyticcompletion}, with
$\eta=\delta\mathbf1_\Sigma$, applies to
$\widehat\beta=\mathscr P_{\mathcal P,h}\beta_8$.
Its remainder satisfies \eqref{eq:N8globalW}. More precisely,
\eqref{eq:generalcompletedremainder}, \eqref{n8a:eq:impose} and $g_1=1$
give
\begin{equation}\label{eq:N8actualreserve}
 W_8|_\Sigma=\frac1{119}+\frac{\varepsilon}{U_1}>c_*.
\end{equation}
The same proposition gives \eqref{eq:N8globalcoercivity} and strictness
for every nonzero competitor. Proposition~\ref{prop:scaling} supplies
$C(8)\le9$, completing the proof.
\end{proof}

The dimension-dependent input consists of the field
\eqref{eq:gaugefield8}, the explicit junction parameters and the interval
bounds proved in Appendix~\ref{app:fielddata}. The assembly uses only
the general completion principle. The reserve $c_*$ is a lower bound;
\eqref{eq:N8actualreserve} records the actual first-sphere remainder.

\section{Exterior optimization and the eight--nine transition}\label{sec:interface}

The defect of the critical power compares dimensions, while
nonnegativity of the operator depends on all possible profiles.
We now allow the exterior profile to adjust to prescribed values on
the first sphere. The resulting decrease in energy has an exact
expression in terms of the residual of the original profile. This
connects the explicit defect with the optimized stability margin.
The exterior remains subcritical in both dimensions eight and nine;
the margin changes sign through its coupling to the first sphere.
The source limits and energy spaces below make this minimization
precise even when the optimal exterior profile is not in $\ell^2$.

\subsection{The first sphere and its exterior response}

The first sphere $\Sigma_N=\{\pm e_j:1\le j\le N\}$ has no internal edges.
Its diagonal in $H_N$ is
\[
 \delta_N=2N-A_N.
\]
Let $Y_N=X_N\setminus\Sigma_N$, let $K_N$ denote the compression of $H_N$
to $\ell^2(Y_N)$, and define the adjacency map
\[
 (B_Nw)(x)=\sum_{a\in\Sigma_N,\,a\sim x}w(a).
\]
Then the following block identity is exact:
\begin{equation}\label{eq:structuralblock}
 H_N=\begin{pmatrix}\delta_NI&-B_N^*\\-B_N&K_N\end{pmatrix}.
\end{equation}
If $K_N\ge0$ and its source response at zero is finite, minimization over
the exterior subtracts the form
\[
 T_N=\lim_{z\downarrow0}B_N^*(K_N+z)^{-1}B_N
\]
from the first-sphere diagonal. Thus the effective interface form is
$\delta_NI-T_N$, with $T_N$ defined by the indicated source limit.

For $3\le N\le8$, positivity and Green finiteness follow from
Lemma~\ref{lem:N8subcritical}. Proposition~\ref{prop:K9} will establish
the same exterior properties in dimension nine. The sign of the
effective interface form can therefore be compared on both sides of
the transition.

Some elementary geometric quantities illustrate the competition. For the
normalized constant vector
$e_N=(2N)^{-1/2}\mathbf1_{\Sigma_N}$, let $b_N=B_Ne_N$.
There are $2N(N-1)$ points with squared norm two, each adjacent to two
points of $\Sigma_N$, and $2N$ points of the form $\pm2e_j$, each adjacent
to one. Hence
\[
 \|b_N\|^2=2N(N-1)\frac4{2N}+2N\frac1{2N}=4N-3.
\]
In the two dimensions at the transition:
\begin{center}
\begin{tabular}{@{}ccc@{}}
\toprule
$N$&$\delta_N=2N-A_N$&$\|b_N\|^2$\\
\midrule
8&$7$&$29$\\
9&$23/4$&$33$\\
\bottomrule
\end{tabular}
\end{center}
The first-sphere diagonal decreases while the source norm increases.
The degree grows linearly with $N$ and the critical coefficient quadratically.
The exact balance is determined by the full source response, which
depends on $K_N$. Proposition~\ref{prop:K9} establishes the exterior
positivity needed for the comparison in dimension nine.

A negative prescribed-profile defect detects instability. To obtain the
positivity criterion, we minimize over the exterior. The following
identities quantify this adjustment; its dimension-specific bounds are
proved below.

\subsection{Energy-space reduction and residual identities}\label{subsec:energytools}

The exact response is a trial variational value plus the squared dual
norm of its residual. This identity gives an exact error estimate in
the energy space. The following variational and Schur reductions are
independent of the lattice dimension.

The energy space allows exterior profiles to be limits in quadratic
energy even when their square-summable norm diverges. We record the
corresponding Hilbert-space identities. In this subsection spaces and
pairings are real;
the complex identities follow by taking real parts. Let $K$ be a
bounded, nonnegative, injective self-adjoint operator on $\ell^2(Y)$,
where $Y$ is countable, and let $\mathscr H_K$ be the completion of
$C_c(Y)$ in $\|u\|_K^2=Q_K[u]:=\langle u,Ku\rangle$;
$Q_K(g,u)$ denotes the associated bilinear form.
A source $f$ is energy-continuous when
\[
 \|f\|_{K,*}^2=
 \sup_{0\ne u\in C_c(Y)}\frac{|\langle f,u\rangle|^2}{Q_K[u]}<\infty.
\]
For $g\in\mathscr H_K$, $Kg$ denotes the energy functional
$u\mapsto Q_K(g,u)$; this notation does not require $g\in\ell^2(Y)$.
The next identity gives both a variational lower bound and its exact error.

\begin{proposition}\label{prop:energyresidual}
Let $f\in C_c(Y)$ be energy-continuous. Its Riesz representative
$v_f\in\mathscr H_K$ satisfies
\begin{equation}\label{eq:energyresponsegeneral}
 m_f:=\lim_{\varepsilon\downarrow0}
 \langle f,(K+\varepsilon)^{-1}f\rangle
 =\|f\|_{K,*}^2=Q_K[v_f].
\end{equation}
For every $g\in\mathscr H_K$,
\begin{equation}\label{eq:residualidentitygeneral}
 m_f=2\langle f,g\rangle-Q_K[g]+\|Kg-f\|_{K,*}^2.
\end{equation}
On every finite-dimensional subspace $U\subset\mathscr H_K$, the
variational maximizer $g_U$ exists and satisfies
$Q_K(g_U,u)=\langle f,u\rangle$ for all $u\in U$. Moreover,
\[
 m_f=Q_K[g_U]+\|Kg_U-f\|_{K,*}^2.
\]
\end{proposition}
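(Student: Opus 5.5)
The plan is to work entirely in the Hilbert space $\mathscr H_K$ and identify all three quantities with Riesz-representation data. Since $f$ is energy-continuous, the functional $u\mapsto\langle f,u\rangle$ on $C_c(Y)$ is bounded in $\|\cdot\|_K$ with norm $\|f\|_{K,*}$. It therefore extends uniquely to $\mathscr H_K$, and the Riesz theorem gives $v_f\in\mathscr H_K$ with $Q_K(v_f,u)=\langle f,u\rangle$ and $Q_K[v_f]=\|f\|_{K,*}^2$. One point needs checking first: $\mathscr H_K$ must be a genuine space of functions, or at least the completion must be a Hilbert space with $Q_K$ extended. Injectivity of $K$ ensures $\|\cdot\|_K$ is a norm on $C_c(Y)$, which is all the abstract completion requires.

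For the resolvent limit, set $u_\varepsilon=(K+\varepsilon)^{-1}f\in\ell^2$. Since $K$ is bounded, $u_\varepsilon$ lies in the $\ell^2$-closure of $C_c(Y)$ in the graph sense, and hence in $\mathscr H_K$ via approximation by finitely supported truncations; $Q_K$ is continuous on $\ell^2$ because $K$ is bounded. The identity $\langle f,u_\varepsilon\rangle=Q_K[u_\varepsilon]+\varepsilon\|u_\varepsilon\|^2$ together with the dual-norm bound $\langle f,u_\varepsilon\rangle\le\|f\|_{K,*}Q_K[u_\varepsilon]^{1/2}$ gives
\[
m_f(\varepsilon)=\langle f,u_\varepsilon\rangle\le\|f\|_{K,*}^2 .
\]
Monotonicity in $\varepsilon$ is immediate from the spectral theorem, so the limit exists. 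For the lower bound, the variational formula
\[
\langle f,(K+\varepsilon)^{-1}f\rangle=\sup_u\bigl(2\langle f,u\rangle-Q_K[u]-\varepsilon\|u\|^2\bigr)
\]
is evaluated at a fixed $u\in C_c(Y)$ and then $\varepsilon\downarrow0$. This yields $\liminf\ge 2\langle f,u\rangle-Q_K[u]$, and the supremum over $u$ equals $\|f\|_{K,*}^2$ after scaling $u$. This proves \eqref{eq:energyresponsegeneral}.

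For the residual identity \eqref{eq:residualidentitygeneral}, take $g\in\mathscr H_K$. The functional $Kg-f$, namely $u\mapsto Q_K(g,u)-\langle f,u\rangle$, is represented by $g-v_f$, so its dual norm squared is $Q_K[g-v_f]$. Expanding gives
\[
Q_K[g]-2Q_K(g,v_f)+Q_K[v_f],
\]
and $Q_K(g,v_f)=\langle f,g\rangle$, with the pairing interpreted through the extended functional. Rearranging yields the identity. A small care point is that the sup defining $\|\cdot\|_{K,*}$ runs over $C_c(Y)$, which is dense in $\mathscr H_K$, so the sup coincides with the Riesz norm.

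For the finite-dimensional statement, $U$ is closed, so let $g_U$ be the $Q_K$-orthogonal projection of $v_f$ onto $U$. Then $Q_K(g_U,u)=Q_K(v_f,u)=\langle f,u\rangle$ for $u\in U$, and $g_U$ maximizes $2\langle f,g\rangle-Q_K[g]=Q_K[v_f]-Q_K[g-v_f]$ over $U$. Substituting $\langle f,g_U\rangle=Q_K[g_U]$ into \eqref{eq:residualidentitygeneral} gives the last formula. The main obstacle is the resolvent-limit step, specifically justifying that $(K+\varepsilon)^{-1}f$ may be tested against the dual norm. I would handle it by truncation and continuity of $Q_K$ on $\ell^2$.
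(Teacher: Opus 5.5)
Your proposal is correct and follows the paper's proof: Riesz representation in $\mathscr H_K$, the resolvent variational formula with $u$ fixed before $\varepsilon\downarrow0$ for the lower bound, representation of $Kg-f$ by $g-v_f$, and orthogonal projection onto $U$. The only deviation is your upper bound via $u_\varepsilon=(K+\varepsilon)^{-1}f$ and truncation. The paper obtains it more directly by restricting the variational supremum to $C_c(Y)$, which is dense in $\ell^2$, and simply dropping the term $-\varepsilon\|u\|_2^2$, so the step you call the main obstacle is not actually needed.
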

\begin{proof}
The resolvent variational formula, with $C_c(Y)$ dense in $\ell^2(Y)$,
is
\[
 \langle f,(K+\varepsilon)^{-1}f\rangle
 =\sup_{u\in C_c(Y)}
 \{2\langle f,u\rangle-Q_K[u]-\varepsilon\|u\|_2^2\}.
\]
As $\varepsilon\downarrow0$ the limit is the supremum without the
last term: one inequality is immediate, and the other follows by fixing
$u$ before taking the limit. Riesz representation and completion of the
square identify that supremum with $Q_K[v_f]$.
The functional $Kg-f$ is represented by $g-v_f$, so expansion of
$Q_K[g-v_f]$ proves \eqref{eq:residualidentitygeneral}.
Orthogonal projection of $v_f$ onto $U$ gives the maximizer and its
Euler equations. Taking $u=g_U$ proves the final formula.
\end{proof}

The response matrix records the energy gained by adjusting the
exterior for each boundary pattern. Its eigenvector directions are
the interface channels. We next formulate the interface criterion.
Let $F$ be finite, let $C$
be a real symmetric matrix on $\R^F$, and suppose
$B:\R^F\to C_c(Y)$ has energy-continuous columns. Let $\mathcal Rw$
be the Riesz representative of $Bw$ and define $T$ by
$\langle w,Tw\rangle=Q_K[\mathcal Rw]$.

\begin{proposition}\label{prop:abstractschur}
For the bounded block operator
$\mathcal H=\left(\begin{smallmatrix}C&-B^*\\-B&K\end{smallmatrix}\right)$,
one has, for $w\in\R^F$ and $u\in C_c(Y)$,
\begin{equation}\label{eq:abstractschuridentity}
 Q_{\mathcal H}[w,u]
 =Q_K[u-\mathcal Rw]+\langle w,(C-T)w\rangle,
 \qquad \mathcal H\ge0\ \Longleftrightarrow\ C-T\ge0.
\end{equation}
Moreover $T=\lim_{\varepsilon\downarrow0}B^*(K+\varepsilon)^{-1}B$.
If $C=dI$, $B$ is entrywise nonnegative, the resolvent of $K$ preserves
positivity, and a permutation group acts transitively on $F$ and
commutes with every $B^*(K+\varepsilon)^{-1}B$, then, for
$e=|F|^{-1/2}\mathbf1_F$,
\begin{equation}\label{eq:transitivechannel}
 \lambda_{\max}(T)=\langle e,Te\rangle=\|Be\|_{K,*}^2,
 \qquad
 \mathcal H\ge0\ \Longleftrightarrow\ \|Be\|_{K,*}^2\le d.
\end{equation}
\end{proposition}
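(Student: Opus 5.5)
The proof has three parts: the Schur identity, the formula for $T$ as a resolvent limit, and the transitive-channel statement.

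\emph{Schur identity.} Fix $w\in\R^F$ and $u\in C_c(Y)$. Expand the block form:
\[
Q_{\mathcal H}[w,u]=\langle w,Cw\rangle-2\langle Bw,u\rangle+Q_K[u].
\]
By definition of the Riesz representative, $\langle Bw,u\rangle=Q_K(\mathcal Rw,u)$ for every $u\in C_c(Y)$. Since $\mathcal Rw\in\mathscr H_K$, the quantity $Q_K[u-\mathcal Rw]$ is defined in the completion, and it equals
\[
Q_K[u]-2\langle Bw,u\rangle+Q_K[\mathcal Rw].
\]
Adding and subtracting $\langle w,Tw\rangle=Q_K[\mathcal Rw]$ gives \eqref{eq:abstractschuridentity}.

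For the equivalence, first suppose $C-T\ge0$. Then $Q_{\mathcal H}\ge0$ on $\R^F\oplus C_c(Y)$, and $\mathcal H\ge0$ follows by boundedness and density. Conversely, suppose $\mathcal H\ge0$ and fix $w$. Choose $u_n\in C_c(Y)$ with $u_n\to\mathcal Rw$ in $\mathscr H_K$, which is possible since $C_c(Y)$ is dense in the completion. Then $Q_K[u_n-\mathcal Rw]\to0$, and so $\langle w,(C-T)w\rangle\ge0$.

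\emph{The resolvent formula for $T$.} Apply Proposition~\ref{prop:energyresidual} with $f=Bw$, which has finite support and is energy-continuous by hypothesis. This gives
\[
\langle w,B^*(K+\varepsilon)^{-1}Bw\rangle\to Q_K[\mathcal Rw]=\langle w,Tw\rangle.
\]
Pointwise convergence of quadratic forms on the finite-dimensional space $\R^F$ implies matrix convergence by polarization.

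\emph{The transitive channel.} Put $T_\varepsilon=B^*(K+\varepsilon)^{-1}B$. By hypothesis $T_\varepsilon$ has nonnegative entries: $B$ is entrywise nonnegative and the resolvent preserves positivity. The group commutes with $T_\varepsilon$, and these properties pass to the limit $T$. So $T$ is a nonnegative symmetric matrix commuting with a transitive permutation group.

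Transitivity makes the row sums of $T$ constant, say equal to $r$. Then $Te=re$, so $e$ is a positive eigenvector. By Perron--Frobenius for nonnegative symmetric matrices, or directly from the bound
\[
|\langle w,Tw\rangle|\le\langle|w|,T|w|\rangle\le r\|w\|^2,
\]
which uses the Schur test with constant row sums, we get $\lambda_{\max}(T)=r=\langle e,Te\rangle$. From \eqref{eq:energyresponsegeneral}, $\langle e,Te\rangle=Q_K[\mathcal Re]=\|Be\|_{K,*}^2$. Finally, with $C=dI$, the matrix $C-T$ is nonnegative exactly when $\lambda_{\max}(T)\le d$, which gives \eqref{eq:transitivechannel}.

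I expect the main care to be needed in the converse direction of the Schur equivalence. There one must use the density of $C_c(Y)$ in $\mathscr H_K$ when $\mathcal Rw\notin\ell^2$, and check that $Q_K[u-\mathcal Rw]$ is legitimately the completed-form expansion. The Perron step is routine once constant row sums are observed.
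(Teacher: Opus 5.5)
Your proposal is correct and follows the paper's proof essentially step for step: expanding the square via the Riesz identity $Q_K(\mathcal Rw,u)=\langle Bw,u\rangle$, using density of $C_c(Y)$ in $\mathscr H_K$ for the converse, applying Proposition~\ref{prop:energyresidual} with polarization for the resolvent limit, and obtaining constant row sums from transitivity. Your Schur-test bound $\langle|w|,T|w|\rangle\le r\|w\|^2$ is an explicit form of the paper's row-sum norm bound on the eigenvalues.
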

\begin{proof}
Riesz representation gives $Q_K(\mathcal Rw,u)=\langle Bw,u\rangle$;
expanding the square proves the first identity. If $C-T\ge0$ it
implies nonnegativity. Conversely approximate $\mathcal Rw$ in energy
by finite-support functions, for each fixed $w$. Nonnegativity then
forces $\langle w,(C-T)w\rangle\ge0$.
Equation~\eqref{eq:energyresponsegeneral}, applied to $Bw$ and polarized,
gives the matrix limit. Under the additional assumptions $T$ has
nonnegative entries and all row sums agree. Their common value is
$\langle e,Te\rangle$, and $e$ is an eigenvector with this eigenvalue.
The row-sum norm bounds the absolute value of every eigenvalue by the
same number, proving \eqref{eq:transitivechannel}.
\end{proof}

Thus transitivity selects the constant stability channel without a
full decomposition into irreducible representations. Bounds on the other
channels remain useful for counting eigenvalues beyond the first
threshold. These identities isolate the exterior response; estimating
that response is the dimension-specific step.

\subsection{The energy remaining after optimization}\label{subsec:optimizeddefect}

Recall the trace-normalized infimum introduced in
Section~\ref{sec:results}:
\begin{equation}\label{eq:optimizeddefinition}
 \mathfrak D_N=\inf\bigl\{\langle u,H_Nu\rangle:
 u\in C_c(X_N;\C),\ \|P_{\Sigma_N}u\|_2^2=2N\bigr\}.
\end{equation}
The constraint fixes no exterior norm, so $\mathfrak D_N$ may be
$-\infty$ even though $H_N$ is bounded. Its normalization agrees with
$h_N=1$ on $\Sigma_N$.

\begin{proposition}\label{prop:optimizedcriterion}
For every integer $N\ge3$,
\begin{align}
 \mathfrak D_N&\le\mathfrak d_N,\label{eq:optimizedbelowdefect}\\
 \mathfrak D_N\ge0
 &\quad\Longleftrightarrow\quad H_N\ge0
 \quad\Longleftrightarrow\quad C(N)=A_N.\label{eq:optimizedcriterion}
\end{align}
Consequently $\mathfrak D_N<0$, including the value $-\infty$, is
equivalent to $C(N)<A_N$.
Suppose in addition that $K_N\ge0$ is injective, its finite sources
are energy-continuous, and $\phi_N=h_N|_{Y_N}$ belongs to its energy
completion. Put
\[
 e_N=(2N)^{-1/2}\mathbf1_{\Sigma_N},\quad b_N=B_Ne_N,\quad
 m_N(0)=\lim_{\varepsilon\downarrow0}
 \langle b_N,(K_N+\varepsilon)^{-1}b_N\rangle,
\]
and set $r_N=K_N\phi_N-\sqrt{2N}\,b_N$. Then
\begin{equation}\label{eq:optimizedresponse}
 \mathfrak D_N
 =2N\bigl(2N-A_N-m_N(0)\bigr)
 =\mathfrak d_N-\|r_N\|_{K_N,*}^2.
\end{equation}
No assumption on the sign of $H_N$ is needed for this last identity.
\end{proposition}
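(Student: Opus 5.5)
The plan has three steps: the inequality $\mathfrak D_N\le\mathfrak d_N$ by logarithmic cutoffs, the equivalences by homogeneity of the constraint, and the two identities in \eqref{eq:optimizedresponse} from the Schur reduction and the residual identity.

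\emph{Inequality \eqref{eq:optimizedbelowdefect}.} Take the logarithmic cutoffs $h_N\chi_R$ from the proof of Theorem~\ref{thm:defectsign}. For large $R$ these equal $h_N=1$ on $\Sigma_N$, so they satisfy the constraint $\|P_{\Sigma_N}u\|_2^2=2N$. By the ground-state identity \eqref{eq:defectgroundstate}, their energy is an edge term of size $O((\log R)^{-1})$ plus $\sum\rho_N|\chi_R|^2$, and the latter tends to $\mathfrak d_N$. Taking the infimum gives $\mathfrak D_N\le\mathfrak d_N$.

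\emph{Equivalences \eqref{eq:optimizedcriterion}.} If $H_N\ge0$, then $\mathfrak D_N\ge0$ trivially. Conversely, suppose $u\in C_c$ has $\langle u,H_Nu\rangle<0$. If $P_{\Sigma_N}u\ne0$, rescale $u$ to meet the constraint; homogeneity keeps the energy negative, so $\mathfrak D_N<0$. If $u$ vanishes on $\Sigma_N$, add $t$ times a small multiple of $\mathbf 1_{\Sigma_N}$. For small $t$ the energy stays negative and now $P_{\Sigma_N}u\ne0$, so we rescale as before. The second equivalence is Proposition~\ref{prop:spectraltransition}. The statement about $\mathfrak D_N<0$ then follows by negation.

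\emph{First identity in \eqref{eq:optimizedresponse}.} Use the block form \eqref{eq:structuralblock} and Proposition~\ref{prop:abstractschur} with $C=\delta_NI$ and $B=B_N$. For fixed boundary values $w$, take $u=\mathcal Rw-v$ in \eqref{eq:abstractschuridentity} with $v$ approximating in energy. The infimum over exterior finite-support functions is $\langle w,(\delta_N-T_N)w\rangle$, because $Q_K[u-\mathcal Rw]$ can be made arbitrarily small by density of $C_c$ in $\mathscr H_K$. Minimizing then over $\|w\|^2=2N$ gives $2N\lambda_{\min}(\delta_N-T_N)$. I must then show that this minimum is attained at the constant channel $e_N$, i.e. $\lambda_{\max}(T_N)=\langle e_N,T_Ne_N\rangle=m_N(0)$. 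The transitivity clause \eqref{eq:transitivechannel} gives this: $B_N\ge0$, the signed-permutation group acts transitively on $\Sigma_N$ and commutes with everything, and the resolvent of $K_N$ preserves positivity. The last point holds because $K_N+\varepsilon$ has nonpositive off-diagonal entries and is invertible; a Neumann series for $\varepsilon$ large, extended by analyticity or monotonicity, gives positivity of the resolvent. Combining, $\mathfrak D_N=2N(2N-A_N-m_N(0))$. Complex competitors only add real and imaginary parts and do not change this.

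\emph{Second identity in \eqref{eq:optimizedresponse}.} Apply \eqref{eq:residualidentitygeneral} with $f=\sqrt{2N}b_N$, so that $m_f=2N\,m_N(0)$, and $g=\phi_N$:
\[
2N\,m_N(0)=2\sqrt{2N}\langle b_N,\phi_N\rangle-Q_K[\phi_N]+\|r_N\|_{K_N,*}^2 .
\]
It remains to check
\[
2N\delta_N-2\sqrt{2N}\langle b_N,\phi_N\rangle+Q_K[\phi_N]=\mathfrak d_N,
\]
i.e. that the formal energy of $h_N$ computed blockwise equals the absolutely summed defect. This is the main obstacle, because $\phi_N\notin\ell^2$ in general. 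I would evaluate $Q_K[\phi_N]$ as the limit of $Q_K[\phi_N\chi_R]$ with the logarithmic cutoffs, which converge to $\phi_N$ in energy. The needed energy convergence follows from the same $O((\log R)^{-1})$ edge bound, applied to differences, together with the fact that $\phi_N$ lies in the energy completion. Expanding $\langle h_N\chi_R,H_Nh_N\chi_R\rangle$ blockwise, with $\chi_R=1$ near $\Sigma_N$, gives exactly
\[
2N\delta_N-2\sqrt{2N}\langle b_N,\phi_N\rangle+Q_K[\phi_N\chi_R].
\]
Here $\langle b_N,\phi_N\rangle$ is a finite sum. The left side tends to $\mathfrak d_N$ by \eqref{eq:defectgroundstate}. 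Passing to the limit yields the identity, and no sign assumption on $H_N$ is used.
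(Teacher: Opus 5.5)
Your proposal is correct and follows essentially the paper's own proof. Logarithmic cutoffs give $\mathfrak D_N\le\mathfrak d_N$, and rescaling plus a small first-sphere perturbation gives the equivalences. The transitive Schur channel of Proposition~\ref{prop:abstractschur} gives $\mathfrak D_N=2N(2N-A_N-m_N(0))$. Finally, the blockwise cutoff energy of $\phi_N$, combined with the residual expansion (which the paper writes as the squared energy of $\phi_N-\sqrt{2N}v_N$), gives the second identity.

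The only differences are minor. You get positivity of $(K_N+\varepsilon)^{-1}$ from a Neumann series rather than the exponential series and Laplace integral; with a shift $a\ge\|K_N\|$ that series already converges for every $\varepsilon>0$, so no continuation step is needed. Also, identifying the energy limit of the cutoffs with $\phi_N$ uses continuity of point evaluations, which the energy-continuity of the sources $\delta_x$ supplies.
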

\begin{proof}
The logarithmic cutoffs $h_N\chi_R$ in
\eqref{eq:defectgroundstate} have the prescribed trace for large $R$,
and their energies converge to $\mathfrak d_N$. This proves
\eqref{eq:optimizedbelowdefect}. Nonnegativity of $H_N$ implies
$\mathfrak D_N\ge0$. Conversely, a negative finite-support test with
nonzero trace can be rescaled to the constraint in
\eqref{eq:optimizeddefinition}. If its trace is zero, adding a sufficiently
small nonzero function on $\Sigma_N$ leaves its energy negative and
makes its trace nonzero. Thus $\mathfrak D_N\ge0$ excludes every
negative test. Boundedness and density, followed by
Proposition~\ref{prop:spectraltransition}, prove
\eqref{eq:optimizedcriterion}.

Under the additional hypotheses, let $v_N$ be the Riesz representative
of $b_N$ in $\mathscr H_{K_N}$. Minimizing the block form over the
exterior gives the matrix $(2N-A_N)I-T_N$, where
$T_N=\lim_{\varepsilon\downarrow0}B_N^*(K_N+\varepsilon)^{-1}B_N$.
The resolvent of $K_N$ preserves positivity: a sufficiently large
scalar shift makes its negative an entrywise nonnegative matrix, and
the exponential series and its Laplace integral give the assertion.
Signed-permutation symmetry is transitive on $\Sigma_N$.
Proposition~\ref{prop:abstractschur} therefore gives
$\lambda_{\max}(T_N)=m_N(0)$, with constant eigenvector $e_N$.
The trace minimum is $2N(2N-A_N-m_N(0))$.

For completeness, the energy of the fixed profile satisfies
\begin{equation}\label{eq:optimizedprofileenergy}
 \mathfrak d_N=2N(2N-A_N)
 -2\sqrt{2N}\langle b_N,\phi_N\rangle
 +\mathcal Q_{K_N}[\phi_N].
\end{equation}
This follows by cutting off $\phi_N$ logarithmically in the block form.
The exterior vertex sum is absolutely convergent, and the cutoff edge
cost tends to zero. The functional $r_N$ is represented by
$\phi_N-\sqrt{2N}v_N$. Expanding its squared energy gives
\[
 \|r_N\|_{K_N,*}^2=
 \mathcal Q_{K_N}[\phi_N]
 -2\sqrt{2N}\langle b_N,\phi_N\rangle+2Nm_N(0).
\]
Subtraction proves \eqref{eq:optimizedresponse}.
\end{proof}

Thus a negative prescribed-profile defect always detects instability.
A positive one is conclusive only after controlling the energy saved
by exterior adjustment. The constructed spatial remainders give a positive
trace margin in dimensions three through eight. Section~\ref{subsec:K9}
shows that the first-sphere exterior remains positive in dimension nine,
where that margin has the opposite sign. The definition
\eqref{eq:optimizeddefinition} remains valid even in dimensions where
this exterior is no longer nonnegative.

\subsection{The positive margin in dimension eight}\label{sec:N8}

Write $\Sigma=\Sigma_8$, $Y=Y_8$, $H=H_8$, $K=K_8$, and
$\mathcal Q[u]=\langle u,Ku\rangle$. The exterior operator is
\begin{equation}\label{eq:N8K}
 (Ku)(x)=\left(16-\frac9{|x|^2}\right)u(x)
       -\sum_{y\in Y,\,y\sim x}u(y).
\end{equation}
For $e=\frac14\mathbf1_\Sigma$ and the adjacency map $B=B_8$,
\begin{equation}\label{eq:N8source}
 b=Be=\frac12\mathbf1_{\{|x|^2=2\}}+
       \frac14\mathbf1_{\{\pm2e_i\}},\qquad
 m(z)=\langle b,(K+z)^{-1}b\rangle.
\end{equation}
The two source sets have 112 and 16 points, so $\|b\|^2=29$.
The block operator is
\begin{equation}\label{eq:N8block}
 H=\begin{pmatrix}7I&-B^*\\-B&K\end{pmatrix}.
\end{equation}

\begin{proposition}\label{prop:N8finalresponse}
The source response is finite and
\begin{equation}\label{eq:N8finalnumber}
 0<m(0)\le7-c_*<7,
 \qquad
 c_*\le7-m(0)<7.
\end{equation}
\end{proposition}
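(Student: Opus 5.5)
The plan is to deduce the response bound from the global coercivity estimate \eqref{eq:N8globalcoercivity} of Theorem~\ref{thm:N8}, combined with the Schur reduction in Proposition~\ref{prop:abstractschur}. No direct computation of the resolvent of $K$ is needed.

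\textbf{Step 1: setting up the Schur framework.} By Lemma~\ref{lem:N8subcritical}, $K=K_8$ satisfies $K\ge M_{w_8|_Y}$ with $w_8>0$. Hence $K$ is nonnegative and injective. Moreover, every finitely supported source $f$ is energy-continuous, with
\[
 \|f\|_{K,*}^2\le\sum_x|f(x)|^2/w_8(x).
\]
In particular $b$, which is supported on the finitely many vertices with $|x|^2=2$ and on $\{\pm2e_i\}$, is energy-continuous. Proposition~\ref{prop:energyresidual} then shows that $m(0)=\|b\|_{K,*}^2$ is finite. It is strictly positive because $b\ne0$: testing the dual norm with $u=b$ gives $\langle b,b\rangle^2/Q_K[b]>0$.

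\textbf{Step 2: applying the positivity-preserving channel criterion.} The hypotheses of \eqref{eq:transitivechannel} hold, exactly as verified in the proof of Proposition~\ref{prop:optimizedcriterion}. The map $B$ is entrywise nonnegative. The resolvent of $K$ preserves positivity, since $K$ has nonpositive off-diagonal entries and the argument via the exponential series applies. Finally, signed permutations act transitively on $\Sigma$ and commute with $B^*(K+\varepsilon)^{-1}B$. Consequently $\lambda_{\max}(T)=m(0)$, with eigenvector $e$, where $T$ is the interface form in \eqref{eq:abstractschuridentity}.

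\textbf{Step 3: transferring the first-sphere reserve.} Write the block form \eqref{eq:N8block} with $C=7I$. Theorem~\ref{thm:N8} gives
\[
 Q_H[w,u]\ge c_*\|w\|^2 \qquad\text{for all } w\in\R^\Sigma,\ u\in C_c(Y).
\]
This is the same as saying that the block operator with diagonal $C=(7-c_*)I$ is nonnegative. Apply Proposition~\ref{prop:abstractschur} to that modified block. The operators $B$ and $K$ are unchanged, so $T$ is unchanged, and the criterion yields $(7-c_*)I-T\ge0$.

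To make this rigorous, fix $w=e$ and take $u_n\in C_c(Y)$ converging in energy to $\mathcal Re$. Identity \eqref{eq:abstractschuridentity} gives
\[
 Q_H[e,u_n]\to 7-m(0).
\]
Since $Q_H[e,u_n]\ge c_*\|e\|^2=c_*$ for every $n$, we obtain $7-m(0)\ge c_*$.

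\textbf{Step 4: the remaining inequalities.} The bound $c_*\le 7-m(0)$ is exactly the claim $m(0)\le 7-c_*$. The strict bounds $7-c_*<7$ and $7-m(0)<7$ follow from $c_*>0$ and $m(0)>0$.

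\textbf{Main obstacle.} The most delicate point is the energy-space approximation in Step 3. The minimizer $\mathcal Re$ need not lie in $\ell^2$, so one must confirm that finite-support approximants converge in the $Q_K$ norm. This holds by the definition of $\mathscr H_K$ as the completion of $C_c(Y)$. Continuity of $Q_H[e,\cdot]$ along such sequences follows from the identity \eqref{eq:abstractschuridentity}, which expresses it through $Q_K[u-\mathcal Re]$.
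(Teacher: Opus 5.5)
Your proof is correct and follows the paper's argument: the coercivity $H_8\ge c_*P_\Sigma$ is fed into the Schur identity of Proposition~\ref{prop:abstractschur} with unchanged exterior block $K$, the constant channel $e$ gives $m(0)\le 7-c_*$, and positivity of $m(0)$ comes from $b\ne0$. Your Step~2 is not needed. The one-sided bound only requires evaluating the identity at $w=e$, where $\langle e,Te\rangle=Q_K[\mathcal Re]=\|b\|_{K,*}^2=m(0)$ by definition and Proposition~\ref{prop:energyresidual}, so transitivity plays no role.
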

\begin{proof}
Lemma~\ref{lem:N8subcritical} gives Green finiteness and injectivity
of $K$. The remainder \eqref{eq:hardyremainder} implies
$H_8\ge c_*P_\Sigma$.
Apply Proposition~\ref{prop:abstractschur} to
$H_8-c_*P_\Sigma$, whose exterior block is still $K$.
The constant channel gives $m(0)\le7-c_*$.
Strict positivity of $m(0)$ follows from $b\ne0$ and positivity of
the resolvent quadratic form.
\end{proof}

\subsection{Dimension nine: a stable exterior and an unstable coupling}
\label{subsec:K9}

The first-sphere exterior remains positive at the first nonsharp
dimension. The comparison with eight therefore uses the same effective
interface quantity in a positive exterior energy space.

\begin{proposition}\label{prop:K9}
The compression $K_9$ on $Y_9=\{x\in\Z^9:|x|^2\ge2\}$ has a
positive reciprocal field with vertex remainder $W_9(x)>0$ everywhere.
It is nonnegative and injective, and its zero-energy Green kernel is
finite. For the source $b_9=B_9(18^{-1/2}\mathbf1_{\Sigma_9})$,
\begin{equation}\label{eq:K9margin}
 m_9(0)>\frac{623}{108},\qquad
 \frac{23}{4}-m_9(0)<-\frac1{54},\qquad
 \mathfrak D_9<-\frac13.
\end{equation}
\end{proposition}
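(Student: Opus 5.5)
The plan has three parts: positivity of $K_9$, a lower bound on $m_9(0)$, and the defect consequence. All three rest on the reciprocal-field machinery of Section~\ref{sec:hardyproofs} and the energy identities of Section~\ref{subsec:energytools}.

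\emph{Exterior positivity.} For $K_9$ we construct a positive reciprocal field on the edges of $Y_9$. On the exterior we reuse the representation \eqref{eq:generaloddfield}, with an odd function $\Psi$ whose leading term is $\Psi=\tfrac{7}{4}v+\cdots$, matching the critical ratio $(t/s)^{-7/4}$ for $s_9=7/2$. We calibrate it by the affine angular identity \eqref{eq:generalangularsum} with $\lambda=A_9=49/4$ and $N=9$. This makes the vertex remainder positive for all large $s$ uniformly in angle; the margin is of order $s^{-2}$ times a positive residual, as in Proposition~\ref{n8a:prop:exterior}.

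The key observation is that $K_9$ keeps the full diagonal $18-A_9/|x|^2$ but loses the edges to $\Sigma_9$. At $|x|^2=2$ the vertex therefore has two fewer neighbours. This is a local gain of diagonal, so the interior patch is easier than for $H_9$. On a finite patch (the Boolean layers $B_2,\dots$ and a few $E_k$ classes), we apply Proposition~\ref{prop:analyticcompletion} with a positive profile $h$ built as in Lemma~\ref{n8a:lem:join}. We require $M_0h>0$, which yields $W_9>0$ everywhere.

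Lemma~\ref{lem:edge} then gives $K_9\ge M_{W_9}$, hence nonnegativity and injectivity. Inverse form order, as in Proposition~\ref{prop:defectpositiveapriori}, gives $\langle\delta_x,(K_9+\varepsilon)^{-1}\delta_x\rangle\le W_9(x)^{-1}$, so the Green kernel is finite. Finite sources are then energy-continuous.

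\emph{Lower bound on $m_9(0)$.} Proposition~\ref{prop:energyresidual} gives $m_9(0)\ge 2\langle b_9,g\rangle-Q_{K_9}[g]$ for every trial $g$, and the best choice on a finite-dimensional space $U$ is $Q_{K_9}[g_U]$. We take $U$ to be spanned by indicators of the signed-permutation classes with small $|x|^2$ (say $|x|^2\le 6$), possibly adding a cut-off copy of $\phi_9$ to capture the tail. We then solve the small symmetric linear system exactly in rationals and check that $Q_{K_9}[g_U]>623/108$.

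The complementary bound $\tfrac{23}{4}-m_9(0)<-\tfrac1{54}$ is arithmetic, since $\tfrac{23}{4}=\tfrac{621}{108}$. Finally, $\mathfrak D_9=18(\tfrac{23}{4}-m_9(0))<-\tfrac{18}{54}=-\tfrac13$ follows from \eqref{eq:optimizedresponse}. Its hypotheses hold here: transitivity on $\Sigma_9$, positivity preservation by the resolvent, and membership of $\phi_9$ in the energy space. The last holds because $\phi_9$ has a summable local defect and $K_9\ge M_{W_9}$, with the same logarithmic-cutoff argument as before. If $\phi_9$ causes trouble, the identity $\mathfrak D_9=18\tau_9$ from the Schur reduction alone suffices, and it needs only finite sources.

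\emph{Main obstacle.} The hard part is the exterior field in dimension nine. We need a rational $\Psi$ for which the angular residual $\mathcal R_{9,49/4}(s,q)$ is provably positive on all large $s$, with interval estimates on a compact range and the low-radius patterns of Lemma~\ref{lem:generalpatchgeometry}. I would first try a truncation of \eqref{eq:gaugefield8} with rescaled coefficients, checking positivity by Taylor expansion in $w$ and $v^2$. The removed first-sphere edges should give enough slack to complete the interior patch.
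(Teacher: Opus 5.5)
Your exterior-positivity step and its Green-function and energy-space consequences follow the paper's outline. The paper's field is just the quadratic member $\beta_9=a_{7/4,17/5}$ of the family in Appendix~\ref{app:calibration9}, i.e.\ $\Psi=\tfrac74v/(1+\tfrac{17}{5}v^2)$. Completion is needed only on the Boolean orbits $\mathcal O_2^{(9)},\dots,\mathcal O_6^{(9)}$, so that part is less of an obstacle than you expect.

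The genuine gap is the lower bound $m_9(0)>\tfrac{623}{108}$. A Rayleigh--Ritz computation on indicators of classes with $|x|^2\le6$ cannot reach it. The exterior has the critical tail, so $v_0$ decays like $|x|^{-7/2}$. Dirichlet truncations of the response therefore converge only at a logarithmic rate in the radius; this is the capacity mechanism of Section~\ref{sec:exactresponse}. Concretely, solving your system on $B_2,\dots,B_6,E_0,E_1,E_2$ gives $18\,m_U\approx83$, whereas $18\cdot\tfrac{23}{4}=103.5$ is needed merely to get $\tau_9<0$. So the tail is not optional. Once you add a cut-off copy of $\phi_9$, however, the Gram entries are infinite lattice sums and cannot be solved ``exactly in rationals''. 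With $g=\phi_9/\sqrt{18}$ alone, \eqref{eq:optimizedprofileenergy} shows the variational value is exactly $\tfrac{23}{4}-\mathfrak d_9/18$. What you would actually have to evaluate is the total critical defect.

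The missing idea is to run the argument in the opposite direction, using results already proved. Lemma~\ref{mono:lem:N9base} gives $\mathfrak d_9<-\tfrac13$ from the theta representation. Inequality \eqref{eq:optimizedbelowdefect} gives $\mathfrak D_9\le\mathfrak d_9$ with no hypotheses; equivalently, $\mathfrak D_9=\mathfrak d_9-\|r_9\|_{K_9,*}^2$ once $\phi_9\in\mathscr H_{K_9}$. Hence $\mathfrak D_9<-\tfrac13$ immediately. The Schur identity $\mathfrak D_9=18(\tfrac{23}{4}-m_9(0))$ needs only the exterior positivity from your first step. It then converts this into $\tfrac{23}{4}-m_9(0)<-\tfrac1{54}$, i.e.\ $m_9(0)>\tfrac{623}{108}$. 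In your ordering, the third inequality of \eqref{eq:K9margin} is deduced from the first, which is the hard one. In the paper's ordering, the third comes first and the other two follow by arithmetic.
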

\begin{proof}
Use the calibrated field with $\mu=7/4$, $\kappa=17/5$:
\begin{equation}\label{eq:field9}
 \beta_9(s,t)=
 \frac{123s^2-96st+53t^2}{53s^2-96st+123t^2}.
\end{equation}
It is positive and reciprocal. The calibration in Appendix~\ref{app:calibration9}, specifically
\eqref{eq:uniformfactor}--\eqref{eq:uniformR2} applies with $N=9$.
Writing $R_9=R_0+R_1\tau+R_2\tau^2$, direct substitution gives
\begin{align*}
 R_0={}&\frac{18}{5}s+\frac{8901}{20}+\frac{41679}{800s}
 -\frac{1759023}{1600s^2}-\frac{3676347}{3200s^3}
 -\frac{13026069}{32000s^4},\\
 R_1={}&\frac{417}{5}s-\frac{882999}{200}-\frac{601101}{400s}
 +\frac{1043901}{200s^2}+\frac{13026069}{4000s^3},\\
 R_2={}&\frac{166419}{25}-\frac{499257}{200s}
 -\frac{13026069}{2000s^2}.
\end{align*}
All three increase for $s\ge12$: indeed
\[
 R_0'>\frac{18}{5}-\frac{41679}{800\cdot12^2}>3,
 \qquad
 R_1'>\frac{417}{5}-\frac{1043901}{100\cdot12^3}
             -\frac{39078207}{4000\cdot12^4}>76,
 \qquad R_2'>0.
\]
At $s=12$, their values satisfy $R_0>484$, $R_1>-3502$,
$R_2>6403$. Thus, for $s\ge12$ and $\tau\ge0$,
\[
 R_9(s,\tau)>484-3502\tau+6403\tau^2>0,
 \qquad 4\cdot484\cdot6403-3502^2>0.
\]
The affine calibration proves a positive remainder in all these rows.

In the remaining layers, only five Boolean orbits need completion.
Appendix~\ref{app:gaugecore9} constructs their weights by a tridiagonal
Poisson problem and proves positivity on every other type by three
one-variable estimates. Completion on those five orbits therefore gives
$K_9\ge M_{W_9}$ with $W_9>0$ everywhere. Inverse form order then gives
\[
 \langle\delta_x,(K_9+\varepsilon)^{-1}\delta_x\rangle
 \le W_9(x)^{-1}.
\]
Cauchy--Schwarz for the positive resolvent proves Green finiteness at
every pair, and finite sources are energy-continuous.

The profile $\phi_9=h_9|_{Y_9}$ belongs to the exterior energy
completion. To see this, apply the ground-state expansion to its
logarithmic cutoffs: the edge cutoff term is $O(1/\log R)$ and
$\phi_9K_9\phi_9=O(|x|^{-11})$ is absolutely summable.
For differences of two cutoffs the same two terms tend to zero.
Continuity of point evaluations identifies the resulting energy limit
with $\phi_9$. Proposition~\ref{prop:optimizedcriterion} now gives
\[
 \mathfrak D_9=\mathfrak d_9-\|r_9\|_{K_9,*}^2
 =18\left(\frac{23}{4}-m_9(0)\right).
\]
Lemma~\ref{mono:lem:N9base} gives $\mathfrak d_9<-1/3$.
The nonnegativity of the squared norm proves \eqref{eq:K9margin}.
\end{proof}

The response matrix has nonnegative entries and constant row sum
$m_9(0)$, so its largest eigenvalue is $m_9(0)$. Consequently the same
Schur reduction gives, for $t\ge0$,
\begin{equation}\label{eq:repulsion9}
 H_9+tP_{\Sigma_9}\ge0
 \quad\Longleftrightarrow\quad t\ge t_9,
 \qquad t_9:=m_9(0)-\frac{23}{4}>\frac1{54}.
\end{equation} Thus dimension eight tolerates additional
attraction on the first sphere, whereas dimension nine requires
repulsion there. Both thresholds are determined by the full exterior
response.

\subsection{The defect sign law and the cost of exterior adjustment}
\label{subsec:defectstructure}

The exterior hypotheses now hold on both sides of the transition.
The optimized energy and the prescribed-profile defect have the same
sign in this family, and their difference has an exact energy meaning.

\begin{proposition}\label{prop:defectschur}
For $3\le N\le9$, $K_N$ has finite Green kernel, $\phi_N=h_N|_{Y_N}$
belongs to $\mathscr H_{K_N}$, and the quantities
\begin{equation}\label{eq:defectschurdata}
 \tau_N=2N-A_N-m_N(0),\qquad
 r_N=K_N\phi_N-\sqrt{2N}\,b_N
\end{equation}
are well-defined. They satisfy
\begin{equation}\label{eq:defectschurgeneral}
 \mathfrak D_N=2N\tau_N
 =\mathfrak d_N-\|r_N\|_{K_N,*}^2.
\end{equation}
For $3\le N\le8$, $\tau_N>0$, and
\begin{equation}\label{eq:schurmarginGreen}
 \tau_N=\langle e_N,G_Ne_N\rangle^{-1}.
\end{equation}
In dimension eight,
\begin{equation}\label{eq:defectschur8}
 \mathfrak d_8=16(7-m(0))+\|r_{\mathrm{cr}}\|_{K,*}^2,
 \qquad r_{\mathrm{cr}}=K(|x|^{-3}|_Y)-4b,
\end{equation}
and
\begin{equation}\label{eq:defectadjustmentgap}
 \mathfrak d_8-\|r_{\mathrm{cr}}\|_{K,*}^2
 \ge16c_*=\frac{16}{119}.
\end{equation}
In dimension nine, $\tau_9<-1/54$.
\end{proposition}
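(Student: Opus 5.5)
The plan is to reduce everything to Proposition~\ref{prop:optimizedcriterion}, whose identity \eqref{eq:optimizedresponse} already has the required form. It then remains only to verify its extra hypotheses for $3\le N\le9$ and to read off the dimension-specific signs.

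\emph{Verifying the exterior hypotheses.} For $3\le N\le8$, Lemma~\ref{lem:N8subcritical} gives $K_N\ge M_{w_N|_{Y_N}}$ with $w_N>0$. Inverse form order bounds the diagonal resolvent by $w_N(x)^{-1}$, which yields injectivity and a finite Green kernel. Resolvent Cauchy--Schwarz then shows that finite sources are energy-continuous. For $N=9$ the same conclusions are contained in Proposition~\ref{prop:K9}.

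Membership $\phi_N\in\mathscr H_{K_N}$ follows the argument used for $\phi_9$. Apply the ground-state expansion to the logarithmic cutoffs $\phi_N\chi_R$. The edge term is $O(1/\log R)$. The vertex term $\phi_NK_N\phi_N$ differs from $\rho_N$ only on the neighbours of $\Sigma_N$, so by \eqref{eq:infinityfield} it is $O(|x|^{-N-2})$ and absolutely summable. Differences of cutoffs are therefore Cauchy in energy, and continuity of point evaluations identifies the limit with $\phi_N$. Hence $\tau_N$ and $r_N$ are well defined, and \eqref{eq:defectschurgeneral} is exactly \eqref{eq:optimizedresponse}.

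\emph{Signs and the Green formula.} For $3\le N\le8$, Theorem~\ref{thm:lowtrace} and Proposition~\ref{prop:hardyremainder} give $H_N\ge(\beta_N/2N)P_{\Sigma_N}$. Applying Proposition~\ref{prop:abstractschur} to $H_N-(\beta_N/2N)P_{\Sigma_N}$, which has the same exterior block, gives $\tau_N\ge\beta_N/(2N)>0$.

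For \eqref{eq:schurmarginGreen}, invert the block form \eqref{eq:structuralblock} at energy $\varepsilon>0$. The $\Sigma_N$-block of $(H_N+\varepsilon)^{-1}$ is
\[
 \bigl((\delta_N+\varepsilon)I-B_N^*(K_N+\varepsilon)^{-1}B_N\bigr)^{-1}.
\]
By the signed-permutation symmetry used in Proposition~\ref{prop:abstractschur}, $e_N$ is an eigenvector of the inner matrix with eigenvalue $\delta_N+\varepsilon-m_N(\varepsilon)$. This gives
\[
 \langle e_N,(H_N+\varepsilon)^{-1}e_N\rangle=\bigl(2N-A_N+\varepsilon-m_N(\varepsilon)\bigr)^{-1}.
\]
Let $\varepsilon\downarrow0$, using $m_N(\varepsilon)\uparrow m_N(0)$ by monotone convergence and $\tau_N>0$.

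\emph{Dimension eight and nine.} In dimension eight, \eqref{eq:defectschur8} is the identity with $\sqrt{16}=4$, $h_8=|x|^{-3}$, and $\delta_8=7$. Then \eqref{eq:defectadjustmentgap} reads $16\tau_8\ge16c_*$, which is Proposition~\ref{prop:N8finalresponse}. Finally, $\tau_9<-1/54$ is \eqref{eq:K9margin}.

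The only step needing care is energy membership of $\phi_N$ and the convergence of the cutoff energies in the block form. For $N\le8$ the exterior could a priori be critical, but this is excluded by the positive weight $w_N$. I expect the limit interchange, that is, the edge-term $O(1/\log R)$ bound uniform over differences of cutoffs, to be the main technical point.
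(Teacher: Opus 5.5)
Your proposal is correct and follows the paper's proof essentially step for step. You verify the exterior hypotheses (the inherited positive weight for $N\le8$, Proposition~\ref{prop:K9} for $N=9$), obtain $\phi_N\in\mathscr H_{K_N}$ from logarithmic cutoffs, apply Proposition~\ref{prop:optimizedcriterion}, and then read off the Schur-complement Green formula and the margins in dimensions eight and nine. The only cosmetic difference is that you derive $\tau_N\ge\beta_N/(2N)$ from the shifted Schur criterion, as in Proposition~\ref{prop:N8finalresponse}, whereas the paper uses $\mathfrak D_N\ge2N\min_{\Sigma_N}w_N$; the two arguments are equivalent.
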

\begin{proof}
For $N\le8$, the compression inherits the pointwise positive weight
of $H_N$; Proposition~\ref{prop:K9} handles nine. The energy convergence
of the logarithmic cutoffs of $\phi_N$ follows in every case from
$\phi_NK_N\phi_N=O(|x|^{-N-2})$ and the cutoff edge cost
$O(1/\log R)$, exactly as in that proposition. Thus
Proposition~\ref{prop:optimizedcriterion} applies and proves
\eqref{eq:defectschurgeneral}.
For $3\le N\le8$, the constructed spatial remainders give
$\mathfrak D_N\ge2N\min_{\Sigma_N}w_N>0$.
For these dimensions, signed-permutation symmetry and the Schur identity
give
\[
 \langle e_N,(H_N+\varepsilon)^{-1}e_N\rangle
 =\bigl(2N-A_N+\varepsilon-m_N(\varepsilon)\bigr)^{-1}.
\]
Green finiteness permits passage to zero and proves
\eqref{eq:schurmarginGreen}. Formula~\eqref{eq:ownmargin8} proves
\eqref{eq:defectadjustmentgap}, and Proposition~\ref{prop:K9} gives
the negative margin in nine. The residuals are also in $\ell^2(Y_N)$,
and the energy norm has the equivalent resolvent expression
\[
 \|r_N\|_{K_N,*}^2=
 \lim_{\varepsilon\downarrow0}
 \langle r_N,(K_N+\varepsilon)^{-1}r_N\rangle.
\]
The variational formula proves this by the same completion of squares
as Proposition~\ref{prop:energyresidual}.
\end{proof}

The fixed-trace formulation makes the energy loss explicit:
\begin{align}
 &\inf_{v\in\mathscr H_{K_N}}
 \bigl\{2N(2N-A_N)-2\sqrt{2N}\operatorname{Re}\langle b_N,v\rangle
                         +\mathcal Q_{K_N}[v]\bigr\}\notag\\
 &\hspace{22mm}=\mathfrak D_N
 =\mathfrak d_N-\|r_N\|_{K_N,*}^2,\qquad 3\le N\le9.
 \label{eq:defectoptimizedminimum}
\end{align}
The prescribed value $v=\phi_N$ gives $\mathfrak d_N$; the minimum
occurs at $v=\sqrt{2N}v_N$, where $K_Nv_N=b_N$ in energy.
The difference of these values is the squared energy of
$\phi_N-\sqrt{2N}v_N$. In particular,
\begin{equation}\label{eq:defectstrictcriterion8}
 m(0)<7\quad\Longleftrightarrow\quad
 \mathfrak d_8>\|r_{\mathrm{cr}}\|_{K,*}^2.
\end{equation}
The positivity of this margin follows from the explicit remainder,
while the identity itself depends only on the exterior energy space.

\begin{corollary}\label{cor:structuraldichotomy}
For the prescribed operators and every integer $N\ge3$,
\begin{align}
 \mathfrak d_N>0
 &\quad\Longleftrightarrow\quad\mathfrak D_N>0
 \quad\Longleftrightarrow\quad C(N)=A_N
 \quad\Longleftrightarrow\quad3\le N\le8,
 \label{eq:positivedichotomy}\\
 \mathfrak d_N<0
 &\quad\Longleftrightarrow\quad\mathfrak D_N<0
 \quad\Longleftrightarrow\quad C(N)<A_N
 \quad\Longleftrightarrow\quad N\ge9.
 \label{eq:negativedichotomy}
\end{align}
\end{corollary}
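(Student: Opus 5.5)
The plan is to assemble results already proved, checking each column of the two chains of equivalences. The last two columns agree everywhere by Proposition~\ref{prop:hardyremainder}, which gives $C(N)=A_N$ for $3\le N\le8$, together with Theorem~\ref{thm:defectsign}, which gives $C(N)<A_N$ for $N\ge9$. The integers $N\ge3$ therefore split into the two ranges $3\le N\le8$ and $N\ge9$, and it suffices to show that in each range the signs of $\mathfrak d_N$ and $\mathfrak D_N$ are the stated ones. Each range satisfies the left sides of exactly one of \eqref{eq:positivedichotomy} and \eqref{eq:negativedichotomy}. Since $C(N)\le A_N$ always (Proposition~\ref{prop:scaling}), the conditions $C(N)=A_N$ and $C(N)<A_N$ are complementary. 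So once the signs are verified in each range, every biconditional follows.

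For $3\le N\le8$, Corollary~\ref{cor:defectsigncomplete} (through Proposition~\ref{prop:defectpositiveapriori}) gives $\mathfrak d_N\ge\beta_N>0$. For $\mathfrak D_N>0$ I would use the first-sphere remainder. Any $u$ with $\|P_{\Sigma_N}u\|_2^2=2N$ satisfies $\langle u,H_Nu\rangle\ge 2N\min_{\Sigma_N}w_N=\beta_N$ by \eqref{eq:hardyremainder}. Hence $\mathfrak D_N\ge\beta_N>0$; this is also recorded in Theorem~\ref{thm:lowtrace} and Proposition~\ref{prop:defectschur}, and for $N=8$ it reads $\mathfrak D_8\ge16c_*$.

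For $N\ge9$, Theorem~\ref{thm:defectsign} gives $\mathfrak d_N<0$, and Proposition~\ref{prop:optimizedcriterion} gives $\mathfrak D_N\le\mathfrak d_N<0$, where $\mathfrak D_N$ may equal $-\infty$. Alternatively, $C(N)<A_N$ together with \eqref{eq:optimizedcriterion} already forces $\mathfrak D_N<0$. In dimension nine this is quantified by $\mathfrak D_9<-1/3$ from Proposition~\ref{prop:K9}.

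None of these steps poses a real obstacle, since the difficult input (the theta-function estimates and the positive remainders) is already available. The only point needing care is the logic of the chains. Each biconditional is between statements that are both true in one range and both false in the other. This works because the sets $\{3,\dots,8\}$ and $\{N\ge9\}$ partition the integers $N\ge3$, and because the positive and negative cases exclude each other. In particular, $\mathfrak d_N=0$ and $\mathfrak D_N=0$ never occur for integer $N$.
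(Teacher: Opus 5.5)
Your proposal is correct and matches the paper's proof. The paper likewise takes the signs of $\mathfrak d_N$ from Corollary~\ref{cor:defectsigncomplete}, obtains $\mathfrak D_N\ge 2N\min_{\Sigma_N}w_N>0$ for $3\le N\le8$ from the first-sphere remainder (via Proposition~\ref{prop:defectschur}), uses $\mathfrak D_N\le\mathfrak d_N<0$ for $N\ge9$, and closes the equivalences with Propositions~\ref{prop:optimizedcriterion} and~\ref{prop:hardyremainder}; your range-by-range check of the biconditionals is simply a more explicit version of that last step.
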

\begin{proof}
Corollary~\ref{cor:defectsigncomplete} gives the signs of
$\mathfrak d_N$. Proposition~\ref{prop:defectschur} gives
$\mathfrak D_N>0$ for $N\le8$; for $N\ge9$,
$\mathfrak D_N\le\mathfrak d_N<0$ by
\eqref{eq:optimizedbelowdefect}. Proposition~\ref{prop:optimizedcriterion}
and Proposition~\ref{prop:hardyremainder} finish the equivalences.
\end{proof}

Green finiteness supplies a lower bound at any fixed vertex.
\begin{corollary}\label{cor:defectGreencontrol}
For $3\le N\le8$,
\begin{equation}\label{eq:defectGreenlower}
 \mathfrak d_N\ge G_N(e_1,e_1)^{-1}>0.
\end{equation}
\end{corollary}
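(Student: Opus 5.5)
We must prove $\mathfrak d_N\ge G_N(e_1,e_1)^{-1}$ for $3\le N\le8$. The plan chains three earlier facts: the defect dominates the optimized defect, the optimized defect equals $2N\tau_N$, and $\tau_N$ is the inverse of the Green form on the constant interface vector. It then remains to compare that Green form with a single diagonal entry.

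First, Proposition~\ref{prop:optimizedcriterion} gives $\mathfrak d_N\ge\mathfrak D_N$. Proposition~\ref{prop:defectschur} gives $\mathfrak D_N=2N\tau_N$ with $\tau_N=\langle e_N,G_Ne_N\rangle^{-1}$, by \eqref{eq:schurmarginGreen}. Hence
\[
 \mathfrak d_N\ge 2N\,\langle e_N,G_Ne_N\rangle^{-1}.
\]
(Alternatively one can read off $\mathfrak d_N=\mathfrak D_N+\|r_N\|_{K_N,*}^2$ directly from \eqref{eq:defectschurgeneral}.)

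Next, write $\langle e_N,G_Ne_N\rangle=(2N)^{-1}\sum_{a,b\in\Sigma_N}G_N(a,b)$. By signed-permutation invariance, $G_N(a,a)=G_N(e_1,e_1)$ for every $a\in\Sigma_N$. Moreover, $G_N$ is the monotone limit of the positive semidefinite kernels $(H_N+\varepsilon)^{-1}$, and Green finiteness (Proposition~\ref{prop:defectpositiveapriori}) lets us pass to the limit entrywise. Cauchy--Schwarz for these kernels therefore gives
\[
 G_N(a,b)\le\sqrt{G_N(a,a)G_N(b,b)}=G_N(e_1,e_1).
\]
Summing the $(2N)^2$ entries then yields $\langle e_N,G_Ne_N\rangle\le 2N\,G_N(e_1,e_1)$. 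Inserting this into the previous bound gives
\[
 \mathfrak d_N\ge 2N\bigl(2N\,G_N(e_1,e_1)\bigr)^{-1}=G_N(e_1,e_1)^{-1}.
\]
Positivity of the right side follows from $G_N(e_1,e_1)\le w_N(e_1)^{-1}<\infty$.

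The only delicate point is justifying the entrywise Cauchy--Schwarz and the symmetry of the limiting kernel. Both follow because they hold for each $\varepsilon>0$ (the resolvent commutes with the signed permutations and is a positive operator) and survive the limit $\varepsilon\downarrow0$, which exists since the diagonal entries increase monotonically and are bounded by Proposition~\ref{prop:defectpositiveapriori}. Everything else is bookkeeping with the factor $2N$; the slack (bounding off-diagonal entries by the diagonal) means no sharper information is needed.
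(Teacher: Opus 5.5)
Your argument is correct, but it takes a longer route than the paper's.

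\textbf{The paper's route.} It applies resolvent Cauchy--Schwarz directly at the single vertex $e_1$. For finitely supported $u$, letting $\varepsilon\downarrow0$ gives $|u(e_1)|^2\le G_N(e_1,e_1)\langle u,H_Nu\rangle$. Testing this on the logarithmic cutoffs $h_N\chi_R$, which equal one at $e_1$, and letting $R\to\infty$ in \eqref{eq:defectgroundstate} gives the claim at once. That proof needs only Green finiteness at one vertex and the cutoff convergence that defines $\mathfrak d_N$. It does not use the exterior energy space of $K_N$, the identity $\mathfrak D_N=2N\tau_N$, or the transitivity argument behind \eqref{eq:schurmarginGreen}.

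\textbf{Your route.} It imports all of that machinery from Proposition~\ref{prop:defectschur}, and then discards information when you bound every off-diagonal entry $G_N(a,b)$ by the diagonal. What it buys is the intermediate chain
\[
 \mathfrak d_N\ge\mathfrak D_N=2N\langle e_N,G_Ne_N\rangle^{-1}\ge G_N(e_1,e_1)^{-1}.
\]
This chain separates the two sources of slack: the exterior adjustment $\|r_N\|_{K_N,*}^2$ and the off-diagonal comparison. Note that the bound $\mathfrak d_N\ge2N\langle e_N,G_Ne_N\rangle^{-1}$ also follows from the paper's one-line argument, by testing against $e_N$ instead of $\delta_{e_1}$. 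Since $\langle e_N,h_N\chi_R\rangle=\sqrt{2N}$ for large $R$, Cauchy--Schwarz gives $2N\le\langle e_N,G_Ne_N\rangle\,\mathfrak d_N$ with no Schur reduction.

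\textbf{A minor technical point.} Monotonicity and boundedness of the diagonal entries do not by themselves give existence of the off-diagonal limits. You can either cite Lemma~\ref{rob:lem:green}, or do the comparison at fixed $\varepsilon$ and avoid off-diagonal limits entirely. At fixed $\varepsilon$ one has $\langle e_N,(H_N+\varepsilon)^{-1}e_N\rangle\le2N\langle\delta_{e_1},(H_N+\varepsilon)^{-1}\delta_{e_1}\rangle$. Both sides are monotone in $\varepsilon$ by the spectral theorem, so you can then pass to the limit.
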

\begin{proof}
Resolvent Cauchy--Schwarz and $\varepsilon\downarrow0$ give
$|u(e_1)|^2\le G_N(e_1,e_1)\langle u,H_Nu\rangle$ on finite
support. Apply this to $h_N\chi_R$ and let $R\to\infty$ in
\eqref{eq:defectgroundstate}.
\end{proof}

The negative directions become particularly small in higher dimensions.
For a function with value $v_j$ on the Boolean orbit $\mathcal O_j^{(N)}$
and zero elsewhere, the full diagonal gives
\begin{equation}\label{eq:booleannegativeenergy}
 \langle u,H_Nu\rangle
 =\sum_j2^j\binom Nj\left(2N-\frac{A_N}{j}\right)v_j^2
 -4\sum_j2^j\binom Nj(N-j)v_jv_{j+1}.
\end{equation}
The choices $(v_1,v_2,v_3)=(1,11/48,3/64)$ in dimension ten
and $(v_1,v_2)=(1,16/95)$ in dimension eleven have energies
$-5/2$ and $-1353/38$, respectively, by direct substitution.
For every $N\ge12$, the single-point test satisfies
$\langle\delta_{e_1},H_N\delta_{e_1}\rangle
=2N-(N-2)^2/4<0$; the expression equals $-1$ at twelve and decreases
thereafter. These values are the quadratic energies of the specified
tests.

The same exterior cannot define a positive energy space in all higher
dimensions. For $N\ge20$,
\[
 \langle\delta_{e_1+e_2},K_N\delta_{e_1+e_2}\rangle
 =2N-\frac{(N-2)^2}{8}<0.
\]
Fixing any trace in \eqref{eq:optimizeddefinition} and multiplying
this zero-trace negative test by a large scalar shows
$\mathfrak D_N=-\infty$ in that range. The original $H_N$ remains
bounded; the value $-\infty$ refers to trace normalization, which
places no bound on the exterior norm.

The positive local margins now lead to a continuous perturbation
problem: increase an attraction on finitely many vertices until
nonnegativity is lost. Section~\ref{sec:criticallaws} determines the
size and spatial spread of the emerging state for a general class of
critical-tail operators. Section~\ref{sec:N8application} returns to
the first-sphere couplings in dimensions eight and nine.

\begin{proof}[Proof of Theorem~\ref{thm:main}]
Proposition~\ref{prop:infinity} gives $C_\infty(N)=A_N$.
The local decay estimate in Section~\ref{sec:defect} proves absolute
summability. Corollary~\ref{cor:defectsigncomplete} and
Theorem~\ref{mono:thm:main} prove the sign law and dimensional decrease.
Proposition~\ref{prop:optimizedcriterion} proves the general variational
criterion. Lemma~\ref{lem:N8subcritical} and
Proposition~\ref{prop:K9} give the exterior hypotheses through nine;
Proposition~\ref{prop:defectschur} gives the energy identity.
The two margins are Propositions~\ref{prop:N8finalresponse}
and~\ref{prop:K9}. Applying the transitive Schur criterion to the
shifted first-sphere diagonals proves the attraction and repulsion
statements.
\end{proof}

\section{Critical laws under local perturbations}\label{sec:criticallaws}

The previous sections identify the stability margin through the
energy on a finite set and the response of its exterior. We now
determine what happens just beyond that margin. For an operator with
a critical inverse-square tail, the new negative eigenvalue is
exponentially small, and its eigenvector spreads over an exponentially
large spatial scale. The results in this section give both constants
in the eigenvalue law and apply beyond the Hardy family.

The proof follows the response to a source of finite support as its
energy tends to zero. Uniform spatial bounds give a precise expansion
for the initially subcritical operator and locate the critical
coupling. At that coupling, the resolvent has a logarithmic singularity;
its finite part determines the multiplicative prefactor. The same
estimates yield the mass bounds and the critical return law.
All constants may depend on the fixed operator and perturbation;
uniformity refers to sources on a fixed finite set.

\subsection{Uniform low-energy response}\label{sec:lowenergy}

For a nonnegative source of mass one, the scalar Green response
approaches its positive zero-energy limit at inverse-logarithmic order.
The estimates are uniform over all such sources supported on a fixed
finite set. This uniformity is needed because the eigenvalue equation
selects an energy-dependent source. The proof proceeds from fixed-boundary matching to exterior
barriers and then to lattice mass estimates. Its hypotheses concern a
fixed operator with a critical tail.

Fixed-pole bounds provide relevant zero-energy context.
At the critical parameter $\mu=-1$, Chen--Hua--Xu
\cite[Theorem~1.1(b)]{ChenHuaXu} work on the full lattice with
$-\Delta-H_0$, where $H_0$ lies strictly below a specified optimal
Hardy weight $\overline H_0$ and
$\overline H_0-H_0=O(|x|^{-2-\varepsilon_0})$ for some
$\varepsilon_0>0$. They obtain two-sided bounds of order
$|x-\xi|^{-(N-2)/2}$ for each fixed pole $\xi$.
The estimates below assume subcriticality and the critical tail, with
a finite core and a correction of either sign. They track the energy
parameter and give a matrix expansion for sources on a fixed finite
set. The subcritical source estimates and the subsequent critical
resolvent expansion together determine the eigenvalue asymptotics.

Let $N\ge3$, $\alpha=(N-2)/2$, and $X=\Z^N\setminus D$, where $D$ is
finite and may be empty. Let $c(x,y)=c(y,x)\ge0$, $c(x,x)=0$, be
conductances of a connected graph on $X$. Assume that there is a finite set
$K\subset X$ such that
\[
 c(x,y)=\ind_{\{|x-y|=1\}}
 \quad\text{whenever at least one of }x,y\text{ is outside }K.
\]
Thus only finitely many edges and their weights may be changed. In particular
the graph has bounded degree and bounded jump rates. Write
\[
 (L_cf)(x)=\sum_{y\in X}c(x,y)(f(x)-f(y)),\qquad H=L_c+q.
\]
The real potential $q$ is arbitrary on finite sets and satisfies, for some
$R_*>0$, $0\le M<\infty$ and $\eta>0$,
\begin{equation}\label{rob:eq:tail}
 q(x)=-\alpha^2|x|^{-2}+V(x),\qquad
 |V(x)|\le M|x|^{-2-\eta}\quad (|x|\ge R_*).
\end{equation}
All norms and inner products without a specified measure are in
$\ell^2(X)$. The operator $H$ is bounded and self-adjoint. We assume
\begin{equation}\label{rob:eq:subcritical}
 H\ge0,\qquad
 G_0(o,o):=\lim_{\varepsilon\downarrow0}
 \ip{\delta_o}{(H+\varepsilon)^{-1}\delta_o}<\infty
 \quad\text{for one }o\in X.
\end{equation}
Lemma~\ref{rob:lem:green} shows that Green finiteness at one vertex is
equivalent to finiteness at every pair. This is the subcritical starting
situation.

Deleting the origin with a Dirichlet condition fits this notation: the
edges to the deleted vertex are retained as a finite diagonal contribution
to $q$. A sufficiently large $R_*$ is always chosen so that all rows of
$L_c$ at $|x|\ge R_*$ are exactly the full integer lattice rows.

Throughout, $a\asymp b$ means two positive constant bounds in the stated
limit. Constants may depend on the fixed operator, the dimension, and the
fixed finite sets.
Put $L_\varepsilon=\log(1/\varepsilon)$ for $0<\varepsilon<1$.
The symbol $\BesselK$ denotes the modified Bessel function of the second
kind and order zero.

For a finite source $f$, its zero-energy response is the pointwise
Green sum $(G_0f)(x)=\sum_yG_0(x,y)f(y)$. We allow the unit mass
of a nonnegative source to be distributed arbitrarily over a fixed
finite set.

\begin{theorem}\label{rob:thm:response}
Assume \eqref{rob:eq:tail}--\eqref{rob:eq:subcritical}. For every nonempty finite
$S\subset X$, the simplex of these sources is
\[
 \cP_S=\{f\ge0:\supp f\subset S,\ \sum_{x\in S}f(x)=1\}.
\]
There are $R,C_-,C_+,\varepsilon_0>0$ such that, simultaneously for
$f\in\cP_S$, $0<\varepsilon<\varepsilon_0$, and $r=|x|\ge R$,
\begin{equation}\label{rob:eq:profile}
 \frac{C_-}{L_\varepsilon}r^{-\alpha}\BesselK(\sqrt{2\varepsilon}\,r)
 \le ((H+\varepsilon)^{-1}f)(x)
 \le\frac{C_+}{L_\varepsilon}r^{-\alpha}\BesselK(\sqrt{\varepsilon/2}\,r).
\end{equation}
In particular, uniformly for these sources,
\begin{align}
 \norm{(H+\varepsilon)^{-1}f}^2
 &\asymp\frac1{\varepsilon L_\varepsilon^2},\label{rob:eq:norm}\\
 \ip f{G_0f}-\ip f{(H+\varepsilon)^{-1}f}
 &\asymp\frac1{L_\varepsilon}.\label{rob:eq:scalarresponse}
\end{align}
The pointwise Green limit $G_0f$ satisfies
\begin{equation}\label{rob:eq:zeroresponse}
 H(G_0f)=f,\qquad (G_0f)(x)\asymp r^{-\alpha}\quad(r\ge R),
\end{equation}
again uniformly for $f\in\cP_S$.
\end{theorem}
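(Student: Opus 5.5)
I would reduce everything to the radial two-dimensional structure behind \eqref{eq:introeffective}. The first step is the zero-energy statement \eqref{rob:eq:zeroresponse}. Green finiteness and positivity of the resolvent give $G_0f\ge0$ with $H(G_0f)=f$ pointwise, by monotone convergence in the resolvent equation. The far-field bound $G_0f\asymp r^{-\alpha}$ will come from barriers.

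For barriers I use the discrete functions $\phi_\pm(x)=|x|^{-\alpha}(1\pm C|x|^{-\sigma})$ with $0<\sigma<\min(\eta,2)$. By \eqref{eq:infinityfield} and the tail \eqref{rob:eq:tail}, $H\phi_+\ge0$ and $H\phi_-\le0$ on $|x|\ge R$ once $R$ is large. The weak maximum principle on the exterior then compares $G_0f$ with multiples of $\phi_\pm$. The exterior operator is subcritical, so the maximum principle holds for $H$ there, given that the comparison function is positive and $o(1)$ relative to $|x|^{-\alpha}$ along a sequence of radii. The constants on the inner sphere $|x|=R$ are fixed by Harnack on the finite set. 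This uses connectedness and the bound $\min_{|x|=R}G_0f\ge c\sum_yf(y)\min G_0(\cdot,y)>0$, which is uniform on $\cP_S$ since $S$ is finite.

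For $\varepsilon>0$ the natural comparison profiles are $r^{-\alpha}\BesselK(\kappa r)$. A ground-state computation shows
\[
 (H+\varepsilon)\bigl(r^{-\alpha}\BesselK(\kappa r)\bigr)=r^{-\alpha}\bigl(\varepsilon-\kappa^2+O(r^{-2-\sigma'})\bigr)\BesselK(\kappa r)
\]
up to lattice Taylor errors. Those errors are controlled by the relative derivative bounds $|\BesselK^{(k)}(t)|\lesssim t^{-k}\BesselK(t)$ for $t\lesssim1$ and $\lesssim\BesselK(t)$ for $t\gtrsim1$. The choices $\kappa^2=2\varepsilon$ and $\kappa^2=\varepsilon/2$ therefore give a strict sub- and supersolution on $|x|\ge R$. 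To absorb the $r^{-2-\sigma}$ errors, which are not small relative to $\varepsilon$ when $r\ll\varepsilon^{-1/2}$, I multiply by $(1\pm Cr^{-\sigma})$ as before.

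The remaining input is the boundary value on $|x|=R$. Here I need $((H+\varepsilon)^{-1}f)(x)\asymp 1/L_\varepsilon$ there, up to the factor $\BesselK(\kappa R)\approx L_\varepsilon/2$. This is the main obstacle. I would treat it by matching the Green defect: the difference $u_0-u_\varepsilon=G_0f-(H+\varepsilon)^{-1}f$ satisfies $(H+\varepsilon)(u_0-u_\varepsilon)=\varepsilon u_0$. Since $u_0\asymp r^{-\alpha}$ and the Bessel-weighted volume is two-dimensional, $\varepsilon\sum u_0\cdot(\text{barrier})$ is of size about $\varepsilon\int^{\varepsilon^{-1/2}}r\log(1/(\varepsilon^{1/2}r))\,dr\asymp1$. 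This suggests the following self-consistent argument. Write $u_\varepsilon=\lambda\,r^{-\alpha}\BesselK(\kappa r)$ at the matching radius, and use the energy identity
\[
 \langle u_0,f\rangle-\langle u_\varepsilon,f\rangle=\varepsilon\langle u_0,u_\varepsilon\rangle .
\]
With $u_\varepsilon\approx\lambda r^{-\alpha}\BesselK$ outside $R$, the right side is about $\lambda$, up to constants. Matching at $r=R$ gives $u_0(R)-u_\varepsilon(R)\approx u_0(R)-\lambda L_\varepsilon/2$. Combined with the maximum principle applied to $u_0-u_\varepsilon$, this forces $\lambda\asymp1/L_\varepsilon$. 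I expect making this two-sided and uniform in $f$ to require a careful exterior capacity estimate on the annulus $R\le r\le\varepsilon^{-1/2}$. My first attempt would be explicit radial harmonic comparison functions $r^{-\alpha}(a+b\log r)$ for the ground-state-transformed equation.

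Once \eqref{rob:eq:profile} holds, \eqref{rob:eq:norm} follows by summing: $\sum r^{-2\alpha}\BesselK(\kappa r)^2\asymp\int r\BesselK(\kappa r)^2dr\asymp\kappa^{-2}$. Multiplying by $L_\varepsilon^{-2}$ gives $1/(\varepsilon L_\varepsilon^2)$, with the finite ball contributing only $O(1)$. Finally, \eqref{rob:eq:scalarresponse} is exactly the identity $\varepsilon\langle u_0,u_\varepsilon\rangle$. Substituting the two profile bounds gives $\varepsilon L_\varepsilon^{-1}\int r\BesselK(\kappa r)\,dr\asymp L_\varepsilon^{-1}$, uniformly on $\cP_S$.
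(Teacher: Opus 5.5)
Your barrier construction is the paper's: $r^{-\alpha}(1\pm ar^{-\beta})\BesselK(\kappa r)$ with $\kappa^2=2\varepsilon$ for the lower comparison and $\kappa^2=\varepsilon/2$ for the upper comparison on $\{|x|\ge R\}$, followed by a Riemann-sum evaluation of $\sum r^{-2\alpha}\BesselK(\kappa r)^2$. The gap is the step you call the main obstacle. You leave it open, and the self-consistent matching you sketch cannot close it. It presupposes $u_\varepsilon\approx\lambda r^{-\alpha}\BesselK(\kappa r)$ with a single constant $\lambda$ on the whole exterior, which is the assertion to be proved. It also never makes precise how $\varepsilon\ip{u_0}{u_\varepsilon}$ controls $u_\varepsilon$ on $|x|=R$.

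In fact the step is elementary. The kernel $\ip{\delta_x}{(H+\varepsilon)^{-1}\delta_y}$ is positive and nonincreasing in $\varepsilon$, and $u_\varepsilon=(H+\varepsilon)^{-1}f$ is a convex combination of its values. Let $\Gamma$ be the finite set of vertices in $|x|<R$ adjacent to the exterior. Then for all $0<\varepsilon\le1$ and $f\in\cP_S$,
\[
 0<\min_{x\in\Gamma,\,y\in S}G_1(x,y)\le u_\varepsilon(x)\le\max_{x\in\Gamma,\,y\in S}G_0(x,y)<\infty \qquad (x\in\Gamma).
\]
Since $\BesselK(\kappa|y|)/L_\varepsilon\to1/2$ uniformly on $\Gamma$ for both choices of $\kappa$, fixed constants $d_\pm$ give $\frac{d_-}{L_\varepsilon}U^-_\varepsilon\le u_\varepsilon\le\frac{d_+}{L_\varepsilon}U^+_\varepsilon$ on $\Gamma$. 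The negative-part argument for the exterior compression then extends this to $|x|\ge R$, because the barriers and $u_\varepsilon$ are square-summable. The factor $1/L_\varepsilon$ in \eqref{rob:eq:profile} is therefore only the normalization of $\BesselK$ at a fixed radius. You already wrote a zero-energy lower bound of exactly this convex-combination type; it simply has to be used at positive energy, with $G_1$ below and $G_0$ above. No capacity estimate enters here; the paper uses one only later, for the exact coefficient in Theorem~\ref{rob:thm:exactresponse}.

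With \eqref{rob:eq:profile} in hand, let $\varepsilon\downarrow0$ at fixed $x$. Then $u_\varepsilon(x)\uparrow G_0f(x)$ and $\BesselK(\kappa r)/L_\varepsilon\to1/2$, which gives \eqref{rob:eq:zeroresponse} directly. This is the paper's route, and it makes your separate zero-energy comparison unnecessary. That is just as well, for two reasons. First, on the infinite exterior at zero energy, the upper comparison needs control of $G_0f$ itself at infinity (for instance, minimality over finite exhaustions), not merely of the comparison function. Second, your labels are reversed: $(-\Delta-\alpha^2r^{-2})r^{-\alpha-\sigma}=-\sigma^2r^{-\alpha-\sigma-2}$ makes $\phi_+$ the subsolution and $\phi_-$ the supersolution.

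Your proof of \eqref{rob:eq:scalarresponse} through $\ip{f}{G_0f-u_\varepsilon}=\varepsilon\ip{u_0}{u_\varepsilon}$, which follows from the resolvent identity and monotone convergence, is a valid alternative. The paper instead integrates $-\frac{d}{ds}\ip{f}{(H+s)^{-1}f}=\norm{(H+s)^{-1}f}^2$ over $(0,\varepsilon)$. Your identity needs the zero-energy profile, whereas the paper's argument needs only \eqref{rob:eq:norm}.
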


The uniformity on the entire simplex, including its vertices, is essential:
the source selected by the finite-dimensional spectral problem is allowed
to move with energy.
\subsubsection{Green functions and matching data}

Finiteness and strict positivity of the Green kernel on a connected
subcritical graph are part of
\cite[Theorems~5.12 and~5.16]{KPP}.
The next lemma also records the resulting uniform bounds for
nonnegative sources of mass one on fixed finite sets.

\begin{lemma}\label{rob:lem:green}
Under \eqref{rob:eq:subcritical}, every $G_0(x,y)$ is finite and strictly
positive. For fixed finite nonempty sets $S,E$ there are $0<a\le b<\infty$
such that
\begin{equation}\label{rob:eq:boundary}
 a\le ((H+\varepsilon)^{-1}f)(x)\le b
 \quad(x\in E,\ f\in\cP_S,\ 0<\varepsilon\le1).
\end{equation}
\end{lemma}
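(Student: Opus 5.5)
The plan has three steps. First, finiteness at every pair follows from the resolvent Cauchy--Schwarz inequality. Second, strict positivity comes from positivity preservation of the resolvent together with connectedness. Third, the uniform bounds over the simplex come from linearity in $f$ and monotonicity in $\varepsilon$.

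\emph{Finiteness.} For $\varepsilon>0$ the resolvent $(H+\varepsilon)^{-1}$ is a positive bounded operator, so
\[
 |\ip{\delta_x}{(H+\varepsilon)^{-1}\delta_y}|^2\le\ip{\delta_x}{(H+\varepsilon)^{-1}\delta_x}\,\ip{\delta_y}{(H+\varepsilon)^{-1}\delta_y}.
\]
It therefore suffices to prove finiteness of the diagonal $G_0(x,x)$ at every vertex. Fix $x$ and a path $o=x_0\sim x_1\sim\dots\sim x_n=x$ with $c(x_{i-1},x_i)>0$.

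\emph{Positivity preservation.} The operator $H+\varepsilon$ has nonpositive off-diagonal entries $-c(x,y)$ and bounded diagonal. Hence $-(H+\varepsilon)+\Lambda I$ is entrywise nonnegative for large $\Lambda$. The Laplace integral $\int_0^\infty e^{-t(H+\varepsilon)}dt$, with $e^{-tH}=e^{-t\Lambda}e^{t(\Lambda-H)}$, then shows that the resolvent kernel is entrywise nonnegative and nonincreasing in $\varepsilon$. The same argument was already used in Proposition~\ref{prop:optimizedcriterion}.

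\emph{Neighbour comparison.} Take $y\sim z$ with $c=c(y,z)>0$. Write $R_\varepsilon=(H+\varepsilon)^{-1}$ and use $R_\varepsilon=(\Lambda+\varepsilon)^{-1}\sum_k(\Lambda+\varepsilon)^{-k}(\Lambda-H)^kR^{(0)}$, or more simply the resolvent equation $R_\varepsilon(H+\varepsilon)=I$ evaluated at the entry $(y,y)$. This gives
\[
 R_\varepsilon(y,y)\,(\mathrm{diag}+\varepsilon)=1+\sum_{w}c(y,w)R_\varepsilon(y,w),
\]
which bounds off-diagonal entries by diagonal ones. For the reverse direction, a cleaner route is the form inequality $\ip{u}{(H+\varepsilon)u}\ge \ip{u}{(H_{z}+\varepsilon)u}$ type comparisons; I expect this step to be the main obstacle. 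What I would do first is the standard trick: positivity of the kernel gives $R_\varepsilon(z,z)\ge R_\varepsilon(z,y)\,c\,R_\varepsilon(y,z)/R_\varepsilon(y,y)$ in the wrong direction, so instead I would cite \cite[Theorem~5.12 and~5.16]{KPP}. These state that on a connected graph, finiteness of the Green function at one vertex is equivalent to finiteness everywhere, and that the Green function is strictly positive. Since the lemma statement already attributes this to KPP, I would simply invoke it for finiteness and strict positivity of $G_0(x,y)$. Strict positivity at finite $\varepsilon$ also follows from the path: $e^{t(\Lambda-H)}(x,y)\ge \frac{t^n}{n!}\prod_i c(x_{i-1},x_i)>0$.

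\emph{Uniform bounds.} For $f\in\cP_S$ and $x\in E$, the response $((H+\varepsilon)^{-1}f)(x)=\sum_{y\in S}f(y)R_\varepsilon(x,y)$ is a convex combination of the numbers $R_\varepsilon(x,y)$. Monotonicity in $\varepsilon$ gives
\[
 R_1(x,y)\le R_\varepsilon(x,y)\le G_0(x,y)\qquad(0<\varepsilon\le1),
\]
the upper bound being the monotone limit. Take $a=\min_{x\in E,\,y\in S}R_1(x,y)>0$, which is strictly positive by the path bound, and $b=\max_{x\in E,\,y\in S}G_0(x,y)<\infty$. Both extrema are over finite sets, which yields \eqref{rob:eq:boundary}.
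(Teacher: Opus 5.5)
\textbf{Verdict: correct, different route for one step.} Your strict-positivity and uniform-bound steps coincide with the paper's proof. You obtain positivity of $e^{-tH}$ from the nonnegative matrix $\Lambda I-H$ and a connecting path, set $a=\min_{E\times S}R_1$ and $b=\max_{E\times S}G_0$, and use the convex-combination argument over $\mathcal P_S$; the paper does exactly this. The difference is the one nontrivial assertion: $G_0(x,x)<\infty$ at every $x$, given finiteness at $o$.

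\textbf{The citation.} You abandon the direct attempt and invoke \cite[Theorems~5.12 and~5.16]{KPP}. That citation is legitimate (the paper points to the same theorems just before the lemma), so your proof is complete.

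\textbf{The paper's self-contained argument.} The paper proves this step in two lines, using only the heat-kernel positivity you had already established. Write $p_t(x,y)=\langle\delta_x,e^{-tH}\delta_y\rangle$ and fix $s>0$. Positivity and the semigroup law give $p_{t+2s}(o,o)\ge p_s(o,x)\,p_t(x,x)\,p_s(x,o)=p_s(o,x)^2p_t(x,x)$. Multiply by $e^{-\varepsilon t}$ and integrate in $t$. This gives $p_s(o,x)^2R_\varepsilon(x,x)\le e^{2\varepsilon s}R_\varepsilon(o,o)$, hence $G_0(x,x)\le p_s(o,x)^{-2}G_0(o,o)<\infty$. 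Your Cauchy--Schwarz reduction then handles all pairs.

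A resolvent-level version fits your notation equally well. Iterating $R_\varepsilon=R_1+(1-\varepsilon)R_1R_\varepsilon$ gives, entrywise, $R_\varepsilon\ge(1-\varepsilon)^2R_1R_\varepsilon R_1$. Hence $R_\varepsilon(o,o)\ge(1-\varepsilon)^2R_1(o,x)^2R_\varepsilon(x,x)$, with $R_1(o,x)>0$ by your path bound.

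\textbf{Why your neighbour comparison stalled.} The row equation at $(y,y)$ only bounds the off-diagonal entries of a row by that row's diagonal entry. It never bounds one diagonal entry by another. What is needed is a two-sided sandwich through a strictly positive kernel, as above, and that should replace the paragraph containing the aborted comparison.

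\textbf{What each route buys.} The paper's route keeps the lemma independent of general criticality theory. The citation buys brevity, plus the equivalence with subcriticality, which this lemma does not need.
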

\begin{proof}
Write $p_t(x,y)=\ip{\delta_x}{e^{-tH}\delta_y}$. Choose
$a_0\ge\sup_x H(x,x)$. The matrix $a_0I-H$ has nonnegative entries;
its exponential series and connectivity give $p_t(x,y)>0$ for every
$t>0$. Since $H\ge0$, the resolvent kernel is
\[
 G_\varepsilon(x,y)=\int_0^\infty e^{-\varepsilon t}p_t(x,y)\,dt.
\]
It increases as $\varepsilon\downarrow0$. For fixed $s>0$, positivity and
the semigroup law imply
\[
 p_{t+2s}(o,o)\ge p_s(o,x)^2p_t(x,x).
\]
Integration proves $G_0(x,x)<\infty$ from $G_0(o,o)<\infty$.
Cauchy--Schwarz for the positive resolvent gives
$G_0(x,y)\le\sqrt{G_0(x,x)G_0(y,y)}<\infty$.
Strict positivity follows from $G_1(x,y)>0$. Now take
\[
 a=\min_{x\in E,y\in S}G_1(x,y),\qquad
 b=\max_{x\in E,y\in S}G_0(x,y).
\]
Every response in \eqref{rob:eq:boundary} is a convex combination of these
kernel values, so the bounds hold throughout the simplex, including its
boundary.
\end{proof}

\subsubsection{Barriers for the full discrete operator}

We give the estimates uniformly in dimension-dependent exponents. The
classical integral representation of the modified Bessel function is
recorded in~\cite[Formula~8.432.1]{GradshteynRyzhik}
and~\cite[Eq.~10.32.9]{DLMF}. For $z>0$ it gives
\begin{equation}\label{rob:eq:besselint}
 \BesselK(z)=\int_0^\infty e^{-z\cosh t}\,dt
 =e^{-z}\int_0^\infty e^{-zu}\frac{du}{\sqrt{u(u+2)}}.
\end{equation}
The second integral follows from $u=\cosh t-1$. The function is
positive and decreasing, and the modified Bessel
equation~\cite[Eq.~10.25.1]{DLMF} gives
$\BesselK''+z^{-1}\BesselK'-\BesselK=0$.

\begin{lemma}\label{rob:lem:bessel}
For $0\le j\le4$ and $z>0$,
\begin{equation}\label{rob:eq:besselderivatives}
 \frac{|\BesselK^{(j)}(z)|}{\BesselK(z)}
 \le\sum_{\ell=0}^j\binom j\ell(1/2)_\ell z^{-\ell}
 \le7(1+z^{-1})^j.
\end{equation}
In particular $0<-\BesselK'(z)/\BesselK(z)\le1+(2z)^{-1}$. Moreover,
\begin{equation}\label{rob:eq:besselasymp}
 \BesselK(z)=-\log z+O(1)\ (z\downarrow0),\qquad
 \BesselK(z)\le e^{-z}\sqrt{\pi/(2z)}\ (z>0).
\end{equation}
\end{lemma}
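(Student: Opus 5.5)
The plan is to work entirely from the Laplace-type representation \eqref{rob:eq:besselint}, $\BesselK(z)=e^{-z}\int_0^\infty e^{-zu}\,\mu(du)$ with $\mu(du)=du/\sqrt{u(u+2)}$. Differentiating under the integral, which is legitimate for $z>0$ because every factor $(1+u)^j$ is integrable against $e^{-zu}\mu$, gives
\[
 (-1)^j\BesselK^{(j)}(z)=\int_0^\infty (1+u)^j e^{-z(1+u)}\mu(du)
 =\sum_{\ell=0}^j\binom j\ell e^{-z}\int_0^\infty u^\ell e^{-zu}\mu(du).
\]
All terms are positive, so the absolute value is harmless and $\BesselK$ is completely monotone.

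The key step is the comparison of the moments. I would show
\[
 \int_0^\infty u^\ell e^{-zu}\mu(du)\le(1/2)_\ell z^{-\ell}\int_0^\infty e^{-zu}\mu(du).
\]
Write the density as $u^{-1/2}\phi(u)$ with $\phi(u)=(u+2)^{-1/2}$ decreasing. The probability measure $\pi_z\propto u^{-1/2}\phi(u)e^{-zu}du$ is then stochastically dominated by the Gamma$(1/2,z)$ law $\gamma_z\propto u^{-1/2}e^{-zu}du$, because the likelihood ratio $\phi$ is decreasing. Since $u^\ell$ is increasing, $\mathbb E_{\pi_z}u^\ell\le\mathbb E_{\gamma_z}u^\ell=(1/2)_\ell z^{-\ell}$. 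If an elementary argument is preferred, the same conclusion follows from Chebyshev's integral inequality for the oppositely monotone pair $u^\ell$ and $\phi$ under $\gamma_z$. Dividing by $\BesselK(z)=e^{-z}\int e^{-zu}\mu$ yields the first inequality in \eqref{rob:eq:besselderivatives}.

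For the numerical bound I use $(1/2)_\ell\le(1/2)_4=105/16<7$ for $\ell\le4$, so the middle sum is at most $7\sum_\ell\binom j\ell z^{-\ell}=7(1+z^{-1})^j$. The case $j=1$ reads $-\BesselK'/\BesselK\le1+(2z)^{-1}$, and strict positivity holds since the integrand is positive.

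For \eqref{rob:eq:besselasymp}, the upper bound uses $u+2\ge2$, which gives $\int e^{-zu}\mu\le2^{-1/2}\int u^{-1/2}e^{-zu}du=\sqrt{\pi/(2z)}$.

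For the small-$z$ expansion, I split $\int_0^\infty e^{-zu}\mu(du)$ at $u=1$. On $[0,1]$ the integral is bounded. On $[1,\infty)$ I write $\mu=du/(u+1)+O(u^{-2})du$, so the error contributes $O(1)$. The remaining piece is $\int_1^\infty e^{-zu}du/(u+1)=E_1(2z)e^{z}$ after substitution, and $E_1(2z)=-\log z+O(1)$. Multiplying by $e^{-z}=1+O(z)$ gives $\BesselK(z)=-\log z+O(1)$.

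The only mildly delicate point is the moment comparison. Everything else is routine bookkeeping.
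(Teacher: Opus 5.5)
Your proof is correct. Its outer structure matches the paper's: differentiate the representation \eqref{rob:eq:besselint} under the integral, expand $(1+u)^j$, and bound each moment $M_\ell(z)=\int_0^\infty u^\ell e^{-zu}\,du/\sqrt{u(u+2)}$ by $(1/2)_\ell z^{-\ell}M_0(z)$. Then use $(1/2)_\ell\le 105/16<7$. The two asymptotic bounds come from $[u(u+2)]^{-1/2}\le(2u)^{-1/2}$ and from comparison with $1/(u+1)$, which is the paper's $1/t$ after $t=u+1$.

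The one genuine difference is how you prove the moment comparison. The paper integrates by parts once, using $(u^\ell w)'=u^{\ell-1}w\bigl(\ell-\tfrac12-\tfrac{u}{2(u+2)}\bigr)$ and dropping the nonpositive last term. This gives the recursion $zM_\ell\le(\ell-\tfrac12)M_{\ell-1}$, which it iterates. You instead view the normalized weight as the $\mathrm{Gamma}(1/2,z)$ law tilted by the decreasing factor $(u+2)^{-1/2}$, and invoke monotone-likelihood-ratio dominance or Chebyshev's integral inequality.

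Both arguments use the same monotonicity. Yours explains why the Pochhammer symbol appears, since it is exactly the $\mathrm{Gamma}(1/2,z)$ moment, and it controls $\mathbb E f$ for every increasing $f$ at once. The paper's is a two-line calculus computation with no probabilistic input. It also yields directly the consecutive-ratio bound $M_\ell/M_{\ell-1}\le(\ell-\tfrac12)/z$.
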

\begin{proof}
Let $w(u)=[u(u+2)]^{-1/2}$ and
$M_\ell(z)=\int_0^\infty u^\ell e^{-zu}w(u)\,du$.
For $\ell\ge1$, integration by parts gives
\[
 zM_\ell\le(\ell-\tfrac12)M_{\ell-1},\qquad
 (u^\ell w)'=u^{\ell-1}w
 \left(\ell-\tfrac12-\frac{u}{2(u+2)}\right).
\]
The boundary terms vanish. Thus
$M_\ell\le(1/2)_\ell z^{-\ell}M_0$. Differentiating
\eqref{rob:eq:besselint} under the integral and expanding $(1+u)^j$ proves
\eqref{rob:eq:besselderivatives}, since $(1/2)_\ell\le7$ for $\ell\le4$.
For the small-argument estimate use
\[
 \BesselK(z)=\int_1^\infty\frac{e^{-zt}}{\sqrt{t^2-1}}\,dt.
\]
The difference between this integral and
$\int_1^\infty e^{-zt}dt/t$ is bounded uniformly as $z\downarrow0$;
the difference of the integrands without the exponential is integrable.
After $s=zt$, the latter integral is $-\log z+O(1)$.
Finally $w(u)\le(2u)^{-1/2}$ in \eqref{rob:eq:besselint} proves the
large-argument bound.
\end{proof}

Let $L_N$ be the full lattice Laplacian, with diagonal $2N$.

\begin{lemma}\label{rob:lem:consistency}
For each $p>0$ there is $C_{N,p}<\infty$ such that
\begin{equation}\label{rob:eq:consistency}
 |(L_N+\Delta_{\R^N})F_{p,k}(x)|
 \le C_{N,p}(r^{-4}+k^4)F_{p,k}(x),\qquad
 F_{p,k}(x)=r^{-p}\BesselK(kr),
\end{equation}
for all $r=|x|\ge2$ and $0<k\le1$.
\end{lemma}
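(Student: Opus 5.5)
The plan is a Taylor comparison along each coordinate direction, done at the level of the one-variable function $\phi(t)=F_{p,k}(x+te_j)$ for $|t|\le1$. Since $r\ge2$, every such segment stays in $\{|y|\ge r/2\}\subset\{|y|\ge1\}$. Taylor's formula with integral remainder at order four gives
\[
 \phi(1)+\phi(-1)-2\phi(0)=\phi''(0)+\frac1{6}\int_0^1(1-\theta)^3\bigl(\phi^{(4)}(\theta)+\phi^{(4)}(-\theta)\bigr)\,d\theta .
\]
The odd third-order terms cancel. Summing over $j$, the second-derivative terms produce $\Delta_{\R^N}F_{p,k}(x)$. Hence $-(L_N+\Delta_{\R^N})F_{p,k}(x)$ equals the sum of the fourth-order remainders, and it suffices to show
\[
 \sup_{|y-x|\le1}|\partial_j^4F_{p,k}(y)|\le C(r^{-4}+k^4)F_{p,k}(x).
\]

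The first step bounds fourth derivatives at a point $y$ with $\rho=|y|\ge1$. Write $F=g(\rho)$ with $g(\rho)=\rho^{-p}\BesselK(k\rho)$. Any fourth directional derivative of a radial function is a combination of $g^{(i)}(\rho)$, $1\le i\le4$, with coefficients homogeneous of degree $i-4$ in $\rho$, bounded by $C\rho^{i-4}$. By Leibniz and Lemma~\ref{rob:lem:bessel}, $|g^{(i)}(\rho)|\le C\sum_{\ell\le i}\rho^{-p-(i-\ell)}k^\ell|\BesselK^{(\ell)}(k\rho)|$. Inserting \eqref{rob:eq:besselderivatives} gives $k^\ell|\BesselK^{(\ell)}(k\rho)|\le7\,k^\ell(1+(k\rho)^{-1})^\ell\BesselK(k\rho)\le C(k+\rho^{-1})^\ell\BesselK(k\rho)$. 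Collecting terms,
\[
 |\partial_j^4F(y)|\le C\rho^{-p}(k+\rho^{-1})^4\BesselK(k\rho)\le C'(\rho^{-4}+k^4)\rho^{-p}\BesselK(k\rho).
\]

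The second step transfers this bound from $y$ back to $x$; this is where the main obstacle lies, because $\BesselK$ decays exponentially. Since $r/2\le\rho\le r+1\le 3r/2$, the factors $\rho^{-p}$ and $\rho^{-4}$ are comparable to their values at $r$ with constants depending only on $p$. For the Bessel factor, the logarithmic derivative bound gives
\[
 |\log\BesselK(k\rho)-\log\BesselK(kr)|\le\sup_{\xi}\bigl(k+(2\xi)^{-1}\bigr)|\rho-r|,
\]
where $\xi$ lies between $\rho$ and $r$, so $\xi\ge r/2\ge1$. Because $k\le1$ and $|\rho-r|\le1$, the right side is at most $1+\tfrac12$, and therefore $\BesselK(k\rho)\le e^{3/2}\BesselK(kr)$. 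The restrictions $k\le1$ and $r\ge2$ enter precisely here.

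Combining the two steps, summing over the $N$ directions and integrating the Taylor weight yields \eqref{rob:eq:consistency} with a constant $C_{N,p}$.
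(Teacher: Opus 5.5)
Your proposal is correct and follows essentially the same route as the paper. Both arguments use a fourth-order Taylor remainder along each coordinate segment (total weight $\frac1{12}$), a bound on $\partial_j^4$ of the radial function through the chain rule, Leibniz and the Bessel derivative estimates, and a transfer back to $x$ through the logarithmic-derivative bound. The only difference is that you bound that log-derivative by its supremum, giving $e^{3/2}$, whereas the paper integrates it to get $e\,2^{1/2}$; this affects only the constant.
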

\begin{proof}
For $f(r)=r^{-p}\BesselK(kr)$, Leibniz' rule and
\eqref{rob:eq:besselderivatives} give, for $0\le j\le4$,
\[
 |f^{(j)}(r)|\le D_p(k+r^{-1})^j f(r)
\]
with a finite constant $D_p$, for example
$7\max_{0\le j\le4}\sum_{\ell=0}^j\binom j\ell(p)_{j-\ell}$.
Here $(p)_0=1$. The elementary bounds
\[
 |\partial_i r|\le1,\quad |\partial_i^2r|\le r^{-1},\quad
 |\partial_i^3r|\le3r^{-2},\quad |\partial_i^4r|\le12r^{-3}
\]
and the fourth-order chain rule yield
$|\partial_i^4 f(|y|)|\le34D_p(k+|y|^{-1})^4f(|y|)$.
For $y=x+te_i$, $|t|\le1$, one has $|y|\ge r/2$. Integration of the
logarithmic derivative bound in Lemma~\ref{rob:lem:bessel} gives
\[
 f(|y|)\le e\,2^{p+1/2}f(r),\qquad
 k+|y|^{-1}\le2(k+r^{-1}).
\]
The error of each central second difference relative to the second
derivative is at most $\frac1{12}$ times the supremum of the fourth
derivative on the corresponding segment. Summing over $N$ coordinates
and using $(k+r^{-1})^4\le8(k^4+r^{-4})$ proves the claim.
All estimates are uniform in the direction of $x$ and in $kr$.
\end{proof}

Set $\beta=\min\{\eta,2\}>0$. For $g(r)=\BesselK(kr)$, direct radial
differentiation, using $2\alpha=N-2$, gives
\begin{align}
 (-\Delta-\alpha^2r^{-2}+\varepsilon)(r^{-\alpha}g)
  &=(\varepsilon-k^2)r^{-\alpha}g,\label{rob:eq:radialmain}\\
 (-\Delta-\alpha^2r^{-2}+\varepsilon)(r^{-\alpha-\beta}g)
  &=(\varepsilon-k^2)r^{-\alpha-\beta}g
    +2\beta r^{-\alpha-\beta-1}g'
    -\beta^2r^{-\alpha-\beta-2}g.\label{rob:eq:radialcorrection}
\end{align}
The coefficient $\beta^2$ is independent of $N$.

\begin{lemma}\label{rob:lem:barriers}
There are $a,R,\varepsilon_0>0$ such that the positive functions
\begin{align}
 U^-_\varepsilon(x)&=r^{-\alpha}(1+ar^{-\beta})
                          \BesselK(\sqrt{2\varepsilon}\,r),\label{rob:eq:subbarrier}\\
 U^+_\varepsilon(x)&=r^{-\alpha}(1-ar^{-\beta})
                          \BesselK(\sqrt{\varepsilon/2}\,r)\label{rob:eq:superbarrier}
\end{align}
are square-summable on $r\ge R-1$ and satisfy
\begin{equation}\label{rob:eq:barriersigns}
 (H+\varepsilon)U^-_\varepsilon\le0,\qquad
 (H+\varepsilon)U^+_\varepsilon\ge0\quad(r\ge R)
\end{equation}
for $0<\varepsilon<\varepsilon_0$.
\end{lemma}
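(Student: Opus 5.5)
The proof splits the operator into its continuum part and three error terms, and shows that at large radius a single dominant term fixes each sign. Write $U^\pm=F\cdot(1\pm ar^{-\beta})$, with $F$ built from $r^{-\alpha}\BesselK(kr)$ and $k^2=2\varepsilon$ for $U^-$, $k^2=\varepsilon/2$ for $U^+$. Choose $R\ge R_*+2$ so that all rows at $|x|\ge R-1$ are full lattice rows. There $H=L_N+q$, and
\[
 (H+\varepsilon)U=(-\Delta-\alpha^2r^{-2}+\varepsilon)U+(L_N+\Delta)U+VU .
\]
Square-summability on $r\ge R-1$ comes first. By \eqref{rob:eq:besselasymp}, $\BesselK(kr)\le e^{-kr}\sqrt{\pi/(2kr)}$, which decays exponentially for fixed $k>0$. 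Hence $r^{-\alpha}\BesselK(kr)\in\ell^2$, and positivity holds once $aR^{-\beta}<1$.

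Next, the continuum part is computed exactly from \eqref{rob:eq:radialmain} and \eqref{rob:eq:radialcorrection}. For $U^-$ it equals
\[
 -\varepsilon r^{-\alpha}g(1+ar^{-\beta})+a\bigl(2\beta r^{-\alpha-\beta-1}g'-\beta^2r^{-\alpha-\beta-2}g\bigr).
\]
Since $g'<0$, the correction terms carry the negative quantity $-a\beta^2r^{-\alpha-\beta-2}g$, plus the further negative term $2a\beta r^{-\alpha-\beta-1}g'$. For $U^+$ the sign of $a$ flips, giving $+\tfrac{\varepsilon}{2}r^{-\alpha}g(1-ar^{-\beta})+a\beta^2r^{-\alpha-\beta-2}g-2a\beta r^{-\alpha-\beta-1}g'$. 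Every term here is nonnegative once $ar^{-\beta}<1$. So the continuum part supplies, with the correct sign, a main term of size $\varepsilon r^{-\alpha}g$ and a term of size $a\beta^2r^{-\alpha-\beta-2}g$.

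The error terms come next. Lemma~\ref{rob:lem:consistency} applies to both $F_{\alpha,k}$ and $F_{\alpha+\beta,k}$ with $k\le1$. It bounds the lattice consistency error by $C(r^{-4}+k^4)r^{-\alpha}g$, up to a factor $(1+ar^{-\beta})\le2$. The tail correction contributes $|VU|\le 2Mr^{-2-\eta}r^{-\alpha}g$.

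Finally, set the dominant terms against these errors. Because $\beta=\min\{\eta,2\}$, both $r^{-4}$ and $r^{-2-\eta}$ are at most $r^{-2-\beta}$. Taking $a\beta^2\ge 4(C+2M)$ therefore absorbs these errors into the $a\beta^2 r^{-\alpha-\beta-2}g$ term. The remaining error $Ck^4r^{-\alpha}g\le 4C\varepsilon^2r^{-\alpha}g$ is absorbed by the main term $\tfrac{\varepsilon}{2}r^{-\alpha}g$ once $\varepsilon<\varepsilon_0$ is small. This explains the factor-two shifts $2\varepsilon$ and $\varepsilon/2$ in $k^2$: they leave a margin of order $\varepsilon$ in each direction.

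The main obstacle is keeping every estimate uniform in the product $kr$, both small and large. Lemma~\ref{rob:lem:consistency} already supplies this for the consistency error, since its constants are uniform in $kr$. The remaining comparisons are pointwise multiples of the same factor $g(kr)$, so they hold uniformly in $kr$.

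For the order of choices, pick $a$ first from $C$, $M$ and $\beta$. Then pick $R$ so that $aR^{-\beta}\le\tfrac12$ and $R\ge R_*+2$. Pick $\varepsilon_0$ last. Since $C$ depends only on $N$, $\alpha$ and $\beta$, there is no circularity.
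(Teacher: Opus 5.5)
Your proposal is correct and follows the paper's proof essentially step for step. Both arguments split the operator into the radial identities \eqref{rob:eq:radialmain}--\eqref{rob:eq:radialcorrection}, the consistency error from Lemma~\ref{rob:lem:consistency} and the tail $V$. Both absorb the $r^{-2-\beta}$ errors into $a\beta^2r^{-\alpha-\beta-2}g$ and the $k^4$ error into the order-$\varepsilon$ margin created by $k^2=2\varepsilon$ and $k^2=\varepsilon/2$, choosing $a$, then $R$, then $\varepsilon_0$. Two small conditions should be stated explicitly, as the paper does in its choices: $\varepsilon_0\le1/2$, so that $k\le1$ in the consistency lemma, and $a(R-1)^{-\beta}<1$ rather than $aR^{-\beta}<1$, so that $U^+_\varepsilon$ is positive on all of $r\ge R-1$.
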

\begin{proof}
Put $C=\max\{1,C_{N,\alpha},C_{N,\alpha+\beta}\}$ and choose
\begin{equation}\label{rob:eq:choosea}
 a\ge1,\quad a\beta^2\ge3(C+M),\quad
 R-1\ge\max\{2,(2a)^{1/\beta}\}.
\end{equation}
Enlarge $R$ to include all exceptional rows and the set where the tail
condition is not prescribed. For either sign in $(1\pm ar^{-\beta})F$
and $F=r^{-\alpha}g$, Lemma~\ref{rob:lem:consistency} bounds the discrete
error, plus the added potential $V$, by
\begin{equation}\label{rob:eq:totalerror}
 \frac32\big[(C+M)r^{-2-\beta}+Ck^4\big]F.
\end{equation}
Indeed $ar^{-\beta}\le1/2$, $r^{-4}\le r^{-2-\beta}$, and
$r^{-2-\eta}\le r^{-2-\beta}$.

For $k^2=2\varepsilon$, \eqref{rob:eq:radialmain}--\eqref{rob:eq:radialcorrection}
and $g'<0$ bound the continuous part of $U^-_\varepsilon$ above by
$[-\varepsilon-a\beta^2r^{-2-\beta}]F$.
Adding \eqref{rob:eq:totalerror} gives the upper bound
\[
 [-\varepsilon+6C\varepsilon^2
       -\tfrac12a\beta^2r^{-2-\beta}]F<0
\]
when $\varepsilon\le1/(12C)$. For $k^2=\varepsilon/2$, the continuous
part of $U^+_\varepsilon$ is at least
$[\varepsilon/4+a\beta^2r^{-2-\beta}]F$.
Subtracting \eqref{rob:eq:totalerror} leaves at least
\[
 [\varepsilon/4-3C\varepsilon^2/8
       +\tfrac12a\beta^2r^{-2-\beta}]F>0.
\]
Choose $\varepsilon_0\le\min\{1/2,1/(12C)\}$ so that also $k\le1$.
The large-argument Bessel bound proves square-summability.
\end{proof}

The continuous identities supply comparison functions for the exact
lattice operator. The estimate \eqref{rob:eq:totalerror} absorbs the
spatial and energy-dependent errors uniformly over the exterior,
including the $k^4$ term at arbitrarily large radii.

\begin{proof}[Proof of \eqref{rob:eq:profile}]
Enlarge the matching radius so that $S\subset\{|x|<R-1\}$ and put
\[
 \Omega=\{x\in X:|x|\ge R\},\quad
 \Gamma=\{y\in X\setminus\Omega:c(x,y)>0\text{ for some }x\in\Omega\}.
\]
This is a finite boundary contained in $R-1\le |y|<R$.
On $\Gamma$, Lemma~\ref{rob:lem:green} bounds every
$v_{\varepsilon,f}=(H+\varepsilon)^{-1}f$ between two fixed positive
constants. Meanwhile
$\BesselK(\sqrt{b\varepsilon}|y|)/L_\varepsilon\to1/2$ uniformly on this
finite boundary for $b=2,1/2$. Thus fixed constants $d_-,d_+>0$ give
\[
 \frac{d_-}{L_\varepsilon}U^-_\varepsilon
 \le v_{\varepsilon,f}\le
 \frac{d_+}{L_\varepsilon}U^+_\varepsilon\quad\text{on }\Gamma
\]
for every source in the simplex and sufficiently small $\varepsilon$.

These inequalities extend to $\Omega$. If $w\in
\ell^2(\Omega)$ has nonnegative boundary values and
$(H+\varepsilon)w\ge0$ on $\Omega$, then the negative part $w_-$,
extended by zero outside $\Omega$, satisfies
\[
 0\le\ip{(H+\varepsilon)w}{w_-}_{\ell^2(\Omega)}
 \le-\ip{(H_\Omega+\varepsilon)w_-}{w_-}
 \le-\varepsilon\norm{w_-}^2.
\]
Here $H_\Omega$ is the compression retaining the full diagonal, so
$H_\Omega\ge0$. The middle inequality uses the nonpositive off-diagonal
entries and the sign of the boundary data. All pairings are legitimate
because $H$ is bounded and $\Gamma$ is finite. Apply this argument to the
two differences. Both barriers belong to $\ell^2(\Omega)$ at positive
energy, so the negative-part argument applies on the whole exterior.
Finally discard $1+ar^{-\beta}\ge1$ in the lower bound and
$1-ar^{-\beta}\le1$ in the upper bound.
\end{proof}

\subsubsection{Mass and zero-energy response}

Write $\omega_{N-1}=|\mathbb S^{N-1}|$.

\begin{lemma}\label{rob:lem:mass}
For fixed $b>0$ and $R>0$,
\begin{equation}\label{rob:eq:masssum}
 \lim_{\varepsilon\downarrow0}\varepsilon
 \sum_{\substack{x\in X\\|x|\ge R}}
 |x|^{-2\alpha}\BesselK(b\sqrt\varepsilon|x|)^2
 =\omega_{N-1}\int_0^\infty t\BesselK(bt)^2\,dt
 =\frac{\omega_{N-1}}{2b^2}.
\end{equation}
The same convergence holds when scaled radii are restricted to $(0,s]$
or $[s,\infty)$ for each fixed $s>0$, with the corresponding integrals.
\end{lemma}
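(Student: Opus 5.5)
The plan is to read the lattice sum as a Riemann sum of a radial function after the substitution $y=\sqrt\varepsilon\,x$. Put $h=\sqrt\varepsilon$ and $F(y)=|y|^{-2\alpha}\BesselK(b|y|)^2$ on $\R^N\setminus\{0\}$. Since $2\alpha=N-2$,
\[
 \varepsilon\,|x|^{-2\alpha}\BesselK(b\sqrt\varepsilon|x|)^2
 =h^{2}\,h^{-2\alpha}\,h^{2\alpha}|x|^{-2\alpha}\BesselK(bh|x|)^2
 =h^N F(hx).
\]
The left side of \eqref{rob:eq:masssum} is therefore $h^N\sum_{x\in X,\,|x|\ge R}F(hx)$, which is a Riemann sum for $\int_{\R^N}F$ over the mesh $h\Z^N$, restricted to $|y|\ge hR$. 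The finite set $D$ is irrelevant because $|x|\ge R$ eventually excludes it, or contributes only $O(h^N)\to0$. The integral in polar coordinates is $\omega_{N-1}\int_0^\infty r^{N-1}r^{-(N-2)}\BesselK(br)^2\,dr=\omega_{N-1}\int_0^\infty t\BesselK(bt)^2\,dt$. The substitution $u=bt$ and the classical value $\int_0^\infty u\BesselK(u)^2\,du=1/2$ give $\omega_{N-1}/(2b^2)$. That classical value can be checked from the indefinite integral $\int u\BesselK^2=\tfrac{u^2}{2}(\BesselK^2-\BesselK'^2)$, which follows from the Bessel equation, together with the boundary behaviour $u^2\BesselK'(u)^2\to1$ at $0$ and exponential decay at $\infty$.

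The convergence itself I would justify by splitting the domain into three regions: $|y|\le\delta$, $\delta\le|y|\le\Lambda$, and $|y|\ge\Lambda$.

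\textbf{Middle region.} Here $F$ is continuous on a compact annulus, so the Riemann sums converge to $\int_{\delta\le|y|\le\Lambda}F$.

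\textbf{Far region.} Lemma~\ref{rob:lem:bessel} gives $\BesselK(z)^2\le\pi e^{-2z}/(2z)$, so $F(y)\le C|y|^{-(N-1)}e^{-2b|y|}$. Comparing each lattice point's value with the integral over its unit cell, using the monotonicity and doubling bounds from the logarithmic derivative estimate in Lemma~\ref{rob:lem:bessel}, bounds the far tail uniformly in $h$ by $C\int_{|y|\ge\Lambda-1}|y|^{-(N-1)}e^{-2b|y|}\,dy$. This is small for large $\Lambda$.

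\textbf{Near region.} This is the main obstacle, since $F$ has the nonintegrable-looking singularity $|y|^{-(N-2)}\log^2|y|$. It is still integrable in dimension $N$. I would bound $\BesselK(z)\le C(1+|\log z|)$ for $z\le1$, again by Lemma~\ref{rob:lem:bessel}, and count lattice points in dyadic shells $2^k\le|x|<2^{k+1}$. This gives
\[
 h^N\sum_{R\le|x|\le\delta/h}|x|^{-(N-2)}\BesselK(bh|x|)^2
 \le C h^2\sum_{R\le|x|\le\delta/h}|x|^{-(N-2)}\bigl(1+\log^2(h|x|)\bigr)
 \le C\int_0^{\delta}t\,(1+\log^2 t)\,dt,
\]
which tends to $0$ as $\delta\downarrow0$, uniformly in $h$. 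The step from the lattice sum to the integral is the same sum–integral comparison, now applied in the variable $t=h|x|$ with radial density $t\,dt$. The matching continuum integral over $|y|\le\delta$ is small as well. Letting first $h\to0$ and then $\delta\to0$, $\Lambda\to\infty$ proves the limit.

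For the restricted versions, the scaled radius is $t=\sqrt\varepsilon|x|$ in $(0,s]$ or $[s,\infty)$. The same three-region argument applies with the cut at $|y|=s$. The boundary sphere $|y|=s$ has measure zero, and $F$ is continuous and bounded near it, so the Riemann sums of the indicator-truncated function still converge. The limits are $\omega_{N-1}\int_0^s t\BesselK(bt)^2\,dt$ and $\omega_{N-1}\int_s^\infty t\BesselK(bt)^2\,dt$ respectively.
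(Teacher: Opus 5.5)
Your proof is correct, and its convergence part is the paper's own argument. With $h=\sqrt\varepsilon$ the sum is $h^N\sum F(hx)$. Riemann sums converge on annuli away from the origin. Dyadic shell counting with $\BesselK(z)=O(1+|\log z|)$ bounds the near-origin part uniformly in $h$; the paper gets $C s^2(1+|\log s|)^2$ for scaled radius $s$. The exponential Bessel bound removes the far tail.

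The genuine difference is how you evaluate $\int_0^\infty u\BesselK(u)^2\,du=\tfrac12$.
\begin{itemize}
\item The paper squares $\BesselK(z)=\int_0^\infty e^{-z\cosh t}\,dt$ and applies Tonelli to get $\iint_{(0,\infty)^2}(\cosh u+\cosh v)^{-2}\,du\,dv$. It then evaluates this after the substitution $a=(u+v)/2$, $b_1=(u-v)/2$. This needs nothing beyond the representation \eqref{rob:eq:besselint}, which is already recorded.
\item Your antiderivative $\tfrac{u^2}{2}(\BesselK^2-\BesselK'^2)$ is correct: its derivative is $u\BesselK^2$ because $(u\BesselK')'=u\BesselK$. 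As a bonus it gives the restricted integrals over $(0,s]$ and $[s,\infty)$ in closed form.
\item However, your route needs the endpoint limit $u\BesselK'(u)\to-1$ as $u\downarrow0$. Lemma~\ref{rob:lem:bessel} does not supply this, since it only bounds $-\BesselK'/\BesselK$ from above. You must import it from $\BesselK'=-\mathrm K_1$ and $\mathrm K_1(u)\sim1/u$. Alternatively, derive it from $-\BesselK'(z)=\int_1^\infty ue^{-zu}(u^2-1)^{-1/2}\,du=e^{-z}/z+O(1)$.
\end{itemize}

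Two harmless slips. First, the leftmost expression in your near-region display should carry the factor $h^2$, or be written as $h^N\sum F(hx)$. Second, a removed vertex contributes $h^NF(hx)=O(h^2\log^2(1/h))$, not $O(h^N)$. This still tends to zero, and your uniform near-origin bound covers it anyway.
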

\begin{proof}
With $h=\sqrt\varepsilon$, the expression is the Riemann sum
\[
 h^N\sum F(hx),\qquad F(z)=|z|^{-N+2}\BesselK(b|z|)^2.
\]
On compact annuli away from zero, ordinary Riemann convergence applies.
The lattice point count $\#\{|x|\le T\}\le C_NT^N$ for $T\ge1$ and
\eqref{rob:eq:besselasymp} give, by dyadic annuli,
\[
 h^N\sum_{R\le|x|\le s/h}F(hx)
 \le C\sum_{j\ge0}(2^{-j}s)^2(1+|\log s|+j)^2
 \le C's^2(1+|\log s|)^2
\]
for small $s$, uniformly in $h$. Empty annuli below the lattice scale
contribute zero. The exponential Bessel bound gives a uniformly vanishing
tail as $s\to\infty$. This proves all asserted convergence statements.
Removing finitely many vertices does not affect the limit.

To evaluate the integral, Tonelli and \eqref{rob:eq:besselint} give
\[
 \int_0^\infty t\BesselK(t)^2\,dt
 =\int_0^\infty\int_0^\infty
        (\cosh u+\cosh v)^{-2}\,du\,dv.
\]
With $a=(u+v)/2$, $b_1=(u-v)/2$, this becomes
\[
 \frac12\int_0^\infty\frac{da}{\cosh^2a}
       \int_{-a}^a\frac{db_1}{\cosh^2b_1}
 =\int_0^\infty\frac{\tanh a}{\cosh^2a}\,da=\frac12.
\]
Rescaling proves \eqref{rob:eq:masssum}.
\end{proof}

\begin{proof}[Completion of Theorem~\ref{rob:thm:response}]
Square \eqref{rob:eq:profile} and apply Lemma~\ref{rob:lem:mass}. The finite
remaining set has uniformly bounded response by Lemma~\ref{rob:lem:green}.
This proves \eqref{rob:eq:norm}. For a fixed source the spectral theorem gives
\[
 -\frac d{ds}\ip f{(H+s)^{-1}f}=\norm{(H+s)^{-1}f}^2.
\]
Integrate from $s=\rho>0$ to $s=\varepsilon$ and then let
$\rho\downarrow0$. Finiteness of the Green pairing and
\[
 \int_0^\varepsilon\frac{ds}{s\log^2(1/s)}
 =\frac1{\log(1/\varepsilon)}
\]
prove \eqref{rob:eq:scalarresponse}, with uniform constants.

Monotonicity and finiteness give the pointwise limit $G_0f>0$.
Passing to the limit in the finite row of
$(H+\varepsilon)v_{\varepsilon,f}=f$ gives $H(G_0f)=f$.
For fixed $r$, the Bessel factor divided by $L_\varepsilon$ tends to
$1/2$. Passing to the limit in \eqref{rob:eq:profile} proves
\eqref{rob:eq:zeroresponse}, with constants valid at every $r\ge R$.
\end{proof}

The effective radial measure is independent of $N$:
\begin{equation}\label{rob:eq:effective2}
 r^{N-1}r^{-2\alpha}\,dr=r\,dr.
\end{equation}
This identity explains the order-zero Bessel function and the universal
form of the rate. Lemmas~\ref{rob:lem:barriers} and \ref{rob:lem:mass}
establish the corresponding lattice estimates.

\subsection{The exact finite-source coefficient}\label{sec:exactresponse}

The leading Green defect is a rank-one matrix with a coefficient fixed
by the source amplitudes at infinity. The proof uses two ingredients: an exterior
oscillation estimate gives each limiting amplitude, and a logarithmic
capacity calculation fixes the common constant. Write
$\omega_{N-1}=|\mathbb S^{N-1}|$.

\begin{theorem}\label{rob:thm:exactresponse}
Under \eqref{rob:eq:tail}--\eqref{rob:eq:subcritical}, for each vertex
$y\in X$ there is an amplitude
\begin{equation}\label{rob:eq:greenamplitude}
 a(y)=\lim_{|x|\to\infty}|x|^\alpha G_0(x,y)>0.
\end{equation}
For every fixed finite $S\subset X$, some $\sigma>0$ satisfies
\[
 G_0(x,y)=|x|^{-\alpha}\bigl(a(y)+O_S(|x|^{-\sigma})\bigr)
 \quad(y\in S).
\]
Let $a_S=(a(y))_{y\in S}$ and let $P_S$ denote restriction to $S$
and extension by zero, as appropriate. In matrix operator norm,
\begin{equation}\label{rob:eq:exactmatrix}
 P_S\bigl[G_0-(H+\varepsilon)^{-1}\bigr]P_S
 =\frac{2\omega_{N-1}}{L_\varepsilon}\,a_Sa_S^*
   +O_S(L_\varepsilon^{-2}).
\end{equation}
In particular, for a fixed nonnegative finite source $f\ne0$, put
$a(f)=\sum_y a(y)f(y)$. Then
\begin{equation}\label{rob:eq:exactscalar}
 \ip f{[G_0-(H+\varepsilon)^{-1}]f}
 =\frac{2\omega_{N-1}a(f)^2}{L_\varepsilon}
  +O_f(L_\varepsilon^{-2}).
\end{equation}
The matrix statement makes the error uniform on bounded sets of sources
supported on $S$, including signed and energy-dependent sources.
\end{theorem}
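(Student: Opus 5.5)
We prove Theorem~\ref{rob:thm:exactresponse} in two parts. First we show that each zero-energy Green function has an exact amplitude at infinity. Then a scalar identity, combined with the uniform estimates of Theorem~\ref{rob:thm:response}, fixes the rank-one coefficient.

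\emph{Amplitude.} Fix $y$ and put $u=G_0(\cdot,y)$. By \eqref{rob:eq:zeroresponse}, $u\asymp r^{-\alpha}$ and $Hu=0$ on $\{|x|\ge R\}$. We use the ground-state transform $u=h\psi$, where $h$ is a positive exterior zero-energy profile. Such an $h$ is obtained as a limit of $\varepsilon$-barriers, or directly from Lemma~\ref{rob:lem:barriers} with $\BesselK$ replaced by $1$, giving the two-sided barriers $r^{-\alpha}(1\pm ar^{-\beta})$. For the quotient $\psi=u/r^{-\alpha}$ we then prove an oscillation decay on dyadic annuli: $\operatorname{osc}_{A_{2^k}}\psi\le C2^{-\sigma k}$. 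The argument is a barrier/maximum-principle iteration. On the exterior of a large ball, $r^{-\alpha}(c\pm ar^{-\beta})$ are super- and subsolutions of $H$. Comparing $u$ with $\max_{|x|=\rho}\psi\cdot r^{-\alpha}(1+a r^{-\beta})$ and the analogous minimum, the ratio between these enters only through the $O(r^{-\beta})$ corrections. A Harnack inequality for the transformed operator (an effective two-dimensional walk with reversible measure $r^{-2\alpha}$) then gives geometric decay of the oscillation. This yields $a(y)=\lim|x|^\alpha G_0(x,y)>0$ with a rate $|x|^{-\sigma}$ that is uniform for $y\in S$, since $S$ is finite. I expect this Harnack/oscillation step on the lattice to be the main obstacle. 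It cannot be replaced by a rotation-invariance argument, because the tail corrections are nonradial. I would try first a discrete Harnack inequality for the Doob-transformed chain on annuli, which has bounded geometry.

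\emph{Coefficient.} For $f$ supported on $S$, the spectral theorem gives
\[
\ip f{[G_0-(H+\varepsilon)^{-1}]f}=\int_0^\varepsilon\norm{(H+s)^{-1}f}^2\,ds,
\]
and polarization extends this to the matrix. It therefore suffices to show
\[
\norm{(H+s)^{-1}f}^2=\frac{2\omega_{N-1}a(f)^2}{sL_s^2}\bigl(1+O(L_s^{-1})\bigr);
\]
integrating then gives $2\omega a(f)^2/L_\varepsilon+O(L_\varepsilon^{-2})$. For this pointwise asymptotic we match on a fixed sphere $|x|=\rho$. There, $G_0f\approx a(f)r^{-\alpha}$, and the exterior solution of $(H+s)v=0$ is, up to relative errors $O(\rho^{-\sigma})$, a multiple of $r^{-\alpha}\BesselK(\sqrt s\,r)$. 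Since $\BesselK(\sqrt s\rho)=\tfrac12L_s-\log\rho+\log2-\gamma_{\mathrm E}+o(1)$ and, by \eqref{rob:eq:scalarresponse}, $(H+s)^{-1}f$ and $G_0f$ differ on the finite core by $O(L_s^{-1})$, the exterior coefficient is $2a(f)/L_s\,(1+O(L_s^{-1}))$. Refining the barrier comparison of Lemma~\ref{rob:lem:barriers} to exact constants $k^2=s(1\pm\delta)$ and letting $\delta\to0$ (or using the mass identity with $b=1$) gives, via Lemma~\ref{rob:lem:mass}, a norm equal to $(2a(f)/L_s)^2\,\omega/(2s)=2\omega a(f)^2/(sL_s^2)$. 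The contribution of the inner region is lower order by \eqref{rob:eq:norm}.

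\emph{Error control.} To make the errors $O(L_s^{-1})$ rather than $o(1)$, we choose the matching radius $\rho$ depending on $s$, for instance $\rho=L_s$, so that both $\rho^{-\sigma}$ and $\log\rho/L_s$ are $O(L_s^{-1}\log L_s)$. The residual $\log L_s$ is absorbed by integrating $\int_0^\varepsilon\log L_s/(sL_s^3)\,ds=O(L_\varepsilon^{-2}\log L_\varepsilon)$; if needed we sharpen this to $O(L_\varepsilon^{-2})$ by taking $\rho$ fixed and keeping the explicit constant term, which integrates to an $L_\varepsilon^{-2}$ term. Uniformity over $S$ follows from linearity and finiteness of the matrix.
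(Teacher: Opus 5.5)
Your amplitude step is essentially the paper's Lemma~\ref{rob:lem:amplitude}: decay of the oscillation of $|x|^\alpha G_0(\cdot,y)$, obtained from an annular Harnack inequality for the $r^{-\alpha}$-transformed network and iterated over dyadic scales. Make one point explicit. Your zero-energy comparison with $r^{-\alpha}(1\pm ar^{-\beta})$ on the exterior is a maximum principle for bounded ratios on an unbounded domain. It holds only because the transformed chain is recurrent, which the paper proves with logarithmic cutoffs of vanishing weighted energy. Without recurrence, bounded exterior harmonic functions are not controlled by their boundary values.

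The coefficient step has a genuine gap. Everything rests on $\norm{(H+s)^{-1}f}^2=\frac{2\omega_{N-1}a(f)^2}{sL_s^2}(1+O(L_s^{-1}))$, and the matching argument does not give this precision.

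\emph{Fixed matching radius.} With $\rho$ fixed, the barrier sandwich is set by the ratio of $(H+s)^{-1}f$ to $r^{-\alpha}\BesselK(\sqrt s\,r)$ across the matching layer. That ratio is known only up to a relative spread $O(\rho^{-\sigma})+O(\rho^{-\beta})$, coming from the amplitude error of $G_0f$ and from the factors $1\pm ar^{-\beta}$. The spread passes unchanged to the mass at $r\sim s^{-1/2}$. After integration you get only $D_\varepsilon(f)=\frac{2\omega_{N-1}a(f)^2}{L_\varepsilon}(1+O(\rho^{-\sigma}))$, hence an $o(L_\varepsilon^{-1})$ remainder after $\rho\to\infty$. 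The leading coefficient is recoverable this way (with refined barriers $k^2=s(1\pm Cs)$ and a uniform-in-$b$ Lemma~\ref{rob:lem:mass}), but the remainder is not.

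\emph{Growing matching radius.} Letting $\rho$ grow with $s$ does not help. You would need $(H+s)^{-1}f$ on $|x|=\rho$ to relative precision $O(L_s^{-1})$. There, $G_0f-(H+s)^{-1}f$ has relative size $\log\rho/L_s$ and must be known including its $O(L_s^{-1})$ part, which is exactly the intermediate-region information you are trying to derive. The only available pointwise input at that radius is \eqref{rob:eq:profile}, with unspecified constants.

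\emph{The specific choices.} Taking $\rho=L_s$ gives $\rho^{-\sigma}=L_s^{-\sigma}$, which is not $O(L_s^{-1}\log L_s)$ for the small $\sigma$ that the Harnack iteration produces. The closing suggestion to keep ``the explicit constant term'' at fixed $\rho$ is not an argument.

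This remainder matters: Theorem~\ref{rob:thm:birth} uses the $O(L_\varepsilon^{-2})$ term to obtain a bounded error in $\log\varepsilon(\tau)$.

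The paper avoids pointwise matching entirely. With $h=G_0f$, the ground-state identity and the resolvent variational principle give the exact formula
\[
 D_\varepsilon(f)=\inf_v\Bigl\{Q_\varepsilon[v]+\sum_xf(x)h(x)|1-v(x)|^2\Bigr\},
\]
where $Q_\varepsilon$ is the $h$-weighted Dirichlet form with mass term $\varepsilon h^2$.
\begin{itemize}
\item The upper bound comes from a logarithmic cutoff between a fixed ball and $|x|=\varepsilon^{-1/2}$.
\item The lower bound (Lemma~\ref{rob:lem:exactcapacity}) comes from a unit flow out of a vertex $o$ with a diffuse sink on $R\le|x|\le2R$, $R=\varepsilon^{-1/2}$, combined with Cauchy--Schwarz.
\end{itemize}
In both bounds the amplitude error enters only through the convergent sum $\sum|x|^{-N-\kappa}$. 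It is therefore an $O(1)$ additive error in an effective resistance of size $L_\varepsilon/(2\omega_{N-1}a(f)^2)$, and inverting gives the $O(L_\varepsilon^{-2})$ remainder directly.

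To keep your energy-differentiation route, you would need a matched approximate solution with a weighted residual estimate across the intermediate region, of the type built in Proposition~\ref{pref:prop:finitepart}. That is considerably more work.
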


\subsubsection{A limiting amplitude without radial symmetry}

A bounded positive profile relative to the critical power has a limit
at infinity. The following lemma strengthens two-sided comparison to
an amplitude with a power-decaying error.

\begin{lemma}\label{rob:lem:amplitude}
Suppose the conductances and potential satisfy \eqref{rob:eq:tail}.
If $Hu=0$ outside a finite set and
$0<c\le |x|^\alpha u(x)\le C$ there, then, for some $a>0$ and
$\sigma>0$,
\begin{equation}\label{rob:eq:amplituderate}
 u(x)=|x|^{-\alpha}\bigl(a+O(|x|^{-\sigma})\bigr).
\end{equation}
\end{lemma}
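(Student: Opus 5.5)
The plan is to pass to the ground-state (Doob) transform relative to the critical power and show that the ratio $v(x)=|x|^\alpha u(x)$ has a limit at a polynomial rate. Put $\phi(x)=|x|^{-\alpha}$ for $|x|\ge R$, with $R$ beyond the finite set, the core and $R_*$. By \eqref{rob:eq:tail} and \eqref{eq:infinityfield}, $H\phi=O(|x|^{-\alpha-2-\beta})$ with $\beta=\min\{\eta,2\}$. Writing $u=\phi v$, the equation $Hu=0$ becomes, for $|x|\ge R$,
\[
 \sum_{y\sim x}\phi(x)\phi(y)\bigl(v(x)-v(y)\bigr)=-\phi(x)^2 e(x)v(x),\qquad e=\frac{H\phi}{\phi}=O(|x|^{-2-\beta}).
\]
This is a weighted Laplace equation with conductances $\phi(x)\phi(y)\asymp r^{-2\alpha}$ and a small right-hand side. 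In the radial variable these conductances give the effective two-dimensional measure \eqref{rob:eq:effective2}, so $v$ behaves like a bounded harmonic function in dimension two.

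\textbf{Oscillation decay.} The core step is to show that $\operatorname{osc}_{A_k}v\le C\theta^k$ on dyadic annuli $A_k=\{2^k\le|x|<2^{k+1}\}$ for some $\theta<1$.
\begin{itemize}
\item Rescale each annulus by $2^{-k}$. The weighted operator then becomes a uniformly elliptic lattice operator: its conductances are $r^{-2\alpha}$ times smooth functions with bounded ratios on the annulus. The source term is of relative size $2^{-k\beta}$.
\item Apply a discrete Harnack inequality, either Delmotte's for uniformly elliptic graphs or a direct one via the comparison barriers of Lemma~\ref{rob:lem:barriers} at $\varepsilon\to0$. Because $v$ is positive and bounded between $c$ and $C$, this yields
\[
 \operatorname{osc}_{A_{k+1}}v\le\theta\operatorname{osc}_{A_k\cup A_{k+1}\cup A_{k+2}}v+C2^{-k\beta}.
\]
\item Iterating gives decay $O(2^{-k\sigma})$ for some $\sigma>0$.
\end{itemize}

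\textbf{Convergence.} Oscillation decay alone does not give convergence, since the level $v$ could drift slowly, like $\log\log r$, across annuli. I would control the drift by a flux identity. Sum the weighted equation over the ball $\{R\le|x|<2^k\}$. The flux through the sphere at radius $2^k$ equals a boundary constant plus $\sum\phi^2 e v=O(\sum r^{-2\alpha-2-\beta})$. In the two-dimensional effective measure this sum converges, so the flux converges at rate $2^{-k\beta}$.

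The flux across the sphere is approximately $\omega\,2^{k}\cdot2^{-k(N-2)}2^{k(N-2)}\partial_rv$. The factors of $2^{k(N-2)}$ come from $\phi^2$ times the surface count, which leaves $r\,\partial_r\bar v$. A nonzero limiting flux $F$ would force $\bar v\sim F\log r$. That contradicts the boundedness of $v$, so $F=0$ and $r\partial_r\bar v=O(r^{-\beta'})$.

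Combining this with the oscillation bound makes the averages $\bar v(2^k)$ Cauchy with geometric increments, so $v\to a$ at rate $O(|x|^{-\sigma})$. The limit satisfies $a\ge c>0$.

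The main obstacle is making the discrete flux/averaging step rigorous without radial symmetry, that is, relating the sphere flux to increments of annular means. I would first try to avoid this by using the Green-type telescoping above together with the gradient estimate from the Harnack (Caccioppoli) step. Failing that, I would bound the drift directly with the radial barriers $r^{-\alpha}(1\pm ar^{-\beta})$ and $r^{-\alpha}\log r$-type comparisons, squeezing $v$ between $a_k\pm C2^{-k\sigma}$ via the maximum principle on exteriors, as in the proof of \eqref{rob:eq:profile}.
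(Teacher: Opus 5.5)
Your transform $v=u/\phi$, the source bound $e=H\phi/\phi=O(r^{-2-\beta})$, and the scale-invariant Harnack inequality on rescaled annuli (Delmotte) all match the paper. The gap is in the oscillation step, which does not close.

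Your inequality $\operatorname{osc}_{A_{k+1}}v\le\theta\operatorname{osc}_{A_k\cup A_{k+1}\cup A_{k+2}}v+C2^{-k\beta}$ controls a middle annulus by a region that contains the outer annulus $A_{k+2}$. Iterating it outward therefore gains nothing: each step needs smallness farther out, which is what you are trying to prove. In terms of $o_k=\operatorname{osc}_{A_k}v$ it gives at best $(1-\theta)o_{k+1}\le\theta(o_k+o_{k+2})+C2^{-k\beta}$. Constant sequences satisfy this as soon as $\theta\ge1/3$, and Harnack gives $\theta$ close to $1$. With the whole exterior on the right, it gives only $o_{k+1}\le\theta(C-c)+C2^{-k\beta}$.

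A local Harnack inequality cannot see the condition at infinity, and it is boundedness at infinity that excludes non-decaying behaviour. The missing ingredient is a global exterior maximum principle at zero energy: a bounded $\mathscr L$-harmonic function outside a ball takes values in the range of its values on the hitting boundary. The paper derives this from recurrence of the chain with conductances $c(x,y)\phi(x)\phi(y)$ and measure $\phi^2$. Logarithmic cutoffs have energy $O(1/\log R)$, so the capacity criterion applies, and bounded optional stopping does the rest. This is where your effective two-dimensional measure actually enters, but your argument never uses it.

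The source must also be removed globally, not annulus by annulus. The paper uses a stopped potential $p_R$ with $\|p_R\|_\infty\le CR^{-\beta}$, built from the bounded barrier $B_R=(R-1)^{-\beta}-r^{-\beta}$. Set $O(R)=\sup_{r\ge R}v-\inf_{r\ge R}v$. Harnack on $\{3R\le r\le5R\}$, followed by the maximum principle outside $\{r\le4R\}$, then gives $O(4R)\le\vartheta O(R)+CR^{-\beta}$.

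Because these oscillations are taken over nested exteriors, their decay immediately forces convergence to a single limit $a\ge c$. The flux argument is therefore unnecessary. Your remark that geometric oscillation decay alone would not give convergence is also mistaken. On slightly overlapping annuli, consecutive means differ by at most $o_k+o_{k+1}$, which is summable; a $\log\log r$ drift would require non-summable oscillation. Conversely, the flux identity controls only the radial $\log r$ mode. You could make it rigorous by testing against a smooth logarithmic cutoff, which shows the limiting flux vanishes, but it still cannot supply the missing decay of the non-radial part.

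Finally, the fallback in your last paragraph does not work as stated. The maximum principle in the proof of \eqref{rob:eq:profile} relies on $\varepsilon>0$ and on $\ell^2$ differences, whereas here the energy is zero and $u\asymp r^{-\alpha}\notin\ell^2$. A Phragm\'en--Lindel\"of comparison with a $\log r$-type supersolution can replace recurrence. It is then exactly the exterior maximum principle above, and it must be used in the oscillation step, not only to control the drift.
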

\begin{proof}
Choose $\phi>0$ on $X$ equal to $r^{-\alpha}$ outside a fixed ball,
where $r=|x|$. Set $\mu=\phi^2$, $b_{xy}=c(x,y)\phi(x)\phi(y)$, and
\[
 (\mathscr L v)(x)=\mu(x)^{-1}\sum_y b_{xy}(v(x)-v(y)).
\]
The chain with generator $-\mathscr L$ has bounded jump rates and is
recurrent. Indeed, logarithmic cutoffs equal to one on $r\le R$ and
zero on $r\ge R^2$ have $b$-energy $O(1/\log R)$ and converge
pointwise to one. The electrical capacity criterion~\cite[Theorem~6.1]{KLW} therefore gives
recurrence; equivalently, these cutoffs are a null sequence for the
network form (see the graph criticality framework in \cite{KPP}).
Consequently, the chain hits any fixed nonempty finite set almost surely~\cite[Theorem~6.35]{KLW}.
A bounded $\mathscr L$-harmonic function outside such a set is its
harmonic extension from the hitting boundary, by bounded optional
stopping. In particular, its exterior range is contained in the range
of its boundary values.

Put $v=u/\phi$. Finite-difference Taylor expansion gives
\[
 H\phi/\phi=O(r^{-2-\gamma}),\qquad
 \mathscr L v=g:=-(H\phi/\phi)v=O(r^{-2-\gamma}),
 \qquad \gamma=\min\{2,\eta\}.
\]
The Taylor expansion acts on the smooth profile $\phi$, while $V$
enters through its pointwise bound. Fix $0<\beta<\gamma$. Applied
to smooth powers, the same expansion gives
\begin{equation}\label{rob:eq:amplitudebarrier}
 \mathscr L(r^{-\beta})
 =-\beta^2r^{-\beta-2}+O(r^{-\beta-4}).
\end{equation}
For large $R$, the bounded nonnegative function
$B_R(x)=(R-1)^{-\beta}-r^{-\beta}$ on $r\ge R-1$ satisfies
$\mathscr L B_R\ge(\beta^2/2)r^{-\beta-2}$ on $r>R$.
Its values farther inside the ball do not enter these rows and can be
set to zero. Write $T_R$ for the first hitting time of the ball
$\{r\le R\}$. Stopping the chain at that ball and then at a finite
exhaustion gives
\[
 \mathbb E_x\int_0^{T_R}|g(X_t)|\,dt\le C R^{-\beta}.
\]
Indeed, $|g|\le C\mathscr L B_R$ on $r>R$ and the stopped Dynkin
identity bounds the expected integral by $CB_R(x)$. Passing to the
limit first in the stopping time and then in the exhaustion uses
nonnegativity. Thus
\[
 p_R(x)=\mathbb E_x\int_0^{T_R}g(X_t)\,dt,
 \qquad p_R=0\quad(r\le R),
\]
is well-defined, solves $\mathscr Lp_R=g$ on $r>R$, and satisfies
$\|p_R\|_\infty\le CR^{-\beta}$. The equation follows either from
the first-jump decomposition or from the stopped finite-domain equations.

Let $m(R)=\inf_{r\ge R}v$, $M(R)=\sup_{r\ge R}v$, and
$O(R)=M(R)-m(R)$. The bounded harmonic function
$k=v-p_{2R}$ outside $r\le2R$ takes boundary values between $m(R)$
and $M(R)$, once $R>1$. Recurrence gives the same bounds throughout
the exterior. On the annulus $3R\le r\le5R$, a finite Harnack chain
therefore applies to the nonnegative function $k-m(R)$ and yields
\[
 \sup_{3R\le r\le5R}(k-m(R))
 \le C_H\inf_{3R\le r\le5R}(k-m(R)).
\]
Here $C_H$ is independent of $R$. To check this hypothesis, cover the
annulus by a number, depending only on $N$, of overlapping lattice
balls of radius proportional to $R$ whose doubled balls lie in $r>2R$.
On each doubled ball, division of all conductances by $R^{2-N}$
gives uniformly elliptic nearest-neighbour weights. Extend these weights
outside that ball by constant weights within the same ellipticity bounds.
The resulting lattice has uniform volume doubling and Poincar\'e constants.
Apply the elliptic consequence of \cite[Theorem~1.7]{Delmotte} to this
extension. More explicitly, for the associated transition kernel $P$,
the lazy kernel $(I+P)/2$ has the same harmonic functions and satisfies
the holding-probability hypothesis in the discrete-time formulation.
The Harnack constants depend only on the ellipticity bounds and $N$,
and hence are independent of $R$. The annulus is connected since $N\ge3$.

It follows that the oscillation of $k$ on this annulus is at most
$\vartheta O(R)$, where $0<\vartheta=1-C_H^{-1}<1$ after enlarging
$C_H$ if necessary. Apply the bounded harmonic extension property once
more, now outside $r\le4R$; its hitting boundary is in that annulus.
Adding back $p_{2R}$ gives
\begin{equation}\label{rob:eq:oscillation}
 O(4R)\le\vartheta O(R)+CR^{-\beta}.
\end{equation}
Iteration proves $O(R)=O(R^{-\sigma})$ for any
$0<\sigma<\min\{\beta,-\log\vartheta/\log4\}$.
The nested exterior ranges consequently shrink to a single limit $a$.
The positive lower bound on $v$ gives $a>0$, proving
\eqref{rob:eq:amplituderate}.
\end{proof}

The same oscillation argument will also be used for bounded Poisson
solutions of either sign. With the reference operator $\mathscr L$
above, if $v$ is bounded outside a finite set and
$\mathscr Lv=O(r^{-2-\beta_0})$ for some $\beta_0>0$, the stopped
potential estimate and \eqref{rob:eq:oscillation} give
$v=b+O(r^{-\sigma})$ for some real $b$ and $\sigma>0$.
One chooses a barrier exponent strictly below $\beta_0$; positivity
is needed only for the conclusion $b>0$, not for convergence.

\subsubsection{Logarithmic capacity with the exact constant}

The capacity associated with a quadratic energy is the least energy
needed to hold the value at a specified vertex at one. We use the
regularized energy $Q_\varepsilon$ below, which includes a positive
mass term. Its reciprocal capacity, denoted by $T_\varepsilon(o)$,
measures the largest squared point value attainable with unit
$Q_\varepsilon$-energy. The next lemma computes this reciprocal for
the network weighted by the critical profile. All edge sums count
each unoriented edge once.

\begin{lemma}\label{rob:lem:exactcapacity}
Let $h>0$ on $X$ satisfy
$h(x)=a|x|^{-\alpha}(1+O(|x|^{-\sigma}))$, with $a,\sigma>0$.
Set $d_{xy}=c(x,y)h(x)h(y)$, $\mu_h=h^2$, and
\[
 Q_\varepsilon[v]=\sum_{\{x,y\}}d_{xy}|v(x)-v(y)|^2
                  +\varepsilon\sum_x\mu_h(x)|v(x)|^2.
\]
For each fixed $o\in X$,
\begin{equation}\label{rob:eq:exactcapacity}
 T_\varepsilon(o):=
 \sup_{0\ne v\in C_c(X)}\frac{|v(o)|^2}{Q_\varepsilon[v]}
 =\frac{L_\varepsilon}{2\omega_{N-1}a^2}+O_o(1).
\end{equation}
\end{lemma}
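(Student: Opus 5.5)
We prove matching upper and lower bounds on $T_\varepsilon(o)$. Both compare the weighted network with the radial two-dimensional model $\int(|v'|^2+\varepsilon v^2)\,\omega_{N-1}a^2 r\,dr$ furnished by \eqref{rob:eq:effective2}. In the continuum model with weight $a^2\omega_{N-1}r\,dr$, the reciprocal capacity of a fixed small disc at energy $\varepsilon$ equals $\BesselK(\sqrt\varepsilon\,r_0)/(\omega_{N-1}a^2)\cdot(1+o(1))$. By Lemma~\ref{rob:lem:bessel} this is $\frac{L_\varepsilon}{2\omega_{N-1}a^2}+O(1)$. The whole proof consists in transferring this to the lattice with only an $O(1)$ error. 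Throughout, the discrepancy factors $(1+O(|x|^{-\sigma}))$ in $h$ are summable against $dr/r$, so they cost only bounded additive errors in the logarithm.

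\emph{Lower bound on $T_\varepsilon$.} We use the radial test function $v(x)=\BesselK(\sqrt\varepsilon\,|x|)/\BesselK(\sqrt\varepsilon\,R_0)$ for $|x|\ge R_0$, and $v=1$ on the ball $|x|\le R_0$ containing $o$, cut off far out where the Bessel tail is negligible. Its $Q_\varepsilon$-energy is a Riemann-type sum. On edges with $|x|\ge R_0$ we have $d_{xy}\approx a^2|x|^{-2\alpha}$, and the discrete difference squared summed over the $N$ directions approximates $|\nabla v|^2$ with relative error $O(r^{-1})$, as in Lemma~\ref{rob:lem:consistency}. The continuum energy of the Bessel profile is $\omega_{N-1}a^2\,r_0|\BesselK'(\sqrt\varepsilon r_0)|\sqrt\varepsilon/\BesselK^2$, which equals $2\omega_{N-1}a^2/L_\varepsilon\,(1+O(1/L_\varepsilon))$. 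The dyadic bookkeeping of Lemma~\ref{rob:lem:mass} converts the relative errors $O(r^{-\sigma}+r^{-1})$ into an additive $O(L_\varepsilon^{-2})$ in the energy. Consequently $T_\varepsilon\ge L_\varepsilon/(2\omega a^2)-O(1)$.

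\emph{Upper bound on $T_\varepsilon$.} Here we need a lower bound on the energy of every $v$ with $v(o)=1$. The first step is to reduce to a boundary sphere. By connectivity, the energy inside the finite ball $B_{R_0}$ controls $|v(o)-\bar v|$, where $\bar v$ is an average over the shell $|x|\approx R_0$. The second step is to bound the exterior energy from below by a dual (flux) argument. We take the radial field $\psi=\BesselK(\sqrt\varepsilon r)$ and a discrete vector field $J_{xy}=d_{xy}(\psi(x)-\psi(y))$. By the barrier computations of Lemma~\ref{rob:lem:barriers} with $k^2=\varepsilon$, the lattice divergence of $J$ equals $\varepsilon\mu_h\psi$ up to a relative error $O(r^{-2-\beta'})$. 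Cauchy--Schwarz against the Green identity $\sum J(v)+\varepsilon\sum\mu_h\psi v=\text{flux}\cdot\bar v$ then gives $Q_\varepsilon[v]\ge\bar v^2\cdot\text{flux}^2/Q_\varepsilon[\psi]$. Here the flux through the sphere is $\omega a^2+O(R_0^{-\sigma})$, and $Q_\varepsilon[\psi]=\omega a^2 L_\varepsilon/2\,(1+\dots)$. Combining this with the interior step through the parallel-resistance inequality $T\le T_{\mathrm{int}}+T_{\mathrm{ext}}$ gives $T_\varepsilon\le L_\varepsilon/(2\omega a^2)+O(1)$.

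The main obstacle is keeping every error additive $O(1)$ rather than relative $o(1)$ in $T_\varepsilon$. The divergence error of $J$ must therefore be absorbed as a small perturbation of the exact flux identity. We would handle this by using a corrected field $\psi(1\pm Ar^{-\beta})$, exactly as in the barriers \eqref{rob:eq:subbarrier}--\eqref{rob:eq:superbarrier}, so that the divergence has a definite sign. The corrections multiply the flux by $1+O(R_0^{-\beta})$ and the energy by $1+O(1/L_\varepsilon)$, and both are bounded contributions after multiplication by $L_\varepsilon$.
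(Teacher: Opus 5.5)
Your lower bound on $T_\varepsilon$ is fine. The Bessel test function is an admissible, if heavier, alternative to the paper's logarithmic cutoff $v_R$: the relative errors $O(r^{-\sigma}+r^{-1})$ multiply an energy density of size $r^{-N}/L_\varepsilon^2$, so they cost only $O(L_\varepsilon^{-2})$. (Your displayed continuum energy should read $\omega_{N-1}a^2r_0\sqrt\varepsilon\,|\BesselK'(\sqrt\varepsilon r_0)|/\BesselK(\sqrt\varepsilon r_0)$, not $/\BesselK^2$.)

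The gap is in the upper bound. There $T_{\rm ext}\le Q_\varepsilon[\psi]/\mathrm{flux}^2$ is a ratio of quantities of size $L_\varepsilon$ and $1$. To end with an additive $O(1)$, every \emph{relative} error must therefore be $O(1/L_\varepsilon)$.
\begin{itemize}
\item With $J_{xy}=d_{xy}(\psi(x)-\psi(y))$, the flux through the fixed sphere $|x|\approx R_0$ is only $\omega_{N-1}a^2(1+O(R_0^{-\sigma}))$, because $h$ deviates from $a|x|^{-\alpha}$ by that relative amount there. This gives $T_\varepsilon\le\frac{L_\varepsilon}{2\omega_{N-1}a^2}(1+O(R_0^{-\sigma}))$, an error $O(L_\varepsilon R_0^{-\sigma})$ that is unbounded for fixed $R_0$.
\item The barrier correction makes this worse. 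At $r\approx R_0$ one has $\psi\approx L_\varepsilon/2$ but $|\nabla\psi|\approx1/r$, so the factor $1\pm Ar^{-\beta}$ changes the flux by a relative amount $\approx A\beta R_0^{-\beta}L_\varepsilon/2$. For the factor $1-Ar^{-\beta}$ it even reverses the sign once $L_\varepsilon$ is large. Your claim that these corrections are ``bounded contributions after multiplication by $L_\varepsilon$'' is false; this is why the barriers of Lemma~\ref{rob:lem:barriers} only yield the two-sided bounds $C_\pm/L_\varepsilon$.
\item The lattice divergence of $d\nabla\psi$ involves differences of $h$ between neighbours. The hypothesis gives only a pointwise asymptotic for $h$ and no equation, so Lemma~\ref{rob:lem:barriers}, which concerns $H=L_c+q$, does not supply your claimed $O(r^{-2-\beta'})$ relative error.
\item A leftover divergence error $e$ cannot simply be put into the sink term, since that costs $\varepsilon^{-1}\sum e^2/\mu_h$, which is at least of order $\varepsilon^{-1}$.
\item Letting $R_0$ grow with $\varepsilon$ would require a sharp interior bound $\log R_0/(\omega_{N-1}a^2)+O(1)$ in place of the connectivity constant, which is the same problem again.
\end{itemize}

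The missing idea is a test flow that does not depend on $h$ and is \emph{exactly} divergence free except at $o$ and at a diffuse sink placed at scale $R=\varepsilon^{-1/2}$. The paper takes the face fluxes of $\chi(|z|/R)\,z/(\omega_{N-1}|z|^N)$. By the divergence theorem these have divergence exactly $-\rho_R$ off a fixed core, with $0\le\rho_R\le CR^{-N}$ supported in $R\le|x|\le2R$. The paper then completes them through the core along a spanning tree at bounded cost.

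With this flow, $h$ enters only through $\sum J_R^2/d_{xy}$. There $d_{xy}=a^2r^{2-N}(1+O(r^{-\kappa}))$ gives $\log R/(\omega_{N-1}a^2)+O(1)$: the error is additive because $r^{-N-\kappa}$ is summable. The sink costs $\varepsilon^{-1}\sum\rho_R^2/h^2=O(\varepsilon^{-1}R^{-2})=O(1)$. Cauchy--Schwarz in the direct sum of edge and vertex spaces then gives the upper bound in one step, with no interior/exterior split.
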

\begin{proof}
Put $R=\varepsilon^{-1/2}$ and choose a fixed $R_0$ containing $o$
and all exceptional vertices and edges. Set $T=\log(R/R_0)$ and use
the cutoff
\[
 v_R(x)=\begin{cases}
 1,&r\le R_0,\\
 \log(R/r)/T,&R_0<r<R,\\
 0,&r\ge R.
 \end{cases}
\]
On a regular edge in direction $j$ at large radius,
$d_{x,x+e_j}=a^2r^{2-N}(1+O(r^{-\kappa}))$, where
$\kappa=\min\{1,\sigma\}>0$, and
$\log|x+e_j|-\log r=x_j/r^2+O(r^{-2})$.
Summing the squared leading differences over $j$ gives
$a^2r^{-N}/T^2$. Comparison with the integral on unit cubes yields
\[
 \sum_{R_0<|x|<R}|x|^{-N}=\omega_{N-1}\log(R/R_0)+O_{R_0}(1),
 \qquad
 \sum_{|x|>R_0}|x|^{-N-\kappa}<\infty.
\]
The edges crossing either cutoff boundary contribute $O(T^{-2})$;
the outer boundary contribution is in fact $O(R^{-1}T^{-2})$.
Consequently,
\begin{equation}\label{rob:eq:cutoffexact}
 \sum_{\{x,y\}}d_{xy}|v_R(x)-v_R(y)|^2
 =\frac{\omega_{N-1}a^2}{T}+O(T^{-2}),\qquad
 \sum_x h(x)^2v_R(x)^2=O(R^2T^{-2}).
\end{equation}
For the second bound, radial shell counting gives a constant multiple of
\[
 T^{-2}\int_{R_0}^R r\log^2(R/r)\,dr=O(R^2T^{-2}).
\]
The fixed inner ball is absorbed for large $R$.
Since $v_R(o)=1$, these estimates give
the lower bound in \eqref{rob:eq:exactcapacity}.

For the reverse bound we construct a unit flow with a diffuse terminal
mass. Choose a smooth nonincreasing cutoff $\chi$ equal to one on
$[0,1]$ and zero on $[2,\infty)$. The continuum field
\[
 F_R(z)=\frac{\chi(|z|/R)}{\omega_{N-1}}\frac{z}{|z|^N}
\]
has distributional divergence $\delta_0-\rho_R^{\rm cont}$, where
$\rho_R^{\rm cont}\ge0$, its integral is one, its support is in
$R\le|z|\le2R$, and $\rho_R^{\rm cont}\le CR^{-N}$.
On every regular oriented lattice edge, take the flux of $F_R$ across
the common face of the two unit cubes centered at its endpoints.
Outside a fixed finite core the divergence theorem gives divergence
$-\rho_R(x)$, with
\[
 \rho_R(x)=\int_{x+[-1/2,1/2]^N}\rho_R^{\rm cont}(z)\,dz,
 \quad \sum_x\rho_R(x)=1,\quad 0\le\rho_R(x)\le CR^{-N}.
\]
For large $R$, all cubes meeting this support are regular. The total
outward flux from the fixed core is one. Inside a fixed finite connected
subgraph containing the exceptional vertices and its boundary, complete
the currents along a spanning tree so that their divergence is
$\delta_o$. Such a completion exists because the prescribed divergences
sum to the outgoing flux. This adds a fixed finite energy, independent
of $R$: in the core the cutoff $\chi$ is identically one. Denote the
completed flow by $J_R$; then
$\operatorname{div}J_R=\delta_o-\rho_R$ on $X$.

In the region between the fixed core and $r<R-O(1)$, face integration
and the estimate for $d_{xy}$ give
\[
 J_R(x,x+e_j)=\frac{x_j}{\omega_{N-1}r^N}+O(r^{-N}),
 \qquad
 \sum_{j=1}^N\frac{J_R(x,x+e_j)^2}{d_{x,x+e_j}}
 =\frac{r^{-N}}{\omega_{N-1}^2a^2}+O(r^{-N-\kappa}).
\]
The sink annulus contributes $O(1)$, since each flux is
$O(R^{1-N})$ and it contains $O(R^N)$ edges. Thus
\begin{equation}\label{rob:eq:flowexact}
 \sum_{\{x,y\}}\frac{J_R(x,y)^2}{d_{xy}}
 =\frac{\log R}{\omega_{N-1}a^2}+O(1),\qquad
 \sum_x\frac{\rho_R(x)^2}{h(x)^2}=O(R^{-2}).
\end{equation}
For $v\in C_c(X)$, summation by parts and Cauchy--Schwarz in the
direct sum of the edge and vertex spaces give
\begin{align*}
 |v(o)|^2
 &=\left|\sum_{\{x,y\}}J_R(x,y)(v(x)-v(y))
                    +\sum_x\rho_R(x)v(x)\right|^2\\
 &\le\left(\sum_{\{x,y\}}\frac{J_R(x,y)^2}{d_{xy}}
       +\varepsilon^{-1}\sum_x\frac{\rho_R(x)^2}{h(x)^2}\right)
       Q_\varepsilon[v].
\end{align*}
An arbitrary orientation is fixed in each edge sum. Because
$\varepsilon R^2=1$, the factor in parentheses is
$L_\varepsilon/(2\omega_{N-1}a^2)+O(1)$. This proves the upper
bound with a bounded error, completing \eqref{rob:eq:exactcapacity}.
\end{proof}

\begin{proof}[Proof of Theorem~\ref{rob:thm:exactresponse}]
For $f\ge0$ of finite support, $f\ne0$, let $h=G_0f$.
Theorem~\ref{rob:thm:response} and Lemma~\ref{rob:lem:amplitude}
give $h=r^{-\alpha}(a(f)+O(r^{-\sigma}))$ with $a(f)>0$.
Applying this first to the finitely many point sources proves
\eqref{rob:eq:greenamplitude} and the asserted common rate on $S$.
Linearity of $G_0$ then gives $a(f)=\sum_y a(y)f(y)$.

The ground-state identity with $Hh=f$ reads, for $v\in C_c(X)$,
\[
 \ip{hv}{H(hv)}
 =\sum_{\{x,y\}}c(x,y)h(x)h(y)|v(x)-v(y)|^2
   +\sum_x f(x)h(x)|v(x)|^2.
\]
The resolvent variational principle consequently gives the exact identity
\begin{equation}\label{rob:eq:defectcapacity}
 D_\varepsilon(f):=\ip f{[G_0-(H+\varepsilon)^{-1}]f}
 =\inf_{v\in C_c(X)}
  \left\{Q_\varepsilon[v]+\sum_x f(x)h(x)|1-v(x)|^2\right\}.
\end{equation}
Indeed, substitute $u=hv$ into
$\ip f{(H+\varepsilon)^{-1}f}
=\sup_{u\in C_c(X)}(2\operatorname{Re}\ip f u-\ip u{(H+\varepsilon)u})$
and use $\ip f{G_0f}=\sum f h$. The source and test functions have
finite support, so all terms in this variational identity are finite.

Choose the fixed inner ball of $v_R$ to contain $\supp f$.
Then the last term of \eqref{rob:eq:defectcapacity} vanishes, and
\eqref{rob:eq:cutoffexact} gives
$D_\varepsilon(f)\le2\omega_{N-1}a(f)^2/L_\varepsilon
+O_f(L_\varepsilon^{-2})$.
For the reverse bound select $o$ with $p=f(o)h(o)>0$.
Lemma~\ref{rob:lem:exactcapacity} and another direct-sum
Cauchy--Schwarz inequality imply
\[
 1=|v(o)+(1-v(o))|^2
 \le\bigl(T_\varepsilon(o)+p^{-1}\bigr)
       \bigl(Q_\varepsilon[v]+p|1-v(o)|^2\bigr).
\]
Taking the infimum yields
\[
 D_\varepsilon(f)\ge
 \bigl(T_\varepsilon(o)+p^{-1}\bigr)^{-1}
 =\frac{2\omega_{N-1}a(f)^2}{L_\varepsilon}
   +O_f(L_\varepsilon^{-2}).
\]
This proves \eqref{rob:eq:exactscalar}. Finally apply that formula to
$\delta_x$, $\delta_y$ and $\delta_x+\delta_y$ for $x,y\in S$.
Polarization gives every matrix entry in \eqref{rob:eq:exactmatrix};
the finite number of entries gives its operator-norm remainder.
\end{proof}

\subsection{Eigenvalue asymptotics and spatial mass}\label{sec:birth}

The critical coupling is the reciprocal of the largest
eigenvalue of a finite positive Green matrix. Its leading channel and
the response estimates above determine the exact exponential rate,
with a bounded error in the logarithm of the energy. They also control
the threshold state and the mass of the bound state.
Section~\ref{sec:prefactor} will strengthen that bounded error to a
limit. The present subsection ends by showing that the hypotheses can
be realized for every allowed tail and connected finite core.

Let $W$ be a nonzero nonnegative multiplication operator with finite support
$F=\{x:W(x)>0\}$. Define the finite matrices and their largest eigenvalues by
\begin{equation}\label{rob:eq:BS}
 \B(\varepsilon)=W^{1/2}(H+\varepsilon)^{-1}W^{1/2}\big|_{\ell^2(F)},
 \quad \lambda(\varepsilon)=\lambda_{\max}(\B(\varepsilon)),
 \quad \tau_c=\lambda(0)^{-1}.
\end{equation}
The value at zero is the entrywise limit. All its entries are finite and
strictly positive.

\begin{theorem}\label{rob:thm:birth}
Under the assumptions of Theorem~\ref{rob:thm:response}, $0<\tau_c<\infty$,
and, among $\tau\ge0$, $H-\tau W\ge0$ precisely for $0\le\tau\le\tau_c$.
There is $\delta_0>0$ such that, for
$0<\delta=\tau-\tau_c<\delta_0$, the negative spectrum of $H-\tau W$
consists of exactly one simple eigenvalue $-\varepsilon(\tau)$, with
\begin{equation}\label{rob:eq:birth}
 \varepsilon(\tau)=\exp\!\left[-\frac{\mathfrak A(H,W)}{\delta}+O(1)\right].
\end{equation}
Let $\zeta_0>0$ be the unit Perron eigenvector of $\B(0)$ and put
$f_0=W^{1/2}\zeta_0$. With $a(f_0)$ from
Theorem~\ref{rob:thm:exactresponse},
\begin{equation}\label{rob:eq:lambdarate}
 \begin{split}
 \lambda(0)-\lambda(\varepsilon)
 &=\frac{2\omega_{N-1}a(f_0)^2}{L_\varepsilon}
    +O(L_\varepsilon^{-2}),\\
 -\lambda'(\varepsilon)&\asymp
       (\varepsilon L_\varepsilon^2)^{-1}.
 \end{split}
\end{equation}
At $\tau=\tau_c$ there is a strictly positive pointwise solution $h_c$ of
$(H-\tau_cW)h_c=0$, unique up to multiplication among positive solutions.
We measure its decay in the weighted spaces $\ell^2_{-\sigma}(X)$,
whose norm is
$\norm u_{\ell^2_{-\sigma}}^2=\sum_x(1+|x|^2)^{-\sigma}|u(x)|^2$.
For some $a_c,\sigma_0>0$, it satisfies
\begin{equation}\label{rob:eq:thresholdprofile}
 h_c(x)=a_c|x|^{-\alpha}(1+O(|x|^{-\sigma_0})),\qquad
 h_c\notin\ell^2(X),\qquad
 h_c\in\ell^2_{-\sigma}(X)\ \Longleftrightarrow\ \sigma>1,
\end{equation}
The rate in \eqref{rob:eq:birth} is
\begin{equation}\label{rob:eq:exactbirthcoefficient}
 \mathfrak A(H,W)=2\omega_{N-1}\tau_c^2 a(f_0)^2
 =\frac{2\omega_{N-1}a_c^2}{\sum_xW(x)h_c(x)^2}.
\end{equation}
The last expression is independent of the normalization of $h_c$.
The operator $H-\tau_cW$ has no
zero eigenvector in $\ell^2$ and is critical: subtracting any nonzero
nonnegative potential of finite support destroys nonnegativity.

Let $\Psi_\tau>0$ be the normalized eigenvector for $-\varepsilon(\tau)$.
For every nonempty finite $E\subset X$,
\begin{equation}\label{rob:eq:localmass}
 \sum_{x\in E}|\Psi_\tau(x)|^2
 \asymp_E\varepsilon L_\varepsilon^2
 \asymp_E\varepsilon/\delta^2.
\end{equation}
For $0<\theta<1$, define
\[
 \cL_\theta(\tau)=\inf\{R\ge0:
       \sum_{|x|\le R}|\Psi_\tau(x)|^2\ge\theta\}.
\]
Then
\begin{equation}\label{rob:eq:radius}
 \cL_\theta(\tau)\asymp_\theta\varepsilon^{-1/2},\qquad
 \log\cL_\theta(\tau)=\frac{\mathfrak A(H,W)}{2\delta}+O_\theta(1).
\end{equation}
Finally,
\begin{equation}\label{rob:eq:derivativebirth}
 \varepsilon'(\tau)=\ip{\Psi_\tau}{W\Psi_\tau}
 \asymp\varepsilon L_\varepsilon^2.
\end{equation}
\end{theorem}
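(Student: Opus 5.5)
The plan is a Birman--Schwinger reduction to the finite matrix $\B(\varepsilon)$ on $\ell^2(F)$, with all quantitative input taken from Theorems~\ref{rob:thm:response} and~\ref{rob:thm:exactresponse}. For $\varepsilon>0$, $-\varepsilon$ is an eigenvalue of $H-\tau W$ if and only if $1/\tau$ is an eigenvalue of $\B(\varepsilon)$, with equal multiplicities. Each entry of $\B(\varepsilon)$ is positive and strictly decreasing in $\varepsilon$, and tends to $0$ as $\varepsilon\to\infty$.

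\textbf{Step 1 (coupling threshold).} Perron--Frobenius makes $\lambda(\varepsilon)$ simple, with positive eigenvector $\zeta_\varepsilon$ and spectral gap uniform for $\varepsilon\in[0,\varepsilon_0]$ (entries are continuous up to $0$ by Lemma~\ref{rob:lem:green}). Hence $\lambda$ is continuous, strictly decreasing, and $\lambda(0)<\infty$. Counting negative eigenvalues via the eigenvalue curves of $\B(\varepsilon)$ gives three conclusions. First, $H-\tau W\ge0$ iff $\tau\lambda(0)\le1$, which gives $\tau_c=\lambda(0)^{-1}\in(0,\infty)$. Second, for $\tau$ slightly above $\tau_c$, only the top curve crosses $1/\tau$, because the remaining eigenvalues of $\B(0)$ are strictly below $\lambda(0)$; so there is exactly one simple eigenvalue $-\varepsilon(\tau)$, determined by $\lambda(\varepsilon)=1/\tau$. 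Third, $\varepsilon(\tau)\downarrow0$ as $\delta\downarrow0$.

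\textbf{Step 2 (the rate).} Apply \eqref{rob:eq:exactmatrix} with $S=F$, conjugated by $W^{1/2}$:
\[
\B(0)-\B(\varepsilon)=\frac{2\omega_{N-1}}{L_\varepsilon}(W^{1/2}a_F)(W^{1/2}a_F)^*+O(L_\varepsilon^{-2}).
\]
First-order perturbation of the simple top eigenvalue, with error $O(L_\varepsilon^{-2})$ since the gap is uniform, gives the first line of \eqref{rob:eq:lambdarate}, where $\ip{\zeta_0}{W^{1/2}a_F}=a(f_0)$. For the derivative, $-\lambda'(\varepsilon)=\norm{(H+\varepsilon)^{-1}W^{1/2}\zeta_\varepsilon}^2$, and \eqref{rob:eq:norm} bounds this uniformly over the simplex because $W^{1/2}\zeta_\varepsilon$ is a positive source on $F$ with mass bounded above and below. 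Solving $1/\tau_c-1/\tau=2\omega a(f_0)^2/L_\varepsilon+O(L_\varepsilon^{-2})$ gives $\delta/\tau_c^2+O(\delta^2)$ on the left, hence $L_\varepsilon=\mathfrak A/\delta+O(1)$ with $\mathfrak A=2\omega\tau_c^2a(f_0)^2$.

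\textbf{Step 3 (threshold state).} Set $h_c=G_0(W^{1/2}\zeta_0)=G_0f_0$. Then $Hh_c=f_0$, and $Wh_c=W^{1/2}\B(0)\zeta_0=\lambda(0)f_0$, so $(H-\tau_cW)h_c=0$. The profile \eqref{rob:eq:thresholdprofile} with $a_c=a(f_0)$ follows from \eqref{rob:eq:zeroresponse} and Lemma~\ref{rob:lem:amplitude}; the $\ell^2_{-\sigma}$ criterion reduces to summing $r^{N-1-2\alpha-2\sigma}=r^{1-2\sigma}$. For the second form of the rate, $\sum Wh_c^2=\lambda(0)^2\norm{\zeta_0}^2=\tau_c^{-2}$, which gives \eqref{rob:eq:exactbirthcoefficient}.

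Uniqueness of positive solutions and criticality are the step I expect to be the main obstacle. I would argue by null-recurrence. The ground-state transform by $h_c$ produces a network whose logarithmic cutoffs have energy $O(1/\log R)$, exactly as in Lemma~\ref{rob:lem:amplitude}. This makes $H-\tau_cW$ critical in the sense of \cite{KPP}, and criticality yields both uniqueness of the positive solution and failure of nonnegativity under any further finite attraction. Since $h_c\notin\ell^2$, uniqueness also rules out an $\ell^2$ zero eigenvector.

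\textbf{Step 4 (eigenvector mass).} Write $\Psi_\tau=c\,(H+\varepsilon)^{-1}W^{1/2}\zeta_\varepsilon$, with $c$ fixed by normalization. By \eqref{rob:eq:norm}, $c^2\asymp\varepsilon L_\varepsilon^2$. Lemma~\ref{rob:lem:green} then gives \eqref{rob:eq:localmass}, and $L_\varepsilon\asymp1/\delta$ gives the second equivalence there. Feynman--Hellmann gives $\varepsilon'(\tau)=\ip{\Psi_\tau}{W\Psi_\tau}$, which is comparable to the mass on $F$.

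For the spread, the barriers \eqref{rob:eq:profile} combined with the partial-range form of Lemma~\ref{rob:lem:mass} show two things. The mass inside $|x|\le s\varepsilon^{-1/2}$ is at most $Cs^2(1+|\log s|)^2$, which is small for small $s$. The mass beyond $s\varepsilon^{-1/2}$ is small for large $s$. Hence $\cL_\theta\asymp_\theta\varepsilon^{-1/2}$, and taking logarithms with Step 2 gives \eqref{rob:eq:radius}.
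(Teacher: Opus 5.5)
Your proposal follows the paper's proof closely. It uses the same Birman--Schwinger reduction to $\B(\varepsilon)$, the same Perron--Frobenius and first-order perturbation argument fed by \eqref{rob:eq:exactmatrix} and the simplex-uniform bound \eqref{rob:eq:norm}, the same threshold state $h_c=G_0f_0$, and the same barrier-plus-Riemann-sum argument for $\cL_\theta$. The differences are minor and all valid:
\begin{itemize}
\item You get criticality and uniqueness by citing the null-sequence characterization of criticality in \cite{KPP}. The paper proves both directly from the ground-state identity \eqref{rob:eq:gst}: it tests $h_c\chi_R$ against $V_1$, and it applies the identity for a second positive solution $k$ to $h_c\chi_R$.
\item You isolate the top channel using a uniform gap for small $\varepsilon$ together with $\lambda(\varepsilon)\le\lambda(\varepsilon_0)$ for larger $\varepsilon$. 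The paper instead uses $\B(\varepsilon)\le\B(0)$ in form order.
\item You get $\varepsilon'(\tau)$ from Hellmann--Feynman in $\tau$, rather than by implicit differentiation of $\lambda(\varepsilon)=1/\tau$.
\end{itemize}

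One step does not follow as written: ``since $h_c\notin\ell^2$, uniqueness also rules out an $\ell^2$ zero eigenvector.'' Your uniqueness holds only among \emph{positive} solutions, and an $\ell^2$ kernel vector $u$ of $H-\tau_cW$ need not be positive. You need one more argument, and either of the following works.
\begin{itemize}
\item The paper's route: truncate $u$, use boundedness of the operator to get $\langle u_n,(H-\tau_cW)u_n\rangle\to0$, and apply Fatou in \eqref{rob:eq:gst} to conclude that $u/h_c$ is constant along edges, hence $u$ is a multiple of $h_c\notin\ell^2$.
\item A positivity route: since the off-diagonal entries are nonpositive, $\langle |u|,(H-\tau_cW)|u|\rangle\le\langle u,(H-\tau_cW)u\rangle=0$. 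Together with nonnegativity of the operator, this forces $(H-\tau_cW)|u|=0$. Connectivity then makes $|u|$ either identically zero or strictly positive, and your uniqueness gives $|u|=c\,h_c$, which is impossible in $\ell^2$ unless $c=0$.
\end{itemize}

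A smaller point: your crossing count only detects eigenvalues. You should state explicitly that the negative spectrum of $H-\tau W$ is purely discrete. This holds because it is a finite-rank perturbation of $H\ge0$; the paper notes that the essential spectrum is $[0,4N]$.
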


This theorem gives the exact exponential rate and a bounded remainder
in its logarithm. Section~\ref{sec:prefactor} proves convergence of
that remainder and identifies the positive multiplicative prefactor.
The mass and derivative estimates are two-sided bounds for each fixed
operator and dimension.

\subsubsection{The first spectral channel}

Positivity isolates one simple spectral channel at the first
threshold. The matrix Green expansion determines its displacement
from zero energy.

\begin{proof}[Proof of the spectral assertions in Theorem~\ref{rob:thm:birth}]
The matrices $\B(\varepsilon)$ are positive definite and have strictly
positive entries. They converge in matrix norm to $\B(0)$.
Perron--Frobenius gives a simple largest eigenvalue and a positive unit
eigenvector $\zeta_\varepsilon$, chosen continuously down to zero.
For positive energy it is differentiable and Hellmann--Feynman yields
\begin{equation}\label{rob:eq:HF}
 -\lambda'(\varepsilon)
 =\norm{(H+\varepsilon)^{-1}W^{1/2}\zeta_\varepsilon}^2.
\end{equation}
Let $f_\varepsilon=W^{1/2}\zeta_\varepsilon$ and
$a_\varepsilon=\norm{f_\varepsilon}_1$. Since $W$ is strictly positive
on $F$ and $\zeta_\varepsilon\ge0$ has norm one,
\begin{equation}\label{rob:eq:sourcenormalization}
 \sqrt{\min_F W}\le a_\varepsilon
 \le\sqrt{\sum_F W}.
\end{equation}
Thus $f_\varepsilon/a_\varepsilon\in\cP_F$, and the uniform estimate
\eqref{rob:eq:norm} applies to the moving source. This proves the derivative
bound in \eqref{rob:eq:lambdarate}. For its first assertion, use
\eqref{rob:eq:exactmatrix} with $S=F$ and set
$b_F=W^{1/2}a_F$. It gives
\[
 \B(\varepsilon)=\B(0)
  -\frac{2\omega_{N-1}}{L_\varepsilon}b_Fb_F^*
  +O(L_\varepsilon^{-2}).
\]
Since $\lambda(0)$ is simple, the finite-dimensional eigenvalue
perturbation formula has a quadratic error in the matrix increment.
It yields the first line of \eqref{rob:eq:lambdarate}, because
$\ip{\zeta_0}{b_F}=a(f_0)$.

The finite-rank Birman--Schwinger reduction follows from
the form factorization
\[
 H-\tau W+\varepsilon
 =(H+\varepsilon)^{1/2}
 \big[I-\tau(H+\varepsilon)^{-1/2}W(H+\varepsilon)^{-1/2}\big]
 (H+\varepsilon)^{1/2}.
\]
The nonzero eigenvalues of the intervening finite-rank operator are those
of $\B(\varepsilon)$. Hence the number of eigenvalues below
$-\varepsilon$ equals the number of eigenvalues of $\B(\varepsilon)$
above $1/\tau$. This also identifies multiplicities at equality.
The function $\lambda(\varepsilon)$ is strictly decreasing, tends to
$\lambda(0)>0$ at zero and to zero at infinity. Therefore nonnegativity
holds exactly up to $\tau_c$.

Let $\lambda_2(0)$ be the second eigenvalue if $|F|\ge2$; set it to
zero if $|F|=1$. Perron--Frobenius gives $\lambda_2(0)<\lambda(0)$.
Choose $\delta_0>0$ so that
$(\tau_c+\delta_0)\lambda_2(0)<1$. Since
$\B(\varepsilon)\le\B(0)$ in form order, only the largest channel can
cross $1/\tau$ in this interval. There is exactly one simple negative
eigenvalue, determined by
\begin{equation}\label{rob:eq:thresholdidentity}
 \lambda(\varepsilon)=\frac1\tau,\qquad
 \lambda(0)-\lambda(\varepsilon)
       =\frac{\delta}{\tau\tau_c}.
\end{equation}
As $\tau\downarrow\tau_c$, its energy tends to zero. Equation
\eqref{rob:eq:lambdarate} first gives $L_\varepsilon\asymp\delta^{-1}$.
Put $D=2\omega_{N-1}a(f_0)^2$. Multiplication of
\eqref{rob:eq:thresholdidentity} by $L_\varepsilon$ gives
\[
 \frac{\delta L_\varepsilon}{\tau\tau_c}
 =D+O(L_\varepsilon^{-1}),\qquad
 L_\varepsilon=\frac{D\tau_c^2}{\delta}+O(1).
\]
In the second relation we used $\tau=\tau_c+\delta$ and the preceding
comparability; this proves \eqref{rob:eq:birth}.

All negative spectrum in this argument is discrete. Indeed a lattice with
finitely many deleted vertices and finitely many changed edges has the
same essential spectrum as the integer lattice Laplacian. One may extend
the operator by a finite-dimensional direct summand on $D$ and use finite
rank perturbation invariance. The potential tends to zero and so is a
compact multiplication operator. Consequently
$\spec_\ess(H-\tau W)=[0,4N]$.
\end{proof}

\subsubsection{The threshold state and spatial mass}

The zero-energy Green response produces the positive threshold
state. Its ground-state transform proves criticality, while the
finite-energy profile gives the mass and derivative estimates.

\begin{proof}[Threshold state, criticality, and uniqueness]
Set $f_0=W^{1/2}\zeta_0$ and $h_c=G_0f_0$. Then $h_c>0$,
$Hh_c=f_0$, and
\[
 W^{1/2}h_c=\B(0)\zeta_0=\lambda(0)\zeta_0.
\]
Thus $\tau_cWh_c=f_0$ and $(H-\tau_cW)h_c=0$ pointwise.
Theorems~\ref{rob:thm:response} and~\ref{rob:thm:exactresponse}
give \eqref{rob:eq:thresholdprofile}, with $a_c=a(f_0)$ for this
normalization. Moreover, $\sum W h_c^2=\lambda(0)^2=\tau_c^{-2}$,
which proves \eqref{rob:eq:exactbirthcoefficient} for any scalar
normalization.
For the integrability claim, the contribution of the annulus
$R\le |x|<2R$ to the weighted norm is comparable to $R^{2-2\sigma}$.
Dyadic summation converges exactly when $\sigma>1$.

Write $H_c=H-\tau_cW$. For every finitely supported $u$, the
ground-state identity, obtained by expansion, is
\begin{equation}\label{rob:eq:gst}
 \ip u{H_cu}=\frac12\sum_{x,y}c(x,y)h_c(x)h_c(y)
 \left|\frac{u(x)}{h_c(x)}-\frac{u(y)}{h_c(y)}\right|^2.
\end{equation}
Choose logarithmic cutoffs $\chi_R$ equal to one on $|x|\le R$, zero
on $|x|\ge R^2$, and equal to
$\log(R^2/|x|)/\log R$ in between. All exceptional edges are within
the region where the cutoff is constant when $R$ is large. On the other
edges in the transition region,
$|\chi_R(x)-\chi_R(y)|\le C/(|x|\log R)$, with the same bound near
its endpoints. The profile of $h_c$ and dyadic annular counting imply
\begin{equation}\label{rob:eq:nullsequence}
 \ip{h_c\chi_R}{H_c(h_c\chi_R)}
 \le\frac C{\log^2R}\sum_{R-1\le|x|\le R^2+1}|x|^{-N}
 \le\frac{C'}{\log R}\longrightarrow0.
\end{equation}
For any nonzero finite nonnegative potential $V_1$, the quantity
$\sum V_1h_c^2\chi_R^2$ tends to $\sum V_1h_c^2>0$.
Thus $H_c-V_1$ is not nonnegative. This proves criticality in the stated
sense.

If $k>0$ is another pointwise solution of $H_ck=0$, identity
\eqref{rob:eq:gst} also holds with $k$ in place of $h_c$. Apply it to
$h_c\chi_R$. By \eqref{rob:eq:nullsequence}, each fixed edge contribution
converges to zero, so $h_c(x)/k(x)=h_c(y)/k(y)$ on every edge.
Connectivity proves uniqueness up to a scalar.

Finally let $u\in\ell^2(X)$ satisfy $H_cu=0$. Finitely supported
truncations $u_n$ converge to $u$ in $\ell^2$, and, since $H_c$ is
bounded, $\ip{u_n}{H_cu_n}\to0$. Fatou in \eqref{rob:eq:gst} gives that
$u/h_c$ is constant on every edge. Since $h_c\notin\ell^2$, necessarily
$u=0$. Thus the kernel in $\ell^2(X)$ is trivial.
\end{proof}

\begin{proof}[Mass, radius, and derivative assertions]
At an energy satisfying \eqref{rob:eq:thresholdidentity}, set
$u_\varepsilon=(H+\varepsilon)^{-1}f_\varepsilon$ and
$J_\varepsilon=\norm{u_\varepsilon}^2$.
The finite matrix equation gives $Wu_\varepsilon=\lambda(\varepsilon)
f_\varepsilon$, so
\[
 (H-\tau W)u_\varepsilon=-\varepsilon u_\varepsilon,
 \qquad\Psi_\tau=u_\varepsilon/\sqrt{J_\varepsilon}.
\]
By \eqref{rob:eq:sourcenormalization}, $J_\varepsilon\asymp
(\varepsilon L_\varepsilon^2)^{-1}$. Lemma~\ref{rob:lem:green} bounds
$u_\varepsilon$ above and below by positive constants on any fixed
nonempty finite set. This proves \eqref{rob:eq:localmass}.

Normalizing \eqref{rob:eq:profile} gives constants $A_-,A_+>0$ with
\begin{equation}\label{rob:eq:normalizedprofile}
 A_-\sqrt\varepsilon r^{-\alpha}\BesselK(\sqrt{2\varepsilon}r)
 \le\Psi_\tau(x)\le
 A_+\sqrt\varepsilon r^{-\alpha}\BesselK(\sqrt{\varepsilon/2}r)
 \quad(r\ge R).
\end{equation}
The upper bound and Lemma~\ref{rob:lem:mass} show
\begin{align*}
 \limsup_{\varepsilon\downarrow0}
 \sum_{|x|\le s/\sqrt\varepsilon}|\Psi_\tau(x)|^2
 &\le A_+^2\omega_{N-1}\int_0^s t\BesselK(t/\sqrt2)^2\,dt,\\
 \limsup_{\varepsilon\downarrow0}
 \sum_{|x|\ge s/\sqrt\varepsilon}|\Psi_\tau(x)|^2
 &\le A_+^2\omega_{N-1}\int_s^\infty t\BesselK(t/\sqrt2)^2\,dt.
\end{align*}
The fixed core contributes $o(1)$. Choose a small $s_-$ making the
first integral strictly less than $\theta$, and a large $s_+$ making
the second strictly less than $1-\theta$. This proves
$s_-/\sqrt\varepsilon\le\cL_\theta\le s_+/\sqrt\varepsilon$
for small energy. Equation \eqref{rob:eq:birth} gives the logarithmic radius
statement.

Implicit differentiation in \eqref{rob:eq:thresholdidentity}, justified by
\eqref{rob:eq:HF}, yields
\[
 \varepsilon'(\tau)=\frac1{\tau^2J_\varepsilon}.
\]
Also $W^{1/2}u_\varepsilon=\lambda(\varepsilon)\zeta_\varepsilon$,
so $\ip{\Psi_\tau}{W\Psi_\tau}=\lambda(\varepsilon)^2/J_\varepsilon$.
This proves \eqref{rob:eq:derivativebirth}.
\end{proof}

\subsubsection{Admissible cores in every dimension}

The next construction verifies that the class in the general theorem
is nonempty in every dimension. It permits a prescribed connected finite
modification of the graph and a prescribed tail satisfying \eqref{rob:eq:tail}.

\begin{proposition}\label{rob:prop:nonempty}
Fix any connected finite modification of the integer lattice graph and any real
potential satisfying \eqref{rob:eq:tail}. There is a nonnegative potential
$U$ of finite support such that $H+U$ satisfies
\eqref{rob:eq:subcritical}.
\end{proposition}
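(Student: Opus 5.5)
The plan is to build a positive supersolution explicitly and then convert it into a positive vertex remainder through the reciprocal-field identity. After that, Green finiteness follows by inverse form order, exactly as in Proposition~\ref{prop:defectpositiveapriori}. Fix the connected finite modification with conductances $c$ and the potential $q$ satisfying \eqref{rob:eq:tail}.

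\emph{Step 1: the exterior.} Take $\phi(x)=|x|^{-\alpha}(1-|x|^{-\beta'})$ with $0<\beta'<\min\{\eta,2\}$ on $|x|\ge R$. By Lemma~\ref{rob:lem:consistency} with $k\to0$, or directly by the Taylor argument of Proposition~\ref{prop:infinity}, together with \eqref{rob:eq:radialcorrection} at $\varepsilon=k=0$, we get
\[
 (H\phi)/\phi\ge \tfrac12\beta'^2|x|^{-2-\beta'}>0\qquad\text{for }|x|\ge R.
\]
This holds because the correction term contributes $+\beta'^2r^{-2-\beta'}$, and this dominates the lattice error $O(r^{-4})$ and $|V|\le M r^{-2-\eta}$ once $R$ is large. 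This is the same sign computation as for $U^+_\varepsilon$ in Lemma~\ref{rob:lem:barriers}. Choose $R$ so large that every exceptional edge, every deleted vertex and $R_*$ lie inside $|x|<R-1$.

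\emph{Step 2: field and remainder.} Define a reciprocal field $\rho_{xy}=\phi(y)/\phi(x)$ on every edge with both endpoints in $\{|x|\ge R-1\}$, where $\phi$ is extended positively to that shell. On all remaining edges, which lie in the finite ball $\Omega=\{|x|<R\}\cap X$ or cross its boundary, set $\rho_{xy}=1$. Identity \eqref{eq:generalweightedidentity} gives
\[
 \langle u,(H+U)u\rangle\ge\sum_x\bigl(V_\rho(x)+U(x)\bigr)|u(x)|^2 .
\]
Here $V_\rho=(H\phi)/\phi>0$ at all vertices with $|x|\ge R$ whose neighbours all lie in the shell. At the finitely many remaining vertices, $V_\rho$ is some finite real number, bounded below by $q(x)-\sum_y c(x,y)\max(\rho_{xy},1)$. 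Set $U(x)=\max\{0,1-V_\rho(x)\}$ on this finite set and $U=0$ elsewhere. Then $U\ge0$ has finite support, and the total weight $w=V_\rho+U$ is strictly positive at every vertex. One must check that the vertices with $|x|$ just above $R$, which have neighbours in $\Omega$ where the field is $1$ rather than a ratio, are covered. They are, because all such vertices are included in the finite set on which $U$ is allowed to be nonzero; only vertices with all neighbours in the shell need the supersolution sign.

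\emph{Step 3: conclusion.} The weight bound gives $H+U\ge M_w$ with $w>0$, so $H+U\ge0$. Inverse form order gives
\[
 \langle\delta_o,(H+U+\varepsilon)^{-1}\delta_o\rangle\le w(o)^{-1},
\]
hence $G_0(o,o)<\infty$, which is \eqref{rob:eq:subcritical}. The tail of $q+U$ is unchanged, so \eqref{rob:eq:tail} still holds. The main obstacle is Step~1, namely obtaining a strict sign for $H\phi$ uniformly in direction with a nonradial correction $V$ of either sign. I expect the choice $\beta'<\eta$, which lets the $r^{-2-\beta'}$ gain dominate both $V$ and the discretization error, to settle it, exactly as in \eqref{rob:eq:choosea}.
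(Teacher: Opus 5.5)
Your proposal is correct and follows the paper's proof: a strict exterior supersolution of the form $r^{-\alpha}(1-ar^{-\beta})$, then a finitely supported $U=\max\{0,1-V\}$ on the ball so that every vertex weight is strictly positive, then inverse form order for Green finiteness. The differences are cosmetic: you take $a=1$ with $\beta'<\min\{\eta,2\}$ strictly, where the paper takes $\beta=\min\{\eta,2\}$ with $a$ large, and you put $\rho=1$ on inner edges instead of ratios of a positive extension of $\phi$; your worry about vertices just above $R$ is moot, since every edge at a vertex with $|x|\ge R$ has both endpoints in the shell $\{|x|\ge R-1\}$.
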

\begin{proof}
Let $\beta=\min\{\eta,2\}$. For sufficiently large $a$ and then $R$,
\[
 \phi(x)=r^{-\alpha}(1-ar^{-\beta})>0\quad(r\ge R-1)
\]
is a strict supersolution on $r\ge R$.
Indeed, at energy zero the continuous expression equals
$a\beta^2 r^{-\alpha-\beta-2}$, by
\eqref{rob:eq:radialmain}--\eqref{rob:eq:radialcorrection} with $g=1$.
The fourth derivative argument of Lemma~\ref{rob:lem:consistency}, now for
powers alone, bounds the lattice error by
$C(r^{-\alpha-4}+ar^{-\alpha-\beta-4})$.
The potential remainder is bounded by
$Mr^{-\alpha-2-\eta}(1+ar^{-\beta})$.
First choose $a\beta^2>3(C+M)$, then choose $R$ with
$ar^{-\beta}\le1/2$. The positive term dominates both errors.

Extend $\phi$ positively to the finite remaining set. The formula for
rows at $r\ge R$ is unchanged by choosing the extension only at
$r<R-1$. On $r<R$ choose, for example,
$U(x)=\max\{0,1-(H\phi)(x)/\phi(x)\}$, and put $U=0$ elsewhere.
Then
\[
 \rho(x)=\frac{((H+U)\phi)(x)}{\phi(x)}>0\quad(x\in X).
\]
The ground-state expansion gives $H+U\ge M_\rho\ge0$ as quadratic
forms. The function $\rho$ is bounded, positive at each vertex, and
need not have a positive infimum. For $\varepsilon>0$, inverse form
order gives
\[
 \ip{\delta_x}{(H+U+\varepsilon)^{-1}\delta_x}
 \le\frac1{\rho(x)+\varepsilon}\le\frac1{\rho(x)}.
\]
This proves nonnegativity and finiteness of the Green diagonal.
\end{proof}

\subsection{The finite part and the asymptotic prefactor}\label{sec:prefactor}

The multiplicative prefactor is fixed by the constant term of the critical
resolvent. We therefore work at $H_c=H-\tau_cW$ and prove a remainder
that tends to zero after subtracting the logarithmic singularity.
The canonical Poisson normalization and Bessel matching identify this
constant; finite-dimensional perturbation then gives the prefactor.

Throughout the construction below normalize the positive state by
\begin{equation}\label{pref:eq:normalization}
 H_ch=0,\qquad h(x)=r^{-\alpha}(1+O(r^{-\sigma})),\qquad
 \omega=|\mathbb S^{N-1}|,
\end{equation}
with $\sigma>0$ decreased whenever necessary. We write
$\gamma_{\mathrm E}$ for Euler's constant and
$k_0=\log2-\gamma_{\mathrm E}$ for the constant in the
small-argument expansion of the order-zero Bessel function.
Pairings with $h$ below are finite sums over the support of the source.

\begin{proposition}\label{pref:prop:finitepart}
For every real finite source $f$, there is a unique pointwise solution
$\mathcal Gf$ of
\begin{equation}\label{pref:eq:canonicalpoisson}
 H_c\mathcal Gf=f,\qquad
 (\mathcal Gf)(x)=-\frac{\ip h f}{\omega}r^{-\alpha}\log r
                  +O_f(r^{-\alpha-\sigma}).
\end{equation}
In particular, the constant term after the logarithm is set to zero.
For each fixed finite set $S$, in matrix operator norm,
\begin{equation}\label{pref:eq:criticalfinitepart}
 P_S(H_c+\varepsilon)^{-1}P_S
 =\frac{L_\varepsilon}{2\omega}h_Sh_S^*+\mathcal R_S+o_S(1),
 \qquad
 \mathcal R_S=\mathcal G_S+\frac{k_0}{\omega}h_Sh_S^*,
\end{equation}
where $\mathcal G_S(x,y)=(\mathcal G\delta_y)(x)$ and $h_S=P_Sh$.
The matrices $\mathcal G_S$ and $\mathcal R_S$ are real symmetric.
\end{proposition}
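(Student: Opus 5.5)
Existence and uniqueness of $\mathcal Gf$ come first. For uniqueness, suppose $u$ is the difference of two solutions. Then $H_cu=0$ and $u=O(r^{-\alpha-\sigma})$, so $v=u/h$ is a bounded $\mathscr L$-harmonic function for the Doob-transformed chain of Lemma~\ref{rob:lem:amplitude} (with reference profile $h$). That chain is recurrent, so $v$ is constant. Since $v\to0$ at infinity, $u=0$.

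For existence, write $f=\ip hf\,\phi_0+f_\perp$, where $\phi_0$ is a fixed finite source with $\ip h{\phi_0}=1$ supported near the boundary of the exceptional ball, and $\ip h{f_\perp}=0$. I would build an explicit approximate solution
\[
 g_1=-\frac{1}{\omega}\,r^{-\alpha}\log r\,(1-\chi_{R_0})
\]
with a smooth inner cutoff. The radial identity \eqref{rob:eq:effective2} gives $(-\Delta-\alpha^2r^{-2})(r^{-\alpha}\log r)=0$ for $r>0$. The Taylor bounds of Lemma~\ref{rob:lem:consistency}, together with the tail \eqref{rob:eq:tail}, then show that $H_cg_1-\phi_1$ is $O(r^{-\alpha-2-\beta}\log r)$ outside a finite set, for some finite $\phi_1$. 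The flux computation of Lemma~\ref{rob:lem:exactcapacity} (summation by parts against $h$) fixes $\ip h{\phi_1}=1$. This makes the source orthogonal to $h$ after subtracting, and that orthogonality is exactly what is needed to solve the remainder equation with a bounded $v=u/h$. Concretely, I would solve $\mathscr Lv=\tilde g$ for $\tilde g\perp\mu_h$ with $\tilde g=O(r^{-2-\beta'})$. I would use the stopped potentials $p_R$ from the proof of Lemma~\ref{rob:lem:amplitude}, or a limit of $(H_c+\varepsilon)^{-1}$ applied to the orthogonal source, which stays bounded by the capacity estimate. The oscillation remark after Lemma~\ref{rob:lem:amplitude} then gives $v=b+O(r^{-\sigma})$, and subtracting $b\,h$ normalizes the constant to zero. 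Symmetry of $\mathcal G_S$ follows from Green's identity $\ip{\delta_x}{\mathcal G\delta_y}-\ip{\mathcal G\delta_x}{\delta_y}$ computed on a ball of radius $\rho$. The boundary flux terms involve $r^{-\alpha}\log r$ against $r^{-\alpha}$ with symmetric coefficients, so they cancel up to $O(\rho^{-\sigma})$.

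For the expansion \eqref{pref:eq:criticalfinitepart}, fix $y\in S$ and compare $u_\varepsilon=(H_c+\varepsilon)^{-1}\delta_y$ with the matched ansatz
\[
 \Phi_\varepsilon=\mathcal G\delta_y+\frac{h(y)}{\omega}\,h\,\bigl[\BesselK(\sqrt\varepsilon\, r)+\log r\bigr]\chi,
\]
where $\chi$ is an outer cutoff. Far out, $\Phi_\varepsilon\approx \frac{h(y)}{\omega}r^{-\alpha}\BesselK(\sqrt\varepsilon r)$, which solves the radial equation \eqref{rob:eq:radialmain} with $k^2=\varepsilon$. Near the core, the expansion $\BesselK(z)=-\log z+k_0+o(1)$ gives $\Phi_\varepsilon\approx\mathcal G\delta_y+\frac{h(y)}{\omega}h\,(\tfrac12L_\varepsilon+k_0)$. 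The residual $(H_c+\varepsilon)\Phi_\varepsilon-\delta_y$ consists of three pieces: the lattice consistency error (Lemma~\ref{rob:lem:consistency}), the $\varepsilon\,\mathcal G\delta_y$ term in the near zone, and the cross-over error of the $O(r^{-\alpha-\sigma})$ corrections.

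The main obstacle is to show that applying $(H_c+\varepsilon)^{-1}$ to this residual produces only $o(1)$ on $S$, even though the resolvent itself grows like $L_\varepsilon$. The plan is to pair the residual $e_\varepsilon$ against $u_\varepsilon^{(x)}=(H_c+\varepsilon)^{-1}\delta_x$ for $x\in S$. Theorem~\ref{rob:thm:response}, transplanted to $H_c$, or direct barriers of the form \eqref{rob:eq:profile} scaled by $L_\varepsilon$, bound $u_\varepsilon^{(x)}\lesssim r^{-\alpha}\BesselK(c\sqrt\varepsilon r)$. Dyadic summation as in Lemma~\ref{rob:lem:mass} then shows $\sum|e_\varepsilon|\,u_\varepsilon^{(x)}\to0$. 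The key point is that each residual piece carries either a gain $r^{-\beta}$ or a factor $\varepsilon r^2\log r$ restricted to $r\lesssim\varepsilon^{-1/2}$, and the effective two-dimensional measure $r\,dr$ makes these integrals $O(\varepsilon^{\beta/2}L_\varepsilon^2)$. If the barrier bound for the critical resolvent is not directly available, I would obtain it from the subcritical one via the Birman--Schwinger resolvent identity $(H_c+\varepsilon)^{-1}=(H+\varepsilon)^{-1}+\tau_c(H+\varepsilon)^{-1}W^{1/2}(I-\tau_c\B(\varepsilon))^{-1}W^{1/2}(H+\varepsilon)^{-1}$, with $(I-\tau_c\B(\varepsilon))^{-1}=O(L_\varepsilon)$ by \eqref{rob:eq:lambdarate}. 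Reading off $\Phi_\varepsilon$ on $S$ then gives the matrix entries, and finiteness of $S$ gives the operator-norm statement.
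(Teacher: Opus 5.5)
\textbf{Main gap: no control of differences of $h$.} Your argument uses only the pointwise asymptotics $h=r^{-\alpha}(1+O(r^{-\sigma}))$ and $\mathcal G\delta_y=-\frac{h(y)}{\omega}r^{-\alpha}\log r+O(r^{-\alpha-\sigma})$. Three of your steps need control of \emph{discrete differences} of these remainders, and pointwise bounds do not supply it.

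\emph{(i) The charge identity.} Summing $h\,H_cg_1$ by parts over $B_\rho$ leaves boundary terms $c(x,y)h(x)h(y)[(g_1/h)(x)-(g_1/h)(y)]$. The part where $\log r$ multiplies the edge difference of $r^{-\alpha}/h$ is only $O(\rho^{-\sigma}\log\rho)$ per edge, hence $O(\rho^{1-\sigma}\log\rho)$ over the sphere. So ``charge of $H_cg_1$ equals $1$'' does not follow. Lemma~\ref{rob:lem:exactcapacity} does not supply it either: it is a two-sided inequality obtained from a test flow, and it uses $d_{xy}$ only pointwise. Your Green's-identity proof of symmetry has the same defect.

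\emph{(ii) The residual.} With $F=\BesselK(\sqrt\varepsilon r)+\log r$, $H_c(hF)=h\,\mathscr L_hF$ contains $\sum_j[h(x+e_j)-h(x-e_j)]\,\partial_jF$. Replacing $h$ by $r^{-\alpha}$ there costs $O(r^{-\alpha-\sigma})|\nabla F|$, not a factor $r^{-2-\beta}$. Since $|\nabla F|\sim r^{-1}$ for $r\gtrsim\varepsilon^{-1/2}$, pairing with $u^{(x)}_\varepsilon\lesssim r^{-\alpha}\BesselK(c\sqrt\varepsilon r)$ only gives $O(\varepsilon^{-(1-\sigma)/2})$, which blows up for $\sigma<1$. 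Lemma~\ref{rob:lem:consistency} covers only the explicit profiles $r^{-p}\BesselK(kr)$.

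\emph{How the paper avoids this.} Its outer profile is the explicit $c\phi\BesselK(\sqrt\varepsilon r)$, and $h$ enters only on $r\le2\varepsilon^{-1/4}$, where $H_ch=0$ holds exactly. The charge $\ip hs=-\omega$ and the matching annulus are handled by the Caccioppoli inequality \eqref{pref:eq:caccioppoli}. It turns the equations satisfied by $h/\phi-1$, $w/\phi-\log r-b$ and $d/\phi$ into small annular energy bounds. You need that estimate, or a lattice gradient bound $\nabla(h-r^{-\alpha})=O(r^{-\alpha-1-\sigma'})$ from discrete elliptic regularity, before your residual bookkeeping is valid.

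\textbf{Further points.} Your existence step never produces a \emph{global} solution of the zero-charge problem. The stopped potentials of Lemma~\ref{rob:lem:amplitude} solve the equation only outside $B_R$, with zero inner data. For the limit of $(H_c+\varepsilon)^{-1}$ on a zero-charge source you would need convergence, not just boundedness.

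The paper instead pins. For $H_+=H_c+\kappa P_o$, \eqref{pref:eq:pinnedcolumn} gives $G_+(\cdot,o)=h/(\kappa h(o))$. Hence for finite $t$ with $\ip ht=0$, $G_+t$ vanishes at $o$ and solves $H_cv=t$, with asymptotics from Theorem~\ref{rob:thm:exactresponse}. In addition, the exact exterior solution $w$ of \eqref{pref:eq:logfixedpoint} makes the logarithmic source finitely supported, so no tail like your $H_cg_1-\phi_1$ survives.

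Your accounting of the $\varepsilon$-term is also off. The term $\varepsilon\mathcal G\delta_y$ on $r\lesssim\varepsilon^{-1/2}$ carries no $r^{-\beta}$ gain; paired with $r^{-\alpha}\BesselK(c\sqrt\varepsilon r)$ it is of order $L_\varepsilon$. It becomes harmless in either of two ways:
\begin{itemize}
\item take $\chi\equiv1$, so that it combines with $\varepsilon\frac{h(y)}{\omega}h\log r$ into $\varepsilon\,O(r^{-\alpha-\sigma}\log r)$;
\item or shrink the inner zone to $r\lesssim\varepsilon^{-1/4}$, as the paper does.
\end{itemize}

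Finally, symmetry of $\mathcal G_S$ and $\mathcal R_S$ needs no Green's identity. It follows from symmetry of the resolvent and of $h_Sh_S^*$ once \eqref{pref:eq:criticalfinitepart} holds.
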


\subsubsection{Pinning and the logarithmic Poisson solution}

The \emph{critical charge} of a source $f$ is its pairing
$\ip h f$ with the critical state; it determines the coefficient of
the logarithmic correction to the power tail. Adding a positive
potential at one vertex, called \emph{pinning}, gives a finite Green
kernel used to treat sources of zero charge. A solution with leading
term $r^{-\alpha}\log r$ supplies the remaining charge.

\begin{proof}[Construction in Proposition~\ref{pref:prop:finitepart}]
We first record two consequences of pinning at a vertex. Fix $o\in X$
and $\kappa>0$. Let $P_o=P_{\{o\}}$, put $H_+=H_c+\kappa P_o$,
and denote its zero-energy Green kernel by $G_+$. The ground-state
identity gives $H_+\ge\kappa P_o$, hence
$G_+(o,o)\le\kappa^{-1}$. Testing the zero-energy variational
principle with $h\chi_R$, using the logarithmic null sequence
\eqref{rob:eq:nullsequence}, proves the reverse inequality. Thus
\begin{equation}\label{pref:eq:pinnedcolumn}
 G_+(o,o)=\kappa^{-1},\qquad
 G_+(x,o)=\frac{h(x)}{\kappa h(o)}.
\end{equation}
Minimality of the Green function gives
$\kappa h(o)G_+(x,o)\le h(x)$, since
$H_+h=\kappa h(o)\delta_o$. This inequality follows by comparison
on finite Dirichlet exhaustions and passage to their Green limit.
The nonnegative difference is $H_+$-harmonic and vanishes at $o$;
the finite-row strong maximum principle makes it zero everywhere.
This proves the column identity. Green finiteness at $o$ and
Lemma~\ref{rob:lem:green} show that $H_+$ satisfies all hypotheses
of Theorems~\ref{rob:thm:response} and~\ref{rob:thm:exactresponse}.

If a finite source $t$ has $\ip h t=0$, then $v=G_+t$ satisfies
$v(o)=0$ by \eqref{pref:eq:pinnedcolumn}, so $H_cv=t$.
The signed-source consequence of Theorem~\ref{rob:thm:exactresponse}
also gives
\begin{equation}\label{pref:eq:zerocharge}
 v(x)=r^{-\alpha}(b_t+O_t(r^{-\sigma})).
\end{equation}
It remains to produce one source with nonzero $h$-charge and a
logarithmic solution.

Use the reference function $\phi=r^{-\alpha}$ outside a fixed ball,
extended positively inside, and the network operator $\mathscr L$
from the proof of Lemma~\ref{rob:lem:amplitude}. Set
$U=H_c\phi/\phi=O(r^{-2-\gamma})$, where $\gamma=\min\{2,\eta\}$.
Outside a sufficiently large ball $B_R$,
\[
 \mathscr L\log r=O(r^{-4}).
\]
The stopped potential operator integrates a source along the
recurrent reference chain until it first reaches $B_R$. Denote this
operator by $\mathscr T_R$. The barrier
\eqref{rob:eq:amplitudebarrier} proves, for $\beta>0$ sufficiently
small, that
\[
 |F(x)|\le C r^{-2-\beta}
 \ \Longrightarrow\ 
 \|\mathscr T_RF\|_\infty\le C'R^{-\beta}.
\]
Choose $0<\beta<\gamma$. The equation
\begin{equation}\label{pref:eq:logfixedpoint}
 p=\mathscr T_R\bigl[-\mathscr L\log r-U\log r-Up\bigr]
\end{equation}
is a contraction on bounded functions when $R$ is large, since
$\|\mathscr T_R|U|\|_\infty\le CR^{-\beta}<1$.
Its forcing is bounded by $C r^{-2-\beta}$ after decreasing $\beta$.
The resulting bounded $p$ satisfies
$\mathscr Lp=O(r^{-2-\beta})$.
The oscillation argument leading to \eqref{rob:eq:oscillation}
applies to every bounded solution of such a Poisson equation: subtract
its stopped potential and apply the same annular Harnack contraction.
Consequently $p=b+O(r^{-\sigma})$ for some $\sigma>0$.
We have constructed an exterior solution
\begin{equation}\label{pref:eq:logmode}
 w(x)=\phi(x)(\log r+b+O(r^{-\sigma})),\qquad H_cw=0
 \quad\text{outside }B_R.
\end{equation}
Extend $w$ arbitrarily across the finite remaining set and put $s=H_cw$.
Then $s$ has finite support and
\begin{equation}\label{pref:eq:logcharge}
 \ip h s=-\omega.
\end{equation}

We compute this charge through an annular energy estimate. Write $b^0_{xy}=c(x,y)\phi(x)\phi(y)$ and
$\mu_0=\phi^2$. If $e$ solves a reference Poisson equation on an
annulus, testing against a cutoff times $e$ gives
\begin{equation}\label{pref:eq:caccioppoli}
 \sum_{\{x,y\}\subset A_R}b^0_{xy}|e(x)-e(y)|^2
 \le C\left[R^{-2}\sum_{A_R'}\mu_0|e|^2
              +\sum_{A_R'}\mu_0|e\mathscr Le|\right],
\end{equation}
where $A_R=\{R\le r\le2R\}$ and
$A_R'=\{R/2\le r\le4R\}$; fixed-width boundary enlargements are
understood. Indeed, expand the edge differences in
$\sum\mu_0(\mathscr Le)\xi^2e$ with $\xi=1$ on $A_R$,
$|\nabla\xi|\le C/R$, and absorb the mixed term by
$2ab\le a^2/2+2b^2$.
Apply this estimate to $h/\phi-1$ and to
$w/\phi-\log r-b$. Their forcing terms are respectively
$O(r^{-2-\gamma})$ and $O(r^{-2-\gamma}\log r)$.
After decreasing $\sigma<\gamma$, both annular energies are
$O(R^{-2\sigma})$. The product and quotient rules for edge differences
therefore give, with $d_{xy}=c(x,y)h(x)h(y)$,
\[
 \sum_{\{x,y\}\subset A_R}d_{xy}
 \left|\nabla_{xy}\left(w/h-\log r-b\right)\right|^2
 =O(R^{-2\sigma}\log^2R).
\]
Take a smooth radial cutoff $\chi_R=\chi(r/R)$ equal to one on
$B_R$ and zero outside $B_{2R}$. Its $d$-energy is $O(1)$.
The ground-state identity and summation by parts give
\[
 \ip h s
 =\sum_{\{x,y\}}d_{xy}
       \nabla_{xy}(w/h)\nabla_{xy}\chi_R.
\]
The preceding energy bound makes the error from $w/h-\log r-b$
tend to zero. In the remaining sum replace $d_{xy}$ by
$r^{2-N}(1+O(r^{-\min\{1,\sigma\}}))$ and compare with the radial
integral. Its limit is
$\omega\int_1^2\chi'(t)\,dt=-\omega$.
This proves \eqref{pref:eq:logcharge}; all edge sums count each
unoriented edge once.

Given $f$, set $Q=\ip h f$ and $t=f+(Q/\omega)s$. Then
$\ip h t=0$, and
\[
 \widetilde g=-\frac Q\omega w+G_+t
\]
solves $H_c\widetilde g=f$. Equations \eqref{pref:eq:zerocharge}
and~\eqref{pref:eq:logmode} give
$\widetilde g=\phi[-(Q/\omega)\log r+b_f+O(r^{-\sigma})]$.
Define $\mathcal Gf=\widetilde g-b_fh$ to obtain
\eqref{pref:eq:canonicalpoisson}. If two such solutions exist, their
difference divided by $h$ is a bounded harmonic function on the
recurrent $h$-network and tends to zero at infinity. Recurrence follows
from the same logarithmic null sequence. Bounded optional
stopping at any vertex makes it constant, hence zero. This proves
uniqueness, linearity, and independence of the pinning choices.
\end{proof}

\subsubsection{Matching with a vanishing resolvent error}

The zero-energy Poisson solution and the decaying Bessel profile have
matching logarithmic and constant terms. A weighted estimate for the
cutoff residual makes their difference tend to zero on each fixed
finite set, which is the precision needed for a prefactor.

\begin{proof}[Resolvent expansion in Proposition~\ref{pref:prop:finitepart}]
First we need an upper bound that is uniform in the second vertex.
Let $R_+(\varepsilon)=(H_++\varepsilon)^{-1}$. The rank-one
resolvent identity is
\[
 (H_c+\varepsilon)^{-1}(x,y)=R_+(\varepsilon)(x,y)
 +\frac{\kappa R_+(\varepsilon)(x,o)R_+(\varepsilon)(o,y)}
        {1-\kappa R_+(\varepsilon)(o,o)}.
\]
Theorem~\ref{rob:thm:exactresponse} gives
$1-\kappa R_+(\varepsilon)(o,o)\asymp L_\varepsilon^{-1}$.
Also $R_+(\varepsilon)\le G_+$ entrywise,
\eqref{pref:eq:pinnedcolumn} controls its column at $o$, and
Theorem~\ref{rob:thm:response} controls every fixed column of $G_+$
by a constant multiple of $h$. Thus, for fixed $x$ and all $y$,
\begin{equation}\label{pref:eq:weightedkernelbound}
 0<(H_c+\varepsilon)^{-1}(x,y)
       \le C_x L_\varepsilon h(y).
\end{equation}

Fix a finite source $f$, put $Q=\ip h f$, $c=Q/\omega$,
$g=\mathcal Gf$, and $s_\varepsilon=L_\varepsilon/2+k_0$.
Choose $R=\varepsilon^{-1/4}$ and a smooth radial cutoff
$\chi_R$ equal to one on $B_R$ and zero outside $B_{2R}$.
Use the following square-integrable approximate solution:
\begin{equation}\label{pref:eq:matchedsolution}
 u_\varepsilon=\chi_R(c s_\varepsilon h+g)
 +(1-\chi_R)c\phi\BesselK(\sqrt\varepsilon r).
\end{equation}
The classical series in~\cite[Formula~8.447.3]{GradshteynRyzhik}
and~\cite[Eq.~10.31.2]{DLMF} gives the small-argument expansion
\begin{equation}\label{pref:eq:besselconstant}
 \BesselK(t)=-\log(t/2)-\gamma_{\mathrm E}
                +O\bigl(t^2(1+|\log t|)\bigr).
\end{equation}
It follows on every fixed enlargement of $A_R$ that, if
$d=c s_\varepsilon h+g-c\phi\BesselK(\sqrt\varepsilon r)$,
\begin{equation}\label{pref:eq:matchingdifference}
 |d/\phi|\le C_f A_\varepsilon,\qquad
 A_\varepsilon=L_\varepsilon(R^{-\sigma}+\varepsilon R^2).
\end{equation}

We now estimate the cutoff residual to prove convergence on each
fixed finite set. On the transition annulus,
\[
 \left|\phi^{-1}(H_c+\varepsilon)d\right|
 \le C_f L_\varepsilon(\varepsilon+R^{-2-\gamma}).
\]
The inner term follows from $H_ch=0$, $H_cg=f$, with $f=0$ there.
For the outer term, the continuum identity
$(-\Delta-\alpha^2r^{-2}+\varepsilon)
(r^{-\alpha}\BesselK(\sqrt\varepsilon r))=0$
and fourth-order Taylor remainders give the bound.
Since
$\phi^{-1}(H_c+\varepsilon)(\phi e)=
\mathscr Le+(U+\varepsilon)e$, apply
\eqref{pref:eq:caccioppoli} to $e=d/\phi$.
The estimates $\sum_{A_R'}\mu_0=O(R^2)$,
$R^{-\gamma}\le R^{-\sigma}$, and~\eqref{pref:eq:matchingdifference}
give
\[
 \begin{split}
 \sum_{\{x,y\}\subset A_R}b^0_{xy}|e(x)-e(y)|^2
 &\le C_f\bigl\{A_\varepsilon^2
       +A_\varepsilon L_\varepsilon(\varepsilon R^2+R^{-\gamma})
       +(\varepsilon R^2+R^{-\gamma})A_\varepsilon^2\bigr\}\\
 &\le C_f A_\varepsilon^2.
 \end{split}
\]
Consequently the cutoff commutator satisfies
\begin{equation}\label{pref:eq:commutator}
 \sum_x h(x)|[H_c,\chi_R]d(x)|\le C_f A_\varepsilon.
\end{equation}
To see this directly, divide the commutator by $\phi$ and use
\[
 \phi^{-1}[H_c,\chi_R](\phi e)
 =e\mathscr L\chi_R
 -\mu_0^{-1}\sum_y b^0_{xy}
          (\chi_R(x)-\chi_R(y))(e(x)-e(y)).
\]
The first term has weighted absolute sum $O(A_\varepsilon)$ since
$\mathscr L\chi_R=O(R^{-2})$. Cauchy--Schwarz bounds the second
by the product of the two annular energy norms, also
$O(A_\varepsilon)$. Here $h/\phi$ is bounded above and below.

The exterior Taylor remainder is summable with the required weight.
More explicitly, with $t=\sqrt\varepsilon r$, it is bounded by
\[
 C\phi r^{-4}(1+t)^4
       [\BesselK(t/2)+t\mathrm K_1(t/2)]
       +C\phi r^{-2-\eta}\BesselK(t).
\]
This follows from the Bessel derivative
recurrences~\cite[Formulas~8.486.10--8.486.13]{GradshteynRyzhik}
and Taylor's formula on unit edges, as in
Lemma~\ref{rob:lem:consistency}.
Multiplication by $h$ and radial shell summation, split at
$r=\varepsilon^{-1/2}$, gives $O(L_\varepsilon R^{-\gamma})$:
below that radius use $\BesselK(t)=O(1+|\log t|)$ and
$t\mathrm K_1(t)=O(1)$, and above it use exponential decay.
The inner mass error has weighted absolute sum
$O_f(\varepsilon R^2L_\varepsilon)$.
Combining these estimates with \eqref{pref:eq:commutator}, for
$r_\varepsilon=(H_c+\varepsilon)u_\varepsilon-f$ we obtain
\begin{equation}\label{pref:eq:residualsmall}
 \sum_x h(x)|r_\varepsilon(x)|
 \le C_fL_\varepsilon(R^{-\sigma}+\varepsilon R^2).
\end{equation}
The residual is also square integrable, so the resolvent identity and
\eqref{pref:eq:weightedkernelbound} imply, at each fixed $x$,
\[
 |(H_c+\varepsilon)^{-1}f(x)-u_\varepsilon(x)|
 \le C_{x,f}L_\varepsilon^2
       (\varepsilon^{\sigma/4}+\varepsilon^{1/2})=o(1).
\]
For small energy this fixed $x$ lies in the inner region of
\eqref{pref:eq:matchedsolution}. Hence
\[
 (H_c+\varepsilon)^{-1}f(x)
 =\frac{\ip h f}{\omega}
      (L_\varepsilon/2+k_0)h(x)+\mathcal Gf(x)+o(1).
\]
Apply this to the finitely many point sources in $S$ to obtain
\eqref{pref:eq:criticalfinitepart} in matrix norm. Symmetry of the
resolvent and of its leading term proves symmetry of the finite part.
\end{proof}

\subsubsection{The prefactor from a finite critical matrix}

The constant term of the principal Birman--Schwinger eigenvalue
determines the multiplicative prefactor in the eigenvalue asymptotics. The next theorem
expresses that constant through the normalized Poisson solution and
then records the formula for any normalization of the critical state.

\begin{theorem}\label{pref:thm:prefactor}
Under the assumptions of Theorem~\ref{rob:thm:birth}, the magnitude
$\varepsilon(\tau)$ of the unique negative eigenvalue near $\tau_c$
has the asymptotic formula
\begin{equation}\label{pref:eq:eigenvalueasymptotics}
 \varepsilon(\tau)=\mathfrak P(H,W)
       \exp\!\left[-\frac{\mathfrak A(H,W)}{\tau-\tau_c}\right]
       (1+o(1)),\qquad \mathfrak P(H,W)>0.
\end{equation}
Normalize $h$ by \eqref{pref:eq:normalization}, put
$M=\sum_xW(x)h(x)^2$, and let $g=\mathcal G(Wh)$ be the unique
solution specified by \eqref{pref:eq:canonicalpoisson}. Then
\begin{equation}\label{pref:eq:prefactorformula}
 \mathfrak A(H,W)=\frac{2\omega}{M},\qquad
 \mathfrak P(H,W)=4e^{-2\gamma_{\mathrm E}}
       \exp\!\left[\frac{2\omega}{M^2}\sum_xW(x)h(x)g(x)\right].
\end{equation}
For an arbitrary normalization $h_c\sim a_c r^{-\alpha}$, put
$M_c=\sum W h_c^2$ and let $g_c$ solve
\[
 H_cg_c=Wh_c,\qquad
 g_c(x)=-\frac{M_c}{\omega a_c}r^{-\alpha}\log r
                 +O(r^{-\alpha-\sigma}).
\]
The same prefactor is
\begin{equation}\label{pref:eq:invariantprefactor}
 \mathfrak P(H,W)=4e^{-2\gamma_{\mathrm E}}
   \exp\!\left[\frac{2\omega a_c^2}{M_c^2}
                     \sum_xW(x)h_c(x)g_c(x)\right].
\end{equation}
The normalization at infinity fixes the additive multiple of $h_c$
in $g_c$.
\end{theorem}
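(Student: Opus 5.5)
The plan is to redo the Birman--Schwinger reduction at the critical operator $H_c=H-\tau_cW$ rather than at $H$, and to feed in the finite-part expansion of Proposition~\ref{pref:prop:finitepart}. Write $\tau=\tau_c+\delta$, so that $H-\tau W=H_c-\delta W$. Since $H_c\ge0$ by Theorem~\ref{rob:thm:birth}, the form factorization used in the proof of that theorem applies verbatim with $H$ replaced by $H_c$ and $\tau$ replaced by $\delta$. Thus $-\varepsilon<0$ is an eigenvalue of $H-\tau W$ exactly when $1/\delta$ is an eigenvalue of the finite matrix
\[
 \T(\varepsilon)=W^{1/2}P_F(H_c+\varepsilon)^{-1}P_FW^{1/2}
\]
on $\ell^2(F)$, with multiplicities matching. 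Theorem~\ref{rob:thm:birth} already gives a unique simple negative eigenvalue $-\varepsilon(\tau)$ with $\varepsilon(\tau)\to0$ and $L_\varepsilon\asymp\delta^{-1}$. So $1/\delta$ is the top eigenvalue of $\T(\varepsilon(\tau))$, and everything reduces to an expansion of that top eigenvalue accurate to $o(1)$.

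\textbf{Step 1: matrix expansion.} Take $S=F$ in \eqref{pref:eq:criticalfinitepart}, with $h$ normalized by \eqref{pref:eq:normalization}. Put $b=W^{1/2}h_F$, so $\|b\|^2=M$. Then
\[
 \T(\varepsilon)=\frac{L_\varepsilon}{2\omega}bb^*+W^{1/2}\mathcal R_FW^{1/2}+o(1)
\]
in matrix norm. Only the fixed set $F$ is involved, so the $o(1)$ is uniform even though $\varepsilon=\varepsilon(\tau)$ moves with $\delta$.

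\textbf{Step 2: top eigenvalue.} The leading part is a rank-one matrix of size $L_\varepsilon M/(2\omega)\to\infty$, while the rest is bounded. Standard perturbation of a large simple eigenvalue (Schur complement on $b^\perp$, or the secular equation) gives
\[
 \lambda_{\max}(\T(\varepsilon))=\frac{L_\varepsilon M}{2\omega}+\frac{\ip{b}{W^{1/2}\mathcal R_FW^{1/2}b}}{M}+O(L_\varepsilon^{-1})+o(1),
\]
and all other eigenvalues stay $O(1)$. Next I evaluate the constant. Using $\mathcal R_F=\mathcal G_F+(k_0/\omega)h_Fh_F^*$ and linearity of $\mathcal G$, one gets $h_F^*W\mathcal G_FWh_F=\sum_xW(x)h(x)g(x)$ with $g=\mathcal G(Wh)$. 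Hence the constant term equals $M^{-1}\sum Whg+k_0M/\omega$.

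\textbf{Step 3: solve for $\varepsilon$.} Set the eigenvalue equal to $1/\delta$ and multiply by $2\omega/M$:
\[
 L_\varepsilon=\frac{2\omega}{M\delta}-\frac{2\omega}{M^2}\sum_xWhg-2k_0+o(1).
\]
Exponentiating and using $e^{2k_0}=4e^{-2\gamma_{\mathrm E}}$ gives \eqref{pref:eq:eigenvalueasymptotics} with the constants \eqref{pref:eq:prefactorformula}. The rate $2\omega/M$ agrees with \eqref{rob:eq:exactbirthcoefficient}, since $a_c=1$ in this normalization. Positivity of $\mathfrak P$ is immediate because it is an exponential.

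\textbf{Step 4: arbitrary normalization.} Let $h_c=a_ch$. Then $M_c=a_c^2M$, and $Wh_c=a_cWh$ has charge $\ip h{Wh_c}=a_cM=M_c/a_c$. By uniqueness in \eqref{pref:eq:canonicalpoisson}, the function $g_c=a_cg$ is exactly the solution with log coefficient $-M_c/(\omega a_c)$. Substituting,
\[
 \frac{a_c^2}{M_c^2}\sum Wh_cg_c=\frac{1}{M^2}\sum Whg,
\]
which proves \eqref{pref:eq:invariantprefactor}. The remark about an added multiple of $h_c$ follows the same way, since such a multiple changes $\sum Wh_cg_c$ by a multiple of $M_c$.

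\textbf{Main obstacle.} Nearly all the analytic work is already in Proposition~\ref{pref:prop:finitepart}, so the delicate point is Step 2. There I must check that the second-order correction really is $O(L_\varepsilon^{-1})$ uniformly, so that it vanishes in the limit. I would write the secular equation $1=\tfrac{L_\varepsilon}{2\omega}\,b^*(\mu-C)^{-1}b$ with $C$ the bounded part, and expand in $\mu^{-1}$. I also need to confirm that the resolvent of $H_c+\varepsilon$ (not $H+\varepsilon$) legitimately gives the Birman--Schwinger count. This needs only $H_c\ge0$ and $\varepsilon>0$.
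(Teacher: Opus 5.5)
Your proposal is correct and follows essentially the paper's own proof. Both use the Birman--Schwinger reduction at $H_c$ with coupling $\delta=\tau-\tau_c$ and the finite-part expansion of Proposition~\ref{pref:prop:finitepart} on $F$. Both obtain the top eigenvalue $\frac{M}{2\omega}L_\varepsilon+d+o(1)$ by simple-eigenvalue perturbation of the rank-one leading term after rescaling by $L_\varepsilon$, then solve $\mu(\varepsilon)=\delta^{-1}$ and rescale $h=h_c/a_c$, $g=g_c/a_c$, $M=M_c/a_c^2$. Your extra checks, that $H_c\ge0$ justifies the factorization and that $1/\delta$ is the top channel, are details the paper leaves implicit.
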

\begin{proof}
Write $\delta=\tau-\tau_c$, so $H-\tau W=H_c-\delta W$.
On $F=\{W>0\}$, Proposition~\ref{pref:prop:finitepart} gives
\[
 W^{1/2}(H_c+\varepsilon)^{-1}W^{1/2}
 =\frac{L_\varepsilon}{2\omega}ww^*+C_F+o(1),
 \qquad w=W^{1/2}h,\quad
 C_F=W^{1/2}\mathcal R_FW^{1/2}.
\]
Here $\|w\|^2=M>0$. The largest eigenvalue therefore equals
\[
 \mu(\varepsilon)=\frac{M}{2\omega}L_\varepsilon+d+o(1),
 \qquad d=\frac{\ip w{C_Fw}}M
         =\frac{\ip{Wh}{\mathcal G(Wh)}}M+\frac{k_0M}{\omega}.
\]
For example, divide the matrix by $L_\varepsilon$ and apply the
simple-eigenvalue expansion at the rank-one matrix $ww^*/(2\omega)$;
its quadratic error becomes $O(L_\varepsilon^{-1})$ after rescaling.
The first Birman--Schwinger equation is $\mu(\varepsilon)=\delta^{-1}$.
Consequently
\[
 L_\varepsilon=\frac{2\omega}{M\delta}
                       -\frac{2\omega d}{M}+o(1).
\]
Exponentiation proves \eqref{pref:eq:eigenvalueasymptotics} and
\eqref{pref:eq:prefactorformula}, since $e^{2k_0}=4e^{-2\gamma_{\mathrm E}}$.
For an arbitrary normalization use $h=h_c/a_c$, $g=g_c/a_c$ and
$M=M_c/a_c^2$ to obtain \eqref{pref:eq:invariantprefactor}.
\end{proof}

The prefactor is determined by the normalized zero-energy Poisson
problem and depends on the finite core through that solution. The
derivative, spatial-mass and return estimates are two-sided bounds,
as stated in their respective results.

\subsection{Critical return probabilities}\label{sec:probability}

The low-energy norm estimate also determines the return law of the
chain transformed by the threshold state. The proof uses a scalar spectral
measure and positive finite-time bridges for the connected operators
specified in Section~\ref{sec:lowenergy}.

\begin{corollary}\label{rob:cor:returns}
Let $H_c=H-\tau_cW$ and let $h_c$ be the state in
Theorem~\ref{rob:thm:birth}. The Doob chain with generator
\begin{equation}\label{rob:eq:doob}
 (\mathscr G_cf)(x)=\sum_y c(x,y)\frac{h_c(y)}{h_c(x)}[f(y)-f(x)]
\end{equation}
is conservative and null recurrent. For each fixed $x\in X$, as
$t\to\infty$,
\begin{equation}\label{rob:eq:returns}
 \mathbb P_x^c(X_t=x)\asymp_x t^{-1},\qquad
 \mathbb P_x^c(T_x>t)\asymp_x(\log t)^{-1},
\end{equation}
where $T_x$ is the first return after the first departure, including the
initial holding time. Its reversible measure is $\mu_c(x)=h_c(x)^2$;
in particular $\mu_c(B_R)\asymp R^2$.
\end{corollary}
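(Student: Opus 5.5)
The plan is to transport all spectral information through the unitary ground-state transform. Let $U:\ell^2(X,\mu_c)\to\ell^2(X)$, $Uf=h_cf$. Then $-\mathscr G_c=U^{-1}H_cU$: for finitely supported $f$ the ground-state identity \eqref{rob:eq:gst} gives $\ip{h_cf}{H_c(h_cf)}=\frac12\sum c(x,y)h_c(x)h_c(y)|f(x)-f(y)|^2$, which is the Dirichlet form of $\mathscr G_c$ in $\ell^2(\mu_c)$. Hence
\[
 \mathbb P_x^c(X_t=x)=\frac{\ip{\delta_x}{e^{-tH_c}\delta_x}}{h_c(x)\,h_c(x)}\cdot h_c(x)^2\mu_c(x)^{-1}
 =\ip{\delta_x}{e^{-tH_c}\delta_x}.
\]
Before using this identity, I have to check conservativeness (no explosion and no killing). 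Killing is absent because $H_ch_c=0$ pointwise, so the rows of $\mathscr G_c$ sum to zero. Explosion is excluded because the jump rates $c(x,y)h_c(y)/h_c(x)$ are bounded: $h_c$ is comparable to $r^{-\alpha}$ and neighbours have comparable radii. The reversible measure $\mu_c=h_c^2$ satisfies $\mu_c(B_R)\asymp\sum_{|x|\le R}|x|^{-2\alpha}\asymp R^2$ by \eqref{rob:eq:thresholdprofile} and \eqref{rob:eq:effective2}.

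For the return probabilities I will use the scalar spectral measure $\nu_x$ of $H_c$ at $\delta_x$, so that $p(t):=\ip{\delta_x}{e^{-tH_c}\delta_x}=\int e^{-t\lambda}d\nu_x(\lambda)$, and compare it with the resolvent behaviour. By Hellmann--Feynman-type differentiation, $-\frac{d}{d\varepsilon}\ip{\delta_x}{(H_c+\varepsilon)^{-1}\delta_x}=\|(H_c+\varepsilon)^{-1}\delta_x\|^2$. Proposition~\ref{pref:prop:finitepart} gives the diagonal $\asymp L_\varepsilon$. For the squared norm I plan to get $\asymp\varepsilon^{-1}$ from the matched profile: up to $o$-errors, $(H_c+\varepsilon)^{-1}\delta_x\approx c\,\phi\BesselK(\sqrt\varepsilon r)$ with $c=h(x)/\omega$. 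Lemma~\ref{rob:lem:mass} then yields $\asymp\varepsilon^{-1}$; alternatively one can use barriers as in \eqref{rob:eq:profile}, now without the $1/L_\varepsilon$ factor, via the rank-one pinning identity. This gives $\int_0^\infty te^{-\varepsilon t}p(t)\,dt\asymp\varepsilon^{-1}$ as $\varepsilon\downarrow0$.

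The main obstacle is the Tauberian step from this to $p(t)\asymp t^{-1}$; I would use monotonicity. Since $p$ is nonincreasing (being a Laplace transform of a positive measure), the lower bound $p(t)\ge c/t$ follows from the upper Laplace bound combined with a lower bound on an integral $\int_{t}^{Kt}$, comparing $\varepsilon=1/t$. For the upper bound, $p(2t)\,\frac{t^2}{2}\le\int_t^{2t}sp(s)ds\le e^{2}\int se^{-s/t}p(s)ds\le Ct$. For the lower bound, split $\int se^{-\varepsilon s}p(s)ds$ into $s\le\eta/\varepsilon$, which is at most $C\eta^2\cdot\sup$ contributions controlled by the upper bound, $\le C\eta/\varepsilon$; the range $s\ge K/\varepsilon$ is $\le Ce^{-K/2}/\varepsilon$; and the middle range is $\le C p(\eta/\varepsilon)\varepsilon^{-2}$. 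Choosing $\eta$ small and $K$ large gives $p(\eta/\varepsilon)\gtrsim\varepsilon$.

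Null recurrence follows because $\int p=\infty$ (recurrence) while $\mu_c(X)=\infty$ (no finite invariant measure, hence infinite mean return time). For the tail of $T_x$, I plan to use the renewal/last-exit identity for continuous-time chains. Writing $q$ for the holding rate at $x$, the Laplace transforms satisfy $\hat p(\varepsilon)=\bigl(q+\varepsilon\bigr)^{-1}\bigl(1-\mathbb E_xe^{-\varepsilon T'}\bigr)^{-1}$-type relations with $T_x$. Since $\hat p(\varepsilon)=G^{c}_\varepsilon(x,x)\asymp L_\varepsilon$, this gives $1-\mathbb E_xe^{-\varepsilon T_x}\asymp 1/L_\varepsilon$. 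A standard monotone Tauberian argument for the nonincreasing tail $\mathbb P(T_x>t)$, namely $\int e^{-\varepsilon t}\mathbb P(T_x>t)dt=\varepsilon^{-1}(1-\mathbb E e^{-\varepsilon T_x})$, then yields $\mathbb P(T_x>t)\asymp(\log t)^{-1}$. Slowly varying two-sided bounds suffice here because $\log$ changes by $O(1)$ under $t\mapsto Kt$.
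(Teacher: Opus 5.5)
Your proposal is correct and follows the same overall route as the paper. You establish a resolvent-norm bound $\norm{(H_c+\varepsilon)^{-1}f}^2\asymp\varepsilon^{-1}$ for a positive finite source, pass to $t^{-1}$ by a monotone Tauberian step, and control the tail through the first-return renewal identity $1-\mathbb E^c_xe^{-sT_x}=[(q_x+s)U_x(s)]^{-1}$.

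The differences are in execution.
\begin{itemize}
\item The paper proves the norm bound only for the Perron source $f_0=W^{1/2}\zeta_0$. It uses the Birman--Schwinger identity $(H_c+\varepsilon)^{-1}f_0=(H+\varepsilon)^{-1}W^{1/2}(I-\tau_c\B(\varepsilon))^{-1}\zeta_0$ together with \eqref{rob:eq:norm} for $H$. It then derives $\nu([0,s])\asymp s$, hence $Q(t)\asymp t^{-1}$. Finally it transfers to an arbitrary vertex through the semigroup inequalities $p^c_{t+2}(x,x)\ge b_x\sum_{y,z\in F}p^c_t(y,z)$ and $p^c_{t+2}(y,y)\ge p^c_1(y,x)^2p^c_t(x,x)$.
\item You work at $\delta_x$ directly, which removes that transfer step. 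Your pinned rank-one version is the exact analogue of the paper's step. With $R_+=(H_c+\kappa P_o+\varepsilon)^{-1}$ one has $(H_c+\varepsilon)^{-1}\delta_x=R_+(\delta_x+c_\varepsilon\delta_o)$, where $c_\varepsilon=\kappa R_+(o,x)/(1-\kappa R_+(o,o))\asymp L_\varepsilon$ by Theorem~\ref{rob:thm:exactresponse} applied to the pinned operator. The normalized source lies in $\cP_{\{x,o\}}$, so \eqref{rob:eq:norm} for the pinned operator gives $\varepsilon^{-1}$.
\item Rely on that pinning argument rather than on the matched Bessel profile. Proposition~\ref{pref:prop:finitepart} controls the resolvent only at fixed vertices, with $x$-dependent constants, so it does not by itself give an $\ell^2$ estimate.
\item Your time-domain Tauberian argument, using monotonicity of $p$ with the split at $\eta/\varepsilon$ and $K/\varepsilon$, is equivalent to the paper's argument through the spectral distribution function.
\item Bounding the jump rates via $h_c\asymp r^{-\alpha}$ is an acceptable substitute for the paper's rate identity $q_c=\sum_yc(\cdot,y)+q-\tau_cW$.
\end{itemize}

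One small correction: your displayed identity for $\mathbb P^c_x(X_t=x)$ contains an arithmetic slip, since its middle expression equals $p^c_t(x,x)/h_c(x)^2$. The correct relation is $P^c_t(x,y)=h_c(y)p^c_t(x,y)/h_c(x)$, whose diagonal is $p^c_t(x,x)$, which is what you then use.
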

\begin{proof}
Keep the fixed positive vector $f_0=W^{1/2}\zeta_0$. The finite-rank
resolvent identity gives
\begin{equation}\label{rob:eq:criticalresolvent}
 (H_c+\varepsilon)^{-1}f_0
 =(H+\varepsilon)^{-1}W^{1/2}
        (I-\tau_c\B(\varepsilon))^{-1}\zeta_0.
\end{equation}
The finite inverse has nonnegative entries by its Neumann series, since
$\tau_c\lambda(\varepsilon)<1$. Its action on $\zeta_0$ has norm
comparable to $L_\varepsilon$. In fact, the component along
$\zeta_\varepsilon$ is
\[
 \frac{\ip{\zeta_\varepsilon}{\zeta_0}}
 {1-\tau_c\lambda(\varepsilon)}\zeta_\varepsilon,
\]
whose norm is comparable to $L_\varepsilon$ by
\eqref{rob:eq:lambdarate}; the other components remain bounded by the
strict spectral gap at zero. Thus the positive source on the right of
\eqref{rob:eq:criticalresolvent}, normalized in $\ell^1$, belongs to
$\cP_F$, and its normalizing factor is comparable to $L_\varepsilon$.
Theorem~\ref{rob:thm:response} gives
\begin{equation}\label{rob:eq:criticalderivative}
 \norm{(H_c+\varepsilon)^{-1}f_0}^2\asymp\varepsilon^{-1}.
\end{equation}

Let $\nu$ be the spectral measure of $f_0$ for $H_c$, and put
$A(s)=\nu([0,s])$. Equation \eqref{rob:eq:criticalderivative} says
\[
 \frac c\varepsilon\le
 \int_0^\infty(\lambda+\varepsilon)^{-2}\,d\nu(\lambda)
 \le\frac C\varepsilon.
\]
Restricting the integral at $\varepsilon=s$ to $[0,s]$ gives
$A(s)\le4Cs$. Enlarge the constant so that $A(s)\le C_1s$ for all
$s>0$. Integration by parts then yields
$\int_{(a,\infty)}\lambda^{-2}\,d\nu(\lambda)\le2C_1/a$.
Evaluate the lower derivative bound at $\varepsilon=a/M_1$ with
fixed $M_1$ sufficiently large. The contribution of $(a,\infty)$ is
at most half of $c/\varepsilon$. The contribution of $[0,a]$ is at
most $A(a)/\varepsilon^2$, so $A(a)\ge ca/(2M_1)$.
Therefore $A(s)\asymp s$ near zero. Integrating against $e^{-t\lambda}$
gives
\begin{equation}\label{rob:eq:Qreturn}
 Q(t):=\ip{f_0}{e^{-tH_c}f_0}\asymp t^{-1}.
\end{equation}
For the lower bound use $e^{-1}A(1/t)$; for the upper bound integrate
by parts and use $A(s)\le C_1s$.

Write $p_t^c(x,y)$ for the kernel of $e^{-tH_c}$. On $F$ the source
$f_0$ is strictly positive, hence
$Q(t)\ge f_0(y)^2p_t^c(y,y)$ for each $y\in F$.
For arbitrary fixed $x$, positivity of $p_1^c$ and finiteness of $F$
give a constant $b_x>0$ such that
\[
 p_{t+2}^c(x,x)\ge b_x
       \sum_{y,z\in F}p_t^c(y,z)\ge
       \frac{b_x}{\max_F f_0^2}Q(t).
\]
Conversely, fixing $y\in F$ gives
$p_{t+2}^c(y,y)\ge p_1^c(y,x)^2p_t^c(x,x)$.
Together with \eqref{rob:eq:Qreturn}, these inequalities prove
$p_t^c(x,x)\asymp_x t^{-1}$ for every fixed vertex.

The total jump rate in \eqref{rob:eq:doob} is, by $H_ch_c=0$,
\[
 q_c(x)=\sum_yc(x,y)\frac{h_c(y)}{h_c(x)}
       =\sum_yc(x,y)+q(x)-\tau_cW(x).
\]
It is strictly positive and uniformly bounded above. The chain is
therefore nonexplosive and conservative. Its transition probabilities
are $P_t^c(x,y)=h_c(y)p_t^c(x,y)/h_c(x)$, so the diagonal is unchanged.
Detailed balance gives $\mu_c=h_c^2$, and annular counting gives
$\mu_c(B_R)\asymp R^2$.

Fix $x$, write $q_x=q_c(x)$, and put
$U_x(s)=\int_0^\infty e^{-st}P_t^c(x,x)dt\asymp\log(1/s)$.
Decomposition at the first return after departure, with the initial
exponential holding time included, gives
\[
 U_x(s)=\frac1{q_x+s}+U_x(s)\mathbb E_x^c e^{-sT_x},\qquad
 1-\mathbb E_x^c e^{-sT_x}
 =\frac1{(q_x+s)U_x(s)}\asymp\frac1{\log(1/s)}.
\]
We use $e^{-s\infty}=0$. The limit at zero shows $T_x<\infty$ almost
surely. For its tail, use
\[
 (1-e^{-1})\mathbb P(T_x>t)
 \le1-\mathbb E e^{-T_x/t},\qquad
 1-\mathbb E e^{-T_x/t^2}\le t^{-1}+\mathbb P(T_x>t).
\]
These prove the two sides of the second estimate in \eqref{rob:eq:returns}.
Its integral diverges, so the mean return time is infinite. With
irreducibility, this proves null recurrence.
\end{proof}

The reversible measure $h_c^2$ has infinite total mass, whereas the
perturbation-weighted mass $\sum W h_c^2$ is finite because $W$ has
finite support.

\section{Critical couplings across the dimensional transition}
\label{sec:N8application}

Lemma~\ref{lem:N8subcritical} applies the general threshold theorem
to $H_N-\tau W$ for every $3\le N\le8$ and every nonzero nonnegative
finitely supported $W$, without a symmetry assumption on $W$.
For first-sphere perturbations, the Schur reduction also identifies
the threshold, the critical state and the interval with one negative
eigenvalue. We treat attraction in dimension eight and repulsion in
dimension nine.

Use $K,\mathcal Q,\Sigma,Y,e,b$ from Section~\ref{sec:N8}, and put
$V=w_8|_Y$. The constructed spatial remainder and compression of the free
lattice Laplacian give
\begin{equation}\label{eq:lebase}
 0\le K\le32I,\qquad K\ge M_V,\quad V(x)>0,\qquad
 m_0:=m(0)\le7-c_*<7.
\end{equation}

\subsection{The eight-dimensional exterior response}

The vector $v_\varepsilon$ is the exterior profile produced by the
constant first-sphere input. We record its scalar response
$m(\varepsilon)$, its squared mass $J(\varepsilon)$, and the response
lost as $\varepsilon$ increases from zero, $d(\varepsilon)$.
For $\varepsilon>0$, these quantities are
\begin{equation}\label{eq:ledefinitions}
 \begin{split}
 v_\varepsilon&=(K+\varepsilon)^{-1}b,\qquad
 m(\varepsilon)=\langle b,v_\varepsilon\rangle,\qquad
 J(\varepsilon)=\|v_\varepsilon\|_2^2,\\
 d(\varepsilon)&=m_0-m(\varepsilon),\qquad
 L_\varepsilon=\log(1/\varepsilon).
 \end{split}
\end{equation}

\begin{proposition}\label{thm:lowenergy}
As $\varepsilon\downarrow0$, the responses $v_\varepsilon$
converge pointwise and in $\mathscr H_K$ to the energy
representative $v_0$ of $b$.
The response $v_\varepsilon$ satisfies the spatial estimates of
\eqref{rob:eq:profile} with $N=8$, $\alpha=3$ and constants rescaled for
the fixed source $b$. In particular,
\begin{equation}\label{eq:leresponse}
 J(\varepsilon)\asymp\frac1{\varepsilon L_\varepsilon^2},\qquad
 d(\varepsilon)\asymp\frac1{L_\varepsilon}.
\end{equation}
In addition, with $a_b=\lim_{|x|\to\infty}|x|^3v_0(x)>0$,
\begin{equation}\label{eq:leexactresponse}
 d(\varepsilon)=\frac{2\pi^4a_b^2}{3L_\varepsilon}
                 +O(L_\varepsilon^{-2}).
\end{equation}
The limiting profile satisfies
\begin{equation}\label{eq:lezero}
 Kv_0=b,\qquad v_0(x)\asymp|x|^{-3},\qquad
 v_0\in\ell^2_{-\sigma}(Y)\ \Longleftrightarrow\ \sigma>1.
\end{equation}
\end{proposition}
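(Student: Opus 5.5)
The plan is to read Proposition~\ref{thm:lowenergy} as a special case of the general low-energy theory of Section~\ref{sec:lowenergy}, applied to the exterior operator $K$ and the fixed nonnegative source $b$. By Lemma~\ref{lem:N8subcritical}, $K$ satisfies \eqref{rob:eq:tail}--\eqref{rob:eq:subcritical} on $X=Y=\Z^8\setminus(\{0\}\cup\Sigma)$. The conductances are those of the lattice, and the diagonal retained at deleted neighbours becomes a finite potential. The tail is $-9|x|^{-2}$ exactly, and \eqref{eq:lebase} gives Green finiteness. The source $b$ is nonnegative and supported on the finite set $S=\{|x|^2=2\}\cup\{\pm2e_i\}$, and $\|b\|_1=112/2+16/4=60$. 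So $b/60\in\cP_S$, and every bound of Theorem~\ref{rob:thm:response} transfers to $v_\varepsilon=(K+\varepsilon)^{-1}b$ after multiplying the constants by $60$. This gives the profile bounds \eqref{rob:eq:profile} with $\alpha=3$. It also gives $J(\varepsilon)\asymp(\varepsilon L_\varepsilon^2)^{-1}$ from \eqref{rob:eq:norm}, and $d(\varepsilon)\asymp L_\varepsilon^{-1}$ from \eqref{rob:eq:scalarresponse}, once we identify $m_0=\langle b,G_0b\rangle$. That identification follows by monotone convergence of the positive kernels, consistently with Proposition~\ref{prop:energyresidual}.

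For the convergence statements, the pointwise limit of $v_\varepsilon$ is $G_0b$, by monotonicity of the resolvent kernel as $\varepsilon\downarrow0$ and Green finiteness. Convergence in $\mathscr H_K$ I will get from the energy identity. Using $\langle v_\varepsilon,(K+\varepsilon)v_\varepsilon\rangle=m(\varepsilon)$, I compute
\[
 Q_K[v_\varepsilon-v_0]=Q_K[v_\varepsilon]-2\langle b,v_\varepsilon\rangle+Q_K[v_0]
 =m_0-m(\varepsilon)-\varepsilon J(\varepsilon).
\]
Here $Q_K[v_0]=m_0$ by \eqref{eq:energyresponsegeneral}, and $Q_K(v_0,v_\varepsilon)=\langle b,v_\varepsilon\rangle$ holds because $v_\varepsilon\in\ell^2\subset\mathscr H_K$. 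The right side is at most $d(\varepsilon)\to0$. Continuity of point evaluations on $\mathscr H_K$ (from $K\ge M_V$) then shows that the energy limit equals the pointwise limit, so $v_0=G_0b$ is the Riesz representative.

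Equation \eqref{eq:lezero} then follows from \eqref{rob:eq:zeroresponse}: $Kv_0=b$ pointwise and $v_0\asymp|x|^{-3}$. The weighted-space criterion is the same dyadic count as in \eqref{rob:eq:thresholdprofile}, since the annulus $R\le|x|<2R$ contributes about $R^{8}R^{-6}R^{-2\sigma}$. The amplitude $a_b>0$ exists by Theorem~\ref{rob:thm:exactresponse}, with $a_b=a(b)=\sum_y a(y)b(y)$. The exact expansion \eqref{eq:leexactresponse} is \eqref{rob:eq:exactscalar} with $f=b$, once we note $d(\varepsilon)=\langle b,[G_0-(K+\varepsilon)^{-1}]b\rangle$ and $\omega_7=|\mathbb S^7|=2\pi^4/\Gamma(4)=\pi^4/3$, so that $2\omega_7=2\pi^4/3$.

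The step that needs the most care is checking that $K$ truly fits the general hypotheses. $K$ is a compression with full diagonal $16$, so at vertices adjacent to $\Sigma$ the missing edges must be rewritten as nonnegative potential terms while keeping the graph connected. Connectivity of $Y$ needs a short argument: $\Z^8$ minus a ball of radius one is connected. One must also confirm that Green finiteness of $K$ is inherited from $K\ge M_V$ with $V>0$ pointwise, via the inverse-form-order argument of Proposition~\ref{prop:defectpositiveapriori}. Everything else is bookkeeping of constants.
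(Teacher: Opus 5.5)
Your proposal is correct and follows the paper's proof. Both apply Theorems~\ref{rob:thm:response} and~\ref{rob:thm:exactresponse} to $K$ with the normalized source $b/\|b\|_1$, identify the energy limit with the pointwise Green limit via continuity of vertex evaluations from $K\ge M_V$, and use $|\mathbb S^7|=\pi^4/3$. The one difference is cosmetic: you get $\mathscr H_K$-convergence from $Q_K[v_\varepsilon-v_0]=m_0-m(\varepsilon)-\varepsilon J(\varepsilon)\le d(\varepsilon)$, while the paper writes the same quantity as the spectral integral $\int\varepsilon^2\lambda^{-1}(\lambda+\varepsilon)^{-2}\,d\mu_b(\lambda)$ and applies dominated convergence; your version also yields an explicit $O(L_\varepsilon^{-1})$ rate.
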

\begin{proof}
Apply Theorem~\ref{rob:thm:response} to $K$ and the source
$b/\|b\|_1$, then restore the fixed normalization. This gives all
pointwise and norm estimates; the identity
$d(\varepsilon)=\int_0^\varepsilon J(s)\,ds$ gives the scalar comparison.
The zero-energy profile and annular counting give the weighted-space
classification.

It remains to identify the pointwise limit with the energy representative
used in the Schur calculation. The inequality $K\ge V>0$ gives
$\ker K=\{0\}$ and continuity of vertex evaluations in
$\mathscr H_K$. If $\mu_b$ is the spectral measure of $b$, then
$\int\lambda^{-1}d\mu_b(\lambda)=m_0<\infty$.
The zero-energy representative therefore satisfies
\[
 \|v_\varepsilon-v_0\|_{\mathscr H_K}^2
 =\int_{(0,32]}\frac{\varepsilon^2}
 {\lambda(\lambda+\varepsilon)^2}\,d\mu_b(\lambda)
 \longrightarrow0.
\]
Dominated convergence applies with majorant $\lambda^{-1}$.
Continuity of evaluations identifies this energy limit with the
pointwise Green limit. Passing to the limit in the finite row gives
$Kv_0=b$ at every vertex. Theorem~\ref{rob:thm:exactresponse}
applied to $K$ and $b$, together with
$|\mathbb S^7|=2\pi^4/\Gamma(4)=\pi^4/3$, gives
\eqref{eq:leexactresponse} and the existence of $a_b$.
\end{proof}
\subsection{The first-sphere coupling}

The same exterior response controls a family of perturbations supported on
the first sphere. Let $P_e$ be the orthogonal projection onto $e$, extended
by zero from $\Sigma$ to $X_8$, and let $P_\Sigma$ be multiplication by
$\mathbf1_\Sigma$. The rank-one perturbation changes only the
constant interface channel; the local perturbation is a diagonal
potential on the sixteen points of $\Sigma$. Denote the two families by
\begin{equation}\label{eq:birthfamilies}
 H_8^{\mathrm r}(\tau)=H_8-\tau P_e,\qquad
 H_8^{\mathrm{loc}}(\tau)=H_8-\tau P_\Sigma,
 \qquad\tau\ge0.
\end{equation}
Both preserve the essential spectrum $[0,32]$. Their first bound-state
branches coincide, as the next proposition shows.

Define
\begin{equation}\label{eq:birththreshold}
 \tau_*=7-m_0.
\end{equation}
Proposition~\ref{prop:defectschur} identifies this quantity in the
structural formula $\mathfrak d_8=16\tau_*+\|r_{\mathrm{cr}}\|_{K,*}^2$.
The next response eigenvalue determines how far the coupling can
increase before another interface direction reaches zero. Denote the
zero-energy response matrix by $T_0$ and its second largest
eigenvalue, counted with multiplicity, by $\lambda_2(T_0)$.
The corresponding coupling is $\tau_2$:
\[
 T_0=\lim_{\varepsilon\downarrow0}B^*(K+\varepsilon)^{-1}B,
 \qquad \tau_2=7-\lambda_2(T_0),
\]
Every entry of $T_0$ is strictly positive, since the
exterior is connected and every column of $B$ is a nonzero nonnegative
source. Perron--Frobenius and transitivity imply
$\lambda_2(T_0)<m_0$, hence $\tau_2>\tau_*$.

\begin{proposition}\label{prop:birthschur}
The threshold satisfies
\begin{equation}\label{eq:birththresholdbounds}
 c_*\le\tau_*<7,\qquad \tau_2>\tau_*.
\end{equation}
Both operators in \eqref{eq:birthfamilies} are nonnegative for
$0\le\tau\le\tau_*$. For every $\tau>\tau_*$,
$H_8^{\mathrm r}(\tau)$ has exactly one negative eigenvalue, which is simple.
For $\tau_*<\tau<\tau_2$, the local family has exactly the same negative
eigenvalue. Writing it as $-\varepsilon(\tau)$, one has
\begin{equation}\label{eq:birthequation}
 \tau-\tau_*=\varepsilon+d(\varepsilon).
\end{equation}
Its strictly positive normalized eigenvector, in the decomposition
$\ell^2(X_8)=\ell^2(\Sigma)\oplus\ell^2(Y)$, is
\begin{equation}\label{eq:birtheigenvector}
 \Psi_\tau=\frac{(e,v_{\varepsilon(\tau)})}
 {\sqrt{1+J(\varepsilon(\tau))}}.
\end{equation}
Moreover,
\begin{equation}\label{eq:birthderivative}
 \varepsilon'(\tau)=\frac1{1+J(\varepsilon(\tau))}
 =\|\mathbf1_\Sigma\Psi_\tau\|_2^2.
\end{equation}
\end{proposition}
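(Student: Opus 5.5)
The plan is to reduce everything to the Schur decomposition \eqref{eq:N8block} at negative energy. The bounds $c_*\le\tau_*<7$ restate Proposition~\ref{prop:N8finalresponse}: $m_0\le7-c_*$ and $m_0>0$. The inequality $\tau_2>\tau_*$ was observed just before the statement, from positivity of $T_0$ and Perron--Frobenius with transitivity.

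Fix $\varepsilon>0$. Since $K\ge0$, the operator $K+\varepsilon$ is invertible, and the Schur complement identity (Proposition~\ref{prop:abstractschur}, or its bounded version) shows that $-\varepsilon$ is an eigenvalue of $H_8-\tau P$ exactly when the interface matrix
\[
 S_\tau(\varepsilon)=(7+\varepsilon)I-\tau P-T(\varepsilon),\qquad T(\varepsilon)=B^*(K+\varepsilon)^{-1}B,
\]
is singular. Here $P=P_e$ or $P_\Sigma=I$ on $\ell^2(\Sigma)$. The eigenvectors are $(w,(K+\varepsilon)^{-1}Bw)$ with $w\in\ker S_\tau(\varepsilon)$, and multiplicities agree. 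More usefully, the negative spectrum below $-\varepsilon$ is counted by the negative eigenvalues of $S_\tau(\varepsilon)$.

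By symmetry and transitivity, $T(\varepsilon)$ has $e$ as its Perron eigenvector with eigenvalue $m(\varepsilon)$. The other eigenvalues are at most $\lambda_2(T(\varepsilon))\le\lambda_2(T_0)$, using $T(\varepsilon)\le T_0$ in form order. For the rank-one family $P_e$, the channel orthogonal to $e$ carries $(7+\varepsilon)-\lambda_j(T(\varepsilon))\ge 7-m_0+\varepsilon>0$, because all eigenvalues of $T(\varepsilon)$ are at most $m(\varepsilon)\le m_0$. So only the $e$ channel matters, and it carries
\[
 F(\varepsilon)=7+\varepsilon-\tau-m(\varepsilon)=\varepsilon+d(\varepsilon)-(\tau-\tau_*).
\]
This function is continuous and strictly increasing in $\varepsilon$, since $m$ is decreasing. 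It tends to $\tau_*-\tau$ as $\varepsilon\downarrow0$ and to $+\infty$ as $\varepsilon\to\infty$. Hence for $\tau>\tau_*$ it has exactly one zero, which gives \eqref{eq:birthequation} and a simple eigenvalue. For $\tau\le\tau_*$ it has no zero, and nonnegativity follows from Proposition~\ref{prop:abstractschur} applied to $7I-\tau P-T_0$.

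For the local family, the orthogonal channels carry $7+\varepsilon-\tau-\lambda_j(T(\varepsilon))\ge\tau_2-\tau>0$ when $\tau<\tau_2$. Therefore the same scalar equation governs, and the negative eigenvalue is the same. Counting uses the monotone count: the number of eigenvalues below $-\varepsilon$ equals the number of negative eigenvalues of $S_\tau(\varepsilon)$, which follows from the Schur factorization with $K+\varepsilon>0$.

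The eigenvector is $(e,(K+\varepsilon)^{-1}Be)=(e,v_\varepsilon)$. It has squared norm $1+J(\varepsilon)$ and is strictly positive, since $e>0$ and the resolvent is positivity improving on the connected exterior. For \eqref{eq:birthderivative}, the spectral theorem gives $m'(\varepsilon)=-J(\varepsilon)$, so $d'=J$. Implicit differentiation of \eqref{eq:birthequation} then gives $\varepsilon'(\tau)(1+J)=1$, and $\|\mathbf1_\Sigma\Psi_\tau\|^2=1/(1+J)$.

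The main obstacle is justifying the counting and eigenvalue statements rigorously through the Schur reduction at negative energy, rather than the scalar calculus. For this I would use the factorization
\[
 H-\tau P+\varepsilon=\mathcal U^*\,\mathrm{diag}\bigl(S_\tau(\varepsilon),K+\varepsilon\bigr)\,\mathcal U,
\]
with $\mathcal U$ bounded and invertible, and then Sylvester inertia to match negative directions.
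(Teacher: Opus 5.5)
Your proposal is correct and follows the paper's route: the Schur reduction at energy $-\varepsilon$, the split of the interface into the constant channel $e$ and its orthogonal complement, and the strictly increasing scalar function $7+\varepsilon-\tau-m(\varepsilon)$, followed by implicit differentiation using $m'(\varepsilon)=-J(\varepsilon)$. The differences are minor refinements: you make the eigenvalue count explicit through the congruence factorization and Sylvester inertia, where the paper simply invokes the Schur identity, and you bound the rank-one complement below by $\tau_*+\varepsilon$ instead of the paper's $\tau_2+\varepsilon$.
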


\begin{proof}
The bounds on $\tau_*$ follow from
Proposition~\ref{prop:N8finalresponse}, and the strict channel gap was
proved above.

At energy $-\varepsilon<0$, the constant-channel Schur complement of
both families is
\begin{equation}\label{eq:birthscalar}
 f_\tau(\varepsilon)=7+\varepsilon-\tau-m(\varepsilon).
\end{equation}
On the orthogonal complement of $e$, the rank-one family retains
the Schur form $(7+\varepsilon)I-T_\varepsilon$, where
$T_\varepsilon=B^*(K+\varepsilon)^{-1}B\le T_0$.
It is bounded below by $\tau_2+\varepsilon$ there.
For the local family the lower bound is
$\tau_2+\varepsilon-\tau$, which is positive for $\tau<\tau_2$.

The scalar function in \eqref{eq:birthscalar} is strictly increasing:
\[
 f_\tau'(\varepsilon)=1+J(\varepsilon)>0.
\]
Its limit at zero is $\tau_*-\tau$, and it tends to infinity as
$\varepsilon\to\infty$, since $m(\varepsilon)\le29/\varepsilon$.
For $\tau\le\tau_*$ every regularized Schur complement is positive,
which proves nonnegativity. For $\tau>\tau_*$ there is one and only one
positive root of \eqref{eq:birthscalar}. The exterior block is invertible
at that energy and the interface kernel is one-dimensional. The Schur
identity thus proves existence, uniqueness and simplicity of the negative
eigenvalue, with the indicated range for the local family.

The signed permutation symmetry makes
$B^*v_\varepsilon=m(\varepsilon)e$. The block equations then give
\eqref{eq:birtheigenvector}; strict positivity follows from
Lemma~\ref{rob:lem:green}. Since
$P_\Sigma(e,v_\varepsilon)=P_e(e,v_\varepsilon)$, the two branches and
these eigenvectors agree. Subtracting $7-\tau_*-m_0=0$ from the root
equation gives \eqref{eq:birthequation}. Implicit differentiation proves
\eqref{eq:birthderivative}.
\end{proof}

\subsection{Threshold state, mass and critical returns}

The symmetry of the first-sphere problem identifies the branch
selected by Theorem~\ref{rob:thm:birth}. The same state determines
the mass bounds, return laws and canonical prefactor below.

\begin{proof}[Proof of Corollary~\ref{thm:birthintro}]
Lemma~\ref{lem:N8subcritical} permits application of
Theorem~\ref{rob:thm:birth} to $H_8$ and $W=P_\Sigma$.
Proposition~\ref{prop:birthschur} identifies its first threshold as
$\tau_*=7-m_0$, gives its quantitative bounds, and extends uniqueness
of the negative eigenvalue to the interval $\tau_*<\tau<\tau_2$.
The general theorem proves the absence of an $\ell^2$ zero eigenvector
and all the mass and radius estimates. The exact coefficient can also
be read directly from the Schur equation: by
\eqref{eq:birthequation} and~\eqref{eq:leexactresponse},
\[
 \log(1/\varepsilon(\tau))=
 \frac{\mathfrak A_8}{\tau-\tau_*}+O(1),\qquad
 \mathfrak A_8=\frac{2\pi^4a_b^2}{3}.
\]
The threshold vector $(e,v_0)$ equals $1/4$ on $\Sigma$ and has
squared mass one there. Thus normalization to one on $\Sigma$ gives
$a_8=4a_b$ and $\mathfrak A_8=\pi^4a_8^2/24$, as asserted.
Taking logarithms of the radius comparison yields the bounded remainder
in \eqref{eq:introdelocalization}.
The general theorem also gives
\[
 \varepsilon'(\tau)=\|\mathbf1_\Sigma\Psi_\tau\|_2^2
 \asymp\varepsilon\log^2(1/\varepsilon)
 \asymp\frac{\varepsilon}{(\tau-\tau_*)^2}.
\]
The explicit representative of the threshold state is
\begin{equation}\label{eq:birthstate}
 \Phi_*=(e,v_0),\qquad
 \Phi_*(x)\asymp |x|^{-3},\qquad
 \Phi_*\in\ell^2_{-\sigma}(X_8)\ \Longleftrightarrow\ \sigma>1.
\end{equation}
Indeed, $Kv_0=b$ and $B^*v_0=m_0e$ give the two zero-energy block
equations. Positivity and uniqueness in Theorem~\ref{rob:thm:birth}
identify it up to normalization.

To specify the prefactor, put $H_*=H_8-\tau_*P_\Sigma$ and let
$g_8$ be the canonical solution
\begin{equation}\label{pref:eq:N8poisson}
 H_*g_8=(e,0),\qquad
 g_8(x)=-\frac1{|\mathbb S^7|a_b}|x|^{-3}\log|x|
                   +O(|x|^{-3-\sigma}).
\end{equation}
Its existence and uniqueness follow from
Proposition~\ref{pref:prop:finitepart} by rescaling the state.
The signed permutation symmetries preserve this normalization, so
$g_8|_\Sigma=t_8e$, where $t_8=\ip e{g_8|_\Sigma}$.
Since $\sum_\Sigma\Phi_*^2=1$, formula
\eqref{pref:eq:invariantprefactor} yields
\begin{equation}\label{pref:eq:N8prefactor}
 \mathfrak P_8=4e^{-2\gamma_{\mathrm E}}
                     \exp(\mathfrak A_8t_8)>0.
\end{equation}
This proves the full asymptotic in \eqref{eq:introbirthlaw}.
It also gives a convergent second term in the exterior response.
Indeed, the constant-channel Schur identity at criticality and
\eqref{pref:eq:criticalfinitepart} give
\[
 \frac1{\varepsilon+d(\varepsilon)}
 =\ip{(e,0)}{(H_*+\varepsilon)^{-1}(e,0)}
 =\frac{L_\varepsilon}{\mathfrak A_8}
       +t_8+\frac{2k_0}{\mathfrak A_8}+o(1).
\]
Consequently,
\begin{equation}\label{pref:eq:N8secondresponse}
 d(\varepsilon)=\frac{\mathfrak A_8}{L_\varepsilon}
 -\frac{\mathfrak A_8\log\mathfrak P_8}{L_\varepsilon^2}
 +o(L_\varepsilon^{-2}).
\end{equation}
Here $\varepsilon=o(L_\varepsilon^{-2})$, and
$\log\mathfrak P_8=\mathfrak A_8t_8+2k_0$.
The amplitude $a_b$ and the normalized Poisson datum $t_8$
determine the prefactor.
\end{proof}

The rank-one family in \eqref{eq:birthfamilies} has the same positive
bound-state branch, hence the same prefactor, and the same pointwise
threshold state. At the
threshold it also has no $\ell^2$ zero eigenvector: since
\[
 H_8^{\mathrm r}(\tau_*)
 =H_8^{\mathrm{loc}}(\tau_*)+\tau_*(P_\Sigma-P_e)
 \ge H_8^{\mathrm{loc}}(\tau_*)\ge0,
\]
a zero eigenvector for the left side would belong to the kernel of the
local operator, which is trivial.

The critical return comparisons follow from
Corollary~\ref{rob:cor:returns} with $h_c=\Phi_*$ and reversible
measure $\Phi_*^2$. The additional information specific to this example
is the interface characterization of the unique negative eigenvalue and the scalar second-order
response \eqref{pref:eq:N8secondresponse}.

\subsection{Extinction at the ninth-dimensional repulsion threshold}
\label{subsec:extinction9}

The same threshold law describes the removal of the negative spectrum
in dimension nine. The stable exterior makes it possible to cross the
coupling threshold while keeping the exterior operator fixed.

\begin{corollary}\label{cor:extinction9}
Let $t_9=m_9(0)-23/4>1/54$ and put
$J_9(t)=H_9+tP_{\Sigma_9}$ for $t\ge0$.
Then $J_9(t)$ is nonnegative exactly for $t\ge t_9$.
At $t=t_9$ it is critical, has no $\ell^2$ zero eigenvector, and its
positive zero-energy state, normalized to have first-sphere mass one, is
\[
 \Phi_9=(e_9,v_9),\qquad
 v_9=G_{K_9}b_9
     =\lim_{\varepsilon\downarrow0}(K_9+\varepsilon)^{-1}b_9,
\]
where the limit is pointwise and in the energy space $\mathscr H_{K_9}$.
In particular, $K_9v_9=b_9$ and
\[
 \Phi_9(x)=a_9|x|^{-7/2}(1+O(|x|^{-\sigma})),
 \qquad a_9,\sigma>0.
\]
For $t<t_9$ sufficiently close to $t_9$, there is exactly one negative
eigenvalue, which is simple, and
\begin{equation}\label{eq:extinction9}
 -\varepsilon_9(t),\qquad
 \varepsilon_9(t)=\mathfrak P_9
 \exp\!\left[-\frac{\mathfrak A_9}{t_9-t}\right](1+o(1)),
 \qquad \mathfrak A_9=2|\mathbb S^8|a_9^2.
\end{equation}
The prefactor is specified by
\eqref{pref:eq:invariantprefactor} with
$H_c=J_9(t_9)$, $W=P_{\Sigma_9}$, $h_c=\Phi_9$, and $M_c=1$.
\end{corollary}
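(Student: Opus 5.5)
The plan is to realise the family $J_9(t)$ as the general threshold problem of Theorem~\ref{rob:thm:birth}, run backwards in the parameter, and to read off the threshold state from the Schur reduction.

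\textbf{Step 1: the nonnegativity range.} In the block form \eqref{eq:structuralblock}, $J_9(t)$ has first-sphere diagonal $(23/4+t)I$ and the same exterior $K_9$. By Proposition~\ref{prop:K9}, $K_9$ is nonnegative and injective, has finite Green kernel, and has energy-continuous finite sources. Its resolvent preserves positivity, and the signed permutations act transitively on $\Sigma_9$. Proposition~\ref{prop:abstractschur} therefore gives
\[
 J_9(t)\ge0\quad\Longleftrightarrow\quad 23/4+t\ge m_9(0)\quad\Longleftrightarrow\quad t\ge t_9 .
\]
This is \eqref{eq:repulsion9}, and $t_9>1/54$ by \eqref{eq:K9margin}.

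\textbf{Step 2: a subcritical starting operator.} Fix $t_0>t_9$ and put $H=J_9(t_0)$, $W=P_{\Sigma_9}$ and $\tau=t_0-t$, so that $H-\tau W=J_9(t)$. I must check the hypotheses \eqref{rob:eq:tail}--\eqref{rob:eq:subcritical} for $H$.
\begin{itemize}
\item The graph is $\Z^9\setminus\{0\}$, which is connected, with the removed edges absorbed into a finite diagonal.
\item The potential equals $-A_9|x|^{-2}$ outside $\Sigma_9$, so the tail condition holds with $V=0$ at large radius.
\item Nonnegativity follows from Step~1.
\item For Green finiteness, apply the regularized Schur identity at energy $-\varepsilon$. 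The interface form is $(23/4+t_0+\varepsilon)I-T_\varepsilon$ with $T_\varepsilon\le T_0$ and $\lambda_{\max}(T_0)=m_9(0)$. It is therefore bounded below by $t_0-t_9>0$ uniformly in $\varepsilon$.
\end{itemize}
Hence $\langle\delta_{e_1},(H+\varepsilon)^{-1}\delta_{e_1}\rangle$ stays bounded. The Schur inverse formula expresses this entry through the inverse interface matrix alone, so finiteness at one vertex follows, and Lemma~\ref{rob:lem:green} extends it to all pairs. I expect this verification to be the main obstacle. The delicate point is the uniform-in-$\varepsilon$ control of the interface matrix together with the passage from the energy-space response to the pointwise Green limit; I would handle that exactly as in the proof of Proposition~\ref{thm:lowenergy}, using $K_9\ge M_{W_9}$ with $W_9>0$.

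\textbf{Step 3: identifying the critical coupling.} Theorem~\ref{rob:thm:birth} gives a critical coupling $\tau_c$ with $H-\tau W\ge0$ exactly for $0\le\tau\le\tau_c$. Comparing with Step~1 forces $\tau_c=t_0-t_9$, so $\delta=\tau-\tau_c=t_9-t$. The theorem then supplies:
\begin{itemize}
\item criticality at $t_9$;
\item absence of an $\ell^2$ zero eigenvector;
\item a unique positive pointwise zero-energy state $h_c=a_c|x|^{-\alpha}(1+O(|x|^{-\sigma}))$ with $\alpha=7/2$;
\item for $t<t_9$ close to $t_9$, exactly one simple negative eigenvalue, with rate $\mathfrak A=2\omega a_c^2/M_c$.
\end{itemize}
Theorem~\ref{pref:thm:prefactor} then gives the full asymptotic \eqref{eq:extinction9} with the invariant prefactor \eqref{pref:eq:invariantprefactor}.

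\textbf{Step 4: the explicit threshold state.} Define $v_9$ as the limit of $(K_9+\varepsilon)^{-1}b_9$. Convergence holds pointwise by monotonicity and finite Green kernel, and in $\mathscr H_{K_9}$ by the spectral-measure dominated convergence argument of Proposition~\ref{thm:lowenergy}. Passing to the limit in the finite rows gives $K_9v_9=b_9$. Signed-permutation symmetry gives $B_9^*v_9=m_9(0)e_9$, so the first-sphere row reads
\[
 (23/4+t_9)e_9-m_9(0)e_9=0 .
\]
Thus $\Phi_9=(e_9,v_9)$ is a positive zero-energy solution of $J_9(t_9)$, and uniqueness in Theorem~\ref{rob:thm:birth} identifies it with $h_c$ up to a scalar. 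Its first-sphere mass is $\|e_9\|^2=1$, so $M_c=1$. The amplitude $a_9$ comes from Theorem~\ref{rob:thm:exactresponse} and Lemma~\ref{rob:lem:amplitude} applied to $K_9$ and the source $b_9$. Substituting $M_c=1$ and $a_c=a_9$ into the rate formula gives $\mathfrak A_9=2|\mathbb S^8|a_9^2$, which completes the proof.
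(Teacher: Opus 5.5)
Your proof is correct and follows the paper's argument. The paper also writes $J_9(t)=\widehat H_9-\tau P_{\Sigma_9}$ with a subcritical reference operator, identifies $\tau_c$ through the first-sphere Schur criterion, applies Theorems~\ref{rob:thm:birth} and~\ref{pref:thm:prefactor}, and recovers the state from $K_9v_9=b_9$ and $B_9^*v_9=m_9(0)e_9$.

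The only difference is that the paper fixes $t_0=t_9+1$. Then $\widehat H_9=J_9(t_9)+P_{\Sigma_9}\ge P_{\Sigma_9}$, and the variational bound $\langle\delta_o,(\widehat H_9+\varepsilon)^{-1}\delta_o\rangle\le1$ gives Green finiteness at once. So the step you flagged as the main obstacle is immediate. Your interface bound $(t_0-t_9)I$ is the same inequality, and no energy-to-pointwise passage is needed at that point.
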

\begin{proof}
The positivity criterion is \eqref{eq:repulsion9}.
Use the subcritical reference operator
$\widehat H_9=J_9(t_9+1)=J_9(t_9)+P_{\Sigma_9}$.
It has the exact critical tail and satisfies
$\widehat H_9\ge P_{\Sigma_9}$. The variational resolvent formula
therefore gives $\widehat G_9(o,o)\le1$ at a first-sphere vertex.
Lemma~\ref{rob:lem:green} propagates Green finiteness to every pair.
The first-sphere Schur criterion shows that
$\widehat H_9-\tau P_{\Sigma_9}\ge0$ exactly for $0\le\tau\le1$.
Apply Theorems~\ref{rob:thm:birth} and~\ref{pref:thm:prefactor}
with $\tau_c=1$, and set $\tau=t_9+1-t$.
This gives $\tau-\tau_c=t_9-t$ and proves the spectral assertions.
The critical state is invariant under signed permutations by uniqueness.
Its trace is therefore a multiple of $e_9$; normalization to
$\|\Phi_9|_{\Sigma_9}\|_2=1$ fixes that multiple to one.
The exterior block equation and its energy representative give $v_9$.
Equivalently, $B_9^*v_9=m_9(0)e_9$ verifies the first-sphere equation
directly. Since $M_c=1$, the general rate formula gives
$\mathfrak A_9=2|\mathbb S^8|a_9^2$.
\end{proof}

\section{Consequences and further questions}\label{sec:conclusion}

The lattice Hardy family shows that preservation of the continuum
threshold at infinity does not guarantee preservation of the global
stability threshold. The total defect makes the dimensional change
explicit, and exterior optimization measures the correction needed
to pass from a prescribed profile to the least energy. At the first
strict dimension, the exterior itself is still subcritical. The
instability occurs in its coupling to the first sphere and leads to
loss of a uniform spectral lower bound under mesh refinement.

Local perturbations turn this stability comparison into a threshold
problem with a continuously varying parameter. The exponential
eigenvalue law holds for the stated class of critical inverse-square
tails, with constants determined by zero-energy data. Finite changes
of the core and power-decaying tail corrections may change those
constants and the critical coupling. The two-dimensional return
scale persists throughout that class.

Several questions remain. The scalar theta interpolation has at least
one zero in $(8,9)$; its uniqueness and monotonicity in the real
parameter are open beyond the integer-dimensional comparison proved
here. Determining the nonsharp Hardy constants $C(N)$ requires
further optimization; at sufficiently large dimensions the
trace-normalized infimum is already $-\infty$. For the general
threshold problem, one can ask whether the spatial mass and return
comparisons admit exact asymptotic constants. Tail corrections of
order $|x|^{-2}(\log|x|)^{-2}$ also lie outside the present power-decay
hypothesis and provide a natural extension of the spectral analysis.

\appendix
\addtocontents{toc}{\protect\setcounter{tocdepth}{1}}
\section{Theta representation and dimensional estimates}\label{app:defecttheta}

We derive Proposition~\ref{prop:defecttheta}. The point of the
regularization is that the energy and inverse-square mass of the critical
profile diverge logarithmically when separated, whereas their discrete
combination in \eqref{eq:defectdefinition} is absolutely summable.

For $p>0$, let $V_p\sim\operatorname{Beta}(p,p)$ and define
\begin{equation}\label{eq:appPp}
 P_p(t)=\mathbb E e^{-tV_p},\qquad
 B_p(t)=1+e^{-t}-2P_p(t).
\end{equation}
For $a,b>0$, apply the classical beta parameter
identity~\cite[Formula~3.197.4]{GradshteynRyzhik} with both shape
parameters equal to $p$ and affine parameter $a/b-1>-1$.
Combining it with the Mellin--Laplace
formula~\cite[Formula~3.381.4]{GradshteynRyzhik} gives
\begin{align}
 a^{-p}b^{-p}
 &=\frac{1}{\Gamma(2p)}\int_0^\infty t^{2p-1}
 \mathbb E e^{-t(V_pa+(1-V_p)b)}dt,\label{eq:appFeynman}\\
 (a^{-p}-b^{-p})^2
 &=\frac{1}{\Gamma(2p)}\int_0^\infty t^{2p-1}
 e^{-t\min\{a,b\}}B_p(t|a-b|)dt.\label{eq:appedgeMellin}
\end{align}
In \eqref{eq:appedgeMellin} the second formula is used only when
$a$ and $b$ are the squared norms of neighbouring nonzero lattice
points, so $|a-b|$ is a positive odd integer after choosing the smaller
squared norm.

Fix a coordinate direction. Pair the two edges crossing the levels
$k$ and $-k-1$. The two edges incident with the origin contribute the
separate term $e^{-t}$. Summing the transverse coordinates gives
$\Theta(t)^{N-1}$, and excluding the transverse origin on the first
pair gives $\Theta(t)^{N-1}-1$. Thus at the critical value
$p=\nu_N$ the edge kernel is exactly \eqref{eq:defectJ}. This is the
origin of the three terms in that formula.

To keep the cancellation at zero intact, introduce
\begin{equation}\label{eq:appFN}
 \mathcal F_N(t)=\int_t^\infty(\Theta(u)^N-1)du.
\end{equation}
Put $p=\nu_N+\delta$, $\delta>0$, and let
$h_p(x)=|x|^{-2p}$ for $x\ne0$, with $h_p(0)=0$.
The energy and inverse-square mass of $h_p$ now converge separately.
Summation by parts on expanding balls has a boundary error
$O(R^{-4\delta})$, and gives their difference as
$\sum_{x\ne0}h_p(x)(H_Nh_p)(x)$. Uniformly for
$p\in[\nu_N,\nu_N+\delta_0]$, Taylor's formula outside a fixed ball gives
\begin{equation}\label{eq:appregularizeddefect}
 h_p(x)(H_Nh_p)(x)
 =-4\delta^2|x|^{-N-4\delta}
   +O_N(|x|^{-N-2-4\delta}).
\end{equation}
The sum of the first term is $O_N(\delta)$, since annular counting
bounds $\sum_{x\ne0}|x|^{-N-4\delta}$ by $O_N(1+\delta^{-1})$.
The remainder is dominated by a fixed multiple of $|x|^{-N-2}$.
Consequently the regularized difference tends to $\mathfrak d_N$.

Let $\mathcal J_p$ denote the kernel in \eqref{eq:defectJ} with
$\mathcal B_N$ replaced by $B_p$. The preceding absolutely convergent
identities give
\[
 \sum_{x\ne0}h_p(x)(H_Nh_p)(x)
 =\frac1{\Gamma(2p)}\int_0^\infty t^{2p-1}
       [\mathcal J_p(t)-s_N^2\mathcal F_N(t)]\,dt.
\]
Here is a uniform small-$t$ expansion of its integrand. Write
$\Theta_v(t)=\sum_{k\in\Z}e^{-t(k+v)^2}$. Completing the square in
\eqref{eq:appFeynman} and correcting the two origin edges in each
coordinate direction gives the exact identity
\begin{align*}
 \mathcal J_p(t)=N\big[2\Theta(t)^N
 &-2\Theta(t)^{N-1}\mathbb E\{
       e^{-tV_p(1-V_p)}\Theta_{V_p}(t)\}\\
 &-2+4P_p(t)\big].
\end{align*}
Jacobi's transformation~\cite[Eq.~20.7.32]{DLMF} and the theta
Fourier series~\cite[Formula~8.180.4]{GradshteynRyzhik} give
\begin{equation}\label{eq:appshiftedJacobi}
 \Theta_v(t)=\sqrt{\frac\pi t}
 \left[1+2\sum_{n\ge1}e^{-\pi^2n^2/t}\cos(2\pi nv)\right],
 \qquad t>0.
\end{equation}
The absolute value of the nonconstant term in brackets is at most
$2e^{-\pi^2/t}/(1-e^{-\pi^2/t})$. Hence
$\Theta_v(t)=(\pi/t)^{1/2}[1+O(e^{-\pi^2/t})]$
uniformly in $v\in[0,1]$. Since $0\le V_p(1-V_p)\le1/4$ and
$\mathbb E[V_p(1-V_p)]=p/[2(2p+1)]$, it follows that
\begin{equation}\label{eq:appJpuniform}
 \mathcal J_p(t)=\frac{Np}{2p+1}\pi^{N/2}t^{1-N/2}
       +O_N(t^{2-N/2}+1),\qquad 0<t\le1,
\end{equation}
with a constant uniform on the indicated compact interval of $p$.
The same Jacobi formula gives
$\mathcal F_N(t)=\pi^{N/2}t^{-s_N}/s_N+O_N(1)$.
Thus the singular coefficient of the difference is
\[
 c(p)=\frac{Np}{2p+1}-s_N,
 \qquad c(\nu_N)=0,\qquad c'(\nu_N)=\frac4N.
\]
Its contribution on $(0,1)$ is
\[
 \frac{\pi^{N/2}c(p)}{\Gamma(2p)}
 \int_0^1t^{2\delta-1}\,dt
 \longrightarrow \frac{2\pi^{N/2}}{N\Gamma(s_N)}.
\]
After this term is subtracted, the integrand is bounded by
$C_N(1+t^{s_N-1})$, uniformly in $\delta\in[0,\delta_0]$.
At infinity $0\le B_p\le1$ bounds $\mathcal J_p(t)$ and
$\mathcal F_N(t)$ by $C_Ne^{-t}$, uniformly in $p$.
Dominated convergence, together with \eqref{eq:appregularizeddefect},
therefore proves
\begin{equation}\label{eq:appolddefect}
 \mathfrak d_N=
 \frac{2\pi^{N/2}}{N\Gamma(s_N)}
 +\frac1{\Gamma(s_N)}\int_0^\infty t^{s_N-1}
 \big[\mathcal J_N(t)-s_N^2\mathcal F_N(t)\big]dt.
\end{equation}

It remains to eliminate the auxiliary tail \(\mathcal F_N\).
Setting $v=0$ in \eqref{eq:appshiftedJacobi} gives the Jacobi
transformation
\begin{equation}\label{eq:appJacobi}
 \Theta(t)=\sqrt{\frac\pi t}\,\Theta(\pi^2/t).
\end{equation}
Consequently, as $t\downarrow0$,
\begin{equation}\label{eq:appFasymptotic}
 \mathcal F_N(t)\sim\frac{\pi^{N/2}}{s_N}t^{-s_N}.
\end{equation}
For $\varepsilon>0$, integration by parts therefore gives
\begin{align}
 \int_\varepsilon^\infty t^{s_N-1}\mathcal F_N(t)dt
 ={}&-\frac{\varepsilon^{s_N}\mathcal F_N(\varepsilon)}{s_N}
 +\frac1{s_N}\int_\varepsilon^\infty
 t^{s_N}(\Theta(t)^N-1)dt.\label{eq:appIBP}
\end{align}
The boundary term in \eqref{eq:appIBP}, multiplied by $-s_N^2$,
contributes another $\pi^{N/2}$ in the limit. Substitution into
\eqref{eq:appolddefect} gives exactly \eqref{eq:defecttheta}.

The same modular calculation also gives
\begin{equation}\label{eq:appJsmall}
 \mathcal J_N(t)=s_N\pi^{N/2}t^{1-N/2}
 +O_N(t^{2-N/2}+1),\qquad t\downarrow0.
\end{equation}
The term $s_Nt(\Theta(t)^N-1)$ has the same leading part, so the
integrand in \eqref{eq:defecttheta}, including the factor
$t^{s_N-1}$, is $O_N(1+t^{s_N-1})$ at zero. At infinity,
$0\le\mathcal B_N\le1$ gives
$\mathcal J_N(t)=O_N(e^{-t})$ and the mass term is $O_N(te^{-t})$.
This proves the endpoint convergence claimed in
Proposition~\ref{prop:defecttheta}.

\subsection{The single base estimate in dimension nine}\label{app:defectsmallchecks}

We prove Lemma~\ref{mono:lem:N9base} using the integrated kernel
$I=I_9$ from \eqref{mono:eq:I}. First, with $x=2^{1/4}$,
\[
 Q_9=\frac{88\pi^4}{135}+\frac{212403}{2048}-144x+\frac{175}{8}x^2
 <\frac{282239}{10240}<\frac{138}{5},
\]
by $\pi<22/7$, $x>19/16$ and $x^2<10/7$.

The remainder admits a uniform analytic lower bound. For $\lambda,b>0$,
put $s=7/2$, $a=\lambda+b/2$ and $z=b/(2a)$. By \eqref{mono:eq:kappa},
$\kappa_{m,9}\ge(77/117)(m-1)$. For
$d_m=(m-1)(s)_{2m}/(2m)!$, $m\ge2$,
\[
 \frac{d_{m+1}}{d_m}
 =\frac{m(2m+7/2)(2m+9/2)}{(m-1)(2m+1)(2m+2)}
 \ge\frac{m+2}{m-1},
\]
since $2m^2+7m/4-4>0$. Integrating \eqref{eq:defectXiseries}
and summing the binomial minorant therefore gives
\begin{align}
 I(\lambda,b)
 &=2a^{-s}\sum_{m\ge2}\frac{\kappa_{m,9}(s)_{2m}}{(2m)!}z^{2m}
 \notag\\
 &\ge2\frac{77}{117}d_2a^{-s}\frac{z^4}{(1-z^2)^4}
 =\frac{5929}{3072}\frac{b^4\sqrt a}{[\lambda(\lambda+b)]^4}
 >\frac{193}{100}\frac{b^4\sqrt a}{[\lambda(\lambda+b)]^4}.
 \label{eq:N9integratedlower}
\end{align}
For $9/4\le a\le9$, $(a-9/4)(a-9)\le0$ implies
$\sqrt a\ge(2a+9)/9$. Let $b_j=2^j\binom8j$.
Retaining the terms through $e^{-4t}$ in
$\Theta^8\ge(1+2e^{-t})^8$, and using \eqref{mono:eq:R}, yields
\begin{align*}
 R_9&\ge18\left[\sum_{j=2}^4 b_jI(j,1)
                  +\sum_{j=1}^4 b_jI(j+1,3)\right]\\
 &>\frac{193}{50}\left[
 \sum_{j=2}^4\frac{b_j(2j+10)}{[j(j+1)]^4}
 +81\sum_{j=1}^4\frac{b_j(2j+14)}{[(j+1)(j+4)]^4}\right]\\
 &>8+\frac{14}{3}+6+\frac43+\frac92+\frac9{20}+3
 =\frac{559}{20}.
\end{align*}
In the last line, the seven terms are ordered by increasing $j$, with
$I(j,1)$ before $I(j+1,3)$ when both occur; their respective numerators
after subtracting the displayed lower bounds are
$64$, $7$, $1$, $2$, $99$, $909$, $3741$, over the positive denominators
$15625$, $2025$, $225$, $2025$, $1715$, $25000$, $400000$.
Consequently
\[
 \mathfrak d_9=Q_9-R_9<\frac{138}{5}-\frac{559}{20}
 =-\frac7{20}<-\frac13.
\]
\subsection{A uniform analytic bound for the dimensional remainder}\label{app:defectremainder}

We prove Lemma~\ref{mono:lem:550}. Recall the remainder
\eqref{mono:eq:R}, write $s=(N-2)/2$ and $n=N-1$, and use
$\Xi_\infty$ from \eqref{mono:eq:Xiinf}. The estimate is uniform for
all integer $N\ge9$.

\begin{proof}
Use \eqref{mono:eq:betaorder} in \eqref{mono:eq:R}. Its prefactor becomes
$2(N-2)/\Gamma(s)=4s/\Gamma(s)$, and every $\Xi_N$ is replaced by
$\Xi_\infty$. Split the integral at $t=3$.

\smallskip\noindent
\emph{The interval $0<t\le3$.}
The remainder integrand is bounded by the full positive sum
\[
 \Theta(t)^n\sum_{k\ge0}e^{-k^2t}\Xi_\infty((2k+1)t).
\]
Let $\Theta_{1/2}(t)=\sum_{k\in\Z}e^{-t(k+1/2)^2}$.
Telescoping the formula \eqref{mono:eq:Xiinf} gives
\[
 \sum_{k\ge0}e^{-k^2t}\Xi_\infty((2k+1)t)
   =(t/4-1)\Theta(t)+e^{-t/4}\Theta_{1/2}(t)
   \le\Theta(t)(t/4-1+e^{-t/4}).
\]
The inequality $\Theta_{1/2}\le\Theta$ follows directly from the
Poisson formula. Since $t/4-1+e^{-t/4}\le t^2/32$, Jacobi's identity
and $\Theta(3)<H:=11/10$ bound this part of $R_N$ by
\begin{equation}\label{mono:eq:Us}
 U(s):=\frac{3s}{8\Gamma(s)}\left(\frac{363}{100}\right)^{s+1}.
\end{equation}
Indeed, the monotonicity of $\Theta$ and Jacobi's identity give
\[
 \sqrt t\,\Theta(t)=\sqrt\pi\,\Theta(\pi^2/t)
 \le\sqrt\pi\,\Theta(\pi^2/3)
 =\sqrt3\,\Theta(3)<\sqrt3\,H,
 \qquad 0<t\le3.
\]
Thus $\Theta(t)^N\le (3/t)^{N/2}H^N$, and the remaining power of $t$
in the integral is zero.
Put $q_*=363/100$. On each of the two admissible progressions of $s$,
\[
 \frac{U(s+1)}{U(s)}=q_*\frac{s+1}{s^2}.
\]
This ratio decreases; hence the maxima for integer dimensions $N\ge9$
are at $s=9/2$ and $s=5$. Both are less than $180$. Indeed
$q_*^6<2300$ and $\sqrt{\pi q_*}>33/10$ give
\[
 U(9/2)=\frac{9q_*^{11/2}}{35\sqrt\pi}
       <\frac{6}{77}\,2300<180,
 \qquad U(5)=\frac{5}{64}q_*^6<180.
\]

\smallskip\noindent
\emph{The interval $t\ge3$.}
Write $q=e^{-t}$ and fix
\[
 H=\frac{11}{10},\qquad a=\frac{201}{100},\qquad b=\frac{501}{100}.
\]
Geometric series give the elementary bounds
\begin{gather}
 \Theta(t)\le H,\qquad
 \Theta(t)-1\le\frac{2001}{1000}q,\qquad
 \Theta(t)-1-2q\le a q^4,\notag\\
 \sum_{k\ge2}(2k+1)e^{-k^2t}\le bq^4.\label{mono:eq:thetabounds}
\end{gather}
For example, use $k^2\ge4+5(k-2)$ for $k\ge2$; the bounds reduce to
$2q^4/(1-q^5)$ and
$q^4[5/(1-q^5)+2q^5/(1-q^5)^2]$ at $q\le e^{-3}$.
Taylor's formula for $x^n$ consequently gives
\[
 \Theta^n-1-2nq\le anq^4+an(n-1)H^{n-2}q^2,
 \qquad
 \Theta^n-1\le anH^{n-1}q.
\]
Here $(2001/1000)^2/2<a$.
Extend the resulting positive integrals to $(0,\infty)$. Define
\begin{align*}
 J_s(\lambda,c)
 &:=\frac{1}{\Gamma(s)}\int_0^\infty t^{s-1}e^{-\lambda t}\Xi_\infty(ct)dt\\
 &=\frac{cs}{4}[\lambda^{-s-1}-(\lambda+c)^{-s-1}]
    -\lambda^{-s}-(\lambda+c)^{-s}
    +2(\lambda+c/2)^{-s}.
\end{align*}
Since $\Xi_\infty(t)\le t/4$, the large-$t$ contribution is at most
$L_1(s)+L_2(s)+T(s)$, where, with $r=121/200$,
\begin{align}
 L_1(s)&=4sa(2s+1)(2s)H^{-1}r^s
          [s/8-1+2(4/5)^s],\notag\\
 L_2(s)&=4sa(2s+1)r^s(3s/8),\notag\\
 T(s)&=\frac{s^2}{4^{s+1}}[a(2s+1)+bH^{2s+1}].\label{mono:eq:large}
\end{align}
In fact these follow by using
\[
 J_s(2,1)\le2^{-s}[s/8-1+2(4/5)^s],\quad
 J_s(2,3)\le2^{-s}(3s/8),\quad
 J_s(4,1)\le\frac{s}{4\,4^{s+1}}.
\]
For the middle inequality $2(4/7)^s<1$ when $s\ge7/2$.

Here are uniform bounds for all the terms in \eqref{mono:eq:large}.
Use
\[
 \log(200/121)>\frac12,\qquad
 \log(250/121)>\frac{18}{25},\qquad e>\frac{19}{7},
\]
and $\sup_{x>0}x^m e^{-cx}=(m/(ce))^m$.
For $7/2\le s\le8$, drop $s/8-1$ and use $2s+1\le16s/7$ to obtain
\[
 L_1(s)\le
 \left(4a\,\frac{16}{7}\,2H^{-1}\,2\right)
 \left(\frac{25}{6}\right)^3\left(\frac{7}{19}\right)^3<250.
\]
For $s\ge8$, use $s/8-1+2(4/5)^s\le s/8$ and
$2s+1\le17s/8$. This gives
\[
 L_1(s)\le
 \left(4a\,\frac{17}{8}\,2H^{-1}/8\right)
 8^4\left(\frac{7}{19}\right)^4<294.
\]
On the whole range $s\ge7/2$,
\[
 L_2(s)\le
 \left(4a\,\frac{16}{7}\,\frac{3}{8}\right)
 6^3\left(\frac{7}{19}\right)^3<75.
\]
Both summands of $T(s)$ decrease for $s\ge7/2$.
At that endpoint, $H^8<11/5$ gives
\[
 T(s)\le \frac{49}{2048}[8a+bH^8]
       <\frac{49}{2048}\left[8a+\frac{11b}{5}\right]<1.
\]
The derivative checks here use $\log4>4/3$ and
$\log(400/121)>1$.
Thus the large-$t$ part is less than $294+75+1=370$.
Together with \eqref{mono:eq:Us}, this proves $R_N<180+370=550$.
All exponential and logarithmic constants used above can, for example,
be checked from $e>\sum_{j=0}^5 1/j!=163/60$ and the positive series
$\log x=2\sum_{j\ge0}((x-1)/(x+1))^{2j+1}/(2j+1)$.
\end{proof}

\section{Scalar estimates for quadratic fields}\label{app:quadraticfields}\label{app:exterior9}

The same algebraic calibration proves the exterior estimates through
dimension seven and supports the completion in dimension nine. We first
derive the common scalar formulas and then give the estimates for the
five lower dimensions.

\subsection{The affine calibration}\label{app:calibration9}

The next algebraic formulas apply in every dimension $N\ge3$. Set
$\mu=(N-2)/4$ and choose $\kappa$ with $4\kappa>\mu^2$. For $s,t>0$, put
\begin{equation}\label{eq:uniformbase}
 a_{\mu,\kappa}(s,t)=
 \frac{1-\mu v+\kappa v^2}{1+\mu v+\kappa v^2},\qquad
 v=\frac{t-s}{t+s}.
\end{equation}
The numerator and denominator are positive, since
$1\pm\mu v+\kappa v^2\ge1-\mu^2/(4\kappa)>0$.
Interchanging $s,t$ changes the sign of $v$, so the field is reciprocal.
The formulas below give the scalar remainder whose sign will be
estimated for the dimension-specific parameter choices.

Set
\begin{equation}\label{eq:uniformAB}
 c=1+\mu+\kappa,\qquad
 \mathsf A=4s^2+2(\mu+2)s+c,\qquad
 \mathsf B=4(\mu+2)s+4c.
\end{equation}
Writing $t=s+\delta$, direct subtraction gives
\begin{equation}\label{eq:uniformincrement}
 1-a_{\mu,\kappa}(s,s+\delta)
 =\frac{2\mu\delta(2s+\delta)}
        {4s^2+2(\mu+2)s\delta+c\delta^2}.
\end{equation}
The denominator is positive for all real $\delta$ and $s>0$, because
\[
 4s^2+2(\mu+2)s\delta+c\delta^2
 =c\left(\delta+\frac{\mu+2}{c}s\right)^2
       +\frac{4\kappa-\mu^2}{c}s^2>0.
\]
For $q\ge0$, define
\begin{align}
 \mathsf D_s(q)&=(\mathsf A+4cq)^2-\mathsf B^2q,
 \label{eq:uniformD}\\
 g_s(q)&=\frac{4\mu\big[(2s+1+4q)(\mathsf A+4cq)
                         -4(s+1)\mathsf Bq\big]}{\mathsf D_s(q)}.
 \label{eq:uniformg}
\end{align}
Here $\mathsf D_s(q)>0$: it is the product of the two denominators in
\eqref{eq:uniformincrement} at $\delta=1\pm2\sqrt q$.
Adding those two fractions gives \eqref{eq:uniformg}. At a lattice point
with $s=|x|^2\ge2$, no neighbour is the origin, so the exact increments
$1\pm2x_j$ yield
\begin{equation}\label{eq:uniformsumg}
 \W_{a_{\mu,\kappa}}^{(N)}(x)=\sum_{j=1}^N g_s(x_j^2).
\end{equation}

An affine function with the correct coordinate sum provides the
minorant. Put
\begin{align}
 T&=4(\mu+1)s^2+2(1-\mu^2)s-\mu c,
 \label{eq:uniformT}\\
 \ell_s(q)&=\frac{4\mu(2s+1)}{\mathsf A}
                -\frac{4\mu T}{s^2\mathsf A}\,q.
 \label{eq:uniformell}
\end{align}
Indeed, $N=4\mu+2$ and
$T=N(2s+1)s-\mu\mathsf A$. Thus $\ell_s(0)=g_s(0)$, and
\begin{equation}\label{eq:uniformcalibration}
 \sum_{j=1}^N\ell_s(q_j)=\frac{4\mu^2}{s}=\frac{A_N}{s}
 \quad\text{whenever}\quad \sum_{j=1}^Nq_j=s.
\end{equation}
It remains to show that $\ell_s$ is a minorant of $g_s$ on the required
range of $q$.

The scalar difference has the factorization
\begin{equation}\label{eq:uniformfactor}
 g_s(q)-\ell_s(q)
 =\frac{4\mu q s^2}{\mathsf A\mathsf D_s(q)}
       R_N\left(s,\frac qs;\kappa\right),
\end{equation}
where
\begin{equation}\label{eq:uniformR}
 R_N(s,\tau;\kappa)=R_{N,0}(s;\kappa)+R_{N,1}(s;\kappa)\tau
                            +R_{N,2}(s;\kappa)\tau^2.
\end{equation}
For a coefficient-by-coefficient verification, write
\[
 d_1=8c\mathsf A-\mathsf B^2,\qquad d_2=16c^2,\qquad
 J=4c(2s+1)+4\mathsf A-4(s+1)\mathsf B.
\]
Then $\mathsf D_s(q)=\mathsf A^2+d_1q+d_2q^2$, and the numerator
of $g_s(q)/(4\mu)$ is $(2s+1)\mathsf A+Jq+16cq^2$.
Subtracting the affine term cancels the constant coefficient and gives
\begin{align}
 R_{N,0}&=
 \frac{s^2\mathsf A J-(2s+1)s^2d_1+T\mathsf A^2}{s^4},
 \label{eq:uniformR0}\\
 R_{N,1}&=
 \frac{16cs^2\mathsf A-(2s+1)s^2d_2+Td_1}{s^3},
 \label{eq:uniformR1}\\
 R_{N,2}&=\frac{Td_2}{s^2}.
 \label{eq:uniformR2}
\end{align}
The following estimates use these three scalar coefficients.

\section{Scalar exterior estimates in dimensions three through seven}
\label{app:lowalgebra}

We prove Lemma~\ref{lem:lowexterior} for the rational parameters
\eqref{eq:lowfixedparameters}. The coefficients
\eqref{eq:uniformR0}--\eqref{eq:uniformR2} reduce the exterior
estimate to positive quadratics and monotonicity in the squared radius.

\subsection{Dimensions three through five}\label{app:affine345}

Here $s\ge2$ and $\kappa=N/4$. We bound the coefficients of
$R_N(s,\tau)=R_0(s)+R_1(s)\tau+R_2(s)\tau^2$ separately.

\paragraph{Dimension three.}
Substitution in \eqref{eq:uniformR0}--\eqref{eq:uniformR2} gives
\begin{align*}
 R_0&=18s+\frac{579}{4}+\frac{4351}{32s}+\frac{285}{8s^2}
                      -\frac{3}{2s^3}-\frac2{s^4}>200,\\
 R_1&=43s-\frac{3007}{8}-\frac{573}{2s}-\frac{24}{s^2}
                      +\frac{16}{s^3}>-480,\\
 R_2&=320+\frac{120}{s}-\frac{32}{s^2}\ge320.
\end{align*}
The negative terms of $R_0$ have total absolute value at most $5/16$,
and $18s+4351/(32s)>98$ by the arithmetic--geometric mean inequality.
For $R_1$, use $43s\ge86$, $1/s\le1/2$ in its negative terms, and
discard the last positive term. Hence, for $\tau\ge0$,
\[
 R_3(s,\tau)>40(5-12\tau+8\tau^2)>0,\qquad
 32(5-12\tau+8\tau^2)=(16\tau-12)^2+16.
\]

\paragraph{Dimension four.}
The coefficients are
\begin{align*}
 R_0&=24s+190+\frac{335}{2s}+\frac{75}{4s^2}
                      -\frac{175}{8s^3}-\frac{125}{16s^4}>280,\\
 R_1&=40s-630-\frac{425}{s}+\frac{50}{s^2}
                      +\frac{125}{2s^3}>-780,\\
 R_2&=600+\frac{150}{s}-\frac{125}{s^2}\ge600.
\end{align*}
The negative terms of $R_0$ have total absolute value less than four,
whereas $24s+335/(2s)>126$. Also
$R_1\ge80-630-425/2=-1525/2>-780$. Therefore
\[
 R_4(s,\tau)>20(14-39\tau+30\tau^2)>0,\qquad
 120(14-39\tau+30\tau^2)=(60\tau-39)^2+159.
\]

\paragraph{Dimension five.}
We obtain
\begin{align*}
 R_0&=34s+\frac{953}{4}+\frac{5935}{32s}-\frac{483}{16s^2}
                      -\frac{531}{8s^3}-\frac{81}{4s^4}>360,\\
 R_1&=17s-\frac{7759}{8}-\frac{2253}{4s}+\frac{234}{s^2}
                      +\frac{162}{s^3},\\
 R_2&=1008+\frac{126}{s}-\frac{324}{s^2}\ge990.
\end{align*}
For $R_0$, the negative terms have total absolute value less than
eighteen and $34s+5935/(32s)>158$. Moreover,
\[
 R_1'=17+\frac1{s^2}\left(\frac{2253}{4}-\frac{468}s-
                                  \frac{486}{s^2}\right)>0,
 \qquad R_1(2)=-\frac{4555}{4}>-1140,
\]
and $R_2-990=18(s-2)(s+9)/s^2\ge0$. Thus
\[
 R_5(s,\tau)>30(12-38\tau+33\tau^2)>0,\qquad
 132(12-38\tau+33\tau^2)=(66\tau-38)^2+140.
\]
For all three dimensions, \eqref{eq:uniformfactor} and
\eqref{eq:uniformcalibration} now prove the exterior assertion in
Lemma~\ref{lem:lowexterior}.

\subsection{The six-dimensional simplex estimate}\label{app:affine6}

Take $\kappa=3/2$ and $s\ge3$. The same coefficient formulas give
\begin{align*}
 R_0'&>48-\frac{182}{s^2}>10,\qquad
 R_2'=\frac{1372}{s^3}>0,\\
 R_0'+R_1'&=16+\frac{490-924/s-588/s^2}{s^2}
                            +\frac{343}{2s^5}>16.
\end{align*}
The last inequality uses $490-924/3-588/9=350/3>0$. Hence
\[
 \partial_sR_6=(1-\tau)R_0'+\tau(R_0'+R_1')+\tau^2R_2'>0
 \qquad(0\le\tau\le1).
\]
At the initial radius,
\[
 R_0(3)=\frac{306305}{648}>472,\quad
 R_1(3)=-\frac{44333}{27}>-1642,\quad
 R_2(3)=\frac{13426}{9}>1491.
\]
It follows that $R_6(s,\tau)>472-1642\tau+1491\tau^2>18$, since
\[
 4\cdot454\cdot1491-1642^2=11492>0.
\]
The restriction $\tau=q/s\le1$ holds for every squared coordinate.
Summing the affine calibration proves $V_6(x)>0$ for $s(x)\ge3$.

\subsection{The seven-dimensional simplex estimate}\label{app:affine7}

For $\kappa=9/4$ and $s\ge5$, we have
\begin{align*}
 R_0'&>18-\frac{6111}{32s^2}>10,\qquad
 R_2'=\frac{729}{2s^2}+\frac{3645}{s^3}>0,\\
 R_0'+R_1'&=81+\frac{27621}{32s^2}-\frac{37593}{16s^3}
                      -\frac{53703}{32s^4}+\frac{3645}{8s^5}>81.
\end{align*}
The correction in the last line is greater than
$(700-2400/s-1700/s^2)/s^2>0$. Thus, as in dimension six,
$\partial_sR_7>0$ for $0\le\tau\le1$. At $s=5$,
\[
 R_0(5)=\frac{225999}{500}>451,\quad
 R_1(5)=-\frac{208647}{100}>-2087,\quad
 R_2(5)=\frac{13851}{5}>2770.
\]
Consequently $R_7(s,\tau)>451-2087\tau+2770\tau^2>18$, because
\[
 4\cdot433\cdot2770-2087^2=442071>0.
\]
This proves $V_7(x)>0$ for $s(x)\ge5$.

At $s=4$, a sum of integer squares has, up to signs and permutations,
the form $(2)$ or $(1^4)$. Formula~\eqref{eq:rationalfield} gives
\[
 a_7(4,1)=\frac{128}{53}<\frac{13}{5},\quad
 a_7(4,5)=\frac{16}{21},\quad
 a_7(4,9)=\frac{288}{613}<\frac12,
\]
\[
 a_7(4,3)=\frac{24}{17}<\frac{99}{70},\qquad
 a_7(4,7)=\frac{40}{73}<\frac{11}{20}.
\]
The axial point has one, twelve and one neighbours of squared radii
one, five and nine. The Boolean point has four, six and four neighbours
of squared radii three, five and seven. Their remainders are therefore
greater than, respectively,
\[
 \frac{199}{16}-\frac{13}{5}-12\frac{16}{21}-\frac12
       =\frac{109}{560}>0,
 \qquad
 \frac{199}{16}-4\frac{99}{70}-6\frac{16}{21}-4\frac{11}{20}
       =\frac1{112}>0.
\]
This completes the proof of Lemma~\ref{lem:lowexterior}.

\section{Analytic estimates for the eighth-dimensional field}\label{app:fielddata}

This appendix proves the uniform estimates used in
Section~\ref{subsec:N8gauge}. The interior is controlled by two
one-variable inequalities. The exterior is divided into $s\ge19$,
$12\le s\le19$, and the two possible low-radius families outside
$\mathcal P$. The normalization and remainder estimates in
Section~\ref{subsec:generalcauchy} give quantitative margins for the
chosen field. All terminating decimals
are exact rationals.

Table~\ref{tab:N8proofguide} lists the estimates used to assemble the
field. Here $s=|x|^2$, $q$ is a positive coordinate square, and
$F_B,F_E$ are the normalized numerators defined in
Appendix~\ref{n8a:app:interior}. The first two rows control the
interior chains; the last four cover every exterior vertex.

\begin{table}[H]
\centering
\small
\renewcommand{\arraystretch}{1.25}
\begin{tabular}{@{}>{\raggedright\arraybackslash}p{.24\textwidth}p{.46\textwidth}p{.22\textwidth}@{}}
\toprule
Region & Positive margin & Proof \\
\midrule
Boolean chain\newline $5\le s\le8$
 & $F_B>1/6000$, hence $B(s)>0$
 & Appendix~\ref{n8a:app:interior} \\
Second chain\newline $7\le s\le11$
 & $F_E>1/5000$, hence $E(s)>0$
 & Appendix~\ref{n8a:app:interior} \\
$s\ge19$, $0<q\le s$
 & $R(s,q)>5/(7s^3)$
 & Appendix~\ref{n8a:app:large} \\
$12\le s\le19$
 & $R(s,q)>1/(200s^3)$ for $q=1,4$ or $9\le q\le s$
 & Appendix~\ref{n8a:app:compact} \\
Two coordinates of magnitude $2$\newline $8\le s\le11$
 & $L_2(s)>0$
 & Lemma~\ref{n8a:e39:thm:low} \\
One coordinate of magnitude $3$\newline $9\le s\le11$
 & $L_3(s)>0$
 & Lemma~\ref{n8a:e39:thm:low} \\
\bottomrule
\end{tabular}
\caption{Estimates for the eighth-dimensional field. The remaining
interior junctions have the prescribed positive residuals
\eqref{n8a:eq:impose}; Appendix~\ref{n8a:app:denominators} supplies
the denominators and circle bounds for the first two rows.}
\label{tab:N8proofguide}
\end{table}

\subsection{Uniform positivity from Taylor and Cauchy bounds}\label{subsec:generalcauchy}

Uniform scalar estimates on real intervals control the required integer
radii simultaneously. First remove a denominator known to be positive. For a rational
function $Q(s)=P(s)/D(s)$ on $[a,b]\subset(0,\infty)$, assume $P$ is a
polynomial and $D>0$. Choose $\sigma\in[a,b]$ and an integer
$n\ge\deg P$. The normalized polynomial
\begin{equation}\label{eq:generalpolynomialnormalization}
 F(t)=\frac{(1+t)^nP(\sigma/(1+t))}{D(\sigma)},\qquad
 t=\frac\sigma s-1\in I:=\left[\frac\sigma b-1,\frac\sigma a-1\right]
\end{equation}
has the same sign as $Q(s)$, since $1+t>0$ on $I$.
Using the numerator polynomial allows a Cauchy circle larger than the
distance to possible complex zeros of $D$.
The finite binomial theorem~\cite[Formula~1.111]{GradshteynRyzhik}
gives a direct coefficient formula. If
$P(s)=\sum_{j=0}^{n}p_js^j$, with zero coefficients added when needed, then
\begin{equation}\label{eq:generalnormalizedcoefficients}
 [t^k]F(t)=\frac1{D(\sigma)}
       \sum_{j=0}^{n-k}p_j\sigma^j\binom{n-j}{k},\qquad 0\le k\le n.
\end{equation}
Indeed, the $j$th term becomes
$p_j\sigma^j(1+t)^{n-j}/D(\sigma)$. The initial coefficients can
therefore be recovered directly from the factored numerator, without
expanding the full transformed polynomial. Products of these polynomials
are expanded by the coefficient convolution
formula~\cite[Formula~0.316]{GradshteynRyzhik}.

We use the following elementary estimate repeatedly. Let $\T_mF$ denote
the Taylor polynomial at zero through degree $m\in\mathbb N_0$.

\begin{lemma}\label{lem:generaltaylorpositivity}
Suppose $F$ is holomorphic in a neighbourhood of $|t|\le R$, is real
on a real interval $I\subset[-r,r]$ with $0<r<R$, and satisfies
$|F(t)|\le M$ on $|t|=R$. Then
\begin{equation}\label{eq:generalcauchytail}
 |F(t)-\T_mF(t)|\le
 M\frac{(r/R)^{m+1}}{1-r/R}\qquad(t\in I).
\end{equation}
In particular, if $\T_mF\ge\mu$ on $I$ and
$\mu>M(r/R)^{m+1}/(1-r/R)$, then $F>0$ on $I$.
\end{lemma}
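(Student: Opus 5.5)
The plan is the standard Cauchy-estimate argument for the Taylor tail of a holomorphic function. Since $F$ is holomorphic on a neighbourhood of the closed disc $|t|\le R$, its Taylor series $F(t)=\sum_{k\ge0}c_kt^k$ at zero converges absolutely on $|t|\le r<R$. The first step applies Cauchy's integral formula for the coefficients on the circle $|t|=R$:
\[
 c_k=\frac1{2\pi i}\oint_{|\zeta|=R}\frac{F(\zeta)}{\zeta^{k+1}}\,d\zeta,
 \qquad\text{so}\qquad |c_k|\le MR^{-k}.
\]

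The second step bounds the tail for $t\in I\subset[-r,r]$. We have $F(t)-\T_mF(t)=\sum_{k\ge m+1}c_kt^k$, and hence
\[
 |F(t)-\T_mF(t)|\le\sum_{k\ge m+1}M\left(\frac rR\right)^k
 =M\frac{(r/R)^{m+1}}{1-r/R}.
\]
This gives \eqref{eq:generalcauchytail}. Reality of $F$ on $I$ ensures that the Taylor polynomial is real there, since the coefficients can be taken as real derivatives at zero. If $0\notin I$ this still holds whenever $I$ meets a real segment through zero inside the disc, which is the case in our normalized applications. In any event, the bound compares real quantities on $I$.

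The final assertion follows directly. For $t\in I$,
\[
 F(t)\ge\T_mF(t)-|F(t)-\T_mF(t)|\ge\mu-M\frac{(r/R)^{m+1}}{1-r/R}>0.
\]

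There is no real obstacle here. The only point needing care is justifying that the power series at zero represents $F$ on all of $I$. This holds because the disc of convergence contains the closed disc of radius $r$, as $F$ is holomorphic on a neighbourhood of $|t|\le R$.
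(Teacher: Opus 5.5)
Your proof is correct and is exactly the paper's argument. It bounds the Taylor coefficients by $MR^{-k}$ via Cauchy's formula on $|t|=R$, sums the geometric tail over $k\ge m+1$, and subtracts the result from the lower bound $\mu$. Your hedging about whether $\T_mF$ is real when $0\notin I$ is unnecessary: the identity theorem already makes all Taylor coefficients real, and the hypothesis $\T_mF\ge\mu$ on $I$ presupposes it anyway.
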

\begin{proof}
Cauchy's derivative formula~\cite[Eq.~1.9.31]{DLMF} bounds the
coefficient of $t^j$ by $MR^{-j}$. Summing the geometric series
over $j\ge m+1$~\cite[Formula~0.231.1]{GradshteynRyzhik} gives
\eqref{eq:generalcauchytail}. Subtract this bound from the lower
bound for $\T_mF$.
\end{proof}

The same argument is uniform in auxiliary parameters whenever $M$ and
the leading lower bound are uniform. The initial Taylor terms can be
kept together and estimated by completed squares, convexity or concavity;
only their joint lower bound matters.

Factored expressions give convenient circle bounds without expanding
all high-degree coefficients. Write $P(s)=\sum_jP_j(s)$ and factor a
summand as $P_j(s)=c\prod_\ell f_\ell(s)^{m_\ell}$. Set
$d_\ell=\deg f_\ell$, $d=\sum_\ell m_\ell d_\ell$ and
\[
 \widehat f_\ell(t)=\frac{(1+t)^{d_\ell}}{\sigma^{d_\ell}}
 f_\ell\left(\frac\sigma{1+t}\right),\qquad
 \nu_R\!\left(\sum_jp_jt^j\right)=\sum_j|p_j|R^j.
\]
For $R>1$, the normalized summand on $|t|=R$ is bounded by
\begin{equation}\label{eq:generalfactorbound}
 \frac{|c|\sigma^d}{D(\sigma)}\,
 \eta_R(n-d)\prod_\ell\nu_R(\widehat f_\ell)^{m_\ell},\qquad
 \eta_R(a)=\begin{cases}(R+1)^a,&a\ge0,\\(R-1)^a,&a<0.\end{cases}
\end{equation}
This follows from $R-1\le|1+t|\le R+1$ and the triangle inequality.
The degree of an individual $P_j$ may exceed $n$ if its leading terms
cancel in the sum. Such a normalized summand can have a pole at $t=-1$;
the circle estimate is still valid, and Cauchy's formula is applied
only to the full polynomial $F$ in \eqref{eq:generalpolynomialnormalization}.

\subsection{Uniform positivity on the two interior chains}\label{n8a:app:interior}

The functions $B,E$ are defined in Section~\ref{n8a:sec:chains},
with the exact parameters of Section~\ref{n8a:sec:profile}. We prove positivity on the complete
intervals in Lemma~\ref{n8a:lem:tails}.

\begin{proof}[Proof of Lemma~\ref{n8a:lem:tails}]
Write $B=N_B/D_B$, $E=N_E/D_E$, with the positive denominators
in Appendix~\ref{n8a:app:denominators}. The numerators are polynomials
of degrees $22$ and $31$ with rational coefficients.
Use the polynomial normalization \eqref{eq:generalpolynomialnormalization}
to define $F_B,F_E$, with
$(\sigma_B,n_B)=(13/2,22)$ and $(\sigma_E,n_E)=(9,31)$.
The transformed intervals are $I_B=[-3/16,3/10]$ and
$I_E=[-2/11,2/7]$. Thus it suffices to prove $F_X>0$ on $I_X$.
All terminating decimals below are exact rational numbers.
Writing $N_X(s)=\sum_jp_{X,j}s^j$, every coefficient used below is
given by \eqref{eq:generalnormalizedcoefficients}:
\[
 f_{X,k}:=[t^k]F_X(t)
 =\frac1{D_X(\sigma_X)}\sum_{j=0}^{n_X-k}
    p_{X,j}\sigma_X^j\binom{n_X-j}{k}.
\]
The numerator is specified by the four or six summands of
\eqref{eq:generalchainresiduals} and the denominators in
Appendix~\ref{n8a:app:denominators}. This formula with the exact rational parameters in
\eqref{n8a:eq:gamma}--\eqref{n8a:eq:ab} reconstructs the coefficient
comparisons used in the proof.

\paragraph{The Boolean chain.}
Dividing $T_B=\T_6F_B$ by
$(t+3/50)^2(t-3/10)^2$ gives
\[
 T_B=(t+3/50)^2(t-3/10)^2(a_Bt^2+b_Bt+c_B)
                   +d_3t^3+d_2t^2+d_1t+d_0,
\]
where
\[
 a_B>0.85,\quad0<b_B<0.88,\quad c_B>0.35,
\]
\[
 d_0>0.000247,\quad0<d_1<0.000269,\quad
 d_2>0.00322,\quad-0.00712<d_3<0.
\]
For example, the monic divisor expands as
\[
 (t+3/50)^2(t-3/10)^2
 =t^4-\frac{12}{25}t^3+\frac{27}{1250}t^2
       +\frac{27}{3125}t+\frac{81}{250000}.
\]
Thus the three quotient coefficients are recovered by
\[
 a_B=f_{B,6},\qquad
 b_B=f_{B,5}+\frac{12}{25}a_B,\qquad
 c_B=f_{B,4}+\frac{12}{25}b_B-\frac{27}{1250}a_B;
\]
subtracting the product determines $d_0,\ldots,d_3$.
Substitution of \eqref{n8a:eq:gamma}--\eqref{n8a:eq:ab} then gives
the displayed rational bounds. The quadratic factor is positive, since
$c_B-b_B^2/(4a_B)>0.35-0.88^2/3.4>0$.
As $t\le3/10$, the remainder exceeds
$t^2/1000+d_1t+0.000247$. Completing the square gives
\[
 T_B>0.000247-\frac{0.000269^2}{4/1000}>\frac9{40000}.
\]
The coefficients of degrees $7$ through $10$ are positive; the
odd ones are less than $3/4$ and $18/125$. Their possible negative
contribution, confined to $t\le0$, has magnitude less than
\[
 \frac34(3/16)^7+\frac{18}{125}(3/16)^9<\frac1{100000}.
\]
Thus $\T_{10}F_B>9/40000-1/100000$ throughout $I_B$.

\paragraph{The second chain.}
Let $T_E=\T_6F_E$. For $t=-x$, $0\le x\le2/11$, its
initial coefficients give
\[
 T_E'''(-x)>0.48+6.78x-82.3x^3>0,\qquad T_E'(0)<0.
\]
Hence $T_E'$ is convex, and its nonpositive sublevel set is an
interval containing the right endpoint. Thus $T_E$ can increase
and then decrease, or decrease throughout; its minimum is attained
at an endpoint. Both endpoint values satisfy
$T_E(-2/11)>3/10000$, $T_E(0)>3/10000$.

For $0\le t\le2/7$, put $x=t-1/8\in[-1/8,9/56]$. Then
\[
 T_E=b_0+b_1x+x^2(b_2+b_3x+b_4x^2+b_5x^3+b_6x^4),
\]
where
\[
 b_0>0.000264,\quad |b_1|<0.000082,\quad b_2>0.01045,
\]
\[
 -0.03049<b_3<0,\quad-0.1095<b_4<0,
 \quad0<b_5<0.53421,\quad b_6>0.
\]
The expression in parentheses exceeds
\[
 0.01045-0.03049\frac9{56}-0.1095\left(\frac9{56}\right)^2
                       -\frac{0.53421}{512}>\frac1{625}.
\]
Consequently
\[
 T_E>0.000264-\frac{0.000082^2}{4/625}>\frac1{4000}.
\]
When adding degrees $7$ through $12$, the terms of degrees $8$
and $12$ are nonnegative; the other coefficients $a_j$ satisfy
\[
 0<a_7<0.458,\quad-0.109<a_9<0,\quad-0.019<a_{10}<0,
 \quad0<a_{11}<0.019.
\]
The negative contribution has magnitude less than
$0.458(2/11)^7+0.019(2/11)^{10}+0.019(2/11)^{11}$ if $t\le0$,
and less than $0.109(2/7)^9+0.019(2/7)^{10}$ if $t\ge0$.
Both bounds are less than $1/200000$. Therefore
$\T_{12}F_E>1/4000-1/200000$ throughout $I_E$.

\paragraph{The full Taylor remainders.}
The factor estimates in Appendix~\ref{n8a:app:denominators} give
\[
 |F_B(t)|<36000\ (|t|=2),\qquad
 |F_E(t)|<300000000\ (|t|=3).
\]
Lemma~\ref{lem:generaltaylorpositivity} gives
\[
 |F_B-\T_{10}F_B|\le36000\frac{(3/20)^{11}}{1-3/20}<\frac1{25000},
\]
\[
 |F_E-\T_{12}F_E|\le300000000\frac{(2/21)^{13}}{1-2/21}
                                                <\frac1{50000}.
\]
Finally,
\[
 F_B>\frac9{40000}-\frac1{100000}-\frac1{25000}>\frac1{6000},
 \qquad
 F_E>\frac1{4000}-\frac1{200000}-\frac1{50000}>\frac1{5000}.
\]
This proves both uniform inequalities.
\end{proof}

\subsection{The continuous exterior: \texorpdfstring{$s\ge19$}{s >= 19}}\label{n8a:app:large}
We prove $\Rcal(s,q)>5/(7s^3)$ for $s\ge19$ and $0<q\le s$.
The angular variables in this argument may be real.

\subsubsection{Polynomial normalization and a uniform remainder}
Write $u=z^2$, $\tau=X^2$ and, for either sign,
\[
 d_\pm=z^2\pm2zX,\qquad a_\pm=2+d_\pm.
\]
Define the normalized polynomials
\begin{align*}
 N_\pm=\frac{d_\pm}{15360}\bigl(&720a_\pm^4-4134a_\pm^3u
 +3885d_\pm^2a_\pm^2-3440a_\pm^2u^2\\
 &+7688d_\pm^2a_\pm u-4500d_\pm^4\bigr),\\
 D_\pm&=\frac{160}{15360}a_\pm^3
                  (3a_\pm^2-20a_\pm u+24d_\pm^2),
 \qquad H_\pm=D_\pm+N_\pm.
\end{align*}
Then $\Psi(v_\pm,w_\pm)=N_\pm/D_\pm$ when
$v_\pm=d_\pm/a_\pm$ and $w_\pm=u/a_\pm$.
Put $n=N_\pm|_{X=0}$, $h=H_\pm|_{X=0}$,
$J=H_+H_-$ and $\Bc=Jh$. Define
\[
 K=2h(N_+H_-+N_-H_+)-4nJ,
\]
\begin{equation}\label{n8a:e37:eq:Adef}
 \Ac(u,\tau)=\frac{K/\tau+32nJ-9u\Bc-2u^4\Bc}{u^2}.
\end{equation}
Despite the displayed divisions, $\Ac$ is a polynomial in $u,\tau$.
Indeed, the products are even in $z$ and $X$; $K$ vanishes at
$X=0$, so it is divisible by $\tau$.
Near $u=0$, $H_\pm(0,X)=1$ and
\[
 \frac{2N_+}{H_+}+\frac{2N_-}{H_-}
                  =3(1-5\tau)u+O(u^2).
\]
The terms of order $u$ in the numerator of \eqref{n8a:e37:eq:Adef}
are $-15u+24u-9u=0$, and the constant term vanishes.
The numerator is therefore divisible by $u^2$.
Substitution in the definition of $\Rcal$ gives
\begin{equation}\label{n8a:e37:eq:identity}
 \Rcal(1/u,\tau/u)=\frac{\Ac(u,\tau)}{\Bc(u,\tau)}
                              +\frac{u^2}{4\tau}.
\end{equation}
Working with the polynomial $\Ac$ prevents complex zeros of
$\Bc$ from restricting the radius in Cauchy's estimate.

\paragraph{A bound for the entire remainder.}
On $|u|=|\tau|=1$, take $|z|=|X|=1$. Then
$|d_\pm|\le3$, $|a_\pm|\le5$, and the preceding formulas give
\[
 |N_\pm|<516,\qquad |H_\pm|<1025.
\]
For example, the bound for $N_\pm$ is
\[
 \frac{3}{15360}\bigl(720\cdot5^4+4134\cdot5^3
 +3885\cdot9\cdot25+3440\cdot25+7688\cdot9\cdot5
 +4500\cdot81\bigr)<516.
\]
For $H_\pm$, add
$160\cdot5^3(3\cdot25+20\cdot5+24\cdot9)/15360$.
Also,
\begin{align*}
 n&=\frac34u-\frac{209}{320}u^2-\frac{7637}{3840}u^3
                  -\frac{59}{480}u^4+\frac{73}{5120}u^5,\\
 h&=1-\frac1{12}u-\frac{2707}{960}u^2-\frac{10517}{3840}u^3
                  +\frac{11}{480}u^4+\frac{1339}{15360}u^5.
\end{align*}
Thus $|n|<4$ and $|h|<7$ on $|u|=1$. Applying these bounds
before dividing in \eqref{n8a:e37:eq:Adef} gives
\begin{align*}
 |\Ac(u,\tau)|
 &\le4\cdot7\cdot516\cdot1025
              +(36\cdot4+11\cdot7)1025^2\\
 &=246997325<250000000=:M.
\end{align*}
The maximum modulus principle~\cite[\S1.10(v)]{DLMF} extends
this bound to $|\tau|\le1$.
Writing $\Ac(u,\tau)=\sum_{\ell\ge0}c_\ell(\tau)u^\ell$, apply
\eqref{eq:generalcauchytail} with $R=1$, $r=1/s$ and $m=9$,
uniformly for $0\le\tau\le1$:
\begin{equation}\label{n8a:e37:eq:tail}
 \left|\sum_{\ell\ge10}c_\ell(\tau)s^{-\ell}\right|
 \le\frac{250000000}{s^{10}(1-1/s)}\qquad(s>1).
\end{equation}
This single bound controls the full Taylor remainder.

\subsubsection{A joint estimate for the leading terms}
The leading angular terms must be kept together to retain their
positive joint contribution. Put
\[
 P(s,\tau)=s^3c_0(\tau)+s^2c_1(\tau)+sc_2(\tau)+c_3(\tau).
\]
The useful square decomposition is
\begin{equation}\label{n8a:e37:eq:square}
 P=a\bigl(\tau^2+b(s)\tau+k(s)\bigr)^2
                  +D_0(s)\tau^2+E_0(s)\tau+F_0(s),
\end{equation}
where
\[
 a=\frac{12883825}{512},\qquad
 b(s)=-\frac{14338500s+2773706087}{3092118000},\qquad
 k(s)=\frac{s+2}{125},
\]
\begin{align*}
 D_0(s)&=\frac{5991011889750000s^2-225078213152067000s
                    +2357806647364246831}{379959459840000},\\
 E_0(s)&=-\frac{369436500s^2-13164488661s+123219409043}{23040000},\\
 F_0(s)&=\frac{2592000s^3+75008952s^2-3570362892s+26748831983}
                      {69120000}.
\end{align*}
Expand \eqref{n8a:e37:eq:Adef} through order $u^3$ and complete
the fourth- and third-degree terms in $\tau$. The affine choice
of $k(s)$ leaves a quadratic remainder with simple positive bounds.

We shall use the bound proved below,
\begin{equation}\label{n8a:e37:eq:c4bound}
 c_4(\tau)\ge-9000(\tau-1/4)^2-50
                     \qquad(0\le\tau\le1).
\end{equation}
For $s\ge19$, the right-hand side is nonpositive; hence
\[
 P+\frac{c_4}{s}\ge
 a(\tau^2+b\tau+k)^2+d\tau^2+e\tau+f,
\]
where we set
\[
 d=D_0-\frac{9000}{19},\qquad
 e=E_0+\frac{4500}{19},\qquad
 f=F_0-\frac{1225}{38}.
\]
With $r=s-19\ge0$, the explicit formulas give
\begin{align*}
 d&\ge D(r):=\frac{63}{4}r^2+\frac{27}{4}r+168,\\
 |e|&\le E(r):=\frac{65}{4}r^2+38r+44,\\
 f&\ge F(r):=\frac3{80}r^3+\frac{16}{5}r^2+30r+22.
\end{align*}
Here $e<0$; substitute $s=19+r$ in $D_0,E_0,F_0$ and bound
their coefficients in the indicated directions.
Since $\tau\ge0$, completing a second square gives
\[
 d\tau^2+e\tau+f\ge D\tau^2-E\tau+F
                            \ge F-\frac{E^2}{4D}.
\]
The remaining scalar inequality has a short proof:
\begin{align*}
 4D(F-10)-E^2
  ={}&r^3\left(\frac{189}{80}r^2-\frac{1229}{20}r
                                      +\frac{3833}{5}\right)
          +\frac{4212}{5}r^2+17140r+6128\\
 \ge{}&r^3(2r^2-62r+766)+840r^2+17000r+6000>0,
\end{align*}
since
\[
 2r^2-62r+766=2(r-31/2)^2+571/2>0.
\]
We have proved the joint estimate
\begin{equation}\label{n8a:e37:eq:joint}
 s^3c_0+s^2c_1+sc_2+c_3+\frac{c_4}{s}>10
            \qquad(s\ge19,\ 0\le\tau\le1).
\end{equation}

\subsubsection{Angular bounds for the initial corrections}
The identities for $c_4,\ldots,c_9$ are displayed below.
The fifth term retains its angular dependence; constant bounds
suffice for the following terms.

\paragraph{A parabolic bound for the fifth term.}
To prove \eqref{n8a:e37:eq:c4bound}, put
$Q=c_4+9000(\tau-1/4)^2+50$.
The coefficient of $\tau^5$ in $c_4$ is negative.
Replacing that term by the same coefficient times $\tau^4$
gives a lower bound for $Q$, since $\tau^5\le\tau^4$.
With $y=\tau-1/4$, this lower bound is
\[
 y^2T(y)+\delta y+\varepsilon,
\]
\[
 \begin{gathered}
 T(y)=-\frac{680238563}{30720}y^2
       +\frac{6671439823}{2949120}y
       +\frac{1185682025503}{98304000},\\
 \delta=-\frac{2278148363921}{3538944000},\qquad
 \varepsilon=\frac{96126342407}{4718592000}.
 \end{gathered}
\]
In particular, $|\delta|<644$ and $\varepsilon>20$.
On $-1/4\le y\le1/4$, concavity of $T$ and
\[
 T(-1/4)=\frac{1491058471567}{147456000}>10000,\qquad
 T(1/4)=\frac{828922233571}{73728000}>10000
\]
imply
\[
 y^2T(y)+\delta y+\varepsilon
       >10000y^2-644|y|+20
       \ge20-\frac{644^2}{40000}>0.
\]
On $1/4\le y\le3/4$, the function $U(y)=yT(y)$ is concave:
its second derivative is decreasing and
$U''(1/4)=-42305736713/1474560<0$.
Moreover,
\[
 U(1/4)=\frac{828922233571}{294912000}>975,\qquad
 U(3/4)=\frac{64019303839}{65536000}>975.
\]
Thus $U(y)>975$ and
$y^2T(y)+\delta y+\varepsilon=y(U(y)+\delta)+\varepsilon>0$.
This proves \eqref{n8a:e37:eq:c4bound} throughout $[0,1]$.

\paragraph{Two useful factorizations.}
For $c_5$, use the identity
\begin{align*}
884736000\,[c_5(\tau)-c_5(0)]
 =\tau\bigl[1515070440000\tau^4
                       +(1-\tau)H(\tau)+48592374149\bigr],\\
 H(\tau)=70172871887880\tau^2-68575340184363\tau
                                      +20003977080782
\end{align*}
and the bound
\[
 H(\tau)\ge10^{12}(70\tau^2-69\tau+20)
       \ge10^{12}\left(20-\frac{69^2}{280}\right)>0.
\]
Thus $c_5(\tau)\ge c_5(0)>-1400$.
For $c_6$, consider the polynomial
$[c_6(\tau)-c_6(0)]/\tau$, extended at $\tau=0$. Discard
its positive fourth-degree term and use $\tau^3\le\tau^2$
in its negative cubic term. This gives
\begin{align*}
3538944000\,[c_6(\tau)-c_6(0)]
 &\ge\tau(198885330077232\tau^2
                -258059035997625\tau+85138567220739)\\
 &\ge10^{12}\tau(198\tau^2-259\tau+85)\ge0,
\end{align*}
since $259^2<4\cdot198\cdot85$. Hence
$c_6(\tau)\ge c_6(0)>-2000$.
Finally, retain only the negative terms of the next three
polynomials and use $0\le\tau\le1$. Altogether,
\begin{equation}\label{n8a:e37:eq:tailinitial}
 c_5>-1400,\quad c_6>-2000,\quad c_7>-26000,\quad
 c_8>-34000,\quad c_9>-40000.
\end{equation}

\subsubsection{Uniform bounds for the numerator and quotient}
By \eqref{n8a:e37:eq:tail}, \eqref{n8a:e37:eq:joint} and \eqref{n8a:e37:eq:tailinitial},
\[
 s^3\Ac(1/s,\tau)>10-\Lambda(s),
\]
\[
 \Lambda(s)=\frac{1400}{s^2}+\frac{2000}{s^3}
 +\frac{26000}{s^4}+\frac{34000}{s^5}+\frac{40000}{s^6}
 +\frac{250000000}{s^7(1-1/s)}.
\]
Every summand decreases for $s\ge19$, and
\[
 \Lambda(19)=\frac{1981154600}{423412929}<5.
\]
Consequently,
\begin{equation}\label{n8a:e37:eq:Apositive}
 \Ac(1/s,\tau)>\frac5{s^3}
                \qquad(s\ge19,\ 0\le\tau\le1).
\end{equation}

It remains to bound the real denominator. If $s\ge19$, $0\le q\le s$,
then $s+1\pm2\sqrt q>0$ and
\[
 0<w_\pm<\frac1{30},\qquad |v_\pm|<\frac13.
\]
For the first bound use
$2s+1-2\sqrt s\ge39-2\sqrt{19}>30$;
for the second, each branch satisfies
$|v_\pm|\le1/(\sqrt s-1)<1/3$.
Thus $1+8v_\pm^2-20w_\pm/3>7/9$, and the triangle inequality
applied to $\Psi$ gives
\[
 |\Psi(v_\pm,w_\pm)|
 <\frac13\left[\frac32+\frac{111}{2400}+\frac{125}{288}
 +\frac{\frac{25}{10800}+\frac{169}{2160}+\frac{175}{648}}{7/9}
 \right]=\frac{2269}{2800}<1.
\]
Consequently, $D_\pm,H_\pm,h$ are positive and $\Bc>0$.
The even part $F_H=(H_++H_-)/2$ is
\begin{align*}
 F_H={}&1+(24\tau-1/12)u
 +\frac{49140\tau^2+41973\tau-2707}{960}u^2\\
 &+\frac{89916\tau^2+175996\tau-10517}{3840}u^3
 +\frac{18027\tau+44}{1920}u^4+\frac{1339}{15360}u^5.
\end{align*}
For $0<u\le1/19$, $0\le\tau\le1$,
\[
 0<F_H\le1+24u+95u^2+70u^3+10u^4+u^5<\frac{13}{5},
 \qquad 0<h<\frac{1001}{1000}.
\]
Since $J\le F_H^2$, it follows that
\[
 0<\Bc=Jh<\left(\frac{13}{5}\right)^2\frac{1001}{1000}<7.
\]
Identities \eqref{n8a:e37:eq:identity} and \eqref{n8a:e37:eq:Apositive} prove
$\Rcal(s,q)>5/(7s^3)$. With $q_i=x_i^2$ and
$n_0=\#\{i:q_i=0\}$, we obtain
\[
 V_{\beta_8}(x)=s^{-3}\sum_{q_i>0}q_i\Rcal(s,q_i)
                                +\frac{n_0}{4s^4}
                       >\frac5{7s^5},
\]
since $\sum_iq_i=s$. This proves the estimate for $s\ge19$.

\subsubsection{Identities for the initial corrections}
Identity \eqref{n8a:e37:eq:square} specifies $c_0,c_1,c_2,c_3$ jointly.
The six further identities used above follow directly from
\eqref{n8a:e37:eq:Adef}:
\begin{align*}
221184000\,c_4={}&-281961000000\tau^5-4615756653600\tau^4\\
 &+5398075640325\tau^3-1534784052762\tau^2\\
 &-81024075575\tau+44413522196,\\[2pt]
884736000\,c_5={}&1515070440000\tau^5-70172871887880\tau^4\\
 &+138748212072243\tau^3-88579317265145\tau^2\\
 &+20052569454931\tau-1219425440305,\\[2pt]
3538944000\,c_6={}&11508081570000\tau^5-15907810811256\tau^4\\
 &+214793140888488\tau^3-258059035997625\tau^2\\
 &+85138567220739\tau-6723112848829,\\[2pt]
14155776000\,c_7={}&-22515839010000\tau^5+156094387026672\tau^4\\
 &-293965298863956\tau^3-35723437906544\tau^2\\
 &+103410903294389\tau-14825254206959,\\[2pt]
56623104000\,c_8={}&-2005056000000\tau^5+632852753068464\tau^4\\
 &-1622170183932224\tau^3+1333626758467276\tau^2\\
 &-236041202593912\tau-17516203206927,\\[2pt]
56623104000\,c_9={}&-23362975170000\tau^5+756424454144640\tau^4\\
 &-1791986582074888\tau^3+1492984087546880\tau^2\\
 &-414958155540623\tau-1295701900318.
\end{align*}
For the last three polynomials, the sums of the absolute values
of their negative coefficients are less than $26000$, $34000$ and
$40000$, respectively. These bound three initial corrections.
All terms of order ten and higher are covered by the single
Cauchy estimate \eqref{n8a:e37:eq:tail}.

\subsection{The compact exterior: \texorpdfstring{$12\le s\le19$}{12 <= s <= 19}}\label{n8a:app:compact}
We prove $\Rcal(s,q)>1/(200s^3)$ for $q=1$, $q=4$ and
$9\le q\le s$. These regions contain every positive integer square.

\subsubsection{A regular numerator and Cauchy bounds}
Use the preceding identities and bounds.
With $u=1/s$, $\tau=q/s$, write
\begin{equation}\label{n8a:e38:eq:AB}
 \Rcal(s,q)=\frac{\Ac(u,\tau)}{\Bc(u,\tau)}+\frac1{4sq},
 \qquad \Ac(u,\tau)=\sum_{j\ge0}c_j(\tau)u^j.
\end{equation}
Here $\Ac,\Bc$ are the polynomials in \eqref{n8a:e37:eq:Adef}--\eqref{n8a:e37:eq:identity}.
Their exact divisibility was proved above.

For $0\le\tau\le1$, the preceding bounds and the initial expansion give
\begin{align}
 c_0&=3/80,\quad
 c_1=\frac{4175}{256}\left(\tau-\frac{11457}{20875}\right)^2
             -\frac{5553221}{2505000}>-\frac{111}{50},\label{n8a:e38:eq:c1}\\
 c_4&\ge-9000(\tau-1/4)^2-50,\label{n8a:e38:eq:c4old}\\
 c_5&>-1400,\quad c_6>-2000,\quad c_7>-26000,
       \quad c_8>-34000,\quad c_9>-40000.\label{n8a:e38:eq:oldbounds}
\end{align}
The proof of \eqref{n8a:e38:eq:c4old} uses concavity and a completed
square; the bounds for $c_5,c_6$ use their factorizations.
All coefficient identities are those of Appendix~\ref{n8a:app:large}.

For small angles we improve the Cauchy constant.
On $|u|=1$, $|\tau|=\alpha$, we have $|d_\pm|\le1+2\sqrt\alpha$,
$|a_\pm|\le3+2\sqrt\alpha$ and $|n|<4$, $|h|<7$.
If $|N_\pm|<N_\alpha$, $|H_\pm|<H_\alpha$, then
\begin{equation}\label{n8a:e38:eq:M}
 |\Ac|\le\frac{28N_\alpha H_\alpha}{\alpha}
 +\left(\frac{16}{\alpha}+205\right)H_\alpha^2.
\end{equation}
For $\alpha=1/12$, insert $|d_\pm|<8/5$, $|a_\pm|<18/5$.
Then $N_\alpha=62$, $H_\alpha=145$, and the right-hand side
is $11367565<12000000$.
For $\alpha=1/3$, use $|d_\pm|<13/6$, $|a_\pm|<25/6$;
then $N_\alpha=162$, $H_\alpha=349$, and the right-hand side is
$35564845<36000000$. Appendix~\ref{n8a:app:large} gives $M_1=250000000$.
The maximum modulus principle in $\tau$ and
\eqref{eq:generalcauchytail} in $u$ give
\begin{equation}\label{n8a:e38:eq:tail}
 \left|\sum_{j\ge10}c_j(\tau)s^{-j}\right|
 \le\frac{M_\alpha}{s^{10}(1-1/s)}\qquad(0\le\tau\le\alpha),
\end{equation}
where $M_{1/12}=12000000$, $M_{1/3}=36000000$ and
$M_1=250000000$. This bounds the entire remaining tail.

\subsubsection{The denominator and two lower bounds}
For $s\ge12$, $0\le q\le s$,
\[
 0<w_\pm<1/18,\qquad |v_\pm|<1/3.
\]
Indeed, $2s+1-2\sqrt s\ge25-2\sqrt{12}>18$;
the two branches of $v$ are bounded using
$4\sqrt s<s+2$ and $2\sqrt s+1<s$ for $s\ge12$.
The triangle inequality gives
\[
 |\Psi|<\frac13\left[\frac32+\frac{111}{1440}+\frac{125}{288}
 +\frac{\frac{25}{3888}+\frac{169}{1296}+\frac{175}{648}}{17/27}
 \right]=\frac{1807}{2040}<1.
\]
Thus $H_\pm,h$ and $\Bc$ are positive. The even expression
$F_H=(H_++H_-)/2$ in Appendix~\ref{n8a:app:large} satisfies, for $u\le1/12$,
\[
 F_H\le1+24u+95u^2+70u^3+10u^4+u^5<4,
 \qquad h<1001/1000.
\]
Consequently,
\begin{equation}\label{n8a:e38:eq:Bupper}
 0<\Bc\le F_H^2h<17.
\end{equation}

For $q=1,4$, retain the positive term in \eqref{n8a:e38:eq:AB}.
We shall prove, for $12\le s\le19$,
\begin{equation}\label{n8a:e38:eq:Blower}
 \Bc(1/s,1/s)>9/10,\qquad \Bc(1/s,4/s)>1.
\end{equation}
When $d=cu$, the polynomial $H$ is
\begin{align*}
 \mathsf H_c(u)={}&1+(13c/4-10/3)u+(6c^2-8467c/960)u^2\\
 &+(1635c^3/256-5267c^2/640-43c/48)u^3\\
 &+(819c^4/256-2919c^3/1280-43c^2/48)u^4\\
 &+(295c^5/1024+59c^4/2560-43c^3/192)u^5.
\end{align*}
Thus $h=\mathsf H_1$, $J_1=\mathsf H_3\mathsf H_{-1}$,
$J_4=\mathsf H_5\mathsf H_{-3}$ and $\Bc(u,qu)=J_q(u)h(u)$.
For $0\le u\le1/12$,
\[
 h\ge1-u/12-(16/5)u^2>19/20.
\]
Multiplying the two quintics gives
\begin{align*}
 J_1&=1-u/6+\frac{169}{1440}u^2-\frac{6119}{1440}u^3
                -\frac{106671667}{921600}u^4+u^5T_1(u),\\
 J_4&=1-u/6+\frac{25009}{1440}u^2-\frac{2843}{1440}u^3
                -\frac{11740087}{36864}u^4+u^5T_4(u).
\end{align*}
The polynomials $T_1,T_4$ are displayed below. Bounding their
negative terms gives
\[
 T_1\ge-113-1188u^2-39u^3-3u^5>-122,
 \qquad T_4\ge4200-164000u^2-56000u^5>3000.
\]
Therefore
\[
 J_1\ge1-u/6-2u^2>19/20,\qquad
 J_4\ge1-u/6+14u^2.
\]
The first bound gives $J_1h>(19/20)^2>9/10$. For the second,
\begin{align*}
 J_4h&\ge(1-u/6+14u^2)(1-u/12-(16/5)u^2)\\
 &=1-u/4+\frac{3893}{360}u^2-\frac{19}{30}u^3-\frac{224}{5}u^4
 \ge1-u/4+10u^2>1,
\end{align*}
since $u\ge1/19$. This proves \eqref{n8a:e38:eq:Blower}.

\subsubsection{The squared coordinate \texorpdfstring{$q=1$}{q=1}}
Put
\[
 P(s,\tau)=s^3c_0(\tau)+s^2c_1(\tau)+sc_2(\tau)+c_3(\tau).
\]
For $0\le\tau\le1/12$, the negative terms of $c_4$ give
\[
 c_4\ge200-367\tau-6940\tau^2-20870\tau^4-1275\tau^5
 \ge\frac{9970751}{82944}>120.
\]
Using \eqref{n8a:e38:eq:Blower}, the initial part of the full numerator
is bounded below by
\[
 P(s,1/s)+\frac{c_4(1/s)}s+\frac{s^2\Bc(1/s,1/s)}4
   \ge L_1(s):=P(s,1/s)+\frac9{40}s^2+\frac{120}s.
\]
This function is increasing for $s\ge12$. Indeed,
\begin{align*}
 4608000s^5L_1'(s)={}&518400s^7+s^5(26916960s-290830260)\\
 &+s^2(27329109913s-249065631610)\\
 &+624083869575s-463817700000>0.
\end{align*}
Each bracket is positive for $s\ge12$.
The explicit formula for $L_1$ below gives $L_1(12)>51/4$.
By \eqref{n8a:e38:eq:oldbounds} and \eqref{n8a:e38:eq:tail},
\[
 s^3\left(\Ac+\frac{\Bc}{4s}\right)>
 \frac{51}{4}-\Lambda_1(s),
\]
\[
 \Lambda_1(s)=\frac{1400}{s^2}+\frac{2000}{s^3}
 +\frac{26000}{s^4}+\frac{34000}{s^5}+\frac{40000}{s^6}
 +\frac{12000000}{s^7(1-1/s)}.
\]
This function decreases, and
\[
 \Lambda_1(12)=\frac{405725}{32076}<\frac{253}{20}.
\]
Thus $s^3(\Ac+\Bc/(4s))>1/10$. Dividing by
$\Bc<17$ gives
\begin{equation}\label{n8a:e38:eq:q1}
 \Rcal(s,1)>\frac1{170s^3}>\frac1{200s^3}.
\end{equation}

\subsubsection{The squared coordinate \texorpdfstring{$q=4$}{q=4}}
Now $\tau=4/s\in[4/19,1/3]\subset[1/5,1/3]$.
On this interval we improve \eqref{n8a:e38:eq:oldbounds} to
\begin{equation}\label{n8a:e38:eq:q4coeff}
 c_5>-112,\qquad c_6>0,\qquad c_7>0.
\end{equation}
For the first bound, put $x=\tau-1/4\in[-1/20,1/12]$.
Expand $c_5'$ in powers of $x$. Its coefficients, from degree four
to degree zero, satisfy
\[
 \begin{gathered}
 0<a_4<8600,\qquad -309000<a_3<0,\qquad 0<a_2<236000,\\
 -24000<a_1<-23900,\qquad a_0<-2900.
 \end{gathered}
\]
Then $x^4\le x^2/144$, $a_3x^3\le(309000/20)x^2$ and
$(a_1+24000)x\le100/12$, so
\[
 c_5'(\tau)\le252000x^2-24000x-2800<0.
\]
The last quadratic is convex and negative at both endpoints.
Hence $c_5(\tau)\ge c_5(1/3)=-18498039257/165888000>-112$.

For $c_6$, use the estimate from Appendix~\ref{n8a:app:large}:
\[
 3538944000[c_6(\tau)-c_6(0)]
 \ge10^{12}\tau(198\tau^2-259\tau+85).
\]
The cubic on the right, without the factor $10^{12}$, is concave
on $[1/5,1/3]$ and its endpoint values are at least $62/9$.
Since $c_6(0)=-6723112848829/3538944000$, we have $c_6>0$.
For $c_7$, group its fourth- and fifth-degree terms, whose sum
is nonnegative, and use $\tau^3\le\tau^2/3$. Thus
\[
 14155776000c_7\ge10^{12}(-134\tau^2+103\tau-15)>0.
\]
This quadratic is concave and positive at both endpoints.

By \eqref{n8a:e38:eq:c4old} and \eqref{n8a:e38:eq:Blower}, the initial part is at least
\[
 L_4(s):=P(s,4/s)+\frac{s^2}{16}
                 -\frac{9000(4/s-1/4)^2+50}{s}>3
                     \qquad(12\le s\le19).
\]
To prove the last inequality uniformly, put
$N_4=13824000s^4L_4$, displayed below.
If $r=s-16\ge0$, its expansion gives
\[
 N_4/10^{12}\ge3.94r^2-1.74r+7.23
       \ge7.23-\frac{1.74^2}{4\cdot3.94}>7.
\]
If $x=16-s\in[0,4]$, the constant term exceeds $7\cdot10^{12}$,
the linear term is positive and the remainder is $10^{12}x^2Q(x)$, where
\[
 Q(x)\ge0.092x^2-1.15x+3.94>0.
\]
Use $x^3\le4x^2$, $x^5\le4x^4$ in the two higher negative
terms of $Q$; the final quadratic has negative discriminant.
The expansions are displayed below. Thus $N_4>7\cdot10^{12}$ and
\[
 L_4>\frac{7\cdot10^{12}}{13824000\cdot19^4}>3.
\]
All the preceding decimals are exact rational numbers.

Using \eqref{n8a:e38:eq:q4coeff} and Cauchy with $\alpha=1/3$,
\[
 s^3\left(\Ac+\frac{\Bc}{16s}\right)>3-\Lambda_4(s),\qquad
 \Lambda_4(s)=\frac{112}{s^2}+\frac{34000}{s^5}
 +\frac{40000}{s^6}+\frac{36000000}{s^7(1-1/s)}.
\]
This function decreases, and
$\Lambda_4(12)=259667/128304<5/2$.
Therefore $\Rcal(s,4)>1/(34s^3)>1/(200s^3)$.

\subsubsection{A uniform estimate for all larger coordinates}
If $12\le s\le19$ and $q\ge9$, then $\tau=q/s\ge9/19>2/5$.
The following argument holds on the entire rectangle
$12\le s\le19$, $2/5\le\tau\le1$.
Replace the negative fifth-degree term of $c_4$ by the same
coefficient times $\tau^4$ and define
\begin{align*}
 221184000Q_4(\tau):={}&-4897717653600\tau^4+5398075640325\tau^3\\
 &-1534784052762\tau^2-81024075575\tau+44413522196,
\end{align*}
so that $c_4\ge Q_4$. Also $Q_4<0$ for $\tau\ge2/5$:
with $x=\tau-1/2$, we have
\[
 -Q_4=x^2T(x)+d x+e,
\]
\[
 \begin{gathered}
 T=\frac{680238563}{30720}x^2+\frac{11726292445}{589824}x
                       +\frac{174277127261}{49152000},\\
 d=\frac{64440899573}{884736000},\qquad
 e=\frac{89139416731}{1769472000}.
 \end{gathered}
\]
For $x\ge-1/10$, $T$ is increasing and $T(-1/10)>1700$;
also $|d|<73$, $e>50$. Hence
$-Q_4\ge1700x^2-73|x|+50>0$.

Expand radially about $s=16$:
\[
 P(s,\tau)=P(16,\tau)+(s-16)P_s(16,\tau)
      +(c_1(\tau)+9/5)(s-16)^2+\frac3{80}(s-16)^3.
\]
Since $|s-16|\le4$, \eqref{n8a:e38:eq:c1} bounds the last two
terms below by $-10$. Also $Q_4/s\ge Q_4/12$. Consequently,
\begin{equation}\label{n8a:e38:eq:Z}
 P(s,\tau)+\frac{c_4(\tau)}s\ge\min\{Z_-(\tau),Z_+(\tau)\},
 \qquad Z_\pm=P(16,\tau)\pm4P_s(16,\tau)-10+Q_4/12.
\end{equation}
Both quartics have simple lower bounds. Write
$x=\tau-4/5\in[-2/5,1/5]$ and
\[
 Z_\pm=x^2T_\pm(x)+d_\pm x+e_\pm,
 \qquad T_\pm(x)=a x^2+b_\pm x+c_\pm,
\]
The explicit expansions below give
\[
 23000<a<24000,\quad b_->28000,\quad b_+>26000,
\]
\[
 T_-(-2/5)>600,\quad |d_-|<97,\quad e_->115,
 \qquad
 T_+(-2/5)>1200,\quad |d_+|<512,\quad e_+>146.
\]
On this interval $T_\pm'>0$. Completing squares gives
\[
 Z_->115-\frac{97^2}{2400}>90,\qquad
 Z_+>146-\frac{512^2}{4800}>90.
\]
By \eqref{n8a:e38:eq:Z}, $P+c_4/s>90$.
The global bounds \eqref{n8a:e38:eq:oldbounds} and Cauchy with $\alpha=1$ give
\[
 s^3\Ac>90-\Lambda_9(s),
\]
\[
 \Lambda_9(s)=\frac{1400}{s^2}+\frac{2000}{s^3}
 +\frac{26000}{s^4}+\frac{34000}{s^5}+\frac{40000}{s^6}
 +\frac{250000000}{s^7(1-1/s)}.
\]
This function decreases, and
$\Lambda_9(12)=1701725/85536<20$.
Thus $\Ac>70/s^3$ and $\Rcal>70/(17s^3)>1/(200s^3)$.
This proves $\Rcal(s,q)>1/(200s^3)$ on all three angular regions.

\subsubsection{Auxiliary polynomial identities}
The following identities follow by multiplication or substitution
in the preceding formulas and make the algebra explicit.

For $L_1,L_4$, write $L_j=N_j/(13824000s^4)$, where
\begin{align*}
 N_1={}&518400s^7+40375440s^6-872490780s^5+8518673035s^4\\
 &-81987329739s^3+373598447415s^2-624083869575s+347863275000,\\
 N_4={}&518400s^7+38129040s^6-1614904380s^5+20463633955s^4\\
 &-339802707756s^3+6071551358640s^2-41932023652800s
                                      +89052998400000.
\end{align*}
In particular,
$L_1(12)=3658132863828/(13824000\cdot12^4)>51/4$.
For the bound on $N_4$, the exact expansion is
\begin{align*}
 N_4(16+r)={}&518400r^7+96189840r^6+4832401860r^5
                   +112004621155r^4\\
 &+1148330793364r^3+3944268984432r^2
                   -1738209349568r+7236696853504.
\end{align*}
Substituting $r=-x$ gives the auxiliary polynomial
\begin{align*}
 Q(x)={}&3.944268984432-1.148330793364x+0.112004621155x^2\\
       &-0.004832401860x^3+0.000096189840x^4-0.000000518400x^5.
\end{align*}
These decimals are exact; the two absorptions above prove
$Q\ge0.092x^2-1.15x+3.94$ on $[0,4]$.

The remainders in the products of quintics are
\begin{align*}
 T_1={}&-\frac{206753737}{1843200}+\frac{17519675741}{14745600}u
 -\frac{2917361759}{2457600}u^2-\frac{377854877}{9830400}u^3\\
 &+\frac{120400463}{409600}u^4-\frac{70887171}{26214400}u^5,\\
 T_4={}&\frac{1557109939}{368640}+\frac{39674783893}{2949120}u
 -\frac{80569866503}{491520}u^2+\frac{108648970715}{393216}u^3\\
 &+\frac{2820591425}{16384}u^4-\frac{57729261975}{1048576}u^5.
\end{align*}
Retaining their negative terms and the positive constant term
of $T_4$ proves the bounds used above.

The two quartics in \eqref{n8a:e38:eq:Z}, before translation, are
\begin{align*}
 Z_-={}&\frac{8596115437}{368640}\tau^4
 -\frac{1624788076241}{35389440}\tau^3
 +\frac{4276670657851}{147456000}\tau^2\\
 &-\frac{15698391869303}{2654208000}\tau
 +\frac{159956715989}{663552000},\\
 Z_+={}&\frac{8596115437}{368640}\tau^4
 -\frac{1690859884241}{35389440}\tau^3
 +\frac{4915405813051}{147456000}\tau^2\\
 &-\frac{23477732300663}{2654208000}\tau
 +\frac{530744389589}{663552000}.
\end{align*}
The substitution $\tau=x+4/5$ gives the constants bounded
in the proof. Only monotonicity of $T_\pm$ and completion of
squares are needed.

\subsection{The low radii on two full intervals}\label{n8a:app:low}
\begin{lemma}\label{n8a:e39:thm:low}
$L_2(s)>0$ for $8\le s\le11$, and $L_3(s)>0$ for $9\le s\le11$.
\end{lemma}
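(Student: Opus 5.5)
I would treat $L_2$ and $L_3$ as explicit rational functions of the real variable $s$ and prove their positivity on the two short intervals. The Taylor--Cauchy scheme of Section~\ref{subsec:generalcauchy} applies directly here, in the same way it was used for the interior chains.

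Recall from \eqref{eq:generallowradii} that $L_{2}$ and $L_{3}$ are finite combinations of $g_s(0)$, $g_s(1)$, $g_s(4)$ and $g_s(9)$, with $g_s(q)=2-\beta_8(s,s+1+2\sqrt q)-\beta_8(s,s+1-2\sqrt q)$. Since $q$ is a perfect square in each case, every $\beta_8$ argument is a rational function of $s$: the neighbours have squared radius $s+1$, $s-1$, $s+3$, $s+5$, $s-3$, $s+7$ and $s-5$. The edge from squared radius $s$ to $s-5$ with $s\ge9$ is non-degenerate because $s-5\ge4>0$. I would substitute $v=(t-s)/(t+s)$ and $w=1/(t+s)$ into \eqref{eq:gaugefield8}. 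Then $\beta_8=(1-\Psi)/(1+\Psi)$ becomes $N_t(s)/D_t(s)$, where $D_t=\mathrm{den}\cdot(1+\Psi)$ cleared of fractions. The lemma's intervals are exactly the edges retained outside $\mathcal P$ with $s\le11$, so Lemma~\ref{lem:generalpatchgeometry} gives $|v|\le5/13$, $w\le1/13$ and $|\Psi|\le25/39$ there. Hence each $1+\Psi$ and the inner denominator $1+8v^2-\tfrac{20}{3}w$ are positive, and the common denominator $D$ of $L_j$ is positive on the whole interval. It then suffices to show that the numerator polynomial $P_j$ is positive on $[8,11]$ or $[9,11]$.

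For each $j$ I would normalize as in \eqref{eq:generalpolynomialnormalization}, taking $\sigma$ at the midpoint ($\sigma=19/2$ for $L_2$, $\sigma=10$ for $L_3$). This gives $F_j(t)$ on a small interval $I_j$, roughly $|t|\le 3/16$. The low Taylor coefficients would be computed by \eqref{eq:generalnormalizedcoefficients}. A lower bound $\mu$ for $\T_mF_j$ on $I_j$, with $m$ around $6$ to $10$, would come from the same devices used in Appendix~\ref{n8a:app:interior}: complete a square in the leading terms, and use convexity or monotonicity of a low derivative to reduce to endpoint values.

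The tail would be handled by Lemma~\ref{lem:generaltaylorpositivity} on a circle $|t|=R$ with $R$ about $2$. The bound $M$ would come from the factored estimate \eqref{eq:generalfactorbound}, applied summand by summand to the cleared numerator. Because only a fixed, small number of field factors appear, this bound stays moderate.

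I expect the main obstacle to be the size of the margin. At these low radii the exterior field was tuned primarily for the asymptotic calibration, so $L_2$ and $L_3$ may be only slightly positive, possibly near the endpoint $s=8$ for $L_2$. If the Cauchy tail proves too costly, my first fallback is to split each interval into two or three subintervals with separate centres, which shrinks $r/R$. Failing that, I would drop the analytic tail and prove the rational inequality by exact Sturm-sequence counting or interval arithmetic on the numerator polynomial, since it has rational coefficients.
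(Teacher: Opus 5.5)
Your proposal is correct and follows essentially the paper's route. The paper also writes $L_2,L_3$ as a positive denominator times a polynomial numerator, using $s+t\ge13$ on these real intervals; for non-integer $s$ you should verify $s+t\ge13$ and $|t-s|/(t+s)\le5/13$ directly rather than citing Lemma~\ref{lem:generalpatchgeometry}, which concerns lattice edges. It then bounds a degree-$10$ Taylor polynomial from below and controls the tail by Cauchy's estimate with contraction ratios $3/32$ and $1/18$, which are exactly the ratios your choice $R=2$ produces. The only differences are parameter choices within the same scheme: the paper expands at the endpoint $s=11$ in $r=11/s-1$ on the circle $|r|=4$, after dividing out the cancelling factor $u^3$ with $u=1/s$ (so it works with the degree-$24$ polynomials $A_2,A_3$), whereas you expand at the midpoint.
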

The functions are those defined in the exterior reduction.

\subsubsection{Polynomial normalization at low radii}
Write $u=1/s$, $d=cu$, $a=2+d$ and define the quintics
\begin{align*}
 N_c(u)&=\frac d{15360}\bigl(720a^4-4134a^3u+3885d^2a^2
             -3440a^2u^2+7688d^2au-4500d^4\bigr),\\
 H_c(u)&=N_c(u)+\frac{160a^3}{15360}(3a^2-20au+24d^2).
\end{align*}
Put $n=N_1$, $h=H_1$ and, for $q=m^2$,
\[
 J_q=H_{1+2m}H_{1-2m},\qquad
 G_q=2(N_{1+2m}H_{1-2m}+N_{1-2m}H_{1+2m}).
\]
Then $g_s(0)=4n/h$ and $g_s(q)=G_q/J_q$.
With $B_2=hJ_1J_4$, $B_3=hJ_1J_9$, this gives
\begin{align*}
 u^3A_2&=(14u-1)4nJ_1J_4+(1-8u)G_1hJ_4+2uG_4hJ_1-9u^2B_2,\\
 u^3A_3&=(16u-1)4nJ_1J_9+(1-9u)G_1hJ_9+uG_9hJ_1-9u^2B_3.
\end{align*}
The terms of orders $0,1,2$ cancel: $A_2,A_3$ are polynomials
of degree $24$, and
\begin{equation}\label{n8a:e39:eq:lowAB}
 L_j(s)=u^2\frac{A_j(u)}{B_j(u)}\qquad(j=2,3).
\end{equation}

On these intervals $s+t\ge13$, so the inner denominator of
$\Psi$ is positive. The rational denominators are also positive.
For $0<u\le1/8$,
\[
 h\ge1-u/12-4u^2>0,\qquad
 H_{-1}\ge1-7u-14u^3-u^5>0,
\]
\[
 H_{-3}\ge(1-7u)^2-1/500>0.
\]
The quintics $H_3,H_5,H_7$ have positive coefficients.
For $H_{-5}$, needed only on $1/11\le u\le1/9$, take
$y=10u-1$, $|y|\le1/9$. Its expression is
\[
 H_{-5}=\frac{164407}{819200}-\frac{523927}{2457600}y
 +\frac{87531}{409600}y^2-\frac{73941}{409600}y^3
 +\frac{150179}{819200}y^4-\frac{21083}{2457600}y^5.
\]
The triangle inequality gives
$H_{-5}\ge12631924973/72559411200>1/6$.

\paragraph{Taylor expansion at the common endpoint.}
Take $r=11u-1$. Then $0\le r\le3/8$ on the first interval
and $0\le r\le2/9$ on the second. Let $\T_m F$ denote the
Taylor polynomial of $F(r)$ at zero through degree $m$.
Multiplying the preceding quintics gives
\begin{align}
 \T_{10}A_2((1+r)/11)
 &\ge\frac3{40}+\frac r2+r^2-2r^4-2r^5-r^9-r^{10}
 \ge\frac3{40},\label{n8a:e39:eq:T2}\\
 \T_{10}A_3((1+r)/11)
 &\ge\frac1{10}-9r^8-5r^9-r^{10}>\frac9{100}.
 \label{n8a:e39:eq:T3}
\end{align}
In \eqref{n8a:e39:eq:T2}, absorb the negative terms into $r^2$:
$2r^2+2r^3+r^7+r^8<1$ for $0\le r\le3/8$.
For \eqref{n8a:e39:eq:T3}, use monotonicity of the subtracted polynomial.
Both estimates hold on the full intervals.

\paragraph{The full remainder.}
Use the coefficient norm $\nu_U$ of Section~\ref{subsec:generalcauchy}.
On $|r|=4$, we have
$|u|\le U=5/11$ and $|u|\ge\ell=3/11$.
With $n_c=\nu_U(N_c)$, $h_c=\nu_U(H_c)$, put
\[
 j_q=h_{1+2\sqrt q}h_{1-2\sqrt q},\qquad
 \gamma_q=2(n_{1+2\sqrt q}h_{1-2\sqrt q}
                    +n_{1-2\sqrt q}h_{1+2\sqrt q}).
\]
The triangle inequality in the formulas for $u^3A_j$ gives
\begin{align*}
 M_2&=\ell^{-3}\bigl[(14U+1)4n_1j_1j_4
 +(1+8U)\gamma_1h_1j_4+2U\gamma_4h_1j_1+9U^2h_1j_1j_4\bigr]
 <5\cdot10^9,\\
 M_3&=\ell^{-3}\bigl[(16U+1)4n_1j_1j_9
 +(1+9U)\gamma_1h_1j_9+U\gamma_9h_1j_1+9U^2h_1j_1j_9\bigr]
 <8\cdot10^{10}.
\end{align*}
Applying \eqref{eq:generalcauchytail} bounds all omitted orders:
\[
 |A_j-\T_{10}A_j|\le
 M_j\frac{(r/4)^{11}}{1-r/4}.
\]
On the respective intervals, these bounds are less than $1/30$
and $1/500$. Together with \eqref{n8a:e39:eq:T2}--\eqref{n8a:e39:eq:T3}, this proves
\[
 A_2((1+r)/11)>1/25,\qquad A_3((1+r)/11)>1/25.
\]
Positivity of the denominators and \eqref{n8a:e39:eq:lowAB} complete
the proof of Lemma~\ref{n8a:e39:thm:low}.

For \eqref{n8a:e39:eq:T2}, writing $A_2((1+r)/11)=\sum a_jr^j$, we use
$a_0>3/40$, $a_1>1/2$, $a_2>1$, $a_3,a_6,a_7,a_8>0$,
$a_4,a_5>-2$ and $a_9,a_{10}>-1$.
For \eqref{n8a:e39:eq:T3}, we use $a_0>1/10$, $a_1,\ldots,a_7>0$,
$a_8>-9$, $a_9>-5$, $a_{10}>-1$.
These follow by multiplying the quintics $N_c,H_c$; the Cauchy
remainder estimate controls all higher orders.

\subsection{Denominators and norms for the constructed profile}\label{n8a:app:denominators}
We give the norms used in Lemma~\ref{n8a:lem:tails}.
Define $N_B=D_BB$, $N_E=D_EE$ using the exact parameters
of Section~\ref{n8a:sec:profile}.

Choose the interior denominators
\begin{align*}
 D_B(s)={}&20000000s^5(s-1)^5(s+1)^4(s+3)^4
                         (2s+3)(2s+5)(2s+7),\\
 D_E(s)={}&80000000s^5(s-3)^5(s-2)(s-1)^5(s+1)^4(2s+3)
                               K_5(s)K_3(s),\\
 K_5(s)={}&3072s^5+39680s^4+325328s^3+1806700s^2
                                      +5198000s+2723875,\\
 K_3(s)={}&15360s^5+98560s^4+423024s^3+1469748s^2
                                      +2910744s+1011069.
\end{align*}
Their factors are positive on the respective intervals.
For the Cauchy norms, split $B$ into its four summands in
\eqref{eq:generalchainresiduals}.
For $E$, also split the diagonal into $(16-9/s)z(s)$,
$-\beta_8(s,s+5)z(s)$ and $-(s-4)\beta_8(s,s+3)z(s)$,
giving six summands in total.

For each summand $b(s)$, $D_X(s)b(s)$ is a polynomial by the choice
of $D_X$. Apply the factored circle bound \eqref{eq:generalfactorbound}
with the values of $\sigma_X,n_X$ fixed in
Appendix~\ref{n8a:app:interior}. In the order of the four terms of
$B$, the respective bounds on $|t|=2$ are smaller than
\[
 24201,\qquad2605,\qquad7798,\qquad805.
\]
Their sum is $35409<36000$.
For $E$, take the three diagonal terms in the order displayed above,
followed by the three off-diagonal terms in
\eqref{eq:generalchainresiduals}. On $|t|=3$ their bounds are smaller than
\[
 160963394,\quad2792315,\quad27681138,\quad
 36267974,\quad68835090,\quad807760.
\]
Their sum is $297347671<300000000$.
These are bounds for factored polynomials on entire circles.

Here is a direct calculation of the first bound. Put $\sigma=13/2$
and $P(s)=s^4p(s)$. Its normalized polynomial is
\[
 \widehat P(t)=a+\frac b\sigma(1+t)
 -\frac{1.4824723}{\sigma^2}(1+t)^2
 +\frac{9.9151639}{\sigma^3}(1+t)^3
 -\frac{17.4876379}{\sigma^4}(1+t)^4.
\]
Expanding these four powers and using \eqref{n8a:eq:parameters} gives
$\nu_2(\widehat P)<37091/25000$. For the first summand
$b(s)=(16-9/s)p(s)$,
\[
 D_B(s)b(s)=20000000(16s-9)P(s)(s-1)^5(s+1)^4(s+3)^4
                  \prod_{j\in\{3,5,7\}}(2s+j).
\]
Its degree is $21$, so the normalization with $n_B=22$ supplies one
factor $1+t$, bounded by $3$. After cancellation of the factors at
$s=\sigma$, \eqref{eq:generalfactorbound} is less than
\[
 3\left(16+\frac9\sigma\right)\frac{37091}{25000}
 \frac{(\sigma+1)(\sigma+9)^4}{(\sigma-1)^5}
 \prod_{j\in\{3,5,7\}}\frac{2\sigma+3j}{2\sigma+j}
 =3\frac{226}{13}\frac{37091}{25000}
       \frac{15\cdot31^4}{11^5}\frac{1309}{360}<24201.
\]
The remaining bounds follow from the same factor rule applied to the
specified summands; their factors have degree at most five.
In the second chain the degree-$32$ terms cancel, so the sum $F_E$
is polynomial even though a normalized summand may have a pole at
$t=-1$. Cauchy's estimate is applied only to this full polynomial,
as explained in Section~\ref{subsec:generalcauchy}.

\section{The exterior completion in dimension nine}\label{app:gaugecore9}

The quadratic calibration is given in Appendix~\ref{app:calibration9}.
We complete it on the finite set needed in Proposition~\ref{prop:K9}.

The exterior estimate in Proposition~\ref{prop:K9} already covers
$s\ge12$. For $2\le s\le11$, the only types that need modification
are among $(1^j)$, $2\le j\le6$. We verify this through three scalar
remainder estimates. Write, using \eqref{eq:field9},
\[
 g_j(s)=2-\beta_9(s,s+1+2j)-\beta_9(s,s+1-2j),\qquad 0\le j\le3,
\]
\[
 B(s)=9g_0(s)+s\bigl(g_1(s)-g_0(s)\bigr)-\frac{49}{4s},\qquad
 E_j(s)=g_j(s)-g_0(s)-j^2\bigl(g_1(s)-g_0(s)\bigr).
\]
If $m_j$ is the number of coordinates of absolute value $j$, the
full-lattice remainder is exactly $B+m_2E_2+m_3E_3$.
Deleting first-sphere neighbours only increases it. Every non-Boolean
type in this range has either one or two coordinates equal to two,
or one coordinate equal to three. Its remainder is therefore one of
\begin{align*}
 R_1&=B+E_2 &&(4\le s\le11),\\
 R_2&=B+2E_2 &&(8\le s\le11),\\
 R_3&=B+E_3 &&(9\le s\le11).
\end{align*}
Here the coordinates not specified are zeros and ones.

These three one-variable expressions are positive on their required
integer ranges. A short algebraic verification uses
\[
 \begin{gathered}
 Q_a(s)=80s^2+150as+123a^2>0,\qquad D_B=4sQ_1Q_3Q_{-1},\\
 D_2=D_BQ_5Q_{-3},\qquad D_3=D_BQ_7Q_{-5}.
 \end{gathered}
\]
All coefficients of each of the following polynomials in $u$ are
strictly positive:
\begin{equation}\label{eq:ninepositivepolys}
 (D_BB)(7+u),\qquad (D_2R_1)(6+u),\qquad
 (D_2R_2)(8+u),\qquad (D_3R_3)(9+u).
\end{equation}
Their degrees are $5,9,9,9$, respectively; the displayed quadratic
products specify every coefficient by direct expansion.
The two remaining values satisfy $R_1(4)>3/4$ and $R_1(5)>1/5$.
Thus every non-Boolean type has positive remainder, as do the
Boolean types with $s\ge7$.

Let $\mathcal O_j^{(9)}$ be the signed-permutation orbit of $(1^j)$.
Complete only $\Omega_9=\bigcup_{j=2}^{6}\mathcal O_j^{(9)}$.
Its outgoing-cost diagonal is
\[
 c_j=18-\frac{49}{4j}-j\beta_9(j,j+3)
       -\mathbf1_{\{j=6\}}6\beta_9(6,7),\qquad 2\le j\le6,
\]
and direct substitution gives
$(c_2,c_3,c_4,c_5,c_6)>(11,25/2,13,13,8)$ componentwise.
Define
\begin{equation}\label{eq:orbitmatrix9}
 M_9^0=\begin{pmatrix}
 11&-14&0&0&0\\
 -3&25/2&-12&0&0\\
 0&-4&13&-10&0\\
 0&0&-5&13&-8\\
 0&0&0&-6&8
 \end{pmatrix},\qquad \mathbf h^{(9)}=(M_9^0)^{-1}\mathbf1.
\end{equation}
The orbit sizes $2^j\binom9j$ symmetrize this matrix. Its leading
principal minors are
\[
 11,\qquad\frac{191}{2},\qquad\frac{1427}{2},\qquad
 \frac{9001}{2},\qquad1756.
\]
The symmetrized matrix is positive definite with nonpositive
off-diagonal entries. Its inverse has strictly positive entries:
write it as $aI-P$, where $P$ is entrywise nonnegative, irreducible,
and $a$ exceeds its spectral radius, then use the Neumann series.
Hence $\mathbf h^{(9)}>0$.
Assign the ratio $h_{j+1}^{(9)}/h_j^{(9)}$ to an oriented edge from
$\mathcal O_j^{(9)}$ to $\mathcal O_{j+1}^{(9)}$ inside $\Omega_9$,
and retain the base field on all other edges of $Y_9$.
Each such vertex has $j$ neighbours on the preceding Boolean layer
and $2(9-j)$ on the next. Thus its remainder is at least
$(M_9^0\mathbf h^{(9)})_j/h_j^{(9)}=1/h_j^{(9)}>0$.
At $j=2$ the first-sphere neighbours have been deleted while the
full diagonal is retained. All exterior rows keep every incident
field factor and hence their positive remainder. The edge identity,
with these deleted neighbours treated as Dirichlet edges, proves
$K_9\ge M_{W_9}$ with $W_9>0$ everywhere.

\section*{Statements and Declarations}

\noindent\textbf{Competing Interests.}
The author has no relevant financial or non-financial interests to disclose.


\begin{thebibliography}{99}
\raggedright

\bibitem{AlpayThree}
Alpay, N.: The sharp discrete Hardy inequality on $\Z^3$.
Preprint (2026).
\href{https://arxiv.org/abs/2608.25262v1}{arXiv:2608.25262v1}.

\bibitem{AlpayLow}
Alpay, N.: Sharp discrete Hardy constants in low dimensions.
Preprint (2026).
\href{https://arxiv.org/abs/2610.05421v1}{arXiv:2610.05421v1}.

\bibitem{BarasGoldstein}
Baras, P., Goldstein, J.~A.:
The heat equation with a singular potential.
\emph{Trans. Amer. Math. Soc.} \textbf{284}(1), 121--139 (1984).
\url{https://doi.org/10.1090/S0002-9947-1984-0742415-3}

\bibitem{Oberwolfach2025}
Berchio, E., Keller, M., Pinchover, Y., Roncal, L.:
Mini-Workshop: Hardy Inequalities in Discrete and Continuum Settings.
\emph{Oberwolfach Rep.} \textbf{22}(1), 579--614 (2025).
\url{https://doi.org/10.4171/OWR/2025/14}

\bibitem{BogdanHardy}
Bogdan, K., Grzywny, T., Jakubowski, T., Pilarczyk, D.:
Fractional Laplacian with Hardy potential.
\emph{Comm. Partial Differential Equations} \textbf{44}(1), 20--50 (2019).
\url{https://doi.org/10.1080/03605302.2018.1539102}

\bibitem{ChenHuaXu}
Chen, H., Hua, B., Xu, W.: Poisson problem with Hardy operator on lattice graph $\mathbb Z^d$ and the application to semilinear equations.
Preprint (2026).
\href{https://arxiv.org/abs/2609.13933v1}{arXiv:2609.13933v1}.

\bibitem{CiaurriRoncal}
Ciaurri, \'O., Roncal, L.:
Hardy's inequality for the fractional powers of a discrete Laplacian.
\emph{J. Anal.} \textbf{26}(2), 211--225 (2018).
\url{https://doi.org/10.1007/s41478-018-0141-2}

\bibitem{DasFuente}
Das, U., de la Fuente-Fern\'andez, R.:
An optimal fractional Hardy inequality on the discrete half-line.
\emph{Calc. Var. Partial Differential Equations} \textbf{65}, Paper No.~46 (2026).
\url{https://doi.org/10.1007/s00526-025-03217-w}

\bibitem{Delmotte}
Delmotte, T.:
Parabolic Harnack inequality and estimates of Markov chains on graphs.
\emph{Rev. Mat. Iberoam.} \textbf{15}(1), 181--232 (1999).
\url{https://doi.org/10.4171/RMI/254}

\bibitem{Doob}
Doob, J.~L.:
Conditional Brownian motion and the boundary limits of harmonic functions.
\emph{Bull. Soc. Math. France} \textbf{85}, 431--458 (1957).
\url{https://doi.org/10.24033/bsmf.1494}

\bibitem{Dyda}
Dyda, B.:
Fractional and non-fractional Hardy inequality on a lattice $\Z^d$.
\emph{J. Differential Equations} \textbf{485}, article 114749 (2026).
\url{https://doi.org/10.1016/j.jde.2026.114749}

\bibitem{FischerHardy}
Fischer, F.:
On the optimality and decay of $p$-Hardy weights on graphs.
\emph{Calc. Var. Partial Differential Equations} \textbf{63}, Paper No.~162 (2024).
\url{https://doi.org/10.1007/s00526-024-02754-0}

\bibitem{FLS}
Frank, R.~L., Lieb, E.~H., Seiringer, R.:
Hardy--Lieb--Thirring inequalities for fractional Schr\"odinger operators.
\emph{J. Amer. Math. Soc.} \textbf{21}, 925--950 (2008).
\url{https://doi.org/10.1090/S0894-0347-07-00582-6}

\bibitem{GalaktionovKamotski}
Galaktionov, V.~A., Kamotski, I.~V.:
On nonexistence of Baras--Goldstein type for higher-order parabolic
equations with singular potentials.
\emph{Trans. Amer. Math. Soc.} \textbf{362}(8), 4117--4136 (2010).
\url{https://doi.org/10.1090/S0002-9947-10-04855-5}

\bibitem{GKSRellich}
Gerhat, B., Krej\v{c}i\v{r}\'ik, D., \v{S}tampach, F.:
An improved discrete Rellich inequality on the half-line.
\emph{Israel J. Math.} \textbf{268}, 45--72 (2025).
\url{https://doi.org/10.1007/s11856-025-2790-6}

\bibitem{GradshteynRyzhik}
Gradshteyn, I.~S., Ryzhik, I.~M.:
\emph{Table of Integrals, Series, and Products}, 7th ed.
Jeffrey, A., Zwillinger, D. (eds.). Academic Press, Amsterdam (2007).

\bibitem{GuptaLattice}
Gupta, S.:
Hardy and Rellich inequality on lattices.
\emph{Calc. Var. Partial Differential Equations} \textbf{62}, Paper No.~81 (2023).
\url{https://doi.org/10.1007/s00526-022-02407-0}

\bibitem{Gupta1D}
Gupta, S.:
One-dimensional discrete Hardy and Rellich inequalities on integers.
\emph{J. Fourier Anal. Appl.} \textbf{30}, Paper No.~15 (2024).
\url{https://doi.org/10.1007/s00041-024-10070-6}

\bibitem{GuptaAsymptotic}
Gupta, S.: Sharp asymptotics for higher-order Hardy constants on lattices.
Preprint (2026).
\href{https://arxiv.org/abs/2607.15181v1}{arXiv:2607.15181v1}.

\bibitem{HKPFractional}
Hake, P., Keller, M., Pogorzelski, F.: Optimal Hardy inequality for fractional Laplacians on the lattice.
Preprint (2025).
\href{https://arxiv.org/abs/2601.00902v1}{arXiv:2601.00902v1}.

\bibitem{HKPSupersolution}
Hake, P., Keller, M., Pogorzelski, F.: Supersolution construction and optimal Hardy inequality for fractional Laplacians.
Preprint (2026).
\href{https://arxiv.org/abs/2607.24169v1}{arXiv:2607.24169v1}.

\bibitem{Hardy}
Hardy, G.~H.:
Note on a theorem of Hilbert.
\emph{Math. Z.} \textbf{6}, 314--317 (1920).
\url{https://doi.org/10.1007/BF01199965}

\bibitem{HoangHundertmarkRichterVugalter}
Hoang, V., Hundertmark, D., Richter, J., Vugalter, S.:
Quantitative bounds versus existence of weakly coupled bound states for
Schr\"odinger type operators.
\emph{Ann. Henri Poincar\'e} \textbf{24}, 783--842 (2023).
\url{https://doi.org/10.1007/s00023-022-01228-3}

\bibitem{HuangYe}
Huang, X., Ye, D.:
One-dimensional sharp discrete Hardy--Rellich inequalities.
\emph{J. Lond. Math. Soc.} (2) \textbf{109}, e12838 (2024).
\url{https://doi.org/10.1112/jlms.12838}

\bibitem{HuangYeAsymptotic}
Huang, X., Ye, D.: Asymptotically sharp Hardy--Rellich inequalities on lattices.
Preprint (2026).
\href{https://arxiv.org/abs/2607.23057v1}{arXiv:2607.23057v1}.

\bibitem{IgnatZuazua}
Ignat, L.~I., Zuazua, E.:
Sharp numerical approximation of the Hardy constant.
\emph{Discrete Contin. Dyn. Syst.} \textbf{54}, 15--32 (2026).
\url{https://doi.org/10.3934/dcds.2025165}

\bibitem{IKO}
Ishige, K., Kabeya, Y., Ouhabaz, E.~M.:
The heat kernel of a Schr\"odinger operator with inverse square potential.
\emph{Proc. Lond. Math. Soc.} (3) \textbf{115}, 381--410 (2017).
\url{https://doi.org/10.1112/plms.12041}

\bibitem{ItoJensen}
Ito, K., Jensen, A.:
Branching form of the resolvent at thresholds for multi-dimensional
discrete Laplacians.
\emph{J. Funct. Anal.} \textbf{277}, 965--993 (2019).
\url{https://doi.org/10.1016/j.jfa.2019.05.018}

\bibitem{JensenNenciu}
Jensen, A., Nenciu, G.:
A unified approach to resolvent expansions at thresholds.
\emph{Rev. Math. Phys.} \textbf{13}, 717--754 (2001).
\url{https://doi.org/10.1142/S0129055X01000843}

\bibitem{JensenNenciuAddendum}
Jensen, A., Nenciu, G.:
Erratum to the paper ``A unified approach to resolvent expansions at thresholds''.
\emph{Rev. Math. Phys.} \textbf{16}, 675--677 (2004).
\url{https://doi.org/10.1142/S0129055X04002102}

\bibitem{JexStampach}
Jex, M., \v{S}tampach, F.:
On the ground state of lattice Schr\"odinger operators.
\emph{J. Spectr. Theory} \textbf{15}, 647--678 (2025).
\url{https://doi.org/10.4171/JST/558}

\bibitem{JiaZhao}
Jia, X., Zhao, Y.:
Coupling constant limits of Schr\"odinger operators with critical potentials.
\emph{Boundary Value Problems} \textbf{2013}, article 62, 15 pp. (2013).
\url{https://doi.org/10.1186/1687-2770-2013-62}

\bibitem{KapitanskiLaptev}
Kapitanski, L., Laptev, A.:
On continuous and discrete Hardy inequalities.
\emph{J. Spectr. Theory} \textbf{6}(4), 837--858 (2016).
\url{https://doi.org/10.4171/JST/144}

\bibitem{KellerLemm}
Keller, M., Lemm, M.:
Optimal Hardy weights on the Euclidean lattice.
\emph{Trans. Amer. Math. Soc.} \textbf{376}(9), 6033--6062 (2023).
\url{https://doi.org/10.1090/tran/8939}

\bibitem{KLW}
Keller, M., Lenz, D., Wojciechowski, R.~K.:
\emph{Graphs and Discrete Dirichlet Spaces}.
Grundlehren der mathematischen Wissenschaften, vol.~358.
Springer, Cham (2021).
\url{https://doi.org/10.1007/978-3-030-81459-5}

\bibitem{KPPMonthly}
Keller, M., Pinchover, Y., Pogorzelski, F.:
An improved discrete Hardy inequality.
\emph{Amer. Math. Monthly} \textbf{125}, 347--350 (2018).
\url{https://doi.org/10.1080/00029890.2018.1420995}

\bibitem{KPPhardy}
Keller, M., Pinchover, Y., Pogorzelski, F.:
Optimal Hardy inequalities for Schr\"odinger operators on graphs.
\emph{Comm. Math. Phys.} \textbf{358}, 767--790 (2018).
\url{https://doi.org/10.1007/s00220-018-3107-y}

\bibitem{KPP}
Keller, M., Pinchover, Y., Pogorzelski, F.:
Criticality theory for Schr\"odinger operators on graphs.
\emph{J. Spectr. Theory} \textbf{10}, 73--114 (2020).
\url{https://doi.org/10.4171/JST/286}

\bibitem{KholmatovLakaevAlmuratov}
Kholmatov, Sh.~Yu., Lakaev, S.~N., Almuratov, F.~M.:
Bound states of Schr\"odinger-type operators on one and two dimensional
lattices.
\emph{J. Math. Anal. Appl.} \textbf{503}(1), article 125280 (2021).
\url{https://doi.org/10.1016/j.jmaa.2021.125280}

\bibitem{KondejLotoreichik}
Kondej, S., Lotoreichik, V.:
Weakly coupled bound state of 2-D Schr\"odinger operator with potential-measure.
\emph{J. Math. Anal. Appl.} \textbf{420}, 1416--1438 (2014).
\url{https://doi.org/10.1016/j.jmaa.2014.06.053}

\bibitem{KrejcirikStampach}
Krej\v{c}i\v{r}\'ik, D., \v{S}tampach, F.:
A sharp form of the discrete Hardy inequality and the
Keller--Pinchover--Pogorzelski inequality.
\emph{Amer. Math. Monthly} \textbf{129}, 281--283 (2022).
\url{https://doi.org/10.1080/00029890.2022.2011569}

\bibitem{LizamaSharp}
Lizama, C.: Sharp discrete Hardy constants in dimensions three and four and strict upper bounds from dimension nine.
Preprint (2026).
\href{https://arxiv.org/abs/2608.30831v1}{arXiv:2608.30831v1}.

\bibitem{LizamaAsymptotic}
Lizama, C.: Three-term asymptotics for discrete Hardy--Rellich constants in high dimension.
Preprint (2026).
\href{https://arxiv.org/abs/2608.30888v1}{arXiv:2608.30888v1}.

\bibitem{DLMF}
National Institute of Standards and Technology:
\emph{NIST Digital Library of Mathematical Functions}.
Version 1.2.8 (2026).
\url{https://dlmf.nist.gov/}. 

\bibitem{PinchoverTintarev}
Pinchover, Y., Tintarev, K.:
A ground state alternative for singular Schr\"odinger operators.
\emph{J. Funct. Anal.} \textbf{230}, 65--77 (2006).
\url{https://doi.org/10.1016/j.jfa.2005.05.015}

\bibitem{Polya}
P\'olya, G.:
\"Uber eine Aufgabe der Wahrscheinlichkeitsrechnung betreffend die
Irrfahrt im Stra\ss ennetz.
\emph{Math. Ann.} \textbf{84}, 149--160 (1921).
\url{https://doi.org/10.1007/BF01458701}

\bibitem{Simon}
Simon, B.:
The bound state of weakly coupled Schr\"odinger operators in one and
two dimensions.
\emph{Ann. Physics} \textbf{97}, 279--288 (1976).
\url{https://doi.org/10.1016/0003-4916(76)90038-5}

\bibitem{WangConical}
Wang, X.~P.:
Asymptotic expansion in time of the Schr\"odinger group on conical manifolds.
\emph{Ann. Inst. Fourier} \textbf{56}, 1903--1945 (2006).
\url{https://doi.org/10.5802/aif.2230}

\bibitem{WangCorrigendum}
Wang, X.~P.:
Corrigendum to: Asymptotic expansion in time of the Schr\"odinger group
on conical manifolds.
\emph{Ann. Inst. Fourier} \textbf{57}, 2081--2082 (2007).
\url{https://doi.org/10.5802/aif.2325}

\end{thebibliography}
\end{document}